\documentclass[12pt]{amsart}
\usepackage[dvipsnames]{xcolor}
\usepackage{fullpage,graphicx,psfrag,amsmath,amsfonts, amssymb,enumitem, hyperref}
\usepackage{forest}
\usetikzlibrary{positioning}

\hypersetup{
	colorlinks=true,
	linkcolor=NavyBlue,
	urlcolor = RoyalBlue,
	citecolor = PineGreen
}
\usepackage{makecell}

\renewcommand*{\P}{\mathbb P}


\newcommand{\cl}{\mathbf{cl}}
\newcommand{\inte}{\mathbf{int}}

\bibliographystyle{alpha}

\newcommand{\R}{\mathbb{R}}
\newcommand{\C}{\mathbb{C}}
\newcommand{\Z}{\mathbb{Z}}
\newcommand{\N}{\mathbb{N}}
\newcommand{\Q}{\mathbb{Q}}
\newcommand{\A}{\mathbb{A}}
  
\newcommand{\diam}{{\mbox{diam}}}

\newcommand{\E}{\mathop{\bf E{}}}

\newcommand{\Cov}{\mathop{\bf Cov{}}}

\newcommand{\dist}{\mathop{\bf dist{}}}



\newcommand{\eg}{{\it e.g.}}
\newcommand{\ie}{{\it i.e.}}


\newtheorem{theorem}{Theorem}[section]
\newtheorem{prop}[theorem]{Proposition}
\newtheorem{fact}[theorem]{Fact}

\newtheorem{lemma}[theorem]{Lemma}
\newtheorem{remark}[theorem]{Remark}
\newtheorem{definition}[theorem]{Definition}
\newtheorem{problem}{Problem}

\newcommand{{\eps}}{\varepsilon}
\newcommand{{\Geps}}{\mathcal{G}^{{\eps}}}
\newcommand{{\VGeps}}{\mathcal{VG}^{{\eps}}}
\newcommand{{\meps}}{\hyperref[eq:meps]{m_{{\eps}}}}

\newcommand{\Mrho}[1]{\hyperref[eq:largest-exit-time-rv]{\mathcal{M}(#1)}}

\newcommand{\Heps}[1]{\hyperref[eq:sle-cell]{H_{#1}^{{\eps}}}}

\newcommand{\gr}[2]{\hyperref[eq:greens-kernel]{\operatorname{gr}^{#1}_{#2}}}
\newcommand{\Gr}[2]{\hyperref[eq:greens-function]{\operatorname{Gr}^{#1}_{#2}}}

\newcommand{\taueps}[2]{\hyperref[eq:exit-time]{\tau_{#1}^{#2, {\eps}}}}
\newcommand{\Qeps}[1]{\hyperref[eq:expected-exit-time]{\mathfrak{Q}_{#1}^{{\eps}}}}
\newcommand{\qeps}[1]{\hyperref[eq:normalized-exit-time]{\mathfrak{q}_{#1}^{{\eps}}}}
\newcommand{\olqeps}[1]{\hyperref[eq:normalized-exit-time]{\overline{\mathfrak{q}}_{#1}^{{\eps}}}}

\newcommand{\TT}{\hyperref[eq:cluster-stopping-time]{T_1}}

\newcommand{\Bdeltap}{\hyperref[eq:outer-inner-neighborhoods]{B_{\delta}^+}}
\newcommand{\Bdeltam}{\hyperref[eq:outer-inner-neighborhoods]{B_{\delta}^-}}

\newcommand{\Bdeltapp}[1]{\hyperref[eq:outer-inner-neighborhoods]{B_{#1}^+}}
\newcommand{\Bdeltamm}[1]{\hyperref[eq:outer-inner-neighborhoods]{B_{#1}^-}}

\newcommand{\wteps}{\hyperref[eq:discrete-least-supersolution]{w^{{\eps}}_t}}
\newcommand{\olwteps}{\hyperref[eq:discrete-least-supersolution]{\overline{w}^{{\eps}}_t}}

\newcommand{\Lambdateps}{\hyperref[eq:discrete-cluster]{\Lambda^{{\eps}}_t}}

\numberwithin{equation}{section}

\title{Internal DLA on mated-CRT maps}
\author{Ahmed Bou-Rabee}
\author{Ewain Gwynne}

\begin{document}

	\begin{abstract}
	We prove a shape theorem for internal diffusion limited aggregation on mated-CRT maps, a family 
	of random planar maps which approximate Liouville quantum gravity (LQG) surfaces. 
	The limit is an LQG harmonic ball, which we constructed in a companion paper.
	We also prove an analogous result for the divisible sandpile. 
	\end{abstract}
	\maketitle

	\section{Introduction} \label{sec:intro}
	\begin{figure}
	\includegraphics[width=0.35\textwidth]{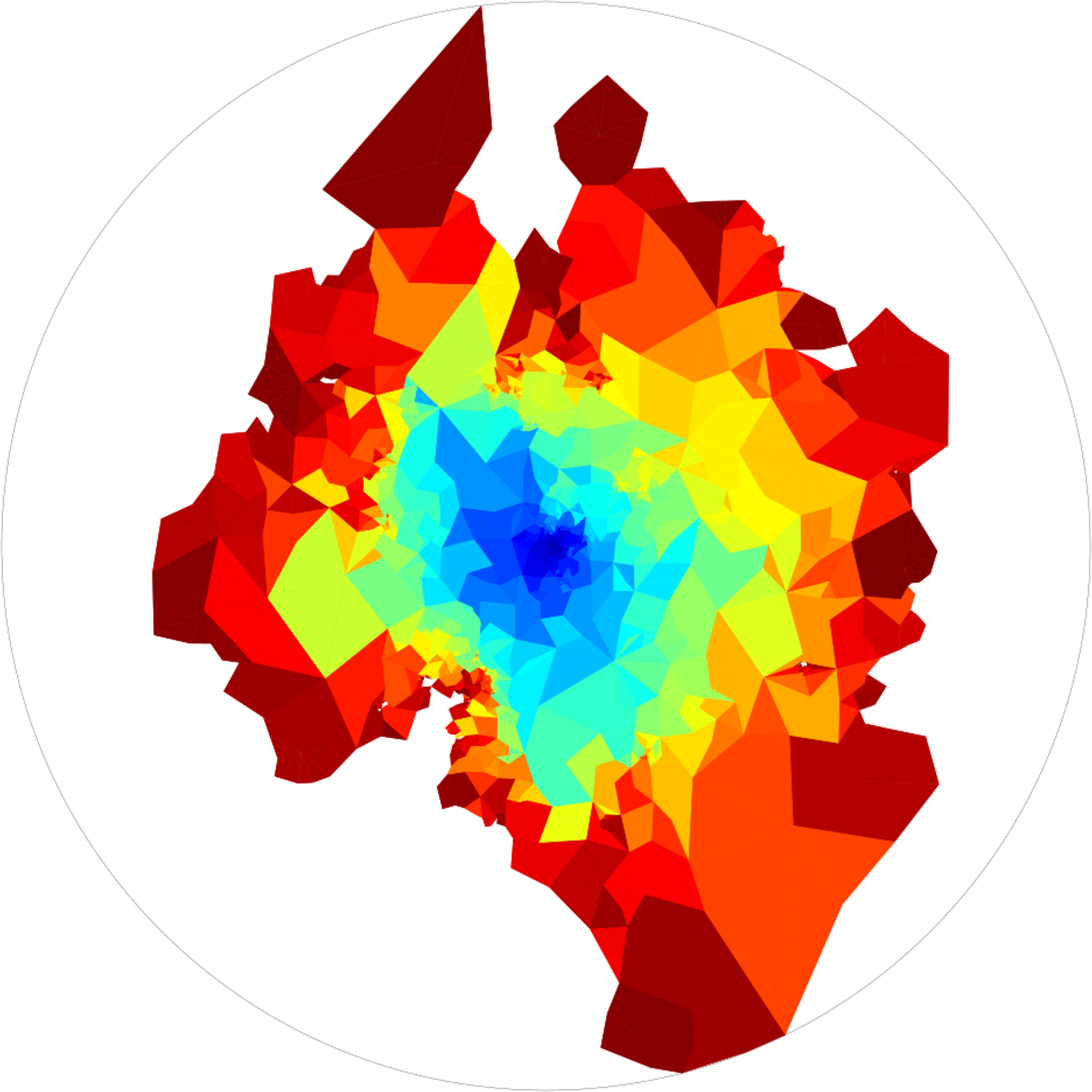} 
	\includegraphics[width=0.35\textwidth]{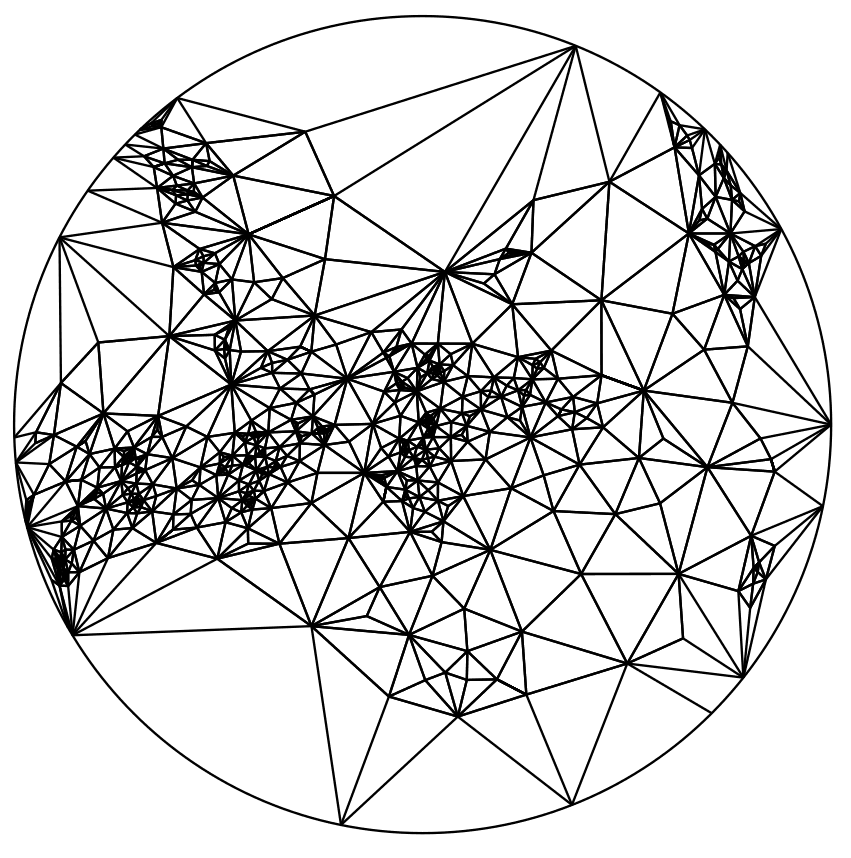}
	\caption{\textbf{Left:} IDLA on the Tutte embedding of a finite $\gamma = \sqrt{2}$ mated-CRT map with boundary, with $10^4$ vertices. 
		Walkers are added until the first time one of them hits a boundary vertex.  
		Cells are colored according to the first time one of their vertices is hit by a random walk. 
		\textbf{Right:} Illustration of the graph upon which IDLA is run. The finite mated-CRT map with boundary can be constructed from a pair of conditioned Brownian motions via a similar procedure as in Definition~\ref{def:mated-crt-map}. Away from the boundary, it locally looks like the infinite mated-CRT map considered in this paper, but its Tutte embedding is easier to define and simulate. See Remark~\ref{remark:tutte} for further discussion.
	}
	\label{fig:IDLA-tutte-embedding}
\end{figure}
In this article we study the large-scale behavior of a random growth model (IDLA) on a random graph (the mated-CRT map)
which approximates a random fractal surface embedded in the plane (Liouville quantum gravity, LQG). We establish, for each $\gamma \in (0,2)$, the convergence in probability of the IDLA cluster on the $\gamma$-mated-CRT map to a so-called $\gamma$-LQG harmonic ball (Theorem~\ref{theorem:convergence-of-IDLA}). Harmonic balls are random subsets of the plane characterized by a mean-value property for harmonic functions with respect to the LQG measure (Definition~\ref{def:harmonic-ball}). In a companion work \cite{bou2022harmonic}, harmonic balls are constructed via Hele-Shaw flow and it is shown that they are neither Lipschitz domains nor LQG metric balls. 

\subsection{Background}
We now briefly and somewhat informally describe the aforementioned objects, delaying the more technical definitions to Section \ref{sec:preliminaries} below. Minimal prior knowledge of LQG is needed to read this article.

\subsubsection{Internal DLA} \label{subsubsec:idla-informal-intro}
{\it Internal diffusion limited aggregation} (Internal DLA or IDLA) was first introduced to model the physical process of `material removal'
by Meakin-Deutch in \cite{meakin1986formation}. It was also independently introduced by Diaconis-Fulton in  \cite{diaconis1991growth}  as a special case of a commutative algebra.

In its simplest form, IDLA produces an increasing sequence of subsets of the vertex set of a graph $G$ (often taken to be a lattice such as $\Z^2$) as follows. Fix a root vertex $o$ for $G$. Send out an explorer starting from $o$ and allow it to travel according to a simple random walk on $G$ until it reaches an unoccupied vertex, at which point it stops and occupies that vertex. This process is repeated, each explorer starts at the root and walks randomly until it reaches an unoccupied vertex. For $n \geq 1$, the time $n$ IDLA {\it cluster} is the collection of occupied vertices after $n $ iterations of this. 

Following remarkable work by several groups of mathematicians, the large $n$ behavior of the IDLA clusters is now well understood on several families of graphs: $\Z^d$
\cite{lawler1992internal, lawler1995subdiffusive,asselah2013logarithmic, asselah2013sublogarithmic,jerison2012logarithmic, jerison2013internal, jerison2014internal, lucas2014limiting, idla-uniform-starting-points, darrow2020convergence},
Cayley graphs of groups with polynomial \cite{blachere2001internal} and  exponential \cite{blachere2007internal, huss2008internal} growth, supercritical percolation clusters \cite{shellef2010idla, duminil2013containing}, Sierpinski gasket graphs \cite{chen2017internal}, comb lattices \cite{huss-sava-comb, asselah2016fluctuations}, and cylinders \cite{jerison2014chapter, levine2019long, silvestri2020internal}. In most of these cases, the scaling limit of the IDLA clusters is described by a metric ball in the corresponding ambient space (the main result of the paper will give an example where the scaling limit of IDLA is not a metric ball).

We would like to examine what happens when $G$ is taken to be a {\it random} graph embedded in the plane. Specifically, instead of taking some deterministic lattice
which approximates Euclidean space, we take $G$ to be a graph which approximates a {\it Liouville quantum gravity} (LQG) surface.

\subsubsection{Liouville quantum gravity}	
LQG was introduced by Polyakov in the 1980s in the context of bosonic string theory~\cite{polyakov-qg1} as a model of a `random two-dimensional Riemannian manifold'. LQG is too rough to be defined as a manifold in the rigorous sense, although the following loose description can be made precise via various regularization procedures.
For $\gamma \in (0,2)$ and a domain $D\subset\C$, a $\gamma$-LQG surface parameterized by $D$ is the two-dimensional Riemannian manifold with Riemannian metric tensor $e^{\gamma h} \, (dx^2+dy^2)$, where $dx^2+dy^2$ is the Euclidean metric tensor and $h$ is a variant of the Gaussian free field (GFF) on $D$. 

Of particular relevance to this work is the associated volume form, the {\it $\gamma$-Liouville measure} or {\it $\gamma$-LQG measure}. This is a Radon measure
on $D$ which is (informally) given by 
\begin{equation} \label{eq:lqg-measure}
\mu_h(dz) = e^{\gamma h(z)} \, d z, 
\end{equation}
where $dz$ denotes two-dimensional Lebesgue measure~\cite{kahane1985chaos,rhodes-vargas-log-kpz,duplantier2011liouville}. We will give a precise definition of $\mu_h$ and the relevant variants of the GFF in Section \ref{sec:preliminaries}. For now,	the reader only needs to know that $\mu_h$ is a random non-atomic, locally finite Radon measure on $D$ which assigns positive mass to every open subset of $D$ and is mutually singular with respect to Lebesgue measure.
The works~\cite{ding2020tightness,gm-uniqueness} showed that an LQG surface can also be endowed with a canonical random metric, but we will not need this metric in the present paper.

\subsubsection{Random planar maps and the mated-CRT map} \label{subsubsec:mated-crt-map-intro}
\begin{figure}[ht!]\begin{center}
		\includegraphics[scale=.8]{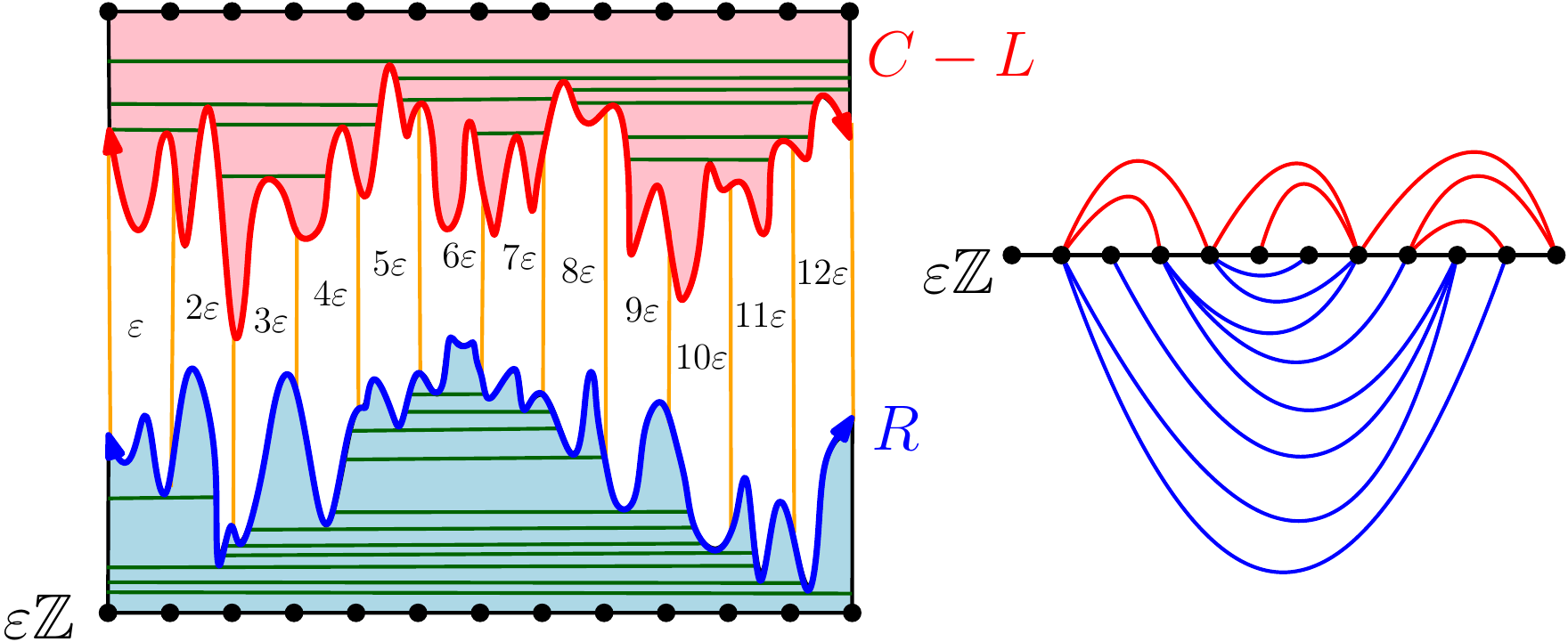}
		\caption{{\textbf{Left:} A sample construction of the mated-CRT map. We draw the graphs of $C-L$ and $ R$ in the interval $[0, 12 {\eps}]$ 
				where $C$ is a large constant chosen so that the graphs do not intersect. Each vertical strip containing the interval $[a-{\eps}, a]$ for $a \in {\eps} \Z$  corresponds to a vertex
				in the mated-CRT map. The adjacency condition \eqref{eq:edge-map} for $L$ (resp.\ $R$) corresponds to two vertices $a,b \in {\eps} \Z$ being adjacent if there is a horizontal line segment above the graph of $C-L$ (resp.\ below the graph of $ R$) which intersects the graph only in the vertical strips which contain the intervals $[a-{\eps}, a]$ and $[b-{\eps},b]$.
				In the figure, we have illustrated, for each pair $(a,b)$ for which this adjacency condition holds, the lowest (resp.\ highest) such horizontal line segment in green.
				\textbf{Right:} A planar embedding of the mated-CRT map on the left, under which it is a triangulation. Edges arising from the adjacency condition for $L$ (resp.\ $R$) which do not join consecutive vertices are shown in red (resp.\ blue). Edges joining consecutive vertices are shown in black. A similar illustration appeared as~\cite[Figure 1]{gms-harmonic}.}}
		\label{figure:mated-crt-map-illustration}
	\end{center}
\end{figure}

LQG is widely expected, and in some cases rigorously known, to describe the scaling limit of 
{\it random planar maps}. A \textit{planar map} is a graph embedded in $\C$ in such a way that no two edges cross, viewed modulo orientation preserving homeomorphisms $\C \rightarrow \C$. 
Uniform random planar maps (including uniform triangulations, quadrangulations, etc.) converge to $\sqrt{8/3}$-LQG surfaces in the Gromov-Hausdorff sense~\cite{legall-uniqueness,miermont-brownian-map,lqg-tbm1,lqg-tbm2} and, in the case of triangulations, when embedded via the so-called \textit{Cardy embedding}~\cite{hs-cardy-embedding}. Similar convergence results are expected to hold for various types of non-uniform random planar maps toward $\gamma$-LQG with $\gamma\not=\sqrt{8/3}$. For example, random planar maps sampled with probability proportional to the Ising model partition function are expected to converge to $\sqrt 3$-LQG (see, e.g.,~\cite[Section 3.1]{ghs-mating-survey} for more on such conjectures).

Mated-CRT maps are a one-parameter family of random planar maps, indexed by $\gamma\in(0,2)$, whose connection to LQG is better understood than for most other random planar maps. In particular, it was proven by Gwynne-Miller-Sheffield \cite{gms-tutte, gms-random-walk} that the re-scaled counting measure on vertices of the $\gamma$-mated-CRT  map converges to the $\gamma$-LQG measure when the map is embedded into $\C$ via the \emph{Tutte embedding} (i.e., the embedding where the position of each vertex is the average of the positions of its neighbors). Building on this, it was also shown by Berestycki-Gwynne in \cite{berestycki2020random} that random walk on the mated-CRT map converges to so-called \emph{Liouville Brownian motion}, see Section \ref{subsec:lbm-and-rw}
for the definition.

The mated-CRT map is built via a pair of correlated linear Brownian motions as in Figure \ref{figure:mated-crt-map-illustration}. 
\begin{definition}[Mated-CRT map] \label{def:mated-crt-map}
	Fix $\gamma \in (0,2)$ and consider a two dimensional Brownian motion $(L_t,R_t)_{t\in\R}$ started at the origin with correlation coefficient given by $ - \cos ( \pi \gamma^2/ 4)$.  For ${\eps} > 0$, the {\it $\gamma$-mated-CRT map} with cell size ${\eps}$ associated to $(L_t, R_t)_{t\in\R}$ is the graph ${\Geps}$ whose vertex set is ${\eps} \Z = {\VGeps}$, with two vertices $a < b$ connected by an edge if and only if
	\begin{equation} \label{eq:edge-map}
	\left( \inf_{t \in [a - {\eps},a]} X_t \right) \vee \left( \inf_{t \in [b - {\eps}, b]} X_t \right)\le \inf_{t \in [a, b - {\eps}]} X_t
	\end{equation}
	where $X$ can be either $L$ or $R$. There are two edges connecting $a$ and $b$ if $|b-a| > {\eps}$ and \eqref{eq:edge-map}
	is satisfied for both $L$ and $R$.  See Figure \ref{figure:mated-crt-map-illustration} for an illustration and a description of the planar map structure of $\mathcal {\Geps}$.
\end{definition}

By Brownian scaling, the law of ${\Geps}$ (as a graph) does not depend on ${\eps}$. However, ${\eps}$ is a convenient scaling parameter when talking about limits, as we will see just below.
The definition of the mated-CRT map is a semi-discrete generalization of so-called \emph{mating-of-trees bijections} between random planar maps and random walk excursions.
For instance, the \emph{Mullin bijection}~\cite{mullin-maps,bernardi-maps,shef-burger} is a discrete analog of Definition~\ref{def:mated-crt-map} which produces a planar map decorated by a spanning tree from a two-dimensional random walk excursion.  See~\cite{ghs-mating-survey} for a survey of mating-of-trees bijections and their applications.

The main reason why the mated-CRT map is more tractable than other random planar maps is that, due to the results of~\cite{duplantier2014liouville}, it has an \textit{a priori} embedding into $\C$ defined in terms of LQG and Schramm-Loewner evolution (SLE). We will now discuss this embedding.

\subsubsection{SLE/LQG embedding}
In this paper, we work with an embedding of the mated-CRT map constructed via a space-filling Schramm-Loewner evolution (SLE) curve. We remark, however, that by \cite{gms-tutte, berestycki2020random}, our results extend 
to the so-called Tutte (or harmonic) embedding of the mated-CRT map --- see Figure \ref{fig:IDLA-tutte-embedding} and Remark \ref{remark:tutte}. 

Let $h$ be the random distribution on $\C$ corresponding to a $\gamma$-quantum cone (a slight modification of the whole-plane GFF).
Let $\mu_h$ be the $\gamma$-LQG measure associated to $h$. 
Let $\eta$ be a whole-plane space-filling SLE, independent of $h$, with parameter $\kappa = 16/\gamma^2 \in (4,\infty)$.
We parameterize $\eta$ so that $\eta(0) = 0$ and $\mu_h(\eta([a,b])) = b-a$ for every $a,b\in\R$ with $a < b$. 
We will review the definitions of $h$ and $\eta$ in Section \ref{sec:preliminaries} below.
For now, the unfamiliar reader can just think of $\eta$ as a random continuous, space-filling curve in $\C$ from $\infty$ to $\infty$ parameterized by a locally finite Radon measure. 

\begin{definition}[SLE/LQG embedding of the mated-CRT map~\cite{duplantier2014liouville}] \label{def:sle-lqg-embedding}
	For $\gamma \in (0,2)$ and ${\eps} > 0$, let ${\Geps}$ be the ${\eps}$-mated CRT map with vertex set ${\VGeps}$ as defined in Definition \ref{def:mated-crt-map}.
	The SLE/LQG embedding is defined as follows. Each vertex $a \in {\VGeps} = {\eps}\Z$ corresponds to a \emph{cell},
	\begin{equation} \label{eq:sle-cell}
	\Heps{a} := \eta([a-{\eps}, a]) .
	\end{equation} 
	Two distinct vertices $a,b \in {\VGeps}$ are connected by one (resp. two) edges if and only if $\Heps{a} \cap \Heps{b}$ has one
	(resp. two) connected components which are not singletons.  This provides an embedding
	of ${\Geps}$ into $\C$ by sending each vertex $a \in {\VGeps}$ to the point $\eta(a) \in \Heps{a}$ then drawing edges between adjacent vertices in such a way that no two edges cross (the precise locations of the edges is unimportant for our purposes). 
\end{definition}

The equivalence of the above SLE/LQG description of the mated-CRT map and the Brownian motion description, Definition \ref{def:mated-crt-map}, 
is a consequence of the main result of the seminal work~\cite{duplantier2014liouville}. See, in particular, \cite[Lemma 8.8]{duplantier2014liouville}. For an illustration of the SLE/LQG embedding, see Figure \ref{fig:sle-lqg-embedding}. 

\begin{figure}
	\includegraphics[width=0.75\textwidth]{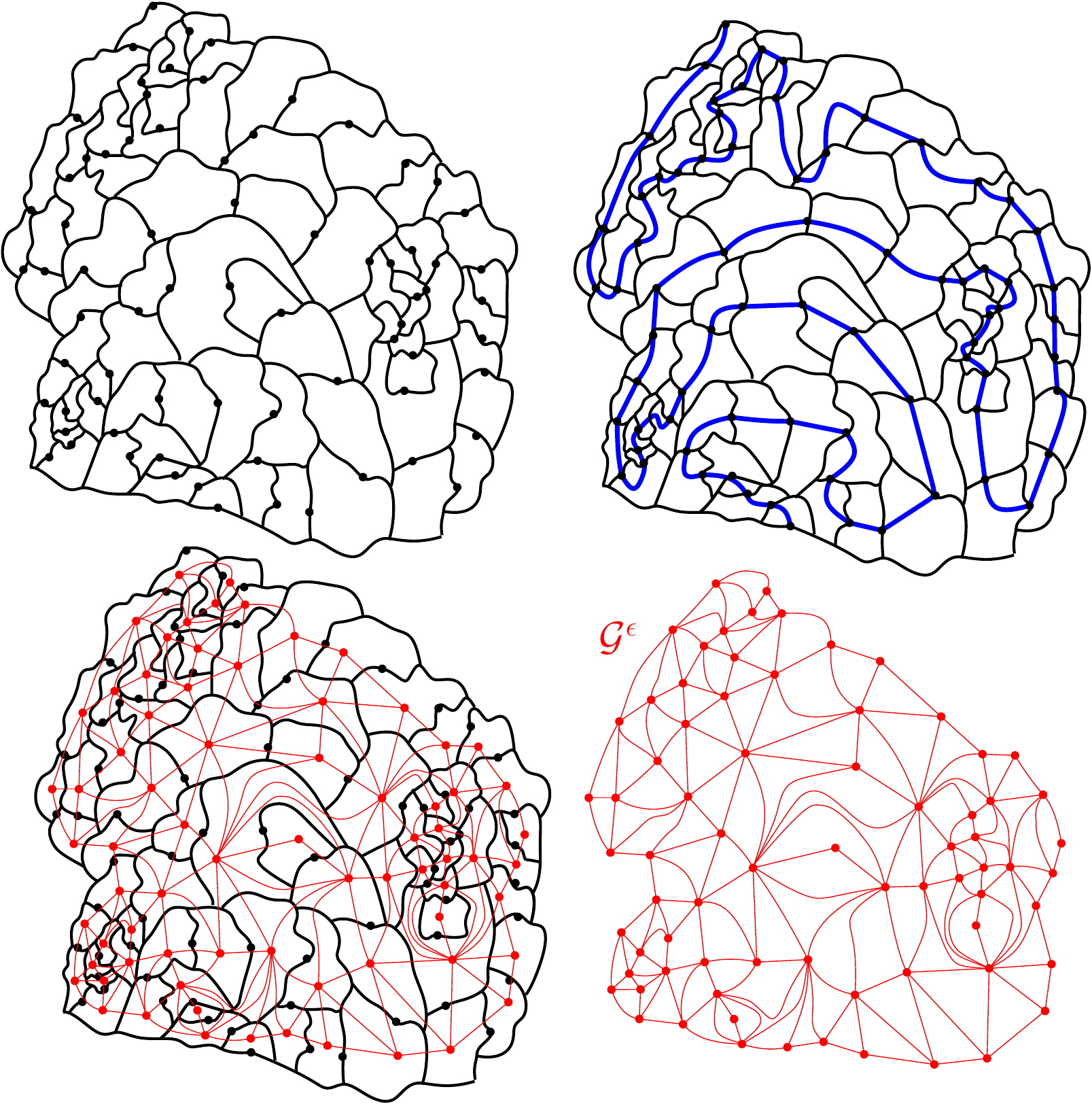}
	\caption{\textbf{Top left:} A space-filling SLE curve $\eta$ for $\kappa \geq 8$ divided into cells $\eta([a - {\eps}, a])$ for a collection of $a \in {\eps} \Z$. \textbf{Top right:} The
		same curve with a blue path showing the order in which cells are traversed by $\eta$.	\textbf{Bottom left:} A point in each cell, corresponding to a vertex in ${\Geps}$, is displayed in red
		and red edges are drawn to adjacent neighbors. \textbf{Bottom right:} The same as bottom left but with the black edges removed --- this illustrates the embedding of ${\Geps}$ into $\C$.
		A similar figure has appeared previously as~\cite[Figure 4]{gms-harmonic}.} \label{fig:sle-lqg-embedding}
\end{figure}

\subsubsection{Harmonic balls}	\label{subsubsec:harmonic-ball-intro}
In this article we show that IDLA on the SLE/LQG embedding of the mated-CRT map converges to a {\it harmonic ball}.
\begin{definition}[Harmonic ball] \label{def:harmonic-ball}
	Fix $\gamma \in (0,2)$ and let $\mu_h$ be the LQG measure \eqref{eq:lqg-measure} where $h$ is a variant of the Gaussian free field.
	A $\gamma$-LQG {\it harmonic ball} is a domain $\Lambda \subset \C$ which satisfies the following mean-value property for harmonic functions:
	\begin{equation}
	\int_{\Lambda} f(y) d \mu_h(y) = \mu_h(\Lambda) f(0), \quad \forall \mbox{$f:\C \to \R$ harmonic in a neighborhood of $\cl(\Lambda)$}. 	
	\end{equation}
\end{definition}
In a companion paper \cite{bou2022harmonic}, it is shown that if $\gamma\in(0,2)$ and $h$ is a suitable variant of the GFF on $\C$, then there is a unique growing family of LQG harmonic balls $\{\Lambda_t\}_{t>0}$, parameterized so that $\mu_h(\Lambda_t)=t$. These harmonic balls are constructed directly in the continuum via a certain optimization problem involving the measure $\mu_h$. This optimization problem, as well as several important properties of harmonic balls proved in \cite{bou2022harmonic}, are used in our proof. A statement of the facts about LQG harmonic balls which will be needed later appears in Theorem \ref{theorem:harmonic-balls} below. It is also shown in~\cite{bou2022harmonic} that LQG harmonic balls are neither Lipschitz domains nor LQG metric balls.

\subsection{Main results}
Fix $\gamma \in (0,2)$ and for ${\eps} > 0$ let ${\Geps}$ be the ${\eps}$-mated CRT map as in Definition \ref{def:mated-crt-map},
with vertices ${\VGeps}$ embedded into $\C$ via the SLE/LQG embedding as in Definition \ref{def:sle-lqg-embedding}. 
For a set $X \subset \C$ and $\delta > 0$ we denote the $\delta$-outer and $\delta$-inner neighborhoods of $X$ by 
\begin{equation} \label{eq:outer-inner-neighborhoods}
\begin{aligned}
\Bdeltap(X) &:= \{ x \in \C : \dist(x, X) \leq \delta\} \\
\Bdeltam(X) &:= \{ y \in X : \dist(y, X^c) \geq \delta\} 
\end{aligned}
\end{equation}
where $\dist$ denotes Euclidean distance.

\subsubsection{Internal DLA}
Our notation for the IDLA cluster on ${\Geps}$ is as follows. Begin with $A_{{\eps}}(0) = \emptyset$. For $m\geq 1$, inductively let $A_{{\eps}}(m)$ be the union of $A_{{\eps}}(m-1)$ and the first point at which a simple random walk on ${\Geps}$ started at the origin $ 0 \in {\VGeps} $ 
exits $A_{{\eps}}(m-1)$. By filling in the mated-CRT map cells corresponding to vertices in $A_{\eps}$ (Definition~\ref{def:sle-lqg-embedding}) we may map a subset $A_{{\eps}} \subset {\VGeps}$ to a subset of $\C$ via 
\begin{equation} \label{eq:vertex-scaling}
\overline A_{{\eps}} := \bigcup_{a \in A_{{\eps}}} \Heps{a}.
\end{equation}
\begin{theorem} \label{theorem:convergence-of-IDLA} 
	Let $\{\Lambda_t\}_{t\geq 0}$ be the growing family of $\gamma$-LQG harmonic balls with $\mu_h(\Lambda_t) = t$, as discussed just after Definition \ref{def:harmonic-ball}, with $h$ the field corresponding to the $\gamma$-quantum cone, as in Definition~\ref{def:sle-lqg-embedding}.
	For each $\delta > 0$ and $t > 0$, it holds with probability tending to 1 as ${\eps}\to 0$ that the union of the cells corresponding to the time $\lfloor t{\eps}^{-1}\rfloor$-IDLA cluster approximates $\Lambda_t$ in the following sense:
	\[
	\Bdeltam(\Lambda_t)	\subset \overline A_{{\eps}}(\lfloor t {\eps}^{-1} \rfloor) \subset \Bdeltap(\Lambda_t),
	\]
	where $B_{\delta}^{\pm}$ are as in \eqref{eq:outer-inner-neighborhoods}.
\end{theorem}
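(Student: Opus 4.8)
The plan is to follow the standard architecture for IDLA shape theorems (as in \cite{lawler1992internal, jerison2012logarithmic, asselah2013sublogarithmic}): first describe the cluster deterministically via an obstacle problem whose scaling limit singles out $\Lambda_t$, and then show that IDLA does not deviate much from this deterministic description. Concretely, for the IDLA process on ${\Geps}$ run up to time $m := \lfloor t{\eps}^{-1}\rfloor$, I would introduce the odometer $u_{\eps}$, recording how many steps the walkers take away from each vertex; by the abelian property of IDLA \cite{diaconis1991growth} the cluster $A_{\eps}(m)$ is, up to boundary vertices, the support of $u_{\eps}$, and $\mathbb{E}[u_{\eps}]$ is close to the odometer $v_{\eps}$ of the \emph{divisible sandpile} on ${\Geps}$ started from mass $m$ at the origin. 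The sandpile odometer $v_{\eps}$ is deterministic given ${\Geps}$: it is the pointwise smallest nonnegative function $f$ with $m\delta_0+\Delta_{\eps}f\le \mathbb{1}_{{\VGeps}}$, i.e.\ the solution of a discrete obstacle problem for the Green's function of ${\Geps}$. Thus the divisible sandpile statement advertised in the abstract amounts to passing this obstacle problem to the limit, and the IDLA statement then follows by controlling the fluctuations of $u_{\eps}$ around $v_{\eps}$.

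For the continuum limit I would use two inputs. First, each cell $\Heps{a}$ has $\mu_h$-mass exactly ${\eps}$ and the cells become uniformly small as ${\eps}\to0$ (a mated-CRT map estimate), so the counting measure ${\eps}\sum_a \delta_{\eta(a)}$ converges weakly to $\mu_h$. Second, random walk on the embedded ${\Geps}$ converges to Liouville Brownian motion \cite{gms-random-walk, berestycki2020random}, so ${\Geps}$-harmonic functions converge to ordinary harmonic functions and, after the appropriate normalization, the Green's function of ${\Geps}$ converges to the Euclidean one (recall that the Liouville Brownian motion Green's function with respect to $\mu_h$ is the Euclidean Green's function, since a time change does not affect harmonicity). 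Feeding these into the discrete obstacle problem, the suitably normalized $v_{\eps}$ converges to the continuum odometer $v$: the pointwise smallest nonnegative function on $\C$ with $t\delta_0+\Delta v\le\mu_h$, where $\Delta$ is the Euclidean Laplacian. By the variational characterization of LQG harmonic balls from \cite{bou2022harmonic}, recorded in Theorem~\ref{theorem:harmonic-balls}, one has $\{v>0\}=\Lambda_t$, and together with the regularity of $v$ near $\partial\Lambda_t$ (also from \cite{bou2022harmonic}) this upgrades to $\Bdeltam(\Lambda_t)\subset\overline{\{v_{\eps}>0\}}\subset\Bdeltap(\Lambda_t)$ with probability tending to $1$ — the divisible sandpile shape theorem.

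To pass from the sandpile to IDLA, the inner bound comes from a Lawler--Bramson--Griffeath argument: for a target vertex $z$ with $\eta(z)\in\Bdeltam(\Lambda_t)$, the number of the $m$ IDLA walkers that visit $z$ dominates $L-M$, where $L$ counts \emph{independent} walks from the origin that hit $z$ before exiting a fixed mesoscopic ball and $M$ counts walkers reaching the complement of a slightly enlarged $\Lambda_t$. Since the walkers are independent, $L$ concentrates by a Chernoff bound, while $\mathbb{E}[L]\gg\log({\eps}^{-1})$ and $\mathbb{E}[M]$ is tiny, both via the Green's function estimates above; hence $\mathbb{P}[z\notin A_{\eps}(m)]$ is smaller than any power of ${\eps}$, and a union bound over the $O({\eps}^{-1})$ vertices in the region gives $\Bdeltam(\Lambda_t)\subset\overline A_{\eps}(m)$. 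The outer bound is the complementary estimate: combining the bound that $\mathbb{E}[u_{\eps}]$ is close to $v_{\eps}$ (so $\mathbb{E}[u_{\eps}(a)]$ is negligible once $\eta(a)$ lies outside $\Bdeltap(\Lambda_t)$) with a martingale-concentration argument for the deviation of $u_{\eps}$ from its mean (in the spirit of \cite{jerison2012logarithmic, asselah2013sublogarithmic}) rules out ``tentacles'' of $A_{\eps}(m)$ extending beyond $\Bdeltap(\Lambda_t)$.

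The main obstacle is not the combinatorial bookkeeping above, which is by now standard, but the fact that the mated-CRT map is a genuinely rough random environment: there is no uniform ellipticity, the cell diameters $\operatorname{diam}\Heps{a}$ are heavy-tailed, and, crucially, $\Lambda_t$ is \emph{not} a Lipschitz domain. Consequently the Green's function, harmonic-measure and effective-resistance comparisons underlying the previous paragraphs cannot be quoted verbatim from the $\Z^d$ literature; they must be run on top of the quantitative a priori estimates for the embedded mated-CRT map from \cite{gms-random-walk, berestycki2020random}, with careful accounting of the low-probability events on which cells or vertex degrees are atypical. Likewise, the passage of the obstacle problem to the limit near $\partial\Lambda_t$, and the final inner/outer sandwiching, must rely on the regularity of LQG harmonic balls established in \cite{bou2022harmonic} rather than on the soft boundary arguments available for smooth target shapes. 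I expect essentially all of the substantive work to be concentrated here.
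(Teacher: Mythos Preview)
Your overall architecture is right and matches the paper: reduce to the divisible sandpile via the obstacle problem, pass this to the limit using convergence of random walk to Liouville Brownian motion and convergence of the Green's kernel, identify the limit as $\Lambda_t$ via \cite{bou2022harmonic}, and then run a Lawler--Bramson--Griffeath argument for the inner bound. All of that is exactly what the paper does in Sections~\ref{sec:dirichlet-problem}--\ref{sec:idla-lower-bound}.

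The substantive divergence is in the outer bound. You propose a Jerison--Levine--Sheffield / Asselah--Gaudilli\`ere style martingale-concentration argument controlling the deviation of the IDLA odometer from its mean. The paper does \emph{not} do this, and for a reason worth flagging: those arguments lean on sharp on-diagonal Green's function estimates, volume regularity, and near-lattice symmetries that are unavailable on the mated-CRT map (the paper explicitly notes that the map is not volume doubling and the graph metric is not controlled by the Euclidean one). Instead, the paper uses the Abelian property to first stop all walkers when they exit ${\VGeps}(\Lambda_t)$; since $\mu_h(\partial\Lambda_t)=0$, the inner bound plus conservation of mass forces the number of surviving walkers to be $o({\eps}^{-1})$. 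The remaining work is to show that $o({\eps}^{-1})$ walkers cannot produce long tentacles. This is done by a Duminil-Copin--Lucas--Yadin--Yehudayoff iteration, adapted to the LQG setting via a ``good annulus'' argument: one shows that with high probability every point is surrounded by an annulus on which a second-moment bound gives a uniform lower bound on the probability that a walker is absorbed before crossing, and then iterates across nested shells so that the number of surviving walkers decays geometrically (Lemmas~\ref{lemma:constant-order-lower-bound}, \ref{lemma:chance-of-being-absorbed}, \ref{lemma:iterative-bound}). This is where the genuinely new LQG-specific work lives, and your proposal does not anticipate it.

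One smaller omission: all of the above only yields convergence on the event $\{t<\TT\}$ where $\TT$ is the first time $\Lambda_t$ exits a fixed small ball. The paper closes with a scale-invariance argument using the $\gamma$-quantum cone (Lemma~\ref{lemma:scale-invariance}) to remove this restriction; you should include that step.
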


\begin{remark}[Convergence under the Tutte embedding] \label{remark:tutte}
	One may  also prove a version of Theorem~\ref{theorem:convergence-of-IDLA} for the disk version of the mated-CRT map under the Tutte embedding, as depicted in Figure~\ref{fig:IDLA-tutte-embedding}.
	The quantum disk is the canonical LQG surface with the topology of the disk~\cite{hrv-disk,duplantier2014liouville}. 
	In~\cite{gms-tutte}, the authors introduce a disk version of the mated-CRT map with a distinguished boundary and define its Tutte embedding (the definition of the Tutte embedding is easier for planar maps with boundary, which is why one wants to consider the disk version of the mated-CRT map). 
	They also show that the Tutte embedding of the disk version of the mated-CRT map is close, in the uniform distance, to its SLE/LQG embedding. 
	Using the fact that the generalized function representing the quantum disk behaves like a GFF away from its boundary, one can extend the results of~\cite{bou2022harmonic} to define a growing family of harmonic balls on the quantum disk started at a uniformly sampled interior point and stopped at the first time when they hit the boundary. 
	Via another absolute continuity argument, one can extend the result of Theorem~\ref{theorem:convergence-of-IDLA} to get that the IDLA clusters on the mated-CRT map with the disk topology, stopped at the first time they hit the boundary, converge to harmonic balls on the quantum disk under the SLE/LQG embedding. 
	We will not provide the details of these absolute continuity arguments in this paper, but see~\cite[Section 7.4]{berestycki2020random} and Theorem \ref{theorem:harmonic-balls} below for similar arguments.
	Using the results of~\cite{gms-tutte}, one gets that IDLA clusters on the disk version of the mated-CRT map also converge to harmonic balls under the Tutte embedding. 
\end{remark}

\subsubsection{Divisible sandpile}\label{subsubsec:div-sandpile-informal-intro}
Our proof of convergence of IDLA proceeds by first establishing an analogous result for an auxiliary particle system which may be thought of as the `expected value' of IDLA. 

The {\it divisible sandpile} is a deterministic diffusion process on a graph $G$,
which, to the best of our knowledge, was first introduced by Zidarov \cite[page 108-118]{zidarov1990inverse}
and (independently) studied by Levine in his thesis \cite{levine-thesis}. Start with masses (real numbers) on vertices of a graph and redistribute the mass according to the following rule. Whenever a vertex has mass $t$ larger than one, it is {\it unstable} and {\it topples} by distributing the excess mass, $(t-1)$, to each of its neighbors equally. This process, called {\it stabilizing}, continues until every vertex has mass less than or equal to one. Although stabilizing may take infinite time, if we have a finite amount of mass on an infinite graph, the end configuration exists and does not depend on the order in which unstable sites are toppled \cite{levine2016divisible}. From another perspective, the divisible sandpile is simply the Jacobi iterative algorithm, \eg, \cite[Chapter 10]{golub-matrix-computations}, for solving the Poisson equation.

In one variant of the model, called the {\it single-source} divisible sandpile, we start with a pile of mass $t > 0$ at a root vertex $o$ in $G$ and stabilize. The final collection of vertices which contain some mass is the {\it cluster}. In each IDLA convergence theorem recalled in Section~\ref{subsubsec:idla-informal-intro}, it has been shown that the limit shape of the single-source divisible sandpile cluster coincides with that of IDLA. This relationship was first observed and established on $\Z^d$ by Levine-Peres in \cite{levine2009strong}. See ~\cite{ruszel-divisible} for a survey relating IDLA and the divisible sandpile. 

In our setting, the IDLA cluster also shares the same limit shape as the divisible sandpile. Denote by $D_{{\eps}}(t)$ the divisible sandpile cluster starting with mass $t > 0$ at the origin in ${\Geps}$. 
\begin{theorem} \label{theorem:convergence-of-div-sandpile}	
	Theorem \ref{theorem:convergence-of-IDLA} holds with the divisible sandpile cluster 
	started with mass $t {\eps}^{-1}$ at the origin,  $\overline D_{{\eps}}(t {\eps}^{-1})$, in place of the IDLA cluster 
	started with $\lfloor t {\eps}^{-1} \rfloor$ explorers at the origin, 
	$\overline A_{{\eps}}(\lfloor t {\eps}^{-1} \rfloor)$. 
\end{theorem}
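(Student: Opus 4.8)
The plan is to prove convergence of the divisible sandpile cluster directly against the continuum harmonic ball, and then transfer to IDLA (this transfer is presumably the content of an earlier section, so I focus on the sandpile). The key tool is the \emph{odometer function} $u_\eps$ of the divisible sandpile started with mass $t\eps^{-1}$ at the origin: $u_\eps(a)$ records the total mass emitted from vertex $a$ during stabilization. Standard divisible-sandpile theory (Levine--Peres) characterizes $u_\eps$ as the \emph{least nonnegative function} satisfying $t\eps^{-1}\mathbf 1_{\{0\}} + \tfrac12\Delta_\eps u_\eps \le 1$ pointwise on $\VGeps$, where $\Delta_\eps$ is the graph Laplacian of $\Geps$ (with the degree normalization matching the simple random walk); equivalently it is an obstacle-problem solution, and the cluster $D_\eps(t\eps^{-1})$ is exactly $\{u_\eps > 0\}$ together with the origin. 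In the continuum, Theorem~\ref{theorem:harmonic-balls} (the imported facts from \cite{bou2022harmonic}) should supply an analogous characterization: the harmonic ball $\Lambda_t$ is the noncontact set $\{U_t > 0\}$ of the continuum obstacle problem for the operator $\tfrac12\Delta$ against the LQG measure $\mu_h$, with obstacle data a point mass $t\delta_0$. So the proof is a discrete-to-continuum obstacle-problem convergence argument.

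The steps, in order, are: (i) \textbf{Rescaling.} Set $\bar u_\eps(z) := \eps\, u_\eps(a)$ for $z$ in the cell $\Heps a$, viewed as a function on $\C$. The factor $\eps$ is chosen so that $\bar u_\eps$ has a nondegenerate continuum limit, using that each cell has $\mu_h$-mass $\eps$ and the re-scaled vertex counting measure converges to $\mu_h$ by \cite{gms-tutte}. (ii) \textbf{A priori bounds.} Show $\bar u_\eps$ is bounded, that the cluster $\overline D_\eps(t\eps^{-1})$ stays in a fixed compact set with high probability (a containment estimate — one can use that total emitted mass is $t$ and a lower bound on how much mass the sandpile must deposit per unit $\mu_h$-area, together with volume estimates for $\mu_h$ of SLE cells from \cite{gms-tutte}), and derive equicontinuity of $\bar u_\eps$ from a discrete elliptic (Harnack-type) estimate on $\Geps$ — here one invokes the random-walk convergence of \cite{berestycki2020random} and the fact that the mated-CRT map satisfies good local regularity estimates with high probability. (iii) \textbf{Subsequential limits.} Extract a subsequential limit $\bar u_\eps \to U$ uniformly on compacts. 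Pass the discrete variational inequality $t\eps^{-1}\mathbf 1_{\{0\}} + \tfrac12\Delta_\eps u_\eps \le 1$ to the limit: the Laplacian part converges because random walk on $\Geps$ converges to Liouville Brownian motion (so $\Delta_\eps$ converges to $\tfrac12\Delta$ in the appropriate weak sense after time-change by $\mu_h$), the constraint $\le 1$ against counting measure becomes $\le 1$ against $\mu_h$, and the point source $t\eps^{-1}\mathbf 1_{\{0\}}$ becomes $t\delta_0$ because a single cell carries $\mu_h$-mass $\eps$. Minimality/nonnegativity is preserved under the limit. (iv) \textbf{Identification.} Conclude $U = U_t$ by uniqueness of the continuum obstacle problem (Theorem~\ref{theorem:harmonic-balls}), hence the limit is independent of the subsequence and $\overline D_\eps(t\eps^{-1}) \to \Lambda_t$. (v) \textbf{From sets to the sandwich.} Upgrade convergence of $\bar u_\eps$ to the Hausdorff-type sandwich $\Bdeltam(\Lambda_t) \subset \overline D_\eps(t\eps^{-1}) \subset \Bdeltap(\Lambda_t)$; the outer inclusion follows from the containment estimate plus $\{U>0\}$ being open, and the inner inclusion requires a nondegeneracy (quadratic growth away from the free boundary) property of $U_t$ near $\partial\Lambda_t$, which should again come from \cite{bou2022harmonic} via Theorem~\ref{theorem:harmonic-balls}, combined with the uniform convergence $\bar u_\eps \to U_t$ to force $u_\eps > 0$ throughout $\Bdeltam(\Lambda_t)$.

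I expect the main obstacle to be step (ii)--(iii): making the convergence $\Delta_\eps \to \tfrac12\Delta$ precise and \emph{quantitative enough} to pass the obstacle inequality to the limit while only relying on \cite{berestycki2020random}, which gives convergence of the random walk (hence of harmonic functions / discrete Green's functions) but not, off the shelf, convergence of solutions of inhomogeneous equations with a constraint. One must therefore combine the random-walk/Green's-function convergence with the variational structure: rather than passing the PDE directly, represent $u_\eps$ via its obstacle-problem (or "smash sum") formulation and use monotonicity — $u_\eps$ is the decreasing limit of an explicit relaxation iteration — so that the limit can be controlled by comparison with discrete harmonic functions, whose scaling limits \emph{are} known. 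The other delicate point is the containment estimate: ruling out that a small amount of sandpile mass escapes to form long thin tentacles, which on a random map requires controlling atypically small cells; this should be handled by the moment/volume bounds for $\mu_h$-masses of SLE cells already used in the companion and prior works, applied on the high-probability event where the embedding is regular.
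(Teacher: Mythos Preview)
Your odometer-convergence strategy (steps (i)--(iv)) is essentially what the paper does in Section~\ref{sec:divisible-sandpile}: tightness via a discrete H\"older estimate, subsequential limits, and identification of the limit with the continuum obstacle solution $\overline w_t^{B_\rho}$ using convergence of the discrete Green's kernel and Dirichlet problem (Section~\ref{sec:dirichlet-problem}). The inner inclusion $\Bdeltam(\Lambda_t)\subset\overline D_\eps$ does follow directly from odometer convergence, as you say in (v); this is Proposition~\ref{prop:div-sandpile-lower-bound}.

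The genuine gap is your treatment of the outer inclusion. Uniform convergence $\bar u_\eps\to U_t$ only tells you that $\bar u_\eps$ is \emph{small} outside $\Bdeltap(\Lambda_t)$, not zero; the cluster is $\{u_\eps>0\}$, so thin tentacles where $u_\eps$ is tiny but positive are not ruled out by odometer convergence alone. You recognize this issue at the end, but your proposed fix---moment/volume bounds on cell sizes---is not the right tool and would not suffice. The paper handles this in Section~\ref{sec:div-upper-bound} by a substantial iteration that reuses the ``good annulus'' harmonic-measure machinery developed for the IDLA upper bound (Section~\ref{sec:idla-upper-bound}): one shows that if the odometer is small on $\partial\Theta$ then the cluster mass in a shell around $\Theta$ is small (Lemma~\ref{lemma:mass-bound-odometer}), which via the harmonic-measure estimate (Lemma~\ref{lemma:constant-order-lower-bound-div}) forces the odometer to be geometrically smaller on the next shell (Lemma~\ref{lemma:odometer-bound-mass}); iterating drives the odometer to zero after finitely many shells of total width $O(\delta_0^\beta)$. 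This is not a cell-size argument at all---it hinges on the random-walk estimate that, in a good annulus, a walker must visit many cells before crossing, which in turn requires the second-moment computation in Lemma~\ref{lemma:constant-order-lower-bound-div} and the near-independence of the GFF across concentric annuli (Lemma~\ref{lemma:good-prevalance}). You should also note that all of this is first proved only for $t<\TT$ (so that the cluster is a priori inside $B_\rho$), and the restriction is removed at the end via the scale invariance of the quantum cone (Section~\ref{sec:proof-of-convergence}).
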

The divisible sandpile has an explicit description in terms of a discrete optimization problem, which we recall in Section \ref{sec:divisible-sandpile} below. 
In particular, from this optimization problem it is easy to deduce that the divisible sandpile cluster enjoys a certain discrete mean-value property; the clusters are 
{\it discrete harmonic balls} \cite[Equation (5)]{levine2010scaling}. From this, it is no surprise that divisible sandpile clusters converge to continuum harmonic balls.

We would also like to point out that the divisible sandpile appears to be the {\it only} way to find discrete harmonic balls. In fact, this property of the divisible sandpile was used to establish logarithmic fluctuations of IDLA around its limit shape in \cite{jerison2012logarithmic, jerison2013internal, jerison2014internal}.
\subsection{Open problems}
We suggest several directions for further study. The most pressing problem is to extend our convergence result to {\it other} 
random planar maps. This problem is mentioned in the companion work~\cite[Problem 1]{bou2022harmonic}.
\begin{problem} \label{prob:planar-map}
	Show that the scaling limit of IDLA and the divisible sandpile on random planar maps, other than mated-CRT maps, is also described by $\gamma$-LQG harmonic balls.
	For example, on a uniform random planar map establish convergence of IDLA to a $\sqrt{8/3}$-LQG harmonic ball. 
\end{problem}
Our proofs of Theorems~\ref{theorem:convergence-of-IDLA} and~\ref{theorem:convergence-of-div-sandpile} rely heavily on the convergence of random walk on the mated-CRT map to Liouville Brownian motion. This fact has not been established for random planar maps besides the mated-CRT map. We speculate that a solution to Problem~\ref{prob:planar-map} which does not use the convergence of random walk might rely on special symmetries of random planar maps, for example, the peeling process for uniform planar maps \cite{angel-peeling, curien-legall-peeling, curien-peeling-notes}. 

The next two questions concern IDLA on the mated-CRT map. While we have established a qualitative convergence theorem much more quantitative results are known on $\Z^d$. Lawler~\cite{lawler1995subdiffusive} used fine estimates on the Green's function in $\Z^d$ to establish subdiffusive fluctuations of IDLA around its limit shape. Later, Asselah-Gaudillère~\cite{asselah2013logarithmic, asselah2013sublogarithmic} and independently Jerison-Levine-Sheffield~\cite{jerison2012logarithmic, jerison2013internal} established logarithmic fluctuations. We do not expect logarithmic fluctuations in our setting, mainly because the cell sizes in the mated-CRT map have  a `multi-fractal' behavior. See Figure \ref{fig:idla-interface}.
\begin{problem} \label{prob:rate of convergence}
	Provide bounds on the rate of convergence of IDLA on the mated-CRT map. 
\end{problem}
\begin{figure}
	\includegraphics[width=0.45\textwidth]{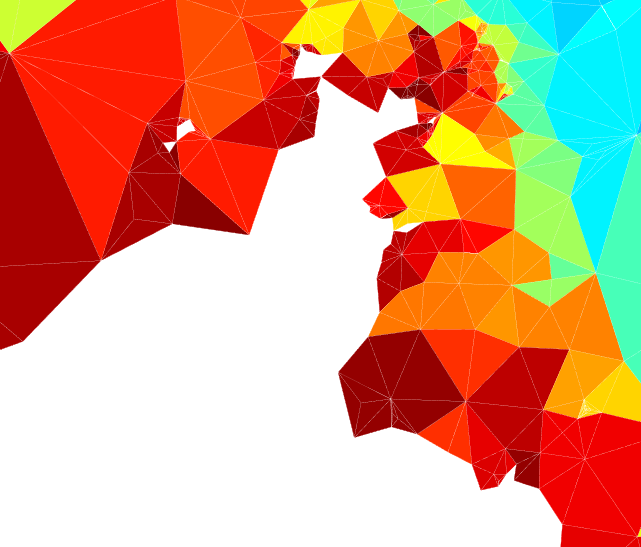}
	\caption{A close up of the interface in Figure \ref{fig:IDLA-tutte-embedding}.} \label{fig:idla-interface}
\end{figure}

After proving logarithmic fluctuations in $\Z^d$, in a remarkable work, Jerison-Levine-Sheffield showed that the scaling limit of the fluctuations themselves exists.
Namely, fluctuations of IDLA on $\Z^d$ converge weakly to a variant of the Gaussian free field \cite{jerison2014internal, jerison2014chapter}. Darrow extended 
the analysis of Jerison-Levine-Sheffield on $\Z^2$ to the case of multiple sources \cite{darrow2020scaling}.
\begin{problem} \label{prob:fluctuations}
	Do the fluctuations of IDLA on the mated-CRT map around its limit shape have a scaling limit? 
\end{problem}

\subsection{Paper and proof outline}
Our proof uses three main inputs; the pairing of IDLA and the divisible sandpile, 
regularity properties of the mated-CRT map, and the convergence of random walk on the mated-CRT map to Liouville Brownian motion. 

The connection between IDLA and the divisible sandpile was implicit in the first work on IDLA by Lawler-Bramson-Griffeath \cite{lawler1992internal}
and made explicit in the thesis of Levine \cite{levine-thesis}. In particular, following the arguments of \cite{lawler1992internal},
a lower bound on the divisible sandpile cluster implies the same lower bound for the IDLA cluster. 
The divisible sandpile, being deterministic and enjoying a variational description, \ie, the {\it least action principle} for its {\it odometer} (see Section \ref{sec:divisible-sandpile} below for definitions), is more tractable than IDLA. 

In the cases considered so far, \eg, \cite{levine2009strong,  huss-sava-comb, MR3957187}, 
convergence of the divisible sandpile is proved by explicitly bounding its odometer via sharp estimates on stopped Green's functions.  These sharp estimates are unavailable in our setting and hence we proceed with a somewhat more robust method. Roughly, we use the fact that the mated-CRT map has an elliptic Harnack inequality and random walk converges to Brownian motion to show that the divisible sandpile odometer converges to its continuum counterpart, the odometer for {\it Hele-Shaw flow} (the solution to the optimization problem used to construct harmonic balls). 

We start by recalling in Section \ref{sec:preliminaries} several mated-CRT map estimates and the invariance principle for random walk 
proved in \cite{berestycki2020random, gwynne2019harmonic}. In this section we also record some basic properties of harmonic balls and Hele-Shaw flow on an LQG surface which we proved in a companion paper \cite{bou2022harmonic}.

In Section \ref{sec:dirichlet-problem}, we use the estimates proved in \cite{berestycki2020random, gwynne2019harmonic} to establish a 
general tightness lemma (Lemma \ref{lemma:discrete-potential-tightness}) for bounded functions with bounded graph Laplacian 
on the mated-CRT map. We then use this lemma to establish the convergence of solutions to the discrete Dirichlet problem on the mated-CRT map to solutions of the continuum Dirichlet problem (Lemma \ref{lemma:discrete-dirichlet-problem}) and convergence of the discrete Green's kernel to the continuum Green's function (Lemma \ref{lemma:uniform-convergence-of-greens-function}). 

In Section \ref{sec:divisible-sandpile}, we define the odometer for the divisible sandpile model on the mated-CRT map and state some basic properties of it which are proved in the Appendix, Section \ref{sec:obstacle-appendix}. 
We then prove convergence of the divisible sandpile odometer to its continuum counterpart. Our proof method is similar to, but easier than, that used to show convergence of the {\it Abelian sandpile odometer} \cite{bourabee-sandpile-2021, pegden-smart-sandpile-2013, barles-souganidis-convergence-1991}.
In Section \ref{sec:divisible-sandpile} we also observe that the convergence of the divisible sandpile odometer implies a lower bound on the divisible sandpile cluster: \ie, for fixed $t , \delta > 0$ it holds with probability tending to 1 as ${\eps}\to 0$ that
\begin{equation} \label{eq:div-lower-outline}
\Bdeltam(\Lambda_t) \subset \overline D_{{\eps}}( t {\eps}^{-1} ).
\end{equation}

In Section \ref{sec:idla-lower-bound} we apply the technique of Lawler-Bramson-Griffeath \cite{lawler1992internal} together with the divisible sandpile lower bound~\eqref{eq:div-lower-outline} and random walk estimates proved in \cite{berestycki2020random} to prove an analogous lower bound for the IDLA cluster.
In fact, the lower bound holds for the {\it stopped} IDLA cluster where random walkers which exit (a mated-CRT map approximation of) $\Lambda_t$ are stopped. 
By the Abelian property of IDLA, Lemma \ref{lemma:abelian-IDLA}, the IDLA cluster can be built by first generating the stopped IDLA cluster and then restarting
the stopped random walkers. 
Thus, an upper bound of the form 
\begin{equation} \label{eq:idla-upper-outline}
\overline A_{{\eps}}(\lfloor t {\eps}^{-1} \rfloor) \subset \Bdeltap(\Lambda_t)
\end{equation} 
will follow if one can show that the restarted random walks cannot travel further than a Euclidean distance of $\delta$ away from $\Lambda_t$. 

In fact, if the limit shape has the correct size, \ie, has volume equal to the total number of walkers, 
and has a measure zero boundary, then by the lower bound and conservation of mass, the number of restarted walkers can be made arbitrarily small.  These two limit shape properties are true for an LQG harmonic ball, as proved in the companion paper \cite{bou2022harmonic}.  Therefore, the proof of the upper bound then reduces to controlling the behavior of order $\delta {\eps}^{-1}$ restarted random walks near the boundary of $\Lambda_t$.  We do this by combining the arguments 
of \cite{duminil2013containing} together with those from LQG theory. 

Specifically, it was shown by Duminil-Copin-Lucas-Yadin-Yehudayoff \cite{duminil2013containing} that if a graph is regular enough, namely is volume doubling, and has a metric which can be controlled by the Euclidean metric, then a small number of walkers cannot spread very far. The mated-CRT map does not enjoy these properties, even approximately, and thus we cannot apply the Duminil-Copin-Lucas-Yadin-Yehudayoff argument directly. Instead, we use methods from LQG theory to provide weak harmonic measure estimates for random walk on the mated-CRT map. A modification of the iterative argument of Duminil-Copin-Lucas-Yadin-Yehudayoff then yields the upper bound.  See the beginning of Section \ref{sec:idla-upper-bound} for a detailed outline of the argument. 

In Section \ref{sec:div-upper-bound} we use the harmonic measure estimates proved in Section \ref{sec:idla-upper-bound} to prove an upper bound on the divisible sandpile cluster. The argument here is a simplified, discrete version of the upper bound for LQG harmonic balls from \cite[Section 6]{bou2022harmonic}. 

All of the aforementioned results were proved conditional on $t$ being small. In Section \ref{sec:proof-of-convergence} we combine the upper and lower bounds together with a scaling argument to remove this constraint on $t$ and complete the proof.

\subsection{Notation and conventions}
\begin{itemize}
	\item Inequalities/equalities between functions/scalars are interpreted pointwise.
	\item For a set $D \subset \C$, $\partial D$ denotes its topological boundary, $\cl(D) = D \cup \partial D$ its closure, 
	and $\inte(D)$ its interior. 
	\item For a set $A \subset {\VGeps}$, $\partial A$ is the set of vertices in ${\VGeps} \setminus A$ which are joined by an edge to
	a vertex in $A$ and $\cl(A) = A \cup \partial A$ is its closure.  
	The set $\inte(A)$ are the vertices in $A$ which do not share an edge with a vertex in $A^c$. 
	\item For two sets $X, Y \subset \C$, we write $X \Subset Y$ if $\cl(X) \subset Y$. 
	\item $B_r(x)$ denotes the open ball of Euclidean radius $r > 0$ centered at $x \in \C$, when $x$ is omitted, the ball is centered at 0. 
	When $D \subset \C$, we write $B_{r}(D) = \{x \in \C: \dist(x, D) < r\}$.
	\item For $0 < r_1 < r_2$, denote an open annulus centered at $z \in \C$ by
	\begin{equation} \label{eq:annulus}
	\A_{r_1, r_2}(z) = B_{r_2}(z) \setminus \cl(B_{r_1}(z))
	\end{equation}
	and $\A_{r_1, r_2} := \A_{r_1,r_2}(0)$. 
	\item Let $\{E^{r}\}_{z > 0}$ be a one-parameter family of events. 
	We say that $E^r$ occurs with polynomially high probability as $r \to 0$
	if there exists $p>0$ such that $\P[E^r] \geq 1 - O(r^p)$. 
	\item For two sets $X, Y \subset \C$, we define $\dist(X,Y) = \inf_{x \in X, y \in Y} \dist(x,y)$ where $\dist$ denotes the Euclidean distance between two points. 
	\item For a set $D \subset \C$, $C(D)$ denotes the set of continuous functions on  $D$. 
\end{itemize}

\subsection*{Acknowledgments}
We thank an anonymous referee for helpful comments on an earlier version of this article.
A.B. thanks Charlie Smart and Bill Cooperman for useful discussions. 
A.B. was partially supported by NSF grant DMS-2202940 and a Stevanovich fellowship. 
E.G. was partially supported by a Clay research fellowship.

	\section{Preliminaries} \label{sec:preliminaries}
	In this section we recall several preliminary results on the Gaussian free field, Liouville quantum gravity, space-filling SLE, harmonic balls, random walk on the mated-CRT map, and Liouville Brownian motion. Of note, we record the convergence of random walk on the mated-CRT map to Liouville Brownian motion \cite{berestycki2020random}
and the regularity properties of $\gamma$-LQG harmonic balls established in \cite{bou2022harmonic}.

\subsection{Gaussian free field}
In this paper, we work with a particular kind of Gaussian free field $h$ which arises naturally from the scaling limit of random planar maps. 
This field corresponds to an LQG surface called a {\it $\gamma$-quantum cone}. For our purposes we just define the field and give its scaling properties
--- a complete definition of the surface is given in \cite{duplantier2014liouville}.  

We start by defining the {\it whole-plane} Gaussian free field (GFF) $h^{\C}$ which is the centered Gaussian random generalized 
function on $\C$ with covariances
\begin{equation} \label{eq:cov-kernel}
\Cov(h^\C(x), h^\C(y)) := \log \frac{\max(|x|,1) \max(|y|,1)}{|x-y|} ,\quad\forall x,y\in\C .
\end{equation} 
The GFF $h^{\C}$ is not defined pointwise as the covariance kernel in~\eqref{eq:cov-kernel} diverges to $\infty$ as $x\to y$. 
However, for $z\in\C$ and $r>0$, one can define the average of $h^{\C}$ over the circle of radius $r$ centered at $z$, which we denote by $h^{\C}_r(z)$~\cite[Section 3.1]{duplantier2011liouville}. 	The whole plane GFF is sometimes defined modulo additive constant. Our choice of covariance in~\eqref{eq:cov-kernel} corresponds to fixing this additive constant so that $h^\C_1(0) = 0$ (see, e.g.,~\cite[Section 2.1.1]{vargas2017lecture}). 

The circle average of $h^{\C}$, $t \to h^{\C}_{e^{-t}}(0)$ is a Brownian motion, \cite[Section 3.1]{duplantier2011liouville}.
The field corresponding to the $\gamma$-quantum cone is constructed by enforcing its circle-average to correspond to a particular conditioned Brownian motion. 
Fix $\gamma \in (0,2)$ and let 
\begin{equation} \label{eq:Q}
Q := \frac{2}{\gamma} + \frac{\gamma}{2}.
\end{equation}

\begin{definition}[Circle average embedding of the $\gamma$-quantum cone] \label{def:quantum-cone}
	The circle average embedding of the $\gamma$-quantum cone is the distribution $h$ defined as follows. 
	Let $B$ be a standard linear Brownian motion and let $\hat{B}$ be an independent standard linear Brownian motion conditioned so that $\hat{B}_t + (Q-\gamma) t > 0$ for all $t > 0$.
	Let
	\begin{equation}
	A_t := \begin{cases}
	B_t + \gamma t , \quad &t \geq 0 \\
	\hat{B}_{-t} + \gamma t ,\quad &t < 0 .
	\end{cases}
	\end{equation}  
	The distribution $h$ is defined so that $t \to h_{e^{-t}}(0)$ has the same law as the process $A$ and $h - h_{|\cdot|}$ is independent 
	from $h_{|\cdot|}$ and has the same law as the analogous process for the whole-plane GFF, $h^{\C}$. 
\end{definition}

We note that the field $h$ in Definition \ref{def:quantum-cone} has the property that $\sup \{ r > 0 : h_r(0) + Q \log r = 0 \} = 1$. Furthermore, it is immediate from Definition~\ref{def:quantum-cone} that $h$ restricted to the unit disk agrees in law with the corresponding restriction of a whole-plane GFF plus $ - \gamma \log |\cdot|$, normalized so that its circle average over the disk is zero:
\begin{equation} \label{eq:gff}
h|_{B_1} \overset{d}{=} ( h^{\C} - \gamma \log |\cdot| )|_{B_1} .
\end{equation}

The law of the whole-plane GFF, viewed modulo additive constant, is invariant under complex affine transformations of $\C$. This implies the following scaling property for $h^{\C}$,
\begin{equation} \label{eq:gff-scaling}
h^{\C} \overset{d}{=} h^{\C}(x\cdot +y) - h^{\C}_{|x|}(y), \quad \forall x \in \C \setminus \{0\}, \quad \forall  y \in \C. 
\end{equation}
The distribution $h$ has a similar scale invariance. For $b > 0$, let 
\begin{equation} \label{eq:cone-scale-radius}
R_b := \sup \{ r > 0 : h_r(0) + Q \log r = \frac{1}{\gamma} \log b\}.
\end{equation}
Then, by \cite[Proposition 4.13(i)]{duplantier2014liouville},
\begin{equation} \label{eq:quantum-cone-scaling}
h(R_b \cdot) + Q \log R_b - \frac{1}{\gamma} \log b \overset{d}{=} h. 
\end{equation}

\subsection{Liouville quantum gravity} \label{subsec:lqg}
\textit{Liouville quantum gravity (LQG)} is a one-parameter family of random fractal surfaces which were introduced by Polyakov in the 1980s in the context of bosonic string theory~\cite{polyakov-qg1}. 
We give some basic properties of LQG and refer the interested reader to the introductory texts \cite{berestycki2021gaussian,sheffield-icm,gwynne2020random}.

Let $\mu_h$ denote the {\it $\gamma$-LQG area (Liouville) measure} associated to $h$, the $\gamma$-quantum cone defined in \eqref{def:quantum-cone}. One of the several ways of defining $\mu_h$ is as the a.s.\ weak limit 
\begin{equation} \label{eq:measure-lim}
\mu_h = \lim_{\epsilon \rightarrow 0} \epsilon^{\gamma^2/2} e^{\gamma h_\epsilon(z)} \,dz
\end{equation}
where $dz$ denotes Lebesgue measure and $h_\epsilon(z)$ is the circle average~\cite{duplantier2011liouville, sheffield2016field}.
In fact, the measure $\mu_{\tilde h}$ can be constructed for any random generalized function $\tilde h$ of the form  $h + f$ where $f$ is a possibly random continuous function
and $h$ is the whole-plane GFF, defined in \eqref{eq:gff-scaling}. In particular, this includes the $\gamma$-quantum cone from Definition~\ref{def:quantum-cone}.
For later use, we record some basic properties of the LQG measure. 
\begin{fact}[LQG measure] \label{fact:lqg-measure}
	The LQG area measure $\mu_h$ satisfies the following properties.
	\begin{enumerate}[label=\Roman*.]
		\item {\bf Radon measure.} A.s., $\mu_h$ is a non-atomic Radon measure.
		\item {\bf Locality.} For every deterministic open set $U \subset \C$,
		$\mu_h(U)$ is given by a measurable function of 
		$h \vert_{U}$.
		\item {\bf Weyl scaling.} A.s., $e^{\gamma f} \cdot \mu_h = \mu_{h + f}$
		for every continuous function $f: \C \to \R$. 
		\item {\bf Conformal covariance.} A.s., the following is true. Let $U , \tilde U \subset \C$ be open and let $\phi$ be a conformal
		map from $\tilde U$ to $U$. Then, with $Q$ as in~\eqref{eq:Q}, 
		\begin{equation} \label{eq:measure-coord}
		\mu_{h \circ \phi + Q \log |\phi'|}(A) = \mu_h(\phi(A)) \quad \mbox{for all Borel measurable $A \subset \tilde U$}.
		\end{equation}
	\end{enumerate}
\end{fact}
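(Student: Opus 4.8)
The plan is to derive all four properties directly from the Gaussian multiplicative chaos construction \eqref{eq:measure-lim} together with standard facts about the GMC/LQG measure; none of the four assertions is new, so the real content is to pin down the right references and to check that the circle-average regularization in \eqref{eq:measure-lim} behaves as claimed under restriction to a subdomain, multiplication by a continuous function, and conformal change of coordinates.

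For property I, I would recall that the weak limit in \eqref{eq:measure-lim} exists almost surely and is a locally finite Borel measure (\cite{duplantier2011liouville}), so $\mu_h$ is Radon; non-atomicity then follows from the standard bound $\mathbb E[\mu_h(B_r(z))] \to 0$ as $r \to 0$ for each fixed $z$ (a consequence of the scaling of $\mu_h$, using $\gamma < 2$), which forces $\mu_h(\{z\}) = 0$ a.s.\ for every fixed $z$ and hence, by Fubini applied to $\mathbb E\bigl[\int \mu_h(\{z\})\,\mu_h(dz)\bigr]$, that $\mu_h$ is a.s.\ atomless. For property II, the point is that whenever $\overline{B_\epsilon(z)} \subset U$ the circle average $h_\epsilon(z)$ is a measurable function of $h|_U$; exhausting $U$ by open sets $U_n \Subset U$ and taking the limit in \eqref{eq:measure-lim} along the regularization restricted to each $U_n$ shows that $\mu_h(U_n)$ is $\sigma(h|_U)$-measurable, and hence so is $\mu_h(U) = \lim_n \mu_h(U_n)$.

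Property III is a direct consequence of continuity of $f$: its circle averages $f_\epsilon$ converge to $f$ locally uniformly, so on every compact set $\epsilon^{\gamma^2/2} e^{\gamma(h_\epsilon + f_\epsilon)}\,dz = e^{\gamma f_\epsilon}\bigl(\epsilon^{\gamma^2/2} e^{\gamma h_\epsilon}\,dz\bigr)$ converges weakly to $e^{\gamma f}\mu_h$, which is exactly $\mu_{h+f} = e^{\gamma f}\cdot\mu_h$ (see \cite{duplantier2011liouville,sheffield2016field}). Property IV is the one that needs genuine care, since the circle-average regularization is not conformally natural --- a conformal map $\phi$ does not send Euclidean circles to Euclidean circles --- so one cannot simply change variables inside \eqref{eq:measure-lim}. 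I would instead invoke the coordinate-change formula for GMC proved in \cite{duplantier2011liouville} and used throughout \cite{duplantier2014liouville}: a naive change of variables gives $\mu_h(\phi(A)) = \int_A e^{\gamma h(\phi(z))}|\phi'(z)|^2\,dz$, but the rigorous regularized computation picks up an extra conformal-anomaly factor $|\phi'(z)|^{\gamma^2/2}$ when one compares the $\epsilon$-circle-average of $h$ near $z$ with the one near $\phi(z)$, so the total power of $|\phi'|$ is $2 + \gamma^2/2 = \gamma Q$, i.e.\ $e^{\gamma h(\phi(\cdot))}|\phi'|^{\gamma Q} = e^{\gamma(h\circ\phi + Q\log|\phi'|)}$, which is \eqref{eq:measure-coord}. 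The main obstacle, to the extent there is one, is the bookkeeping needed to upgrade the fixed-$\phi$ identity to one holding a.s.\ simultaneously for all triples $(U,\tilde U,\phi)$: this is handled by first proving it for $\phi$ ranging over a countable dense family of conformal maps and then using continuity of both sides of \eqref{eq:measure-coord} in $\phi$, together with the fact that $\mu_h$ is determined by its values on a countable generating family of Borel sets.
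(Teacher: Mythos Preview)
Your proposal is essentially aligned with the paper's treatment, which is even terser: the paper simply declares properties I--III ``immediate from the definition~\eqref{eq:measure-lim}'' and for property IV cites \cite[Proposition 2.1]{duplantier2011liouville} for a fixed conformal map and \cite{sheffield2016field} for the simultaneous extension to all conformal maps. Your sketch fills in the heuristics the paper omits and lands on the same references.

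One small caveat on your non-atomicity argument: knowing $\mu_h(\{z\})=0$ a.s.\ for each \emph{deterministic} $z$ does not by itself rule out atoms at random locations, and the Fubini you invoke, $\mathbb E\bigl[\int \mu_h(\{z\})\,\mu_h(dz)\bigr]$, implicitly uses second-moment information (the two-point function) that is only locally integrable for $\gamma<\sqrt 2$. For $\gamma\in[\sqrt 2,2)$ one needs a different route, e.g.\ the power-law upper bound on $\mu_h(B_r(z))$ uniform in $z$ (as in Lemma~\ref{lemma:volume-growth}), or the multifractal analysis in the GMC literature. This is a well-known fact, so citing \cite{duplantier2011liouville,kahane1985chaos} suffices, but the Fubini shortcut as written has a gap in the full range of $\gamma$.
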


The first three properties in Fact~\ref{fact:lqg-measure} are immediate from the definition~\eqref{eq:measure-lim}. The conformal covariance property was proven to hold a.s.\ for a fixed conformal map in~\cite[Proposition 2.1]{duplantier2011liouville} and extended to all conformal maps simultaneously in~\cite{sheffield2016field}.

\subsection{Harmonic balls}
A harmonic ball is a domain which satisfies the mean-value property for harmonic functions with respect to the LQG measure --- Definition \ref{def:harmonic-ball}. 
These balls are constructed in a companion work via the following optimization problem. 
For each $t > 0$ and ball $B_r = B_r(0)$, let 
\begin{equation} \label{eq:least-super-solution}
\overline w_t^{B_r} = \inf\{ w \in C(\cl(B_r)) : \Delta w \leq \mu_h \mbox{ in $B_r$}  \mbox{ and }  w \geq  - t G_{B_r}(0, \cdot) \mbox{ in $\cl(B_r)$}\},
\end{equation}
where $C(\cl(B_r))$ denotes the set of continuous functions on the closed ball, $\Delta w$ is interpreted in the distributional sense, $G_{B_r}$ is the Green's function for standard Brownian motion on $B_r$ with zero boundary conditions, and the infimum is pointwise. 
The definition in \eqref{eq:least-super-solution} is the {\it obstacle problem} formulation of Hele-Shaw flow~\cite{sakai1984solutions} and may be thought of as the continuum 
analogue of the odometer for the divisible sandpile, defined below in Section \ref{sec:divisible-sandpile}.

\begin{theorem}[Combination of Theorems 1.1 and 5.5 and Lemma 3.2 in \cite{bou2022harmonic}] \label{theorem:harmonic-balls}
	On an event of probability one, there exists a unique family of harmonic balls $\{\Lambda_t\}_{t > 0}$
	satisfying the following properties:
	\begin{enumerate}[label=(\alph*)]
		\item For each $t > 0$, $\mu_h(\Lambda_t) = t$, $\mu_h(\partial \Lambda_t) = 0$, and $\Lambda_t$ is equal to the interior of its closure.
		\item The domains $\Lambda_t$ are bounded, connected, open, contain the origin, increase continuously in $t$ (in the Hausdorff topology), and satisfy $\cap_{t > 0} \Lambda_t = \{0\}$.
		\item If, for some $r > 0$, $\Lambda_t \subset B_r$, then, for all $\rho > r$, 
		\[
		\Lambda_t = \inte \left( \cl \left( \{ x \in B_\rho : \overline w_t^{B_\rho}(x) > - t G_{B_\rho}(0, x)\} \right) \right).
		\] 
		Moreover, $\overline w_t^{B_\rho}$ is continuous and
		satisfies
		\[
		\begin{cases}
		\Delta \overline w_t^{B_\rho} = \mu_h |_{\Lambda_t}  \quad \mbox{on $B_\rho$} \\
		\overline w_t^{B_\rho} = 0 \quad \mbox{on $\partial B_\rho$}.
		\end{cases}
		\]
	\end{enumerate}
	
\end{theorem}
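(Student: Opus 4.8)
The plan is to obtain the entire theorem from an analysis of the obstacle problem \eqref{eq:least-super-solution}, which is the standard route to Hele-Shaw flow and quadrature domains, carried out with the LQG measure $\mu_h$ in place of a classical density. First I would record the structure of the least supersolution on a fixed ball. The competitor class in \eqref{eq:least-super-solution} is nonempty, since the constant $0$ has $\Delta 0 = 0 \le \mu_h$ (the first item of Fact~\ref{fact:lqg-measure}) and $0 \ge -tG_{B_\rho}(0,\cdot)$ because the Green's function is nonnegative; hence $\overline{w}_t^{B_\rho}$ is well defined, satisfies $\psi_t := -tG_{B_\rho}(0,\cdot) \le \overline{w}_t^{B_\rho} \le 0$, equals $0$ on $\partial B_\rho$, and, being a pointwise infimum of continuous $\mu_h$-supersolutions lying above $\psi_t$, is itself such a supersolution. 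Since $\psi_t$ is smooth on $\cl(B_\rho)\setminus\{0\}$ with $\psi_t(0) = -\infty$ while $\overline{w}_t^{B_\rho}$ is bounded, the non-coincidence set $\{\overline{w}_t^{B_\rho} > \psi_t\}$ is open and contains the origin; standard obstacle-problem theory then gives that $\overline{w}_t^{B_\rho}$ is continuous on $\cl(B_\rho)$, satisfies $\Delta\overline{w}_t^{B_\rho} = \mu_h$ on the non-coincidence set, and coincides with $\psi_t$ on the rest of $B_\rho$. I would define $\Lambda_t$ to be the interior of the closure of the non-coincidence set, so that $\Lambda_t = \inte(\cl(\Lambda_t))$ holds by construction (using that $\inte\circ\cl$ is idempotent on its image), deferring to the mass analysis the check that this regularization changes nothing.

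Next I would pass to the whole plane. By the comparison principle $\overline{w}_t^{B_\rho}$ is nondecreasing in $\rho$ and the non-coincidence sets increase in $\rho$; I would show these stabilize once $\rho$ is large, using the rapid growth of $\mu_h(B_R)$ as $R\to\infty$ for the $\gamma$-quantum cone --- a consequence of the scaling relation \eqref{eq:quantum-cone-scaling} --- to confine the flow to a bounded region, which also gives boundedness of $\Lambda_t$. Away from the free boundary one then knows $\Delta\overline{w}_t^{B_\rho}$ exactly: it equals $\mu_h$ on $\Lambda_t$, equals $0$ on $B_\rho\setminus\cl(\Lambda_t)$ (where $\overline{w}_t^{B_\rho} = \psi_t$ is harmonic, the origin being in $\Lambda_t$), and $\overline{w}_t^{B_\rho}$ is the $G_{B_\rho}$-potential of the positive measure $\Delta\overline{w}_t^{B_\rho}$. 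Comparing the boundary behavior of this potential with that of $\psi_t$ --- both vanish on $\partial B_\rho$ at the rate $\dist(\cdot,\partial B_\rho)$, and the Poisson kernel of $B_\rho$ integrates to $1$ --- shows that $\Delta\overline{w}_t^{B_\rho}$ has total mass $t$. The theorem then reduces to the single assertion that $\Delta\overline{w}_t^{B_\rho}$ puts no mass on $\partial\Lambda_t$: granting this, $\Delta\overline{w}_t^{B_\rho} = \mu_h|_{\Lambda_t}$ on all of $B_\rho$, which with the boundary condition is part~(c) (its displayed formula being $\rho$-independent by the stabilization just mentioned); hence $\mu_h(\Lambda_t) = t$ and $\overline{w}_t^{B_\rho} = -\int_{\Lambda_t}G_{B_\rho}(\cdot,y)\,d\mu_h(y)$; subtracting from this the identity $\overline{w}_t^{B_\rho} = \psi_t$ on $B_\rho\setminus\Lambda_t$ yields the quadrature property $G_{B_\rho}[\mu_h|_{\Lambda_t} - t\delta_0]\equiv 0$ off $\Lambda_t$. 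Feeding this into Green's second identity on a smooth domain $V$ with $\cl(\Lambda_t)\Subset V\Subset B_\rho$ lying in the region of harmonicity of a test function $f$ (both $G_{B_\rho}[\mu_h|_{\Lambda_t}-t\delta_0]$ and its normal derivative vanish near $\partial V$) gives $\int_{\Lambda_t}f\,d\mu_h = tf(0)$, i.e., $\Lambda_t$ is a harmonic ball in the sense of Definition~\ref{def:harmonic-ball}.

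For the remaining geometric statements in parts~(a) and~(b) I would use monotonicity. Lowering the obstacle gives $\Lambda_s\subseteq\Lambda_t$ for $s<t$, and I would upgrade this to the non-degeneracy $\cl(\Lambda_s)\subseteq\Lambda_t$ for $s<t$ together with right-continuity of $t\mapsto\cl(\Lambda_t)$ in the Hausdorff topology --- the key point being that $\mu_h$ charges every ball, so the free boundary cannot stall. Non-degeneracy gives $\partial\Lambda_t\subseteq\Lambda_{t+\eps}\setminus\Lambda_t$, hence $\mu_h(\partial\Lambda_t)\le\mu_h(\Lambda_{t+\eps})-\mu_h(\Lambda_t)=\eps\to 0$; combined with $\Delta\overline{w}_t^{B_\rho}\le\mu_h$ this also delivers the missing step $\Delta\overline{w}_t^{B_\rho}(\partial\Lambda_t)=0$, closing the loop above, and it shows the regularization $\Lambda_t=\inte(\cl(\Lambda_t))$ lost no $\mu_h$-mass. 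Continuity of $t\mapsto\Lambda_t$ in the Hausdorff topology follows from the two one-sided continuities together with $\mu_h(\Lambda_t)=t$; connectedness of $\Lambda_t$ because a component $U$ not containing the origin would make $\overline{w}_t^{B_\rho}-\psi_t$ subharmonic and strictly positive on $U$ but zero on $\partial U$, violating the maximum principle; and $\bigcap_{t>0}\Lambda_t=\{0\}$ because $\mathrm{diam}(\Lambda_t)\to 0$ as $t\to 0$, since a connected open set containing $0$ of diameter $\delta$ must cross the annulus $\A_{\delta/4,\delta/2}$, forcing a lower bound on its $\mu_h$-mass and contradicting $\mu_h(\Lambda_t)=t$. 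Uniqueness is the classical uniqueness of quadrature domains: any family of harmonic balls of mass $t$ satisfies the same quadrature identity, which pins down its $G_{B_\rho}$-potential as $\overline{w}_t^{B_\rho}$ and hence forces the domain to equal $\Lambda_t$.

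The main obstacle is the absence of free-boundary regularity in this setting. Because $\mu_h$ is mutually singular with respect to Lebesgue measure and has no density, $\overline{w}_t^{B_\rho}$ is not $C^{1,1}$ and $\Delta\overline{w}_t^{B_\rho}$ is genuinely a singular measure, so the classical argument --- $C^{1,1}$ regularity of the obstacle solution forces $\Delta\overline{w}_t^{B_\rho}\in L^\infty$ and hence no mass on the free boundary --- is simply unavailable; this is precisely why I route around it through the total-mass computation and the monotonicity-in-$t$ bootstrap, which in turn rest on quantitative regularity of $\mu_h$ (non-atomicity, positive mass on every ball, Hölder-type volume bounds coming from the quantum-cone scaling \eqref{eq:quantum-cone-scaling}). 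A secondary difficulty is the two-dimensional confinement argument showing $\Lambda_t$ is bounded and the obstacle problems on $B_\rho$ stabilize, which is delicate since the whole-plane Laplacian is only marginally recurrent; here one leans on the super-polynomial growth of $\mu_h(B_R)$ for the quantum cone.
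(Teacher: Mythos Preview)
Your proposal takes a genuinely different route from the paper. The theorem is explicitly imported from the companion work \cite{bou2022harmonic}, where the full obstacle-problem analysis you sketch is carried out for the whole-plane GFF plus $-\gamma\log|\cdot|$. The present paper's proof is therefore not an analysis of \eqref{eq:least-super-solution} at all, but a short transfer argument: since $h|_{B_1}$ agrees in law with the whole-plane GFF plus $-\gamma\log|\cdot|$ restricted to $B_1$ (equation \eqref{eq:gff}), and since harmonic balls depend only locally on the field, the results of \cite{bou2022harmonic} give the family $\{\Lambda_t\}$ directly up to the first time $T$ that $\Lambda_t$ exits $B_{1/2}$ (Lemma~\ref{lemma:existence-up-to-fixed-time}). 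The extension to all $t>0$ is then done not by a confinement estimate on a growing ball $B_\rho$, but by the exact scale invariance \eqref{eq:quantum-cone-scaling} of the quantum cone: rescaling space by $R_b$ produces a new family $\{R_b^{-1}\Lambda_{bt}\}$ with the same law as $\{\Lambda_t\}$, so one gets existence up to time $bT^b$ with $T^b\overset{d}{=}T$, and $bT^b\to\infty$ in probability (Lemma~\ref{lemma:scale-invariance-up-to-fixed-time} and the proof of Theorem~\ref{theorem:harmonic-balls}).

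Your approach is essentially a self-contained reproof of \cite{bou2022harmonic}, adapted directly to the quantum cone. The outline is broadly sound --- the total-mass-via-boundary-flux computation and the monotonicity bootstrap for $\mu_h(\partial\Lambda_t)=0$ are indeed how one circumvents the lack of $C^{1,1}$ regularity --- but it is far longer than what the paper needs, and the confinement step (showing $\Lambda_t$ stays bounded for the quantum cone by estimating growth of $\mu_h(B_R)$) is more delicate than you indicate, since outside $B_1$ the circle average of $h$ is a \emph{conditioned} Brownian motion rather than a free one. The paper sidesteps this entirely: absolute continuity handles the unit disk, and scale invariance replaces the confinement argument. What your approach buys is independence from the companion paper; what the paper's approach buys is a one-page proof.
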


Strictly speaking, \cite{bou2022harmonic} considers the case where $h$ is not the quantum cone but rather a whole-plane GFF plus a log-singularity.
Transferring the proof to the quantum cone is a standard absolute continuity argument which we now provide. 
\begin{lemma} \label{lemma:existence-up-to-fixed-time}
	On an event of probability one, there exists a unique family of harmonic balls $\{\Lambda_t\}_{0 < t < T}$
	which satisfies the properties of Theorem \ref{theorem:harmonic-balls} (with $t$ restricted to be in $(0,T)$), where 
	\[
	T = \sup \{ t > 0 : \Lambda_t \subset B_{1/2}\}.
	\]
\end{lemma}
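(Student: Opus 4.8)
The plan is to deduce the lemma from Theorem~\ref{theorem:harmonic-balls} by an equality-in-law argument: the harmonic balls $\Lambda_t$ with $t<T$ are contained in $B_{1/2}$, hence are local functionals of the field restricted to a slightly larger ball, and on such a ball the $\gamma$-quantum cone agrees in law with the fields treated in \cite{bou2022harmonic}. Let $\tilde h$ be a whole-plane GFF plus $-\gamma\log|\cdot|$, normalized as in \eqref{eq:gff} so that $\tilde h|_{B_1}\overset{d}{=}h|_{B_1}$; this is a field of the type to which Theorem~\ref{theorem:harmonic-balls} applies. Applying that theorem to $\tilde h$ produces, on a probability-one event, the growing family $\{\tilde\Lambda_t\}_{t>0}$ of harmonic balls for $\mu_{\tilde h}$ with properties (a)--(c). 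Since $\mu_{\tilde h}$ is a.s.\ Radon with $\mu_{\tilde h}(\tilde\Lambda_t)=t$ and $t\mapsto\tilde\Lambda_t$ increases continuously with $\bigcap_{t>0}\tilde\Lambda_t=\{0\}$, the time $\tilde T:=\sup\{t>0:\tilde\Lambda_t\subset B_{1/2}\}$ lies a.s.\ in $(0,\mu_{\tilde h}(B_{1/2})]\subset(0,\infty)$, and by monotonicity $\tilde\Lambda_t\subset B_{1/2}$ for every $t<\tilde T$.

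The key step is to show that $\big(\tilde T,(\tilde\Lambda_t)_{0<t<\tilde T}\big)$ is a measurable function of $\mu_{\tilde h}|_{B_{3/4}}$, hence --- by the locality of the LQG measure, Fact~\ref{fact:lqg-measure}(II) --- of $\tilde h|_{B_{3/4}}$. For any $t<\tilde T$, Theorem~\ref{theorem:harmonic-balls}(c) with $r=1/2$ and $\rho=3/4$ identifies $\tilde\Lambda_t$ with
\[
\hat\Lambda_t := \inte\left(\cl\left(\{x\in B_{3/4}:\overline w_t^{B_{3/4}}(x)>-t\,G_{B_{3/4}}(0,x)\}\right)\right),
\]
and by the variational definition \eqref{eq:least-super-solution} the right-hand side depends on the field only through $\mu_{\tilde h}|_{B_{3/4}}$. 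I would then argue that $\tilde T=\sup\{t>0:\hat\Lambda_t\subset B_{1/2}\}$: one inclusion is the identity $\hat\Lambda_t=\tilde\Lambda_t$ just obtained for $t<\tilde T$, and for the other I would invoke the fact (available from \cite{bou2022harmonic}) that when the non-coincidence set is compactly contained in $B_{3/4}$ --- in particular when $\hat\Lambda_t\subset B_{1/2}$ --- the set $\hat\Lambda_t$ is itself a harmonic ball for $\mu_{\tilde h}$ of volume $t$ centered at the origin, so by uniqueness of such domains $\hat\Lambda_t=\tilde\Lambda_t$. Denote by $\mathcal F$ the resulting measurable map, so that $\big(\tilde T,(\tilde\Lambda_t)_{0<t<\tilde T}\big)=\mathcal F\big(\mu_{\tilde h}|_{B_{3/4}}\big)$ off a null event.

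To conclude, transfer to $h$: by \eqref{eq:gff} and locality, $\mu_h|_{B_{3/4}}\overset{d}{=}\mu_{\tilde h}|_{B_{3/4}}$, so $\mathcal F$ is defined off a $\mu_h|_{B_{3/4}}$-null event, and we set $\big(T,(\Lambda_t)_{0<t<T}\big):=\mathcal F\big(\mu_h|_{B_{3/4}}\big)$. Every required assertion is an almost-sure property of $\mathcal F$ evaluated at its argument, hence passes from $\mu_{\tilde h}$ to $\mu_h$: that the $\Lambda_t$ are harmonic balls for $\mu_h$ (their defining mean-value integrals involve $\mu_h$ only on a neighborhood of $\cl(\Lambda_t)\subset B_{1/2}$), that (a) and (b) hold and (c) holds for all $\rho>1/2$ (for fixed $t<T$, the set $\Lambda_t$ and the odometer $\overline w_t^{B_\rho}+t\,G_{B_\rho}(0,\cdot)$, which is supported in $\Lambda_t\Subset B_{1/2}$, are by Theorem~\ref{theorem:harmonic-balls}(c) determined by $\mu_h|_{B_{3/4}}$ alone, uniformly in $\rho>1/2$), that $0<T<\infty$, and that $T=\sup\{t:\Lambda_t\subset B_{1/2}\}$. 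Uniqueness follows the same way: any competing family consists of harmonic balls for $\mu_h$ of volume $t$ contained in $B_{1/2}$, so the uniqueness statement of \cite{bou2022harmonic} --- a statement about $\mu_h|_{B_{3/4}}$, whose law is known --- forces agreement of the families, and hence of the exit times.

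I expect the main obstacle to be the middle step, and within it the reverse implication $\hat\Lambda_t\subset B_{1/2}\Rightarrow\hat\Lambda_t=\tilde\Lambda_t$: the forward direction is immediate from Theorem~\ref{theorem:harmonic-balls}(c), but to see that the locally constructed candidate cannot lie inside $B_{1/2}$ without being the true harmonic ball one needs the interior regularity of the obstacle problem together with the uniqueness of harmonic balls of a prescribed volume, both of which I would quote from \cite{bou2022harmonic}. The rest is routine bookkeeping with equality in law and the locality of the LQG measure.
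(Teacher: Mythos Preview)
Your approach is essentially the same as the paper's: use the equality in law \eqref{eq:gff} on $B_1$, invoke the companion paper \cite{bou2022harmonic} to get existence and properties of harmonic balls for the whole-plane GFF plus $-\gamma\log|\cdot|$, argue locality to transfer to $h$, and cite uniqueness from \cite{bou2022harmonic}. The paper's version is much terser because it cites \cite[Proposition~1.3]{bou2022harmonic} directly for the locality of harmonic balls rather than reconstructing the measurable map $\mathcal F$ by hand, and it cites \cite[Proposition~5.6]{bou2022harmonic} for uniqueness without elaboration.

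One point of care: you invoke Theorem~\ref{theorem:harmonic-balls} for $\tilde h$, but in the paper's logical order that theorem is stated for the quantum cone $h$ and is \emph{proven} via this very lemma (through Lemma~\ref{lemma:scale-invariance-up-to-fixed-time}). What you really want to cite for $\tilde h$ are the underlying results \cite[Theorems~1.1 and~5.5, Lemma~3.2]{bou2022harmonic}, which are indeed stated for the whole-plane GFF plus a log-singularity. This is not a gap in substance, only in bookkeeping, and once that citation is corrected your argument goes through; the ``reverse implication'' you flag is handled in the paper simply by appealing to the locality statement \cite[Proposition~1.3]{bou2022harmonic} rather than re-deriving it.
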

\begin{proof}
	By \eqref{eq:gff}, the restrictions to the unit disk of the quantum cone field $h$ and the whole-plane GFF plus $-\gamma \log |\cdot|$ agree in law.
	Since harmonic balls depend locally on the field~\cite[Proposition 1.3]{bou2022harmonic}, this equality in law together with  \cite[Theorems 1.1 and 5.5 and Lemma 3.2]{bou2022harmonic} directly implies the existence of a family of harmonic balls $\{ \Lambda_t \}_{0 < t < T}$ satisfying the properties in Theorem \ref{theorem:harmonic-balls}. 	
	Uniqueness of such a family is guaranteed by \cite[Proposition 5.6]{bou2022harmonic} 
	(that proposition is stated for a family of harmonic balls defined for all times, but the proof still works if the harmonic balls are only defined up to some fixed time). 
\end{proof}
We will now extend the above construction to all time using the scale invariance property of the $\gamma$-quantum cone.
The following is the quantum cone analog of \cite[Lemma 5.7]{bou2022harmonic} (which gives a similar statement for the LQG harmonic balls associated with the whole-plane GFF).  
\begin{lemma} \label{lemma:scale-invariance-up-to-fixed-time}
	For $b >0$, define the random radius $R_b > 0$ as in~\eqref{eq:cone-scale-radius}.
	A.s., there exists a unique family of harmonic balls 
	$\{\Lambda_{t}\}_{0 < t < b T^b}$
	such that $\{R_b^{-1} \Lambda_{b t}\}_{0 < t < T^b}$
	has the same law as $\{\Lambda_t\}_{0 < t < T}$, 
	where
	$T$ is as in Lemma \ref{lemma:existence-up-to-fixed-time}, 
	and 
	\[
	T^b = \sup\{ t > 0 : R_b^{-1} \Lambda_{b t} \subset B_{1/2} \}.
	\]
	In fact, we have the equality of joint laws
	\[
	\left( h, \{\Lambda_t\}_{0 < t < T} \right) \overset{d}{=}\left( h(R_b \cdot) + Q \log R_b - \frac{1}{\gamma} \log b, \{R_b^{-1} \Lambda_{b t}\}_{0 < t < T^b} \right).
	\] 
\end{lemma}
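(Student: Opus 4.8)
The statement asserts a scale-invariance for the family of harmonic balls attached to the quantum cone field $h$, obtained by transporting the scale-invariance of $h$ itself, recorded in \eqref{eq:quantum-cone-scaling}. The plan is to combine three ingredients: (i) the distributional identity \eqref{eq:quantum-cone-scaling}, $h(R_b\cdot) + Q\log R_b - \frac{1}{\gamma}\log b \overset{d}{=} h$; (ii) the existence and uniqueness of harmonic balls up to the (random) time $T$ from Lemma~\ref{lemma:existence-up-to-fixed-time}; and (iii) the fact that harmonic balls are a \emph{local} and \emph{conformally/Weyl-covariant} functional of the field — concretely, Fact~\ref{fact:lqg-measure}(III)--(IV) together with the obstacle-problem characterization in part (c) of Theorem~\ref{theorem:harmonic-balls} and the locality statement \cite[Proposition 1.3]{bou2022harmonic}. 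The point is that a dilation $x\mapsto R_b x$ is a conformal map, under which $\mu_h$ transforms by \eqref{eq:measure-coord}, so harmonic balls for $h$ and for the rescaled field are related by the same dilation, up to the correct time reparametrization by the factor $b$.

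First I would spell out the measure-theoretic transformation. Let $\phi(x) = R_b x$, so $\phi'\equiv R_b$ and $Q\log|\phi'| = Q\log R_b$ is a constant. Writing $\tilde h := h\circ\phi + Q\log R_b = h(R_b\cdot) + Q\log R_b$, conformal covariance \eqref{eq:measure-coord} gives $\mu_{\tilde h}(A) = \mu_h(\phi(A))$ for Borel $A$; equivalently $\mu_{\tilde h} = R_b^{*}(\mu_h\circ\phi)$ is the pushforward of $\mu_h$ under $\phi^{-1}$. Next, subtracting the further constant $\frac{1}{\gamma}\log b$ and applying Weyl scaling, Fact~\ref{fact:lqg-measure}(III), multiplies the measure by $e^{-\log b} = b^{-1}$: if $\hat h := \tilde h - \frac{1}{\gamma}\log b$ then $\mu_{\hat h} = b^{-1}\mu_{\tilde h}$, so $\mu_{\hat h}(A) = b^{-1}\mu_h(\phi(A))$. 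The harmonic-ball mean-value property and the obstacle problem \eqref{eq:least-super-solution} are both \emph{linear in scale of the measure}: inspecting \eqref{eq:least-super-solution}, the obstacle is $-tG_{B_r}(0,\cdot)$ and the constraint is $\Delta w\le\mu_h$, and under the dilation $\phi$ the Green's function transforms as $G_{B_\rho}(0,\cdot) = G_{R_b^{-1}B_\rho}(0,\phi^{-1}(\cdot))$ while $\Delta$ picks up a factor $R_b^{-2}$; tracking these factors through parts (a)--(c) of Theorem~\ref{theorem:harmonic-balls} shows that if $\{\Lambda'_s\}$ denotes the harmonic balls for $\hat h$, then $\phi^{-1}(\Lambda'_{s})$ should be the harmonic ball for $\mu_h$ of mass $b s$ — which is exactly the asserted relation $R_b^{-1}\Lambda_{bt}$ with $s = t$. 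I would carry out this bookkeeping carefully but concisely, verifying that the mass indeed scales by $b$ and the domain by $R_b$.

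Then I would assemble the proof of the lemma as stated. Apply Lemma~\ref{lemma:existence-up-to-fixed-time} to the field $\hat h$ (which, by \eqref{eq:quantum-cone-scaling}, has the law of $h$): a.s.\ there is a unique family $\{\Lambda'_s\}_{0<s<T'}$ of harmonic balls for $\hat h$ with $\mu_{\hat h}(\Lambda'_s)=s$ and $T' = \sup\{s: \Lambda'_s\subset B_{1/2}\}$, satisfying (a)--(c). Define $\Lambda_{t} := R_b\,\Lambda'_{t/b}$ for $0<t<bT'$; the scaling computation above shows these are harmonic balls for $h$ with $\mu_h(\Lambda_t)=t$, and one checks $T^b$ as defined equals $T'$ and $\{R_b^{-1}\Lambda_{bt}\}_{0<t<T^b} = \{\Lambda'_t\}_{0<t<T'}$. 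Uniqueness transfers likewise, via uniqueness for $\hat h$ (again \cite[Proposition 5.6]{bou2022harmonic}, whose proof works up to a fixed random time, as already noted in Lemma~\ref{lemma:existence-up-to-fixed-time}). Finally, the equality of joint laws follows because $\Lambda_t$ is a deterministic (local, covariant) measurable functional of $h$ — that is, $(h,\{\Lambda_t\}) = F(h)$ for a fixed measurable $F$ — so applying $F$ to both sides of the distributional identity $\hat h \overset{d}{=} h$ yields $(h,\{\Lambda_t\}_{0<t<T}) \overset{d}{=} (\hat h, \{R_b^{-1}\Lambda_{bt}\}_{0<t<T^b})$, which is the claim.

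**Main obstacle.** The routine but genuinely delicate part is the scaling bookkeeping of the two normalizing constants: keeping straight that the dilation $\phi$ alone does \emph{not} change the LQG mass (by \eqref{eq:measure-coord}, since $Q\log|\phi'|$ is absorbed as a field shift), so that it is precisely the extra additive constant $-\frac1\gamma\log b$, via Weyl scaling, that produces the mass rescaling by $b$ — and then matching this against the time-parametrization $\mu_h(\Lambda_t)=t$ so that mass $s$ for $\hat h$ corresponds to mass $bs$ for $h$. One must also confirm that the \emph{domain} $\Lambda_t\subset B_r$ hypotheses in part (c) of Theorem~\ref{theorem:harmonic-balls} are compatible with the dilation (i.e.\ $\Lambda'_s\subset B_{1/2}\iff \Lambda_{bs}\subset B_{R_b/2}$), which is what makes the stopping times $T$ and $T^b$ correspond. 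None of this is conceptually hard, but it is the place where a sign or a reciprocal is easy to get wrong, so I would present it with explicit care. The locality/covariance input (that harmonic balls are a fixed measurable functional of the local field data) is exactly \cite[Proposition 1.3]{bou2022harmonic}, already invoked in Lemma~\ref{lemma:existence-up-to-fixed-time}, so no new difficulty arises there.
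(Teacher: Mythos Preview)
Your proposal is correct and follows essentially the same approach as the paper: apply Lemma~\ref{lemma:existence-up-to-fixed-time} to the rescaled field $\hat h = h(R_b\cdot)+Q\log R_b-\frac1\gamma\log b$, transport the resulting harmonic balls back via $x\mapsto R_b x$ using conformal covariance and Weyl scaling of $\mu_h$, verify the mass reparametrization $t\mapsto bt$, invoke uniqueness from \cite[Proposition~5.6]{bou2022harmonic}, and deduce the joint-law equality from $\hat h\overset{d}{=}h$. The paper's write-up is slightly more streamlined in that it checks properties (a)--(b) of Theorem~\ref{theorem:harmonic-balls} directly for the transported balls rather than tracking the obstacle problem through the dilation, but the argument is the same.
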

\begin{proof}
	Fix $b > 0$ and let
	\[
	h^b := h(R_b \cdot) + Q \log R_b - \frac{1}{\gamma} \log b,
	\]
	so that, as in \eqref{eq:quantum-cone-scaling}, $h^b \overset{d}{=} h$. By Lemma \ref{lemma:existence-up-to-fixed-time}, a.s.\ there exists a unique family of harmonic balls $\{\Lambda^b_t\}_{0 < t < T^b}$ which satisfies the properties of Theorem \ref{theorem:harmonic-balls} for the measure $\mu_{h^b}$ (with $t$ restricted to be in $(0,T^b)$), where
	\[
	T^b = \sup\{ t > 0 : \Lambda^b_{t} \subset B_{1/2} \}.
	\]
	Define
	\[
	\Lambda_{b t}  :=   R_b  \Lambda^b_t, \quad \forall 0 < t < T^b
	\]
	and note that by the properties of $\Lambda^b_t$, the LQG coordinate change formula, and Weyl scaling (Fact \ref{fact:lqg-measure}), a.s.,
	\[
	t = \mu_{h^b}(\Lambda^b_t) = b^{-1} \mu_h(\Lambda_{ b t}), \quad \forall 0 < t < T^b.
	\]
	For similar reasons, the domains $\{\Lambda_{t}\}_{0 < t < b T^b}$ are a.s.\ harmonic balls and satisfy parts (a) and (b) of Theorem \ref{theorem:harmonic-balls} for the measure $\mu_h$
	(with $t$ constrained to be less than $b T^b$). 
	These properties are enough to apply \cite[Proposition 5.6]{bou2022harmonic} (which works if the harmonic balls are only defined up to some fixed time) and conclude that $\{\Lambda_{t}\}_{0 < t < b T^b}$ is the unique such family. As $h^{b}$ has the same law as $h$, this shows the equality of joint laws. 
\end{proof}

\begin{proof}[Proof of Theorem \ref{theorem:harmonic-balls}]

	By Lemma \ref{lemma:scale-invariance-up-to-fixed-time}, a.s.\ for every integer $b \geq 0$, 
	there exists a unique family of harmonic balls $\{\Lambda_{t}\}_{0 < t < b T^b}$
	which satisfies the conditions of the theorem statement up to time $b T^b$,
	where $T^b$ is as in Lemma \ref{lemma:scale-invariance-up-to-fixed-time}. Since $T^b \overset{d}{=} T$ for each $b>0$, where $T$ is as in Lemma \ref{lemma:existence-up-to-fixed-time}, and $T$ is a strictly positive random variable, we deduce that $\lim_{b \to \infty} b T^b = \infty$ in probability. This complete the proof. 
\end{proof}

By applying the proof of Lemma \ref{lemma:scale-invariance-up-to-fixed-time} to the infinite family of harmonic balls given by Theorem \ref{theorem:harmonic-balls}, we have the following.
\begin{lemma} \label{lemma:scale-invariance}
	For each $b > 0$, we have the equality of the joint laws
	\[
	\left( h, \{\Lambda_t\}_{t > 0} \right) \overset{d}{=}\left( h(R_b \cdot) + Q \log R_b - \frac{1}{\gamma} \log b, \{R_b^{-1} \Lambda_{b t}\}_{t > 0} \right).
	\] 
	where the random radius $R_b > 0$ is as in~\eqref{eq:cone-scale-radius}. 
\end{lemma}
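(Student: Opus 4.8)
The plan is to transcribe the proof of Lemma \ref{lemma:scale-invariance-up-to-fixed-time}, now feeding in the full family $\{\Lambda_t\}_{t>0}$ produced by Theorem \ref{theorem:harmonic-balls} in place of the family truncated at $T$. Fix $b>0$ and set $h^b := h(R_b\cdot) + Q\log R_b - \frac{1}{\gamma}\log b$, so that $h^b \overset{d}{=} h$ by \eqref{eq:quantum-cone-scaling}. Since Theorem \ref{theorem:harmonic-balls} holds a.s.\ for the law of the $\gamma$-quantum cone, it applies to $h^b$: a.s.\ there is a unique family $\{\Lambda^b_t\}_{t>0}$ of harmonic balls for $\mu_{h^b}$ with $\mu_{h^b}(\Lambda^b_t)=t$ and satisfying (a)--(c). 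I will check that, after reindexing by $s=bt$, the dilated domains $R_b\Lambda^b_t$ form \emph{the} harmonic ball family for $\mu_h$, hence coincide with $\Lambda_{bt}$, and then read off the equality of joint laws.

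The dilation $\phi(z)=R_b z$ is conformal on $\C$, fixes the origin, and has $|\phi'|\equiv R_b$. Conformal covariance of the LQG measure (Fact \ref{fact:lqg-measure}), applied with $h\circ\phi+Q\log|\phi'| = h^b + \frac{1}{\gamma}\log b$, together with Weyl scaling, gives $\mu_h(\phi(A))=b\,\mu_{h^b}(A)$ for every Borel set $A$; in particular $\mu_h(R_b\Lambda^b_t)=b\,\mu_{h^b}(\Lambda^b_t)=bt$. Moreover $f$ is harmonic in a neighborhood of $\cl(R_b\Lambda^b_t)$ exactly when $f\circ\phi$ is harmonic in a neighborhood of $\cl(\Lambda^b_t)$, so changing variables and using $(f\circ\phi)(0)=f(0)$ turns the mean-value property of $(\Lambda^b_t,\mu_{h^b})$ into that of $(R_b\Lambda^b_t,\mu_h)$; hence each $R_b\Lambda^b_t$ is an LQG harmonic ball for $\mu_h$. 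Since $\phi$ is a homeomorphism of $\C$ fixing $0$, the assertions in parts (a) and (b) of Theorem \ref{theorem:harmonic-balls} — boundedness, connectedness, openness, containing $0$, being the interior of its closure, $\mu_h(\partial(R_b\Lambda^b_t))=0$, continuous increase in the Hausdorff topology, and $\cap_{t>0}R_b\Lambda^b_t=\{0\}$ — transfer immediately from the corresponding properties of $\Lambda^b_t$. These facts are precisely what is needed to apply the uniqueness clause of Theorem \ref{theorem:harmonic-balls} (i.e.\ \cite[Proposition 5.6]{bou2022harmonic}), and we conclude that $R_b\Lambda^b_t=\Lambda_{bt}$ for all $t>0$ a.s.; part (c) then holds automatically since $\Lambda_{bt}$ satisfies it by Theorem \ref{theorem:harmonic-balls}.

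It remains to extract the joint law. The radius $R_b$ is a measurable function of the circle-average process $r\mapsto h_r(0)$, and by Theorem \ref{theorem:harmonic-balls} the harmonic ball family is a measurable function of the field, so there is a fixed measurable map $\Psi$, defined for a.e.\ realization of the quantum cone field $g$, with $\Psi(g) = \bigl(g, \{\text{the harmonic ball family for }\mu_g\}_{t>0}\bigr)$. Then $\Psi(h) = \bigl(h,\{\Lambda_t\}_{t>0}\bigr)$, while the identity $\Lambda^b_t = R_b^{-1}\Lambda_{bt}$ obtained above gives $\Psi(h^b) = \bigl(h^b, \{R_b^{-1}\Lambda_{bt}\}_{t>0}\bigr)$, which is a deterministic functional of $h$. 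Since $h^b\overset{d}{=}h$, we get $\Psi(h^b)\overset{d}{=}\Psi(h)$, and this is exactly the asserted equality of joint laws.

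This is a routine repetition of the proof of Lemma \ref{lemma:scale-invariance-up-to-fixed-time}, so I do not expect a genuine obstacle; the only points meriting care are the bookkeeping under the dilation $z\mapsto R_b z$ — in particular the factor $b$ arising from combining conformal covariance with Weyl scaling — and the observation that the uniqueness input now applies for all times $t>0$ rather than only up to a fixed time, which is exactly the content of Theorem \ref{theorem:harmonic-balls}.
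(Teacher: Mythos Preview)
Your proposal is correct and follows exactly the approach the paper indicates: the paper's proof consists of the single sentence ``By applying the proof of Lemma \ref{lemma:scale-invariance-up-to-fixed-time} to the infinite family of harmonic balls given by Theorem \ref{theorem:harmonic-balls}, we have the following,'' and your write-up is a faithful, more detailed execution of precisely that argument.
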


\subsection{Whole plane space-filling SLE}
The Schramm-Loewner evolution ($\text{SLE}_{\kappa}$) for $\kappa > 0$ was introduced by Schramm in \cite{schramm-sle}
as the only conformally invariant family of random curves which satisfy the domain Markov property. There are three phases of  $\text{SLE}_{\kappa}$: the curves are simple when $\kappa \in (0,4]$,  have self-intersections but are not space-filling for $\kappa \in (4,8)$, and are space-filling when $\kappa \geq 8$. 

In the embedding of the mated-CRT map, we work with the {\it whole plane space-filling SLE} $\eta$, a variant of SLE introduced in 
\cite[Sections 1.2.3 and 4.3]{miller2017imaginary}.  This is a random space-filling curve $\eta$ in $\C$ 
from $\infty$ to $\infty$ which fills all of $\C$ and does not intersect the interior of its past. 

For context, we informally describe how one constructs space-filling SLE. When $\kappa \geq 8$, whole plane space-filling SLE is simply ordinary SLE. In this case the 
cells $\eta([x- {\eps}, x])$ are topological rectangles and their boundaries are the union of four $\text{SLE}_{\underline{\kappa}}$ curves for $\underline{\kappa} = 16/\kappa$.
When $\kappa \in (4,8)$,  $\eta$ is built by taking ordinary $\text{SLE}_{\kappa}$ and `filling in' bubbles which are disconnected from infinity.
This results in a somewhat complicated geometry. In particular, for $\kappa \in (4,8)$ it is possible for the intersection of two cells to be an uncountable, totally disconnected Cantor-like set. Pairs of cells which intersect in this manner do not correspond to edges of the mated-CRT map in Definition~\ref{def:sle-lqg-embedding}.

In this paper, we mainly just use the fact that the law of $\eta$ is scale-invariant:
\begin{equation} \label{eq:eta-scale-invariant}
r \eta \overset{d}{=} \eta, \quad \forall r > 0, \quad \mbox{when viewed as curves modulo time parameterization}.
\end{equation}
For a detailed treatment of SLE see \cite{werner-notes, lawler-book} and for a more in-depth exposition of whole plane space-filling SLE see \cite[Section 3.6]{ghs-mating-survey}.

\subsection{Mated-CRT map} \label{subsec:mated-crt-prelims}
We defined most of what we need concerning the mated-CRT map in Section \ref{subsubsec:mated-crt-map-intro}. We provide some additional notation which we will use in the sequel. 

We fix the convention that when referring to points in $\C$, we use the letters $x,y,z$; while vertices in the mated-CRT map correspond to letters $a,b$.
Sets in $\C$ are $X, Y, D$, while sets of vertices in the mated-CRT map are $A,B$. 

For ${\eps} > 0$ and $\gamma \in (0,2)$ let ${\Geps}$ be the mated-CRT map with vertex set ${\VGeps}$ embedded in the plane via a whole plane space-filling SLE curve $\eta$ with $\kappa = 16/\gamma^2$  
as in Definitions \ref{def:mated-crt-map} and \ref{def:sle-lqg-embedding}. 

It is important to note that while a.s.\ each vertex in the mated-CRT map has a finite number of neighbors, the number of neighbors may be arbitrarily large. 
Write $\deg^{{\eps}}(a)$ for the degree of $a \in {\VGeps}$. 	We will need a way to pass between vertices in the map and points in $\C$:  for $x \in \C$, let
\begin{equation} \label{eq:plane-to-embedding}
a_x^{{\eps}} := \mbox{(smallest $a \in {\eps} \Z$ such that $x \in \Heps{a}$)}.
\end{equation}
For a set $D \subset \C$, we define 
\begin{equation} \label{eq:cells-domain-restriction}
{\Geps}(D) := \mbox{(subgraph of ${\Geps}$ induced by $\{a \in {\eps} \Z: \Heps{a} \cap D \neq \emptyset\}$)}
\end{equation}
and write ${\VGeps}(D)$ for the vertices in ${\Geps}(D)$. 
Several times throughout the paper we will work with the sets ${\VGeps}(D) \setminus {\VGeps}(\partial D)$ rather than ${\VGeps}(D)$ to account 
for the coarseness of the cells in the mated-CRT map. Indeed, even if $D$ is a smooth, bounded open set, typically ${\VGeps}(D)$ overlaps with ${\VGeps}(\partial D)$ --- see Figure \ref{fig:cell-boundary-mated-crt}.	
\begin{figure}
	\includegraphics{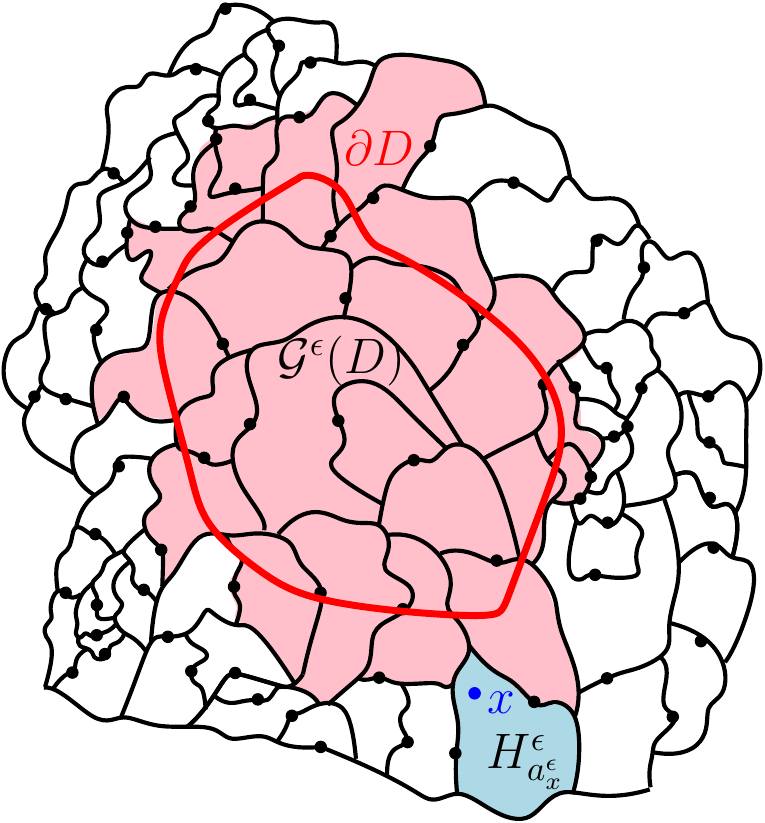}
	\caption{A set $D\subset \C$ and the set of mated-CRT map cells corresponding to the vertices in ${\Geps}(D)$, as in \eqref{eq:cells-domain-restriction}. 
		The boundary of the domain $\partial D$ is drawn in red and the cells of ${\Geps}(D)$ are filled in pink. A point $x \in \C$ and the cell containing it, $\Heps{a^{{\eps}}_x}$, is drawn 
		in light blue. A similar figure appeared in ~\cite[Figure 5]{gms-harmonic}.} \label{fig:cell-boundary-mated-crt}
\end{figure}

\subsection{Cell size and volume estimates}
In this section, we recall some estimates proved in the appendix of \cite{berestycki2020random}. 
The following estimate gives worst-case upper and lower bounds for the LQG mass of Euclidean balls.  
\begin{lemma}[Lemma A.1 in \cite{berestycki2020random}] \label{lemma:volume-growth}
	For each $\beta^+ \in (0, (2 -\gamma)^2/2)$ and $\beta^- > (2+\gamma)^2/2$
	it holds with polynomially high probability as $\delta \to 0$ that  
	\begin{equation} \label{eq:ball-volume}
	\delta^{\beta^-} \leq \mu_h(B_\delta(z)) \leq \delta^{\beta^+} ,\quad\forall z \in B_{1-\delta} .
	\end{equation}
\end{lemma}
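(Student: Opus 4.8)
The plan is a union bound argument based on moment estimates for the Gaussian multiplicative chaos mass $\mu_h(B_\delta(z))$ of small Euclidean balls. By~\eqref{eq:gff} the restriction of $h$ to $B_1$ has the law of $(h^\C-\gamma\log|\cdot|)|_{B_1}$, so by Weyl scaling (Fact~\ref{fact:lqg-measure}) it is enough to prove the estimates with $\mu_h(B_\delta(z))$ replaced by $\int_{B_\delta(z)}|w|^{-\gamma^2}\,d\mu_{h^\C}(w)$, where $h^\C$ is the whole-plane GFF of~\eqref{eq:cov-kernel}. Since $|w|^{-\gamma^2}\ge 1$ for $|w|\le 1$, one has $\mu_h(B_\delta(z))\ge\mu_{h^\C}(B_\delta(z))$, so the lower bound reduces at once to the corresponding standard estimate for $\mu_{h^\C}$, with no difficulty near the origin. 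For each bound I would then replace the supremum over $z\in B_{1-\delta}$ by a maximum over a net of $O(\delta^{-2})$ points, at the cost of replacing $B_\delta(z)$ by a ball of radius comparable to $\delta$ centered at a net point (a slightly larger ball for the upper bound, a slightly smaller one for the lower bound).

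\textbf{The core estimate.} The main input is the pair of moment bounds for the whole-plane GMC measure: there are constants $C_q,C_s$, uniform over $z$ with $\rho_0\le|z|$ and $B_{2\delta}(z)\subset B_1$, such that
\[
\mathbb{E}\bigl[\mu_{h^\C}(B_\delta(z))^q\bigr]\le C_q\,\delta^{\,\xi(q)},\qquad \xi(q):=\Bigl(2+\tfrac{\gamma^2}{2}\Bigr)q-\tfrac{\gamma^2}{2}q^2,\quad q\in\Bigl(0,\tfrac{4}{\gamma^2}\Bigr),
\]
together with $\mathbb{E}\bigl[\mu_{h^\C}(B_\delta(z))^{-s}\bigr]\le C_s\,\delta^{-(2+\gamma^2/2)s-(\gamma^2/2)s^2}$ for every $s>0$ (negative moments of all orders of the $\mu_{h^\C}$-mass of a ball are finite). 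These are standard for log-correlated fields: they follow from the exact scaling relation for $\mu_{h^\C}$, the spatial Markov property of $h^\C$, and a Kahane-type convexity comparison, and can also be quoted from the Gaussian multiplicative chaos literature. Granting them, Markov's inequality gives $\mathbb{P}\bigl[\mu_{h^\C}(B_{2\delta}(z_i))>\delta^{\beta^+}\bigr]\lesssim\delta^{\xi(q)-q\beta^+}$ for each net point $z_i$, and a union bound over the $O(\delta^{-2})$ net points costs a factor $\delta^{-2}$; the resulting exponent $\xi(q)-q\beta^+-2$ is maximized over $q\in(0,4/\gamma^2)$ at $q=2/\gamma$ --- which is admissible precisely because $\gamma<2$ --- with value $\tfrac{2}{\gamma}\bigl(\tfrac{(2-\gamma)^2}{2}-\beta^+\bigr)>0$. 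Symmetrically, applying Markov to $\mu_{h^\C}(B_{\delta/2}(z_i))^{-s}$, taking a union bound, and optimizing over $s$ gives the optimum at $s=2/\gamma$ with exponent $\tfrac{2}{\gamma}\bigl(\beta^- - \tfrac{(2+\gamma)^2}{2}\bigr)>0$. Hence both inequalities hold with polynomially high probability once $z$ is bounded away from the origin; for the lower bound the restriction $|z|\ge\rho_0$ is unnecessary, by the first paragraph.

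\textbf{The main obstacle: the upper bound near the origin.} The one genuinely delicate point is the upper bound for $|z|<\rho_0$, where the weight $|w|^{-\gamma^2}$ blows up and the estimate above is too weak. Here I would use the exact scale invariance of the $\gamma$-quantum cone: by~\eqref{eq:quantum-cone-scaling} (equivalently, the conformal covariance of $\mu_h$ under $x\mapsto\rho x$), a ball $B_\delta(z)$ with $|z|=\rho$ rescales to the unit-distance ball $B_{\delta/\rho}(z/\rho)$ for the field $h(\rho\cdot)+Q\log\rho$, which differs from a quantum-cone field only by the additive constant $h_\rho(0)+Q\log\rho$. The crucial point is that at small scales the cone's circle average has a strong negative drift, $h_r(0)\approx\gamma\log(1/r)$, so $h_\rho(0)+Q\log\rho\approx-(Q-\gamma)\log(1/\rho)$, and the associated Weyl factor $e^{\gamma(h_\rho(0)+Q\log\rho)}$ essentially cancels the $|z|^{-\gamma^2}$ amplification. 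One then applies the bulk estimate on $B_{\delta/\rho}(z/\rho)$, adds a Gaussian tail bound for the fluctuation of $h_\rho(0)$ about its drift, and sums dyadically over $\rho\in[\delta,\rho_0]$ (there are $O(\rho^2\delta^{-2})$ net points at distance $\asymp\rho$). Checking that these competing contributions balance, so that the extremal exponents remain $(2\pm\gamma)^2/2$ uniformly over all of $B_{1-\delta}$ --- that is, so that the tip of the cone does not give a worse bound than the extremal thick points of the GFF --- is the main work; the rest of the argument is routine.
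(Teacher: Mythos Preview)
The paper does not prove this lemma at all: it is stated with the attribution ``Lemma~A.1 in \cite{berestycki2020random}'' and simply quoted as a black box, so there is no paper proof to compare against. Your outline is the standard route to such LQG ball-volume bounds and is essentially correct: reduce to the whole-plane GFF plus log singularity via~\eqref{eq:gff}, use positive/negative GMC moment bounds with the multifractal exponent $\xi(q)=(2+\gamma^2/2)q-(\gamma^2/2)q^2$, apply Markov and a union bound over a $\delta$-net, and optimize in $q$ (resp.\ $s$) to obtain the thresholds $(2\mp\gamma)^2/2$. Your observation that the lower bound needs no special treatment near $0$ because $|w|^{-\gamma^2}\ge 1$ on $B_1$ is exactly right.

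One small comment on the near-origin upper bound, which you correctly flag as the delicate step. Invoking~\eqref{eq:quantum-cone-scaling} directly is slightly awkward since that relation involves a \emph{random} radius $R_b$; it is cleaner to work directly with the explicit law of the circle average process from Definition~\ref{def:quantum-cone} (so $h_\rho(0)+Q\log\rho = B_{-\log\rho}+(Q-\gamma)\log\rho$ for $\rho<1$, with $B$ a standard Brownian motion), which is effectively what you do in the next sentence. The heuristic check that the origin is not extremal is that the typical mass there scales like $\delta^{\gamma Q-\gamma^2}=\delta^{2-\gamma^2/2}$, and $2-\gamma^2/2>(2-\gamma)^2/2$ for all $\gamma\in(0,2)$, so a $\gamma$-thick point is strictly less extreme than the $2$-thick points governing the bulk exponent; together with a Gaussian tail bound on the circle-average fluctuation and the dyadic sum over $\rho$, this closes the argument. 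You have the right picture; filling in those exponents carefully is indeed the main remaining work.
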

For convenience, we fix $\beta^{\pm}$ in a fashion depending only on $\gamma$ so that when we refer to Lemma 
\ref{lemma:volume-growth} we may just refer to a single choice of $\beta^{\pm}$. 

The next lemma gives similar worst-case upper and lower bounds for the size of the cells in the mated-CRT map. 
\begin{lemma}[Lemma 2.4 in \cite{berestycki2020random}] \label{lemma:cell-size-estimate}
	Fix a small parameter $\zeta \in (0,1)$ and $\rho \in (0,1)$. With polynomially high probability as ${\eps} \to 0$, the Euclidean diameter of each mated-CRT map cell $\Heps{a}$ which intersects $B_\rho$ satisfies 
	\begin{equation}
	{\eps}^{\frac{2}{(2 - \gamma)^2} + \zeta} \leq \mathrm{diam}(\Heps{a}) \leq {\eps}^{\frac{2}{(2 + \gamma)^2} - \zeta}.
	\end{equation}
	
\end{lemma}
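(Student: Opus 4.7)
The plan is to apply Lemma \ref{lemma:volume-growth} in two directions, exploiting the defining identity $\mu_h(\Heps{a}) = \eps$ which comes from the LQG parameterization of $\eta$ in Definition \ref{def:sle-lqg-embedding}.

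For the \textbf{lower bound}, I would argue directly. If $\Heps{a}$ meets $B_\rho$ and has diameter at most $r$, then for $\eps$ (and hence $r$) small enough, $\Heps{a} \subset B_r(z)$ for some $z \in B_{(1+\rho)/2} \subset B_{1-r}$. Choose $\beta^+ \in (0, (2-\gamma)^2/2)$ close enough to the endpoint that $1/\beta^+ \leq 2/(2-\gamma)^2 + \zeta$ (this is possible since $1/\beta^+ \to 2/(2-\gamma)^2$ as $\beta^+ \uparrow (2-\gamma)^2/2$). By Lemma \ref{lemma:volume-growth}, on a polynomially-high-probability event,
\[
\eps \,=\, \mu_h(\Heps{a}) \,\leq\, \mu_h(B_r(z)) \,\leq\, r^{\beta^+},
\]
which rearranges to $r \geq \eps^{1/\beta^+} \geq \eps^{2/(2-\gamma)^2 + \zeta}$.

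The \textbf{upper bound} is the harder direction; it amounts to a worst-case H\"older regularity statement for the space-filling SLE curve $\eta$ under its LQG parameterization, with exponent slightly below $2/(2+\gamma)^2$. My plan is to set $R := \eps^{2/(2+\gamma)^2 - \zeta}$, cover $B_\rho$ by a polynomial-in-$\eps$ grid of Euclidean balls of radius comparable to $R$, and combine two ingredients. First, applying Lemma \ref{lemma:volume-growth} with $\beta^-$ just above $(2+\gamma)^2/2$ (so that $R^{\beta^-} \gg \eps$), each such ball has $\mu_h$-mass $\gg \eps$ with polynomially high probability. Second, one must show that if $\mathrm{diam}(\Heps{a}) \geq R$, then $\Heps{a}$ essentially contains one such ball, hence has $\mu_h$-mass $\gg \eps$, contradicting $\mu_h(\Heps{a}) = \eps$.

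The main obstacle is this last step: demonstrating that large-diameter space-filling SLE cells must thicken to comparable Euclidean scale. This is relatively clean for $\kappa \geq 8$, where the cells are topological disks whose boundary arcs are $\mathrm{SLE}_{16/\kappa}$ curves of Hausdorff dimension strictly greater than $1$; a crossing argument together with H\"older regularity of these branches forces the cell to carry a macroscopic Euclidean ball. For $\kappa \in (4,8)$ the cells can have Cantor-like pairwise intersections, and one must instead use the filling-in description of whole-plane space-filling SLE: the cell is a union of an $\mathrm{SLE}_\kappa$ arc segment together with all bubbles it disconnects from $\infty$, and one bounds the Euclidean scale of these bubbles using standard SLE regularity. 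In both regimes the ingredients are classical inputs from the SLE/LQG literature.
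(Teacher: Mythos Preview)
The paper does not prove this lemma; it is quoted as Lemma~2.4 of \cite{berestycki2020random} and used as a black box throughout. So there is no in-paper proof to compare against, and your task is really to reconstruct the argument from the cited reference.

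Your outline matches the standard proof. The lower bound is complete as written: a set of diameter $r$ sits inside a ball of radius $r$, and the upper volume bound in Lemma~\ref{lemma:volume-growth} forces $\eps = \mu_h(\Heps{a}) \leq r^{\beta^+}$, hence $r \geq \eps^{1/\beta^+}$; choosing $\beta^+$ close to $(2-\gamma)^2/2$ gives the exponent.

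For the upper bound you have correctly isolated the one nontrivial input: a space-filling $\mathrm{SLE}_\kappa$ segment of Euclidean diameter $R$ must contain a Euclidean ball of radius comparable to $R$ (up to at most a polylogarithmic loss), uniformly over all cells intersecting a fixed compact set. Once that is in hand, your argument is correct: combine it with the lower volume bound $\mu_h(B_{cR}) \geq (cR)^{\beta^-}$ from Lemma~\ref{lemma:volume-growth}, choose $\beta^-$ close enough to $(2+\gamma)^2/2$ that $1/\beta^- > 2/(2+\gamma)^2 - \zeta$, and the assumption $\mathrm{diam}(\Heps{a}) \geq \eps^{2/(2+\gamma)^2 - \zeta}$ contradicts $\mu_h(\Heps{a}) = \eps$.

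The ``cells contain comparable balls'' fact, however, is a standalone result about space-filling SLE that is typically cited rather than reproved; see for instance the cell-geometry lemmas in \cite{gms-harmonic} or the discussion in \cite{berestycki2020random}. Your sketches (boundary-arc dimension for $\kappa \geq 8$, bubble control for $\kappa \in (4,8)$) point in the right direction but would take real work to make rigorous, and the $\kappa \in (4,8)$ case in particular is more delicate than you indicate. In practice the cleanest route is to invoke the existing in-radius/out-radius comparability lemma from the literature and then run exactly the volume argument you describe.
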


The following lemma is a quantitative version of the important fact that the counting measure on vertices in the mated-CRT map converges weakly to the LQG area measure.

\begin{lemma}[Lemma A.4 in \cite{berestycki2020random}] \label{lemma:mu_h-convergence}
	Fix $\rho \in (0,1)$. There exists $\alpha = \alpha(\gamma) > 0$ and $\beta = \beta(\gamma) > 0$
	such that with polynomially high probability as ${\eps} \to 0$, the following is true. 
	Let $D \subset B_{\rho}$ and let $f: B_{{\eps}^{\alpha}}(D) \to [0,\infty)$
	be a non-negative function which is ${\eps}^{-\beta}$-Lipschitz continuous and satisfies $\|f\|_{\infty} \leq {\eps}^{-\beta}$.
	For $a \in {\VGeps}(D)$, let $w^{{\eps}}_a$ be an arbitrary point of $\Heps{a} \cap \cl(D)$. 
	If we let $\mu_h$ be the $\gamma$-LQG area measure induced by $h$, then 
	\begin{equation}
	{\eps}^{-1} \int_D f(z)d \mu_h(z) - {\eps}^{-1 + \alpha} \leq
	\sum_{a \in {\VGeps}(D)} f(w^{\eps}_a) \leq 
	{\eps}^{-1} \int_{B_{{\eps}^\alpha}(D)} f(z) d \mu_h(z)  + {\eps}^{-1 + \alpha}
	\end{equation}
	simultaneously for every choice of $D$ and $f$ as above. 
\end{lemma}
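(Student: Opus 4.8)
The statement to prove is a uniform (over all admissible $D$ and $f$) comparison between the Riemann-type sum $\sum_{a \in {\VGeps}(D)} f(w^\eps_a)$ and the LQG integral $\eps^{-1}\int f \, d\mu_h$, with an error of order $\eps^{-1+\alpha}$. The key structural fact driving everything is that $\mu_h(\Heps{a}) = \eps$ for every cell (by our parameterization of $\eta$), so $\eps^{-1}\int_D f \, d\mu_h$ and $\sum_a \eps^{-1}\int_{\Heps{a}} f\, d\mu_h$ differ only because of cells on the ``fringe'' — cells meeting $D$ but not contained in $D$, i.e. cells meeting $\partial D$ — and because $f$ varies within a cell.

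First I would reduce to the pointwise comparison on a single cell. Write $\sum_{a\in{\VGeps}(D)} f(w^\eps_a) = \sum_a \eps^{-1}\int_{\Heps{a}} f(w^\eps_a)\,d\mu_h(z)$, and bound $|f(w^\eps_a) - f(z)| \le \eps^{-\beta}\,\mathrm{diam}(\Heps{a})$ for $z \in \Heps{a}$ using the Lipschitz hypothesis, provided the cell is small. By Lemma~\ref{lemma:cell-size-estimate}, with polynomially high probability every cell meeting $B_\rho$ has diameter at most $\eps^{\frac{2}{(2+\gamma)^2}-\zeta}$; choosing $\beta < \frac{2}{(2+\gamma)^2} - \zeta$ (this is where $\beta$ gets pinned down in terms of $\gamma$), the product $\eps^{-\beta}\,\mathrm{diam}(\Heps{a})$ is at most $\eps^{\alpha_0}$ for some $\alpha_0>0$. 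Summing over the $O(\eps^{-1})$ cells meeting $D$ (the number of such cells is comparable to $\eps^{-1}\mu_h(\text{neighborhood of }D)$, which is bounded since $D \subset B_\rho$) gives a contribution of size $O(\eps^{-1+\alpha_0})$. The $\eps^\alpha$-fattening of $D$ in the hypothesis on $f$'s domain is exactly what ensures $w^\eps_a$ and all of $\Heps{a}$ lie in the domain of $f$ once cell diameters are $\le \eps^\alpha$; so one takes $\alpha \le \frac{2}{(2+\gamma)^2}-\zeta$ as well, and $\alpha \le \alpha_0$.

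Next, the fringe-cell discrepancy: $\bigl|\eps^{-1}\int_D f\,d\mu_h - \sum_{a\in{\VGeps}(D)} \eps^{-1}\int_{\Heps{a}} f\,d\mu_h\bigr|$ is controlled by $\eps^{-1}\|f\|_\infty \, \mu_h\bigl(\bigcup_{a : \Heps{a}\cap\partial D\ne\emptyset}\Heps{a}\bigr)$. Since all such cells meet $\partial D$ and have diameter $\le \eps^{\frac{2}{(2+\gamma)^2}-\zeta}$, their union is contained in an $\eps^{\frac{2}{(2+\gamma)^2}-\zeta}$-neighborhood of $\partial D$; but $\partial D$ is an arbitrary set, so one cannot bound $\mu_h$ of its neighborhood directly — instead one notes that the number of fringe cells is at most the number of cells in $B_{\eps^{c}}(\partial D)$, and there is no a priori control on this without knowing $\partial D$. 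This is the step I expect to be the genuine obstacle, and I believe the resolution is that the lemma as I've stated the reduction actually compares $\sum_{a\in{\VGeps}(D)}$ against $\eps^{-1}\int_D$ on the left and $\eps^{-1}\int_{B_{\eps^\alpha}(D)}$ on the right — precisely the asymmetry in the displayed inequality — so that the fringe cells are \emph{absorbed}: every cell $\Heps{a}$ with $a\in{\VGeps}(D)$ satisfies $\Heps{a}\subset B_{\eps^\alpha}(D)$ once $\mathrm{diam}(\Heps{a})\le\eps^\alpha$, giving the upper bound $\sum_a f(w^\eps_a) \le \sum_a \eps^{-1}\int_{\Heps{a}}(f(z)+\eps^{\alpha_0})\,d\mu_h \le \eps^{-1}\int_{B_{\eps^\alpha}(D)} f\,d\mu_h + \eps^{-1+\alpha}$ after relabeling $\alpha$; and the lower bound $\sum_a f(w^\eps_a) \ge \eps^{-1}\int_D f\,d\mu_h - \eps^{-1+\alpha}$ follows because $D \subset \bigcup_{a\in{\VGeps}(D)}\Heps{a}$ pointwise and $f\ge 0$, together with the same within-cell Lipschitz estimate. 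Thus the one-sided error terms are by design, and no control of $\mu_h$ near $\partial D$ is needed.

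Finally, I would assemble the pieces: intersect the polynomially-high-probability event of Lemma~\ref{lemma:cell-size-estimate} (applied on $B_{\rho'}$ for some $\rho' \in (\rho,1)$ to accommodate the $\eps^\alpha$-fattening) with a polynomially-high-probability event controlling $\mu_h(B_{\rho'})<\infty$ (or more simply using that $\mu_h$ is a.s. locally finite and taking $\eps$ small), choose $\alpha=\alpha(\gamma)>0$ and $\beta=\beta(\gamma)>0$ small enough that all the constraints above hold, and observe that on this event all the cited bounds hold simultaneously for every admissible $(D,f)$ — the supremum over $(D,f)$ causes no trouble since each estimate was pointwise-per-cell and the cell collection ${\VGeps}(B_{\rho'})$ is fixed once $\eta$ and $h$ are fixed. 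The uniformity claim in the lemma is then immediate. I do not expect any step beyond the fringe-cell bookkeeping to require more than routine estimates; the main conceptual point to get right is matching the two-sided cell estimate of Lemma~\ref{lemma:cell-size-estimate} and the Lipschitz scale $\eps^{-\beta}$ so that $\eps^{-\beta}\eps^{2/(2+\gamma)^2-\zeta}$ is a positive power of $\eps$, and ensuring $\alpha$ is chosen below the threshold where the $\eps^\alpha$-neighborhood of $D$ still sits inside $B_1$.
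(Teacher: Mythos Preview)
The paper does not prove this lemma: it is quoted verbatim as Lemma~A.4 of \cite{berestycki2020random} and used as a black box, so there is no ``paper's own proof'' to compare against here. That said, your sketch is the natural and correct argument, and it is essentially how the result is established in \cite{berestycki2020random}: exploit $\mu_h(\Heps{a})=\eps$, control the within-cell oscillation of $f$ by $\eps^{-\beta}\cdot\mathrm{diam}(\Heps{a})\le \eps^{\alpha_0}$ via the cell-size estimate, and absorb the fringe cells by the deliberate asymmetry $\int_D$ versus $\int_{B_{\eps^\alpha}(D)}$ in the two-sided bound.

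One small point to tighten. When you sum the per-cell error $\eps^{\alpha_0}$ over $|{\VGeps}(D)|$ cells you invoke $|{\VGeps}(D)|\le \eps^{-1}\mu_h(B_{\eps^\alpha}(D))\le \eps^{-1}\mu_h(B_{\rho'})$, and then say ``$\mu_h$ is a.s.\ locally finite and take $\eps$ small''. That by itself yields only an $\eps$-threshold that is random, not a polynomially-high-probability event for each fixed $\eps$. To get the stated conclusion you should instead use a moment bound: $\mu_h(B_{\rho'})$ has finite positive moments, so $\P[\mu_h(B_{\rho'})>\eps^{-\delta}]=O(\eps^{c})$ for some $c>0$; intersecting with this event and the cell-size event, and choosing $\alpha<\alpha_0-\delta$, gives the claimed $\eps^{-1+\alpha}$ error with polynomially high probability. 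You allude to this (``polynomially-high-probability event controlling $\mu_h(B_{\rho'})$''), but the quantitative version is what is actually needed.
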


An analogous result to the above holds when cells are weighted by their degrees, in which case, since the mated-CRT map is a planar triangulation,
there is an extra factor of 6 in the limit. 
\begin{lemma}[Corollary A.6 in \cite{berestycki2020random}] \label{lemma:mu_h_degree-convergence}
	For ${\eps}> 0$, let $\mu_h^{{\eps}}$ be the measure whose restriction to each cell $\Heps{a}$ for $a \in {\eps} \Z$
	is equal to $\deg^{{\eps}}(a) \mu_h |_{\Heps{a}}$. Then as ${\eps} \to 0$, we have $\mu^{{\eps}}_h \to 6 \mu_h$ 
	in probability w.r.t.\ the vague topology. 
\end{lemma}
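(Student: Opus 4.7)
The plan is to test $\mu_h^{{\eps}}$ against an arbitrary continuous compactly supported $\phi : \C \to \R$ and show $\int \phi \, d\mu_h^{{\eps}} \to 6 \int \phi \, d\mu_h$ in probability. Because the space-filling SLE $\eta$ is parameterized by $\mu_h$, each cell satisfies $\mu_h(\Heps{a}) = {\eps}$, so
\[
\int \phi \, d\mu_h^{{\eps}} = \sum_{a \in {\VGeps}} \deg^{{\eps}}(a) \int_{\Heps{a}} \phi \, d\mu_h .
\]
The first step is to pick a reference point $w_a^{{\eps}} \in \Heps{a}$ in each cell and use the polynomial cell-diameter upper bound of Lemma \ref{lemma:cell-size-estimate}, together with uniform continuity of $\phi$, to replace $\int_{\Heps{a}} \phi \, d\mu_h$ by $\phi(w_a^{{\eps}}) \, {\eps}$ at the cost of a negligible error after summation. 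This reduces the target to
\[
{\eps} \sum_{a \in {\VGeps}} \deg^{{\eps}}(a) \, \phi(w_a^{{\eps}}) \longrightarrow 6 \int \phi \, d\mu_h .
\]

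Next I would localize. Partition $\mathrm{supp}(\phi)$ into a grid of boxes $\{B_i\}$ of mesoscopic side length $\delta = \delta({\eps}) \to 0$ chosen slowly enough that typical cell diameters are $\ll \delta$. Since $\phi$ varies by $o(1)$ on each box, the task reduces to showing that
\[
\sum_{a \in {\VGeps}(B_i)} \deg^{{\eps}}(a) = 6 \, |{\VGeps}(B_i)| + O(|\partial {\VGeps}(B_i)|) .
\]
This is where the planar structure enters: under the SLE/LQG embedding of Definition \ref{def:sle-lqg-embedding}, the graph ${\Geps}$ is a planar triangulation, so a vertex--face incidence count (equivalently, Euler's formula applied to the induced subgraph on ${\VGeps}(B_i)$ together with its outer neighborhood) gives exactly the identity above, the $O(\cdot)$ term absorbing Euler's additive constant and the faces straddling $\partial B_i$. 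Plugging this into the sum and invoking Lemma \ref{lemma:mu_h-convergence} with $f = \phi$ box-by-box then yields the desired limit, modulo the boundary correction.

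The main obstacle will be showing that the aggregate boundary correction $\sum_i |\partial {\VGeps}(B_i)|$ is $o({\eps}^{-1})$. Because the mated-CRT-map cells have a multifractal size distribution (cf.\ the remarks after Problem \ref{prob:rate of convergence}), bounding $|\partial {\VGeps}(B_i)|$ by the worst-case cell diameter from Lemma \ref{lemma:cell-size-estimate} alone is too crude. My plan is to fix an ${\eps}^{\alpha}$-tubular neighborhood $T_{\delta,{\eps}}$ of the grid skeleton $\bigcup_i \partial B_i$ and apply Lemma \ref{lemma:mu_h-convergence} with $f$ a smooth, ${\eps}^{-\beta}$-Lipschitz cutoff of $T_{\delta,{\eps}}$: this bounds the number of cells meeting $T_{\delta,{\eps}}$ by ${\eps}^{-1} \mu_h(T_{\delta,{\eps}}) + O({\eps}^{-1+\alpha})$, and by the non-atomicity of $\mu_h$ (Fact \ref{fact:lqg-measure}) the quantity $\mu_h(T_{\delta,{\eps}})$ can be made arbitrarily small by first sending ${\eps} \to 0$ and then $\delta \to 0$. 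Combining this boundary control with Lemma \ref{lemma:mu_h-convergence} applied directly to $\phi$ closes the argument and delivers the factor of $6$.
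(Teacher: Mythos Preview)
The paper does not prove this lemma; it is quoted verbatim as Corollary~A.6 of \cite{berestycki2020random}. So there is no in-paper argument to compare against, and your task is really to give a self-contained proof.

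Your strategy is the right one and is essentially how the result is obtained: reduce to a vertex-weighted sum, invoke planarity (the mated-CRT map is a triangulation, so Euler gives average degree $6$), and control the boundary defect via the $\mu_h$-mass of a thin tube around the mesoscopic grid. Two points need tightening. First, the Euler correction on each box is $O(\text{boundary \emph{edges}})$, not $O(|\partial{\VGeps}(B_i)|)$: the discrepancy $\sum_{a\in V_i}\deg^{\eps}(a)-6|V_i|$ involves the edges from $V_i$ to its complement and the non-triangular faces of the induced subgraph, both of which are bounded by $(\max_a \deg^{\eps}(a))\cdot|\partial{\VGeps}(B_i)|$ rather than by $|\partial{\VGeps}(B_i)|$ alone. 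You therefore need the polylogarithmic degree bound $\max_{a\in{\VGeps}(B_\rho)}\deg^{\eps}(a)\le (\log{\eps}^{-1})^2$ (this is \cite[Lemma~2.6]{gwynne2019harmonic}, used in the paper at~\eqref{eq:degree-bound}); once inserted, the extra $(\log{\eps}^{-1})^2$ is harmless against $\mu_h(T_{\delta,{\eps}})\to 0$. Second, you write $\delta=\delta({\eps})$ in one place but then take the iterated limit ``${\eps}\to 0$ then $\delta\to 0$'' in the boundary step. Pick one: the iterated limit is cleaner, since for each fixed deterministic $\delta$ the grid skeleton is a null set for $\mu_h$ (its $\mu_h$-expectation vanishes because it has zero Lebesgue measure), hence $\mu_h(T_{\delta,{\eps}})\to 0$ as ${\eps}\to 0$; a $3{\eps}$-argument then lets you send $\delta\to 0$ afterwards. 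With these two fixes your argument is complete for test functions supported in $B_\rho$, which is all the paper ever uses.
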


\subsection{Liouville Brownian motion and random walk} \label{subsec:lbm-and-rw}
\textit{Liouville Brownian motion} (LBM) is the natural diffusion associated with $\gamma$-LQG and was constructed in \cite{berestycki2015diffusion, garban2016liouville}. Roughly, LBM is obtained from ordinary Brownian motion by changing time 
so that $\mu_h$ is invariant. That is, LBM started at $z\in\C$ is defined as $B^z_{\phi^{-1}(t)}$, where $B^z_t$ is Brownian motion started at $z $ and 
\[
\phi(t) = \lim_{{\eps} \to 0} \int_{0}^{t} {\eps}^{\gamma^2/2} e^{\gamma h_{{\eps}}(B_s^z)} ds, 
\]
where $h_{{\eps}}$ is the average of $h$ over the circle of radius ${\eps}$. 	 

In this paper we consider LBM rescaled by its (annealed) median exit time from a ball,
\begin{equation} \label{eq:rescaled-lbm}
X^z_t = B^z_{\phi^{-1}(t m_0)}, \quad \forall z \in \C,
\end{equation}
where 
\begin{equation} \label{eq:mo} 
m_0 = \mbox{median}(\inf\{t > 0 : |B^z_{\phi^{-1}(t)}| \geq 1/2\}).
\end{equation}	
For $z \in \C$ and ${\eps} > 0$, let $X^{z, {\eps}} : \N \to {\VGeps}$ be a simple random walk on ${\Geps}$ started from $a_z^{{\eps}}$ (defined in \eqref{eq:plane-to-embedding}).
On occasion we will also write, for $a \in {\VGeps}$, the walk started from $a$ as $X^{a, {\eps}}$.

LBM was shown in \cite{berestycki2020random} to describe the scaling limit of random walk on the mated-CRT map. To state this precisely, we introduce the scaling factor 
\begin{equation} \label{eq:meps}
{\meps} := (\mbox{median exit time of $\eta(X^{0, {\eps}})$ from $B_{1/2}$}),
\end{equation}
where $B_{1/2}$ is the Euclidean ball of radius $1/2$ centered at 0. By (1.8) in \cite{berestycki2020random},  
\begin{equation} \label{eq:meps-scaling}
C^{-1} {\eps}^{-1} \leq {\meps} \leq C {\eps}^{-1}, \quad \mbox{for all sufficiently small ${\eps} > 0$},
\end{equation}
where $C > 1$ is a deterministic constant depending only on $\gamma$. We also extend the domain of definition of the embedded 
walk $\N \ni j \to \eta(X^{z, {\eps}}_j)$ from $\N$ to $[0, \infty)$ by piecewise linear interpolation. 
\begin{theorem}[Theorem 1.2 \cite{berestycki2020random}] \label{theorem:rw-to-lbm}
	For each compact subset $K \subset \C$ and each $z \in K$,  the conditional law of the embedded linearly interpolated walk $(\eta(X^{z, {\eps}}_{m_{{\eps}} t}))_{t \geq 0}$
	given $(h, \eta)$ converges in probability to the law of rescaled $\gamma$-LBM, defined by \eqref{eq:rescaled-lbm}, started from $z$ associated with $h$ with respect to the Prokhorov topology induced by the local uniform metric on curves $[0,\infty) \to \C$.
\end{theorem}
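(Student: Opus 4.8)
\subsection*{Proof sketch}

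The plan is to split the argument into two stages: (i) show that the embedded walk $(\eta(X^{z,\eps}_j))_{j \ge 0}$, viewed as a curve \emph{modulo time parameterization}, converges in probability given $(h,\eta)$ to Brownian motion started from $z$; and (ii) identify the correct time parameterization as the one defining rescaled LBM. Stage (i) is essentially the content of the Gwynne--Miller--Sheffield results \cite{gms-tutte, gms-random-walk}: under the Tutte (harmonic) embedding the coordinate functions of the walk are martingales, so convergence to Brownian motion modulo time parameterization follows, via a martingale invariance principle, from isotropy of the embedded increments together with a crossing (effective-resistance) estimate ruling out macroscopic one-step displacements; one then transfers this to the SLE/LQG embedding using the fact, also from \cite{gms-tutte}, that the two embeddings are uniformly close. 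I would take Stage (i) as given and concentrate on Stage (ii), where the $\gamma$-dependent geometry of $\mu_h$ enters.

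First I would prove tightness of the genuinely time-parameterized walk $(\eta(X^{z,\eps}_{m_{\eps}t}))_{t \ge 0}$ in the local uniform topology. By a standard tightness criterion this reduces to showing that, with probability tending to $1$, no time-window of length $m_{\eps}\delta$ is carried to a set of Euclidean diameter exceeding some $\psi(\delta)$ with $\psi(\delta)\to 0$ as $\delta \to 0$. This follows by combining Stage (i), which controls spatial displacement once the walk is known to resemble Brownian motion, with two-sided control on the number of steps the walk needs to exit, and to cross, a small Euclidean ball: an upper bound preventing the walk from lingering inside a single cell or a small cluster of cells, and a lower bound preventing it from traversing a ball of positive $\mu_h$-mass in too few steps. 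Both are read off from the cell-size and volume estimates recalled in Lemmas~\ref{lemma:cell-size-estimate} and~\ref{lemma:volume-growth}, which hold with probability tending to $1$ and pin the diameter and $\mu_h$-mass of every relevant cell into a controlled range.

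The decisive step is identifying the time change. Any subsequential limit $X$ of the time-parameterized walk is continuous and, by Stage (i), agrees as a curve modulo time parameterization with a Brownian motion, so $X_t = B_{\sigma_t}$ for a continuous increasing clock $\sigma$; it remains to force $\sigma_t = \phi^{-1}(m_0 t)$. For this I would show that the joint laws of the walk's exit times from a nested family of nice domains $z \in D_1 \Subset D_2 \Subset \cdots$, each normalized by $m_{\eps}$, converge to those of rescaled $\gamma$-LBM. The engine is a comparison between the walk's stopped Green's function and the Brownian one: for a nice domain $D \ni z$, the expected number of visits a walk from $z$ makes to a vertex $b$ before exiting $\Geps(D)$, divided by $\deg^{\eps}(b)$, is asymptotic to a universal constant times $G_D(z,\eta(b))$, with $G_D$ the Brownian Green's function on $D$ (an elliptic-Harnack-plus-invariance-principle estimate). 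Summing this against the degree-weighted counting measure and invoking Lemma~\ref{lemma:mu_h_degree-convergence} gives, for instance,
\[
\frac{1}{m_{\eps}}\,\mathbb{E}\bigl[\tau^{\eps}_D\bigr]\;\longrightarrow\;\frac{1}{m_0}\int_D G_D(z,w)\,d\mu_h(w),
\]
where $\tau^{\eps}_D$ denotes the number of steps for $X^{z,\eps}$ to exit $\Geps(D)$; the right-hand side is exactly the expected exit time of rescaled $\gamma$-LBM from $D$, and the normalizing constant comes out right because $m_{\eps}$ and $m_0$ are both defined as median exit times from $B_{1/2}$. Running the same comparison at the level of resolvents (expected visit counts for killed walks) and iterating over the $D_i$ pins down every finite-dimensional distribution of $\sigma$, whence $\sigma_t = \phi^{-1}(m_0 t)$. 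Since rescaled LBM is characterized by a well-posed martingale problem, every subsequential limit coincides with it, so the conditional law of $(\eta(X^{z,\eps}_{m_{\eps}t}))_{t \ge 0}$ converges as $\eps \to 0$; the upgrade from convergence in law to the quenched convergence in probability asserted in the theorem is obtained by controlling the conditional variance, over the randomness of the walk, of the functionals above, using the independence of $\eta$ and $h$.

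The main obstacle is the Green's-function comparison underlying the displayed limit, together with the accompanying passage from sums to integrals. The mated-CRT map is not uniformly elliptic and its cells have multifractal sizes, so no off-the-shelf local limit theorem applies; one must instead localize, handle the ``good'' cells --- those whose diameter, degree, and nearby $\mu_h$-mass lie in typical ranges --- via Stage (i) together with a quantitative Harnack inequality, and bound the contribution of the polynomially rare bad cells by crude exit-time moment estimates. Controlling the error near $\partial D$, where $G_D$ is small and so the relative error is largest, is the most delicate point, and is where the volume estimate of Lemma~\ref{lemma:volume-growth} is used most heavily.
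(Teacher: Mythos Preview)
The paper does not prove this theorem. Theorem~\ref{theorem:rw-to-lbm} is stated in Section~\ref{subsec:lbm-and-rw} as a preliminary result imported verbatim from~\cite{berestycki2020random} (it is explicitly labeled ``Theorem~1.2 \cite{berestycki2020random}''), and no proof or sketch is offered here. The present paper treats the convergence of random walk on the mated-CRT map to Liouville Brownian motion as a black box input, invoking it for instance in the proofs of Lemma~\ref{lemma:integral-convergence-of-greens-function} and Lemma~\ref{lemma:convergence-of-harmonic-functions}.

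Your sketch is therefore not comparable to anything in this paper. For what it is worth, the two-stage architecture you outline --- first convergence modulo time parameterization via the Tutte-embedding martingale results of~\cite{gms-tutte, gms-random-walk}, then identification of the time change through Green's-function and exit-time comparisons feeding into the degree-weighted measure convergence --- is broadly the strategy pursued in~\cite{berestycki2020random} itself, so as a high-level summary of that paper your proposal is reasonable. But it is misplaced in the context of the present paper, where the statement is a citation, not a result to be proved.
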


\subsection{Discrete potential theory} \label{subsec:discrete-potential-definitions}
Recall the notation for simple random walk from Section \ref{subsec:lbm-and-rw}. 
We denote the Green's function for simple random walk killed upon exiting a set $D \subset \C$ by 
\begin{equation} \label{eq:greens-function}
\begin{aligned}
&\Gr{{\eps}}{D}(a,b) \\
&:= \E[\mbox{number of times that $X^{a, {\eps}}$ hits $b$ before exiting ${\VGeps}(D) \setminus {\VGeps}(\partial D)$} | h, \eta] 
\end{aligned}
\end{equation}
and its normalized version, the {\it Green's kernel} by
\begin{equation} \label{eq:greens-kernel}
\gr{{\eps}}{D}(a,b) := \frac{\Gr{{\eps}}{D}(a,b)}{\deg^{\eps}(b)}.
\end{equation}
By defining 
\[
\Gr{{\eps}}{D}(a, \cdot) = \Gr{{\eps}}{D}(\cdot, a) = 0, \quad \forall a \not \in {\VGeps}(D) \setminus {\VGeps}(\partial D)
\]
we extend the domain of definition of both the Green's function and kernel to all of ${\VGeps}$. 

For $a \in {\VGeps}$, the exit time of simple random walk on the mated-CRT map from a domain $D \subset \C$ is defined by
\begin{equation}  \label{eq:exit-time}
\taueps{D}{a} := \mbox{(first exit time of $X^{a, {\eps}}$ from ${\VGeps}(D) \setminus {\VGeps}(\partial D)$)}  
\end{equation}
and its mean is
\begin{equation} \label{eq:expected-exit-time}
\Qeps{D}(a):= \E[\taueps{D}{a} | h, \eta]  = \sum_{b \in {\VGeps}(D) \setminus {\VGeps}(\partial D)} \Gr{{\eps}}{D}(a,b) .
\end{equation}
We will also need to consider the {\it normalized expected exit time}, for  $a \in {\VGeps}$
\begin{equation} \label{eq:normalized-exit-time}
\qeps{D}(a)= \sum_{b \in {\VGeps}(D) \setminus {\VGeps}(\partial D)} \gr{{\eps}}{D}(a,b).
\end{equation}
Let $\Delta^{{\eps}}$ denote the degree normalized graph Laplacian on $\mathcal{G}^{{\eps}}$, 
\begin{equation} \label{eq:degree-normalized-laplacian}
\begin{aligned}
\Delta^{{\eps}} u(a) &= \frac{1}{\deg^{{\eps}}(a)} \sum_{b \sim a} (u(b) - u(a)) \quad \forall a \in {\VGeps}, \\
&\mbox{for functions $u: {\VGeps} \to \R$}, 
\end{aligned}
\end{equation}
where the sum $b \sim a$ is over the sites $b \in {\VGeps}$ which are joined by an edge to $a$. 
As $\Delta^{{\eps}}$ is the generator of simple random walk on ${\Geps}$, for $D \subset \C$, by, \eg, \cite[Proposition 6.2.3]{lawler-limic-walks},
\begin{equation} \label{eq:discrete-exit-laplacian}
\begin{cases}
\Delta^{{\eps}} \Qeps{D}(\cdot) = -1 \quad &\mbox{in ${\VGeps}(D) \setminus {\VGeps}(\partial D)$} \\
\Qeps{D}(\cdot) = 0 \quad &\mbox{otherwise}
\end{cases} 
\end{equation}
\begin{equation} \label{eq:discrete-normalized-exit-laplacian}
\begin{cases}
\Delta^{{\eps}} \qeps{D}(\cdot) = -1/\deg^{{\eps}}(\cdot) \quad &\mbox{in ${\VGeps}(D) \setminus {\VGeps}(\partial D)$} \\
\qeps{D}(\cdot) = 0 \quad &\mbox{otherwise}
\end{cases} 
\end{equation}
and for each $a \in {\VGeps}(D) \setminus {\VGeps}(\partial D)$, 
\begin{equation} \label{eq:discrete-green-laplacian}
\begin{cases}
\Delta^{{\eps}} \Gr{{\eps}}{D}(\cdot,a) = -1\{\cdot = a\} \quad &\mbox{in ${\VGeps}(D) \setminus {\VGeps}(\partial D)$} \\
\Gr{{\eps}}{D}(\cdot,a) = 0 \quad &\mbox{otherwise}
\end{cases} 
\end{equation}
and
\begin{equation} \label{eq:discrete-normalized-green-laplacian}
\begin{cases}
\Delta^{{\eps}} \gr{{\eps}}{D}(a,\cdot) = -\frac{1\{\cdot = a\}}{\deg^{{\eps}}(a)} \quad &\mbox{in ${\VGeps}(D) \setminus {\VGeps}(\partial D)$} \\
\gr{{\eps}}{D}(a,\cdot) = 0 \quad &\mbox{otherwise}.
\end{cases} 
\end{equation}
\begin{remark} \label{remark:overload-notation-potential}
	For a subset $A$ of  ${\VGeps}$, we overload notation and 
	define $\Gr{{\eps}}{A}, 	\gr{{\eps}}{A}, \Qeps{D},$ and $\qeps{A}$
	as above with the instances of ${\VGeps}(D) \setminus {\VGeps}(\partial D)$
	replaced by $A$.  Each of \eqref{eq:discrete-exit-laplacian}, \eqref{eq:discrete-normalized-exit-laplacian}, \eqref{eq:discrete-green-laplacian}, 
	and \eqref{eq:discrete-normalized-green-laplacian} hold with this same substitution. 
\end{remark}

We also write $\Delta$ for the continuum Laplacian on $\C$. The operator $\Delta^{{\eps}}$ shares several basic properties with its continuum counterpart $\Delta$, \eg, linearity and monotonicity. Of importance to us is the following {\it maximum principle} for $\Delta^{{\eps}}$. Recall that a function $u: {\VGeps} \to \R$ is {\it subharmonic} (resp. {\it superharmonic}) on $A \subset {\VGeps}$ if $\Delta^{{\eps}} u \geq 0$
(resp. $\Delta^{{\eps}} u \leq 0$) on $A$ and is {\it harmonic} if it is both subharmonic and superharmonic.  When we need to emphasize the distinction between $\C$ and ${\VGeps}$, we qualify functions as $\Delta$-harmonic or $\Delta^{{\eps}}$-harmonic.

\begin{lemma} \label{lemma:maximum-principle}
	For every connected subset $A \subset {\VGeps}$, if $u: \cl(A) \to \R$ is subharmonic on $A$, then 
	\begin{equation} \label{eq:maximum-principle}
	\max_{a \in A} u(a) \leq \max_{b \in \partial A} u(b). 
	\end{equation}
\end{lemma}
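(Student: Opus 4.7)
My plan is to prove this by the standard discrete maximum principle argument, adapted to the degree-normalized Laplacian $\Delta^{{\eps}}$ on ${\Geps}$. The structural feature I would exploit is the convention (from the Notation section) that $\partial A$ contains every vertex in ${\VGeps}\setminus A$ joined by an edge to a vertex in $A$; consequently, for every $a\in A$, all neighbors $b\sim a$ lie in $\cl(A)=A\cup\partial A$, so $u(b)$ is well-defined and the identity defining $\Delta^{{\eps}} u(a)$ has no stray terms outside $\cl(A)$. This is what makes the argument go through without any regularity assumption on the (possibly high-degree) mated-CRT map cells.

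I would first reduce to the case when $A$ is finite by exhausting $A$ via an increasing sequence of finite connected subgraphs $A_n\uparrow A$; the finite case yields the inequality at each level $A_n$, after which one takes $n\to\infty$, interpreting $\max$ as $\sup$ if it is not attained. So assume $A$ is finite, put $M:=\max_{a\in A}u(a)$ and $M':=\max_{b\in\partial A}u(b)$, and suppose for contradiction that $M>M'$. The set $S:=\{a\in A:u(a)=M\}$ is then non-empty and contained in $A$.

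The main step is a propagation-of-maximum argument. At any $a^*\in S$, subharmonicity gives
\[
0 \;\leq\; \Delta^{{\eps}} u(a^*) \;=\; \frac{1}{\deg^{{\eps}}(a^*)}\sum_{b\sim a^*}\bigl(u(b)-M\bigr),
\]
and since $u(b)\leq M$ on $\cl(A)$ each summand is $\leq 0$, forcing all summands to vanish. Hence $u(b)=M$ for every neighbor $b$ of $a^*$, including any neighbor in $\partial A$. I would then use connectedness of $A$ to pick a path $a^*=a_0,a_1,\dots,a_k$ in $A$ with $a_k$ adjacent to some vertex of $\partial A$ (a path exists provided $\partial A\neq\emptyset$, the only nontrivial case). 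Iterating the observation along the path propagates the value $M$ to every $a_j$, and then one more application to $a_k$ yields a vertex in $\partial A$ at which $u=M$, contradicting $M>M'$.

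I do not expect a serious obstacle here: this is essentially a textbook argument and none of the probabilistic or LQG structure of ${\Geps}$ is used. The only points requiring minor care are the finite-exhaustion reduction and checking that the convention for $\partial A$ really captures all edge-neighbors of $A$, so that subharmonicity at interior vertices makes sense with the given domain $\cl(A)$ of $u$.
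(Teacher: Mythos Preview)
Your propagation-of-maximum argument is correct and is essentially the same standard approach as the paper's (very terse) proof, which just notes that subharmonicity at any $a\in A$ produces a neighbor $b\in\cl(A)$ with $u(b)\geq u(a)$, then walks uphill to $\partial A$. One small caveat: your reduction to the finite case via exhaustion $A_n\uparrow A$ does not work as stated, since $\partial A_n$ generally contains vertices of $A\setminus A_n$, so the finite-case bound only gives $\max_{A_n}u\leq\max\bigl(\sup_{A}u,\sup_{\partial A}u\bigr)$; however, in every application in the paper $A$ is finite, and the paper's own proof does not address the infinite case either, so this is not a real issue.
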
 
\begin{proof}
	This is standard. Consider a point $a \in A$ and apply the definition of subharmonicity to find a
	neighbor $\cl(A) \ni b \sim a$ for which $u(b) \geq u(a)$. 
\end{proof}

In this article, we will use discrete Green's function and exit times estimates established in \cite{berestycki2020random}. 
Fix a $\rho \in (0,1)$ --- several of our estimates will be stated only on $B_\rho$. The reason for working on $B_\rho$ is that $h|_{B_1}$ agrees in law with the corresponding restriction of a whole-plane GFF plus $\gamma\log(1/|\cdot|)$, see \eqref{eq:gff}. Consequently the constants in several of these estimates will depend on $\rho$. 
First, we recall an upper bound on the Green's kernel. 
\begin{lemma}[Lemma 5.4 in \cite{berestycki2020random}] \label{lemma:bounded-greens-function}
	There exists $\beta = \beta(\gamma) > 0$
	and $C = C(\rho, \gamma) > 0$ such that 
	with polynomially high probability as ${\eps} \to 0$,
	\[
	\gr{{\eps}}{B_{\rho}}(a,b) 
	\leq C \log \left(  \frac{1}{|\eta(a) - \eta(b)|} \right) + C
	\]
	simultaneously for all $a,b \in {\VGeps}(B_{\rho})$
	with $|\eta(a) - \eta(b)| \geq {\eps}^{\beta}$.
\end{lemma}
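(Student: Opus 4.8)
\emph{Proof proposal.} The plan is to control the degree-normalized Green's kernel $\gr{{\eps}}{B_\rho}(b,\cdot)$ --- which is nonnegative, $\Delta^{{\eps}}$-harmonic away from $b$, and vanishes on the killing boundary --- by comparing the mated-CRT map with Brownian motion through two robust inputs: exit-time estimates and the elliptic Harnack inequality. First, since enlarging the killing domain only increases expected occupation times, $\gr{{\eps}}{B_\rho}(a,b)\le\gr{{\eps}}{B_{\rho'}}(a,b)$ for any $\rho'\in(\rho,1)$, and by Lemma~\ref{lemma:cell-size-estimate} every cell meeting $B_\rho$ lies in $B_{\rho'}$ once ${\eps}$ is small; so we may pass to a $\rho'$ slightly larger than $\rho$ and thereby place $\eta(a),\eta(b)$ well inside the killing domain (this also disposes of base points $\eta(b)$ near $\partial B_\rho$). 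By reversibility of simple random walk $\gr{{\eps}}{B_{\rho'}}(a,b)=\gr{{\eps}}{B_{\rho'}}(b,a)$, so it suffices to estimate $\gr{{\eps}}{B_{\rho'}}(b,a)$ for $a$ at Euclidean distance $d:=|\eta(a)-\eta(b)|\ge{\eps}^\beta$ from $b$.

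Fix $b$ and organize space into the dyadic Euclidean annuli $A_k:=\A_{2^{-k-1},2^{-k}}(\eta(b))$, $k=1,\dots,K$, down to the innermost scale $2^{-K}\asymp{\eps}^\beta$. Since $\beta$ is taken below the cell-size exponent $2/(2+\gamma)^2$ of Lemma~\ref{lemma:cell-size-estimate}, each $A_k$ has width far exceeding the maximal cell diameter, so it is ``resolved'' by the graph (it meets $\asymp{\eps}^{-1}\mu_h(A_k)$ cells, by Lemma~\ref{lemma:mu_h-convergence}), it is disjoint from $\Heps{b}$, and the walk from $b$ must cross every $A_k$ with $2^{-k}$ between a constant times ${\eps}^\beta$ and a constant times $d$ before it can reach $\Heps{a}$. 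The key point is a pair of \textbf{exit-time estimates} for the mated-CRT map: with polynomially high probability, uniformly over all such annuli, (i) the expected exit time from $A_k$ and the expected length of an excursion of the walk inside $A_k$ are each comparable, up to constants, to the number of cells meeting $A_k$; and (ii) the walk from $b$ makes $\asymp k$ excursions into $A_k$ before exiting $B_{\rho'}$ (the log-radial coordinate of the embedded walk behaves, up to bounded factors, like a fair gambler's-ruin walk). The content of (i) is that the wild multifractal fluctuations of cell volumes and of exit times \emph{cancel} in the ratio --- precisely why only up-to-constants control is needed. Combining (i) and (ii), the expected time the walk from $b$ spends in $A_k$ before $\taueps{B_{\rho'}}{b}$ is $\asymp k\cdot(\#\text{cells in }A_k)$; but this equals $\sum_{a'\in{\VGeps}(A_k)}\Gr{{\eps}}{B_{\rho'}}(b,a')=\sum_{a'}\deg^{{\eps}}(a')\,\gr{{\eps}}{B_{\rho'}}(b,a')$, and $\sum_{a'}\deg^{{\eps}}(a')\asymp\#\text{cells in }A_k$ by Lemma~\ref{lemma:mu_h_degree-convergence}, so the $A_k$-average of $\gr{{\eps}}{B_{\rho'}}(b,\cdot)$ is $\le C(k+C)\asymp C(\log(1/2^{-k})+C)$. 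Summing the $O(1)$ annulus-crossing resistances produced by (i) over the $K\asymp\log(1/{\eps})$ scales also gives the diagonal-type bound $\gr{{\eps}}{B_{\rho'}}(b,b)=R_{\mathrm{eff}}(b\leftrightarrow\partial)\le C\log(1/{\eps})$.

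It remains to pass from the annulus-average to the pointwise value at $a$. Since $\gr{{\eps}}{B_{\rho'}}(b,\cdot)$ is a nonnegative $\Delta^{{\eps}}$-harmonic function on a neighborhood of $A_k$, the \textbf{elliptic Harnack inequality} for the mated-CRT map, applied along a chain of $O(1)$ Euclidean balls covering $A_k$, gives $\max_{{\VGeps}(A_k)}\gr{{\eps}}{B_{\rho'}}(b,\cdot)\le C\min_{{\VGeps}(A_k)}\gr{{\eps}}{B_{\rho'}}(b,\cdot)\le C\cdot(A_k\text{-average})$. Applying this with $2^{-k}\asymp d$ yields $\gr{{\eps}}{B_{\rho'}}(b,a)\le C(\log(1/d)+C)$, and by reversibility this is the claimed bound on $\gr{{\eps}}{B_\rho}(a,b)$. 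To get it simultaneously over all admissible pairs, one union bounds over the $\le{\eps}^{-1+o(1)}$ vertices in ${\VGeps}(B_{\rho'})$ (each event holding with probability $1-O({\eps}^p)$ for a fixed $p$, with room to spare, or one first proves the estimate on a polynomial-size net of base points and dyadic scales and extends by monotonicity).

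The main obstacle is establishing the exit-time estimates and the elliptic Harnack inequality uniformly over \emph{all} dyadic annuli and balls contained in $B_{\rho'}$ with inner radius at least ${\eps}^\beta$, with polynomially high probability. Item (i) (exit time from $A_k$ $\asymp$ volume of $A_k$) rests on the a priori cell-size and volume bounds (Lemmas~\ref{lemma:cell-size-estimate} and~\ref{lemma:volume-growth}) together with the planar triangulation structure of the map; item (ii) and the elliptic Harnack inequality encode that the embedded walk has no anomalous radial behavior at any scale, which is the substantive LQG input and is where a quantitative form of the convergence of random walk on the mated-CRT map to Brownian motion (Theorem~\ref{theorem:rw-to-lbm}), cf.\ \cite{berestycki2020random, gwynne2019harmonic}, enters. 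It is exactly here that the ``robust'' philosophy --- replacing sharp Green's function asymptotics, which are unavailable on the mated-CRT map, by up-to-constants comparisons in which the multifractality cancels --- does the work.
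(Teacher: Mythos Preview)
The paper does not prove this lemma at all: it is stated as Lemma~5.4 of \cite{berestycki2020random} and simply quoted in the preliminaries section without argument. So there is no ``paper's own proof'' to compare against here.

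That said, your sketch is in the right spirit for how such bounds are actually established in \cite{berestycki2020random}. The essential ingredients---uniform Harnack-type control on discrete harmonic functions (from \cite{gwynne2019harmonic}) together with exit-time/occupation-time comparisons at all mesoscopic scales---are indeed the tools used there. A couple of points where your outline is looser than what a full proof requires: (a) the claim that the walk makes $\asymp k$ excursions into the $k$th dyadic annulus before exiting is not something you can read off directly from Theorem~\ref{theorem:rw-to-lbm}, which gives convergence in law but not the uniform-in-scale quantitative control you need; the actual argument in \cite{berestycki2020random} proceeds more directly via off-diagonal Green's function bounds derived from effective resistance and volume estimates, rather than by counting excursions; (b) to apply Harnack on the annulus $A_k$ you need harmonicity on a slightly larger region, which is fine once $2^{-k}\gg{\eps}^\beta$ but requires the cell-size control you mention. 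These are technical rather than conceptual gaps, and your overall architecture---dyadic decomposition plus Harnack to pass from averages to pointwise values---matches the standard route.
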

We will also need an upper bound on the Green's function. Recall that the Green's function is the Green's kernel multiplied by the degree. 
Since the vertex degrees on the mated-CRT map are unbounded, we cannot directly apply the previous lemma. 
Nevertheless, the following crude upper bound will suffice. 
\begin{lemma}[Lemma 5.6 in \cite{berestycki2020random}] \label{lemma:greens-function-log-bound}
	For each $\zeta \in (0,1)$, it holds 
	with polynomially high probability as ${\eps} \to 0$ that
	\[
	\sup_{a,b \in {\VGeps}(B_{\rho})} \Gr{{\eps}}{B_{\rho}}(a,b) 
	\leq {\eps}^{-\zeta}.
	\]
\end{lemma}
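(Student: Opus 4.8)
The plan is to reduce to the diagonal $\Gr{{\eps}}{B_\rho}(b,b)$ by the maximum principle, write $\Gr{{\eps}}{B_\rho}(b,b)=\deg^{{\eps}}(b)\,\gr{{\eps}}{B_\rho}(b,b)$, and bound the two factors separately: the degree by a crude moment estimate and the normalized diagonal kernel by a logarithm, the latter extracted from Lemma~\ref{lemma:bounded-greens-function} via a one-step escape argument.

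\textbf{Reduction to the diagonal.} Fix $b\in{\VGeps}(B_\rho)$. If $b\notin {\VGeps}(B_\rho)\setminus{\VGeps}(\partial B_\rho)$ then $\Gr{{\eps}}{B_\rho}(a,b)=0$ for all $a$, so assume $b\in{\VGeps}(B_\rho)\setminus{\VGeps}(\partial B_\rho)$. By \eqref{eq:discrete-green-laplacian}, $a\mapsto\Gr{{\eps}}{B_\rho}(a,b)$ is nonnegative, vanishes outside ${\VGeps}(B_\rho)\setminus{\VGeps}(\partial B_\rho)$, and is $\Delta^{{\eps}}$-harmonic on $({\VGeps}(B_\rho)\setminus{\VGeps}(\partial B_\rho))\setminus\{b\}$; applying Lemma~\ref{lemma:maximum-principle} on each connected component of this set (whose boundary consists of $b$ together with vertices at which $\Gr{{\eps}}{B_\rho}(\cdot,b)=0$) gives $\Gr{{\eps}}{B_\rho}(a,b)\le\Gr{{\eps}}{B_\rho}(b,b)$ for every $a$, so $\sup_{a,b\in{\VGeps}(B_\rho)}\Gr{{\eps}}{B_\rho}(a,b)=\sup_b\Gr{{\eps}}{B_\rho}(b,b)$, and by \eqref{eq:greens-kernel} this equals $\sup_b\deg^{{\eps}}(b)\,\gr{{\eps}}{B_\rho}(b,b)$. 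For the degree I would show $\max_{a\in{\VGeps}(B_\rho)}\deg^{{\eps}}(a)\le{\eps}^{-\zeta/2}$ with polynomially high probability: the number of vertices in ${\VGeps}(B_\rho)$ is $O({\eps}^{-1})$ with polynomially high probability (each cell has $\mu_h$-mass ${\eps}$ and, by Lemma~\ref{lemma:cell-size-estimate}, cells meeting $B_\rho$ lie in a fixed larger ball of finite $\mu_h$-mass), so by a union bound it suffices to have $\P[\deg^{{\eps}}(a)>{\eps}^{-\zeta/2}]\le{\eps}^{p}$ for each fixed $p$ and small ${\eps}$, which follows from the fact that the law of $\deg^{{\eps}}(a)$ does not depend on ${\eps}$ and has finite moments of all orders (a standard property of the mated-CRT map, see~\cite{berestycki2020random}).

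\textbf{The diagonal kernel.} I would show $\gr{{\eps}}{B_\rho}(b,b)\le C\log(1/{\eps})+C$ with polynomially high probability, for $C=C(\rho,\gamma)$. The identities needed are that $\Gr{{\eps}}{B_\rho}(b,b)^{-1}$ equals the conditional probability given $(h,\eta)$ that $X^{b,{\eps}}$ exits ${\VGeps}(B_\rho)\setminus{\VGeps}(\partial B_\rho)$ before returning to $b$, and that from a vertex $b'$ the walk hits $b$ before exiting with conditional probability $\Gr{{\eps}}{B_\rho}(b',b)/\Gr{{\eps}}{B_\rho}(b,b)$. If $b$ has a neighbor $b'$ with $|\eta(b)-\eta(b')|\ge{\eps}^{\beta}$ ($\beta$ as in Lemma~\ref{lemma:bounded-greens-function}), conditioning on the first step of $X^{b,{\eps}}$ being to $b'$ (conditional probability $\ge 1/\deg^{{\eps}}(b)$) gives
\[
\frac{1}{\Gr{{\eps}}{B_\rho}(b,b)}\;\ge\;\frac{1}{\deg^{{\eps}}(b)}\Bigl(1-\frac{\Gr{{\eps}}{B_\rho}(b',b)}{\Gr{{\eps}}{B_\rho}(b,b)}\Bigr),
\]
which, using $\Gr{{\eps}}{B_\rho}=\deg^{{\eps}}\,\gr{{\eps}}{B_\rho}$ and the symmetry of $\gr{{\eps}}{B_\rho}$, rearranges to $\gr{{\eps}}{B_\rho}(b,b)\le 1+\gr{{\eps}}{B_\rho}(b,b')\le C\log(1/{\eps})+C$ by Lemma~\ref{lemma:bounded-greens-function}. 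When $b$ has no such neighbor, I would run $X^{b,{\eps}}$ until it first leaves ${\VGeps}(B_{{\eps}^{\beta'}}(\eta(b)))\setminus{\VGeps}(\partial B_{{\eps}^{\beta'}}(\eta(b)))$ for a fixed $\beta'\in(0,\beta\wedge\tfrac{2}{(2+\gamma)^2})$; by Lemma~\ref{lemma:cell-size-estimate} the vertex $a$ where it leaves has $|\eta(a)-\eta(b)|\ge{\eps}^{\beta'}-{\eps}^{2/(2+\gamma)^2-\zeta'}\ge{\eps}^{\beta}$, and the same conditioning applied to the whole excursion out of the small ball yields $\gr{{\eps}}{B_\rho}(b,b)\le\gr{{\eps}}{B_{{\eps}^{\beta'}}(\eta(b))}(b,b)+C\log(1/{\eps})+C$; one then iterates this with a geometrically shrinking radius (using monotonicity of $\gr{{\eps}}{\cdot}$ in the domain to keep invoking Lemma~\ref{lemma:bounded-greens-function}) down to a scale at which the ball contains $O(1)$ cells, so the diagonal kernel is polylogarithmic in $1/{\eps}$ in any case. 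Combining with the previous step, with polynomially high probability $\sup_{a,b\in{\VGeps}(B_\rho)}\Gr{{\eps}}{B_\rho}(a,b)\le{\eps}^{-\zeta/2}\bigl(C\log(1/{\eps})+C\bigr)\le{\eps}^{-\zeta}$ once ${\eps}$ is small.

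\textbf{Main obstacle.} The decisive point is the degree estimate. The worst-case bound one can extract directly from Lemmas~\ref{lemma:volume-growth}--\ref{lemma:cell-size-estimate} alone is only of the form $\deg^{{\eps}}(a)\le{\eps}^{-c}$ with $c=c(\gamma)$ bounded away from $0$ (every neighbor of a cell meeting $B_\rho$ lies in a ball about $\eta(a)$ whose $\mu_h$-mass is a fixed power of ${\eps}$, while each cell has $\mu_h$-mass exactly ${\eps}$), and ${\eps}^{-c}$ times the logarithmic diagonal kernel above is \emph{not} ${\eps}^{-\zeta}$ for small $\zeta$. Getting the full strength therefore requires the (standard but nontrivial) fact that cell degrees have all moments finite; equivalently, one must exploit that a high-degree cell has correspondingly small effective resistance to $\partial B_\rho$, so that the product $\deg^{{\eps}}(b)\,\gr{{\eps}}{B_\rho}(b,b)=\Gr{{\eps}}{B_\rho}(b,b)$ stays polylogarithmic even though neither factor does. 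The degenerate case in the diagonal-kernel step (a cell all of whose neighbors are geometrically very close to its marked point $\eta(\cdot)$) is a secondary technicality, handled by the scale recursion above or by checking it has negligible probability.
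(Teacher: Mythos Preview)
The paper does not prove this lemma; it is quoted verbatim from \cite[Lemma~5.6]{berestycki2020random}, so there is no in-paper argument to compare against. Your reduction to the diagonal via the maximum principle and the factorization $\Gr{{\eps}}{B_\rho}(b,b)=\deg^{{\eps}}(b)\,\gr{{\eps}}{B_\rho}(b,b)$ are correct, and in the non-degenerate case your one-step escape computation cleanly gives $\gr{{\eps}}{B_\rho}(b,b)\le 1+\gr{{\eps}}{B_\rho}(b,b')\le C\log(1/{\eps})$. For the degree factor you can even use the sharper polylogarithmic bound $\max_{a\in{\VGeps}(B_\rho)}\deg^{{\eps}}(a)\le(\log{\eps}^{-1})^2$ from \cite[Lemma~2.6]{gwynne2019harmonic}, which this paper invokes at~\eqref{eq:degree-bound}; so the degree is not the obstacle you flag in your final paragraph.

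The genuine gap is your treatment of the degenerate case. Your recursion gives
\[
\gr{{\eps}}{B_\rho}(b,b)\;\le\;\gr{{\eps}}{D_k}(b,b)\;+\;\sum_{j=1}^{k}\E\bigl[\gr{{\eps}}{D_{j-1}}(b,X_{\tau_j})\bigr],
\]
and each summand is controlled by monotonicity and Lemma~\ref{lemma:bounded-greens-function} \emph{only when} $|\eta(b)-\eta(X_{\tau_j})|\ge{\eps}^{\beta}$, which forces every intermediate ball $D_j$ to have radius at least of order ${\eps}^{\beta}$. But there is no reason a ball of radius ${\eps}^{\beta}$ around $\eta(b)$ contains $O(1)$ cells: by Lemma~\ref{lemma:volume-growth} it may contain as many as ${\eps}^{\beta\beta^+-1}$ of them, and since the exponent $\beta$ in Lemma~\ref{lemma:bounded-greens-function} is not specified, $\beta\beta^+<1$ is entirely possible. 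Your iteration therefore bottoms out with an uncontrolled term $\gr{{\eps}}{D_k}(b,b)$; bounding it by the expected exit time via Lemma~\ref{lemma:expected-exit-time-explicit-bound} yields only ${\eps}^{q\beta-1}$, a fixed positive power of $1/{\eps}$, which after multiplying by the degree does \emph{not} give ${\eps}^{-\zeta}$ for arbitrary $\zeta$. Your fallback---that the degenerate configuration has negligible probability---would require a uniform lower bound on $\max_{b'\sim b}|\eta(b)-\eta(b')|$, and this does not follow from Lemma~\ref{lemma:cell-size-estimate}: the point $\eta(b)$ sits on the common boundary of $H_b^{\eps}$ and $H_{b+{\eps}}^{\eps}$, and nothing prevents all neighboring marked points $\eta(b')$ from clustering near it even when the cells themselves are large.

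What is actually needed is a direct upper bound on the on-diagonal quantity $\Gr{{\eps}}{B_\rho}(b,b)$ (equivalently on the effective resistance from $b$ to the complement of ${\VGeps}(B_\rho)\setminus{\VGeps}(\partial B_\rho)$) that does not route through the off-diagonal estimate of Lemma~\ref{lemma:bounded-greens-function}. This is precisely the content of \cite[Lemma~5.6]{berestycki2020random}, and it relies on inputs not recalled in Section~\ref{sec:preliminaries} of the present paper.
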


We also need a lower bound on both the Green's kernel and function. 
\begin{lemma}[Lemma 4.4 in \cite{berestycki2020random}] \label{lemma:greens-kernel-lower-bound}
	There exists $\omega = \omega(\gamma) > 0$ and $C = C(\rho, \gamma) > 0$
	such that with polynomially high probability as ${\eps} \to 0$ the following holds. 
	For each $a \in {\VGeps}(B_{\rho - {\eps}^{\omega}})$ and 
	each $r \in [{\eps}^{\omega}, \dist(\eta(a), \partial B_{\rho})]$, 
	\[
	\begin{aligned}
	\Gr{{\eps}}{B_r(\eta(a))}(a,b)  \geq \gr{{\eps}}{B_r(\eta(a))}(a,b) &\geq C^{-1} \log \left( \frac{r}{|\eta(a) - \eta(b)|} \wedge {\eps}^{-1} \right) - 1, \\
	\quad \forall b \in {\VGeps}(B_{r/3}(\eta(a))).
	\end{aligned}
	\]

\end{lemma}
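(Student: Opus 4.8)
The first inequality is immediate, since $\Gr{{\eps}}{B_r(\eta(a))}(a,b) = \deg^{{\eps}}(b)\,\gr{{\eps}}{B_r(\eta(a))}(a,b) \ge \gr{{\eps}}{B_r(\eta(a))}(a,b)$, so I would concentrate on the Green's kernel. Write $x := \eta(a)$, $D := B_r(x)$, let $A := {\VGeps}(D)\setminus{\VGeps}(\partial D)$ be the vertex set on which the killed walk lives, put $s := |\eta(a)-\eta(b)|$, and note one may assume $s \le r/3$. The plan is to factor the kernel using reversibility and a last-exit decomposition, and then to prove a lower bound for each factor by a multi-scale argument in which every dyadic scale between (roughly) the cell size near $x$ and $r$ contributes a constant amount.

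I would first use that simple random walk on ${\Geps}$ is reversible with respect to the degree measure, so that $\gr{{\eps}}{D}$ is symmetric, and that decomposing the walk from $b$ at its first visit to $a$ (strong Markov property) gives
\[
\gr{{\eps}}{B_r(x)}(a,b) \;=\; \P_b\!\left[\,\tau_a < \taueps{B_r(x)}{b}\,\right]\cdot\gr{{\eps}}{B_r(x)}(a,a),
\]
where $\P_b$ is the quenched law of the walk from $b$ and $\tau_a$ its hitting time of $a$. Moreover $\gr{{\eps}}{B_r(x)}(a,a) = R_{\mathrm{eff}}(a\leftrightarrow\partial A)$, the effective resistance in ${\Geps}$ with unit conductances between $a$ and the killing set $\partial A$, because the number of visits to $a$ before exiting $A$ is geometric with mean $\deg^{{\eps}}(a)/C_{\mathrm{eff}}(a\leftrightarrow\partial A)$. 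So, up to absorbing $O(1)$ additive errors into the $-1$, it remains to establish, with polynomially high probability and uniformly over the allowed $a$, $r$ and $b\in{\VGeps}(B_{r/3}(x))$, the two bounds
\[
R_{\mathrm{eff}}(a\leftrightarrow\partial A)\;\gtrsim\;\log{\eps}^{-1},\qquad
\P_b\!\left[\,\tau_a < \taueps{B_r(x)}{b}\,\right]\;\gtrsim\;\frac{\log\!\left(\tfrac{r}{s}\wedge{\eps}^{-1}\right)}{\log{\eps}^{-1}}.
\]

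Next I would fix dyadic radii $r_0 = r > r_1 = r/2 > \cdots > r_J$, descending until $r_J$ reaches the order of the cell diameters meeting $B_{r/3}(x)$; by Lemmas \ref{lemma:cell-size-estimate} and \ref{lemma:volume-growth} together with $r \ge {\eps}^{\omega}$ and $\omega$ chosen small (in particular $\omega < 2/(2+\gamma)^2 - \zeta$), one gets $J \asymp \log{\eps}^{-1}$, while $b$ sits at scale $j_b \asymp \min(\log(r/s), J)\asymp\log\!\left(\tfrac{r}{s}\wedge{\eps}^{-1}\right)$. The whole argument then hinges on one geometric input, which must hold with polynomially high probability uniformly over $a\in{\VGeps}(B_{\rho})$ and over all the scales $r_j$: \emph{(i)} the effective resistance across the dyadic annulus $\A_{r_{j+1},r_j}(x)$, with its inner and outer circles of cells short-circuited, is bounded above and below by constants depending only on $\gamma,\rho$; equivalently, \emph{(ii)} the probability that the walk started from a cell meeting $\partial B_{r_j}(x)$ reaches a cell meeting $\partial B_{r_{j+1}}(x)$ before reaching one meeting $\partial B_{2r_j}(x)$ is bounded above and below by such constants. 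This is the assertion that at every scale the mated-CRT map restricted to a Euclidean ball ``looks two-dimensional''; I would derive it from the elliptic Harnack inequality and effective-resistance estimates for the mated-CRT map (see \cite{gwynne2019harmonic} and its antecedents) together with the a priori cell-size and volume bounds of Lemmas \ref{lemma:cell-size-estimate} and \ref{lemma:volume-growth} --- it is the discrete analogue of the two-dimensionality underlying Theorem \ref{theorem:rw-to-lbm}. Granting \emph{(i)}--\emph{(ii)}: contracting each sphere $\partial B_{r_j}(x)$ --- which only decreases resistance, by Rayleigh monotonicity --- places the $J$ annular pieces in series, so that $R_{\mathrm{eff}}(a\leftrightarrow\partial A) \ge \sum_j R_{\mathrm{eff}}\big(\A_{r_{j+1},r_j}(x);\,\text{shorted}\big) \gtrsim J \asymp \log{\eps}^{-1}$; and viewing the walk's ``current scale'' as a process on $\{0,\dots,J\}$ with uniformly bounded bias (by \emph{(ii)}), gambler's ruin gives $\P_b[\tau_a < \taueps{B_r(x)}{b}] \gtrsim j_b/J \asymp \log\!\left(\tfrac{r}{s}\wedge{\eps}^{-1}\right)/\log{\eps}^{-1}$. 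Multiplying the two displays then yields $\gr{{\eps}}{B_r(x)}(a,b)\gtrsim\log\!\left(\tfrac{r}{s}\wedge{\eps}^{-1}\right)$, as required.

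The main obstacle will be exactly input \emph{(i)}--\emph{(ii)}. The mated-CRT map has unbounded vertex degrees, is not volume-doubling, and is not uniformly elliptic, so ``each dyadic annulus costs a constant amount of resistance'' is not a soft fact; it has to be obtained uniformly over all $a \in {\VGeps}(B_{\rho})$, over all scales down to the cell level, and with only a polynomially small failure probability. I expect the key trick to be that one must short-circuit the two boundary spheres before measuring the annulus resistance, which removes the dependence on the number of cells meeting a given circle (itself an unbounded quantity). The choice of $\omega$ also requires care: it must be small enough --- smaller than $2/(2+\gamma)^2 - \zeta$ and than the relevant exponents in Lemmas \ref{lemma:cell-size-estimate} and \ref{lemma:volume-growth} --- that the dyadic ladder between the cell scale and $r$ still has length $\asymp \log{\eps}^{-1}$ (so the $\asymp\log{\eps}^{-1}$ constant contributions actually materialise) even in the worst case $r = {\eps}^{\omega}$, and a little extra bookkeeping is needed at the finest scales where $a$ and $b$ may lie in neighbouring small cells.
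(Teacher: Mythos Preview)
The paper does not give a proof of this lemma: it is stated as Lemma 4.4 of \cite{berestycki2020random} and simply quoted as a preliminary input, with no argument supplied here. So there is nothing in this paper to compare your proposal against.

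That said, your outline is broadly the right shape for how such bounds are obtained in \cite{berestycki2020random} and its predecessor \cite{gwynne2019harmonic}: one uses reversibility to reduce to an on-diagonal (effective-resistance) term times a hitting probability, and then a multiscale argument across concentric annuli, each contributing a bounded amount, to produce the logarithm. You correctly identify the genuine content as the uniform annulus-crossing/effective-resistance estimate (your \emph{(i)}--\emph{(ii)}); that is exactly what is established in \cite{gwynne2019harmonic} (and used in \cite{berestycki2020random}) via Dirichlet-energy comparisons for the mated-CRT map, and it is not something one can derive softly from the Harnack inequality alone. If you were to flesh this out, the place to be careful is that the annulus estimate in those references is proven with polynomially high probability \emph{at a single scale and center}, and one must then union-bound over a polynomial number of centers and $O(\log{\eps}^{-1})$ scales to get the uniform statement---your sketch glosses over this step. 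But as a high-level plan, it matches the strategy of the cited reference.
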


We conclude this subsection by formulating an upper bound on the expected exit time function in terms of the coarseness
of the ${\eps}$-mated CRT map and a certain continuum quantity. To that end, we consider the following random variable (from (5.1) in \cite{berestycki2020random})
\begin{equation} \label{eq:largest-exit-time-rv}
\Mrho{D} := \sup_{z \in D} \int_{D} \left( \log \left( \frac{1}{|z-w|} \right) + 1 \right) d\mu_h(w), \quad \mbox{for $D \subset \C$}
\end{equation}
and recall the following bounds. 
\begin{lemma} \label{lemma:bounded-exit-time}
	Almost surely, 
	\[
	\sup_{D \subseteq B_{\rho}} \Mrho{D} < \infty
	\]
	and with polynomially high probability as $\delta \to 0$
	\[	
	\sup_{y \in B_{\rho}} \Mrho{B_{\delta}(y)} \leq \delta^q, 
	\]
	for some deterministic $q = q(\gamma) > 0$.
\end{lemma}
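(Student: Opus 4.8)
The plan is to reduce both parts of Lemma~\ref{lemma:bounded-exit-time} to the worst-case volume bound for $\mu_h$ on Euclidean balls, Lemma~\ref{lemma:volume-growth}, together with a layer-cake (Fubini) rewriting of the logarithmic integral defining $\Mrho{\cdot}$.

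\emph{The a.s.\ bound.} First I would observe that $\Mrho{\cdot}$ is monotone under inclusion within $B_\rho$: for $z,w \in B_\rho$ one has $|z-w| < 2\rho < e$, so the integrand $w \mapsto \log(1/|z-w|)+1$ is strictly positive on $B_\rho$, and enlarging both the domain of integration and the range of the supremum can only increase $\Mrho{\cdot}$. Hence $\Mrho{D} \le \Mrho{B_\rho}$ for all $D \subseteq B_\rho$, and it suffices to show $\Mrho{B_\rho} < \infty$ a.s. For fixed $z \in B_\rho$, the layer-cake formula gives
\[
\int_{B_\rho}\Big(\log\tfrac{1}{|z-w|}\Big)^{+} d\mu_h(w) = \int_0^\infty \mu_h\big(B_{e^{-s}}(z)\cap B_\rho\big)\,ds .
\]
Applying Lemma~\ref{lemma:volume-growth} along $\delta = e^{-n}$ and using Borel--Cantelli yields, a.s., a finite random $N$ (which we may also take $\ge \log(1/(1-\rho))$) such that $\mu_h(B_{e^{-n}}(z)) \le e^{-\beta^+ n}$ for all $n \ge N$ and all $z \in B_\rho$. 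Bounding the integrand on $[n,n+1]$ by $\mu_h(B_{e^{-n}}(z))$ then shows the integral is finite: the tail $n \ge N$ sums a geometric series, and each of the finitely many remaining terms is $\le \mu_h(B_\rho) < \infty$ since $\mu_h$ is a Radon measure (Fact~\ref{fact:lqg-measure}). Adding $\mu_h(B_\rho)$ for the ``$+1$'' and taking the supremum over $z$ — the above is uniform in $z \in B_\rho$ — gives $\Mrho{B_\rho} < \infty$ a.s.

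\emph{The polynomial bound.} Fix $y \in B_\rho$ and $z \in B_\delta(y)$, so $|z-w| \le 2\delta$ for every $w \in B_\delta(y)$. By the same layer-cake identity,
\[
\int_{B_\delta(y)}\log\tfrac{1}{|z-w|}\, d\mu_h(w) = \int_0^\infty \mu_h\big(B_{e^{-s}}(z)\cap B_\delta(y)\big)\,ds .
\]
I would split this at $s = \log(1/(2\delta))$: below that value the integrand is at most $\mu_h(B_{2\delta}(z)) \le (2\delta)^{\beta^+}$, contributing $(2\delta)^{\beta^+}\log(1/(2\delta))$; above it the integrand is at most $\mu_h(B_{e^{-s}}(z)) \le e^{-\beta^+ s}$, contributing $(\beta^+)^{-1}(2\delta)^{\beta^+}$. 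Here I use that Lemma~\ref{lemma:volume-growth} holds simultaneously for all centers in $B_{1-\delta}$, hence uniformly over $y \in B_\rho$ and $z \in B_\delta(y)$ once $\delta < (1-\rho)/3$. Together with $\mu_h(B_\delta(y)) \le \delta^{\beta^+}$ for the ``$+1$'' term, this produces a deterministic $C = C(\gamma)$ with $\Mrho{B_\delta(y)} \le C\,\delta^{\beta^+}\log(1/\delta)$ uniformly in $y$, which is $\le \delta^q$ for any fixed $q \in (0,\beta^+)$ and $\delta$ small; since $\beta^+$ depends only on $\gamma$, so does $q$. The estimate holds with polynomially high probability because Lemma~\ref{lemma:volume-growth} does.

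\emph{Main obstacle.} The lemma is essentially a corollary of Lemma~\ref{lemma:volume-growth}; the only mildly delicate point is making the first (a.s.) bound hold for the worst-case $D \subseteq B_\rho$ and the worst-case interior point $z$ simultaneously, which the monotonicity reduction to $\Mrho{B_\rho}$ plus the Borel--Cantelli upgrade of Lemma~\ref{lemma:volume-growth} across the scales $e^{-n}$ take care of. Everything else is routine bookkeeping with the positivity of the integrand and the layer-cake formula.
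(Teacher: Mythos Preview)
Your proof is correct and more self-contained than the paper's. The paper argues both parts by direct citation: for the a.s.\ bound it invokes \cite[Lemma~A.3]{berestycki2020random}, an exponential tail bound for the whole integral $\sup_{z\in D}\int_D \log(1/|z-w|)\,d\mu_h(w)$, and then runs Borel--Cantelli over integer values of the parameter $A$; for the polynomial bound it simply points to the proof of \cite[Corollary~5.2]{berestycki2020random} (which decomposes the integral over dyadic annuli). You instead reduce everything to the single input Lemma~\ref{lemma:volume-growth} via a layer-cake representation of the logarithmic integral and the monotonicity $\Mrho{D}\le\Mrho{B_\rho}$. This is effectively re-deriving from first principles what the paper imports, and it makes the dependence on the LQG mass estimates completely explicit; the price is a slightly longer write-up.

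One small gap worth patching: in the polynomial bound you invoke $\mu_h(B_{e^{-s}}(z)) \le e^{-\beta^+ s}$ for \emph{all} $s \ge \log(1/(2\delta))$, i.e., Lemma~\ref{lemma:volume-growth} at a continuum of scales simultaneously, whereas that lemma is stated for a single fixed scale. The fix is routine --- apply Lemma~\ref{lemma:volume-growth} at the dyadic scales $2^{-n}$ with $2^{-n}\le 2\delta$, sum the $O(2^{-pn})$ failure probabilities to get a polynomial-in-$\delta$ bound on the bad event, and interpolate between consecutive dyadic scales by the monotonicity of $r\mapsto\mu_h(B_r(z))$ at the cost of a harmless factor $2^{\beta^+}$ --- but you should say so explicitly.
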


\begin{proof}
	{\it Step 1: $	\mathcal{M}(D) < \infty$.} \\
	As $\mu_h$ is a locally finite measure, it suffices to bound the log integrand in \eqref{eq:largest-exit-time-rv}.
	By \cite[Lemma A.3]{berestycki2020random}, there are constants $c_0, c_1$ depending only on $\rho$ and $\gamma$ such that for each $A > 1$,
	it holds with probability at least $1 - c_0 e^{-c_1 A}$ that 
	\begin{equation} \label{eq:intermediate-bound}
	\sup_{z \in D} \int_{D}  \log \left( \frac{1}{|z-w|} \right) d\mu_h(w) \leq A ( \mu_h(D) + e^{-c_1 A}). 
	\end{equation}
	By the Borel-Cantelli lemma, this shows $\sup_{D \subseteq B_{\rho}} \Mrho{D} < \infty$ almost surely. 
	
	{\it Step 2: $\sup_{y \in B_{\rho - \delta}} \Mrho{B_{\delta}(y)} \leq \delta^q$.} \\
	This follows by a partition of the integral into dyadic annuli as in the proof of \cite[Lemma A.3]{berestycki2020random}. An explicit proof is given in  
	the proof of Corollary 5.2 in \cite{berestycki2020random} -- this bound is exactly (5.3) there. 
\end{proof}

By \cite{berestycki2020random}, we can bound the expected exit time in terms of this random variable. 
\begin{lemma}[Proposition 5.1 in \cite{berestycki2020random}] \label{lemma:exit-time-upper-bound}
	There exists deterministic $\alpha = \alpha(\gamma) > 0$ and $C = C(\rho, \gamma) > 0$ such that with polynomially high probability as ${\eps} \to 0$
	it holds simultaneously for every Borel set $D \subset B_{\rho}$ with $\mu_h(D) \geq {\eps}^{\alpha}$
	that for all $a \in {\VGeps}(D)$
	\[
	\E[(\taueps{D}{a})^N| h, \eta]  \leq N! C^N {\eps}^{-N} \left( \sup_{z \in D} \int_{B_{{\eps}^{\alpha}}(D)}  \left ( \log \left( \frac{1}{|z-w|} \right) + 1  \right) d \mu_h(w) \right)^N
	\quad \forall N \in \N,
	\]
	in particular, for all $D \subset B_{\rho}$
	\[
	\qeps{D}(a) \leq \Qeps{D}(a)\leq C {\eps}^{-1} \Mrho{B_{{\eps}^{\alpha}}(D)}, \quad \forall a \in {\VGeps}(D).
	\]
\end{lemma}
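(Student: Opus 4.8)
The plan is to prove the single first-moment bound $\Qeps{D}(a)\le C{\eps}^{-1}\sup_{z\in D}\int_{B_{{\eps}^\alpha}(D)}(\log(1/|z-w|)+1)\,d\mu_h(w)$ for every $D\subset B_\rho$ and every $a\in\VGeps(D)$, and then to bootstrap it to the full moment estimate by the standard iteration of the Markov property. For the bootstrap, which is the routine half: if $\tau$ is the exit time of simple random walk from a vertex set $A$ and $M:=\sup_{b\in A}\E_b[\tau\mid h,\eta]<\infty$, then Markov's inequality gives $\P_b[\tau>2M\mid h,\eta]\le\tfrac{1}{2}$ from every $b\in A$, hence $\P_b[\tau>2kM\mid h,\eta]\le2^{-k}$ for all $k\ge0$ by iterating, and therefore $\E_b[\tau^N\mid h,\eta]=\sum_{k\ge0}((k+1)^N-k^N)\,\P_b[\tau>k\mid h,\eta]\le\widetilde C^N N!\,M^N$ for a universal constant $\widetilde C$. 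Applying this with $A=\VGeps(D)\setminus\VGeps(\partial D)$ (so that $\tau=\taueps{D}{b}$, and $\taueps{D}{a}=0$ automatically when $a\in\VGeps(D)\setminus A$) and with $M$ equal to $C{\eps}^{-1}$ times the quantity in the first-moment bound, and absorbing $\widetilde C^N$ into the constant, yields the stated $N$-th moment bound. Replacing $\sup_{z\in D}$ by the supremum over $B_{{\eps}^\alpha}(D)$ defining $\Mrho{\cdot}$ only enlarges the right-hand side, and the reverse replacement costs at most a polynomially small error by Lemma \ref{lemma:cell-size-estimate} and Lemma \ref{lemma:bounded-exit-time} (moving the base point by one cell diameter changes the integral negligibly), so the ``in particular'' inequality $\Qeps{D}(a)\le C{\eps}^{-1}\Mrho{B_{{\eps}^\alpha}(D)}$ follows, while $\qeps{D}(a)\le\Qeps{D}(a)$ is immediate from $\deg^{{\eps}}\ge1$.

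For the first-moment bound I would start from the Green's-function representation $\Qeps{D}(a)=\sum_{b\in\VGeps(D)\setminus\VGeps(\partial D)}\Gr{{\eps}}{D}(a,b)$. Because each cell $\Heps{b}$ is connected, a vertex $b\in\VGeps(D)\setminus\VGeps(\partial D)$ has $\Heps{b}\subset D\subset B_\rho$, so monotonicity of the Green's function in the killing region gives $\Gr{{\eps}}{D}(a,b)\le\Gr{{\eps}}{B_\rho}(a,b)=\deg^{{\eps}}(b)\,\gr{{\eps}}{B_\rho}(a,b)$, and hence $\Qeps{D}(a)\le\sum_{b\in\VGeps(D)}\deg^{{\eps}}(b)\,\gr{{\eps}}{B_\rho}(a,b)$. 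I would then split this sum according to whether $|\eta(a)-\eta(b)|$ is $<{\eps}^\beta$ or $\ge{\eps}^\beta$, with $\beta>0$ chosen small enough that Lemma \ref{lemma:bounded-greens-function} applies on the far part and the truncated logarithm below is an admissible test function for the degree-weighted cell-count estimate. On the near part, the crude bound $\Gr{{\eps}}{B_\rho}(a,b)\le{\eps}^{-\zeta}$ from Lemma \ref{lemma:greens-function-log-bound}, together with the facts that each cell has $\mu_h$-mass exactly ${\eps}$, has small Euclidean diameter (Lemma \ref{lemma:cell-size-estimate}), and the volume bound $\mu_h(B_{2{\eps}^\beta}(\eta(a)))\le C{\eps}^{\beta\beta^+}$ (Lemma \ref{lemma:volume-growth}), show there are $O({\eps}^{-1+\beta\beta^+})$ such cells, so the near part is $O({\eps}^{-1-\zeta+\beta\beta^+})$. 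On the far part, Lemma \ref{lemma:bounded-greens-function} gives $\gr{{\eps}}{B_\rho}(a,b)\le C\log(1/|\eta(a)-\eta(b)|)+C$, so the far part is at most $C\sum_{b\in\VGeps(D)}\deg^{{\eps}}(b)\,g(\eta(b))$ with $g(w):=\min\{\log(1/|\eta(a)-w|),\,\beta|\log{\eps}|\}+1$, which is $O({\eps}^{-\beta})$-Lipschitz and $O(|\log{\eps}|)$-bounded; a quantitative, degree-weighted version of Lemma \ref{lemma:mu_h-convergence} (of the type behind Lemma \ref{lemma:mu_h_degree-convergence}, with the factor $6$ absorbed into $C$) then bounds this by $C{\eps}^{-1}\int_{B_{{\eps}^\alpha}(D)}(\log(1/|\eta(a)-w|)+1)\,d\mu_h(w)+C{\eps}^{-1+\alpha}\le C{\eps}^{-1}\sup_{z\in D}\int_{B_{{\eps}^\alpha}(D)}(\log(1/|z-w|)+1)\,d\mu_h(w)+C{\eps}^{-1+\alpha}$.

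To finish, I would add the two parts and choose $\alpha>0$ small enough relative to $\beta$, $\beta^\pm$, $\zeta$ and the exponents in Lemmas \ref{lemma:mu_h-convergence} and \ref{lemma:cell-size-estimate} so that every error term of the form ${\eps}^{-1+(\cdot)}$ is absorbed: this is possible because $\sup_{z\in D}\int_{B_{{\eps}^\alpha}(D)}(\log(1/|z-w|)+1)\,d\mu_h(w)\ge c\,\mu_h(B_{{\eps}^\alpha}(D))\ge c\,{\eps}^{\alpha\beta^-}$ by Lemma \ref{lemma:volume-growth} (and is $\ge\mu_h(D)\ge{\eps}^\alpha$ under the mass hypothesis). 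Since each of Lemmas \ref{lemma:volume-growth}, \ref{lemma:cell-size-estimate}, \ref{lemma:mu_h-convergence}, \ref{lemma:bounded-greens-function}, \ref{lemma:greens-function-log-bound} holds simultaneously over all admissible $D$ on a single polynomially-high-probability event, the first-moment bound, and hence the whole statement, holds on such an event. The main obstacle is precisely this first-moment estimate, and within it the behaviour near the diagonal: the Green's kernel genuinely blows up logarithmically as $b\to a$, while the cell-count estimates require a Lipschitz test function with an ${\eps}$-dependent constant, so one must truncate at scale ${\eps}^\beta$, handle the truncated-out cells with the lossy ${\eps}^{-\zeta}$ bound, and then verify that a single small $\alpha$ beats all the resulting exponents; confirming that the required quantitative degree-weighted refinement of Lemma \ref{lemma:mu_h_degree-convergence} is available is the other point needing care. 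Everything else — domain monotonicity, the moment bootstrap, and $\qeps{D}\le\Qeps{D}$ — is routine.
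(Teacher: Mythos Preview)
The paper does not prove this lemma: it is quoted verbatim as Proposition~5.1 of \cite{berestycki2020random}, so there is no ``paper's own proof'' to compare against. Your outline is essentially the argument carried out in \cite{berestycki2020random}: a Green's-function representation of $\Qeps{D}$, domain monotonicity $\Gr{{\eps}}{D}\le\Gr{{\eps}}{B_\rho}$, a near/far split at Euclidean scale ${\eps}^\beta$, the crude bound of Lemma~\ref{lemma:greens-function-log-bound} on the near part combined with a cell count via Lemmas~\ref{lemma:volume-growth} and~\ref{lemma:cell-size-estimate}, the logarithmic Green's-kernel bound of Lemma~\ref{lemma:bounded-greens-function} on the far part, and the quantitative cell-count estimate of Lemma~\ref{lemma:mu_h-convergence} to convert the resulting Riemann-type sum into $\int_{B_{{\eps}^\alpha}(D)}(\log(1/|z-w|)+1)\,d\mu_h(w)$. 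The bootstrap from the first moment to all moments via the $\P[\tau>2kM]\le 2^{-k}$ iteration is exactly the standard device.

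The one point you flag --- needing a quantitative \emph{degree-weighted} analogue of Lemma~\ref{lemma:mu_h-convergence} for the far part --- is real, and is handled in \cite{berestycki2020random} (their Lemma~A.5, which underlies Corollary~A.6 quoted here as Lemma~\ref{lemma:mu_h_degree-convergence}) by combining the unweighted estimate with moment bounds on $\deg^{{\eps}}$. Once that ingredient is in hand, your exponent-balancing ($\alpha$ small relative to $\beta\beta^+-\zeta$ so the near-diagonal error ${\eps}^{-1-\zeta+\beta\beta^+}$ is dominated by ${\eps}^{-1}\cdot{\eps}^\alpha\le{\eps}^{-1}\mu_h(D)$) goes through. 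One cosmetic remark: your claim that $b\in\VGeps(D)\setminus\VGeps(\partial D)$ forces $\Heps{b}\subset D$ uses that $D$ is open, whereas the statement allows arbitrary Borel $D$; but this is irrelevant, since all you actually need is $\VGeps(D)\setminus\VGeps(\partial D)\subset\VGeps(B_\rho)\setminus\VGeps(\partial B_\rho)$ for the Green's-function monotonicity, and that containment is immediate.
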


By combining the previous two lemmas, we have the following bound on the expected exit time. 
\begin{lemma} \label{lemma:expected-exit-time-explicit-bound}
	There exists a deterministic $\alpha = \alpha(\gamma) > 0$ such that with polynomially high probability as ${\eps} \to 0$
	it holds simultaneously for every Borel set $D \subset B_{\rho}$ with $\mu_h(D) \geq {\eps}^{\alpha}$
	\[
	\qeps{D} \leq \Qeps{D} < \infty.
	\]
	Moreover,	for some deterministic $q = q(\gamma) > 0$, it holds with probability tending to 1 as ${\eps} \to 0$ and then $\delta \to 0$ that
	\[	
	\qeps{B_{\delta}(y)} \leq \Qeps{B_{\delta}(y)} \leq {\eps}^{-1} \times \delta^q,
	\]
	for every $y \in \C$ such that $B_{\delta}(y) \subset B_{\rho}$. 
\end{lemma}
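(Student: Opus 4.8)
The plan is to combine the two cited inputs essentially directly: Lemma~\ref{lemma:exit-time-upper-bound} bounds $\Qeps{D}$ by $C{\eps}^{-1}$ times the continuum quantity $\Mrho{B_{{\eps}^{\alpha}}(D)}$, and Lemma~\ref{lemma:bounded-exit-time} controls that quantity (it is a.s.\ finite over all subdomains of a fixed ball, and polynomially small for small balls). The inequality $\qeps{D} \le \Qeps{D}$ itself is deterministic and immediate: by \eqref{eq:greens-kernel} and $\deg^{{\eps}}(\cdot) \ge 1$ we have $\gr{{\eps}}{D} \le \Gr{{\eps}}{D}$ pointwise, and summing as in \eqref{eq:expected-exit-time}--\eqref{eq:normalized-exit-time} gives $\qeps{D} \le \Qeps{D}$ (and likewise with $D$ replaced by a vertex set, cf.\ Remark~\ref{remark:overload-notation-potential}).

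For the first assertion I would fix $\rho' := (1+\rho)/2 \in (\rho,1)$ and work on the polynomially-high-probability event of Lemma~\ref{lemma:exit-time-upper-bound} (applied with $\rho'$ in place of $\rho$), intersected with the a.s.\ event $\{\sup_{D' \subseteq B_{\rho'}} \Mrho{D'} < \infty\}$ of Lemma~\ref{lemma:bounded-exit-time}. Since $B_{{\eps}^{\alpha}}(D) \subseteq B_{\rho + {\eps}^{\alpha}} \subseteq B_{\rho'}$ for every $D \subset B_\rho$ once ${\eps}$ is small, Lemma~\ref{lemma:exit-time-upper-bound} yields $\Qeps{D}(a) \le C{\eps}^{-1}\Mrho{B_{{\eps}^{\alpha}}(D)} \le C{\eps}^{-1}\sup_{D' \subseteq B_{\rho'}}\Mrho{D'} < \infty$ for every admissible $D$ and every $a \in {\VGeps}(D)$, which is the claim; here the hypothesis $\mu_h(D) \ge {\eps}^{\alpha}$ in the statement is exactly what licenses the application of Lemma~\ref{lemma:exit-time-upper-bound}.

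For the quantitative bound I would take $D = B_\delta(y)$ with $B_\delta(y) \subset B_\rho$, so $y \in B_{\rho-\delta}$ and, for $\delta$ small, $B_{2\delta}(y) \subset B_{\rho'}$. First, by Lemma~\ref{lemma:volume-growth}, $\mu_h(B_\delta(y)) \ge \delta^{\beta^-}$ with polynomially high probability as $\delta \to 0$, uniformly in such $y$; since $\delta^{\beta^-} \ge {\eps}^{\alpha}$ once ${\eps}$ is small relative to $\delta$, the volume hypothesis of Lemma~\ref{lemma:exit-time-upper-bound} is met. That lemma then gives $\Qeps{B_\delta(y)}(a) \le C{\eps}^{-1}\Mrho{B_{\delta + {\eps}^{\alpha}}(y)}$. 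Because the integrand $\log(1/|z-w|)+1$ in \eqref{eq:largest-exit-time-rv} is strictly positive on $B_{\rho'}$ (there $|z-w| < 2 < e$), the functional $\Mrho{\cdot}$ is monotone under inclusion of subdomains of $B_{\rho'}$, so $\Mrho{B_{\delta + {\eps}^{\alpha}}(y)} \le \Mrho{B_{2\delta}(y)}$ for ${\eps}$ small. Applying the small-ball estimate of Lemma~\ref{lemma:bounded-exit-time} (with $2\delta$ and $\rho'$) gives $\Mrho{B_{2\delta}(y)} \le (2\delta)^q$ with polynomially high probability as $\delta \to 0$. Combining, $\Qeps{B_\delta(y)} \le C(2\delta)^q{\eps}^{-1} \le {\eps}^{-1}\delta^{q/2}$ once $\delta$ is small enough to absorb $C2^q$, and relabelling $q/2$ as $q$ gives the stated inequality. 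The order of limits (${\eps}\to 0$, then $\delta\to 0$) is exactly what is needed: the volume- and $\Mrho{\cdot}$-events have probability $\to 1$ as $\delta\to 0$, while the exit-time event of Lemma~\ref{lemma:exit-time-upper-bound} and the numerical requirement ${\eps}^{\alpha}\le\delta^{\beta^-}$ have probability $\to 1$ as ${\eps}\to 0$ for each fixed $\delta$.

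I do not expect a genuine obstacle here; the only care required is the domain bookkeeping --- passing to the intermediate ball $B_{\rho'}$ so that the ${\eps}^{\alpha}$- and $2\delta$-dilations stay inside the region where Lemmas~\ref{lemma:volume-growth}, \ref{lemma:exit-time-upper-bound}, and \ref{lemma:bounded-exit-time} apply --- together with tracking the constants and the exponent relabelling.
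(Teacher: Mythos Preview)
Your proposal is correct and follows essentially the same approach as the paper's proof, which simply says to apply Lemma~\ref{lemma:bounded-exit-time} with a fixed $\rho' \in (\rho,1)$ to Lemma~\ref{lemma:exit-time-upper-bound} and absorb constants into the factor $\delta^q$. You have carefully filled in the details the paper leaves implicit (the domain bookkeeping via $\rho'$, the monotonicity of $\Mrho{\cdot}$, and the invocation of Lemma~\ref{lemma:volume-growth} to verify the volume hypothesis), but the strategy is the same.
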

\begin{proof}
	Apply Lemma \ref{lemma:bounded-exit-time} with some fixed $\rho' \in (\rho, 1)$ to Lemma \ref{lemma:exit-time-upper-bound} and absorb the constants into the factor of $\delta^q$. 
\end{proof}

	\section{Convergence of the Dirichlet problem} \label{sec:dirichlet-problem}
	In this section we use the estimates recalled in the previous section to prove uniform convergence of both the Green's kernel and the solution to the Dirichlet problem to their continuum counterparts.  Throughout this section, we fix $\rho \in (0,1)$ such that $B_{\rho} \Subset B_1$.

	\subsection{Scaling}
	Recall the definition of ${\meps}$  and recall from~\eqref{eq:meps-scaling} that ${\meps} \asymp {\eps}^{-1}$.
	For a set $D \subset B_{\rho}$, we rescale a function, $u^{{\eps}}: {\VGeps}(D) \to \R $  
	by defining $\overline{u}^{{\eps}}: D \to \R$ in a piecewise constant fashion
	\begin{equation} \label{eq:discrete-scaling}
	\overline{u}^{{\eps}}(x) := {\meps}^{-1}  u^{{\eps}}(a^{{\eps}}_x), \quad \forall x \in D, 
	\end{equation}
	where $a^{{\eps}}_x$ is as in \eqref{eq:plane-to-embedding}.  That is, $\overline{u}^{{\eps}}$ is constant within the cells of the mated-CRT map.

	Throughout this section, we work with functions which are of order ${\eps}^{-1}$. We do this because this is the order of the expected exit time function,
	Lemma \ref{lemma:exit-time-upper-bound}, and the divisible sandpile odometer in Section \ref{sec:divisible-sandpile}. 
	
	\subsection{Tightness}
	In this subsection we prove a general tightness result concerning bounded functions with bounded Laplacian.

	\begin{lemma} \label{lemma:discrete-potential-tightness}
		For every connected set $D \subset B_{\rho}$
		and $\sigma(h,\eta)$-measurable sequence  $\mathcal{E} \ni {\eps} \to 0$,
		if a sequence of functions $u^{{\eps}}: {\VGeps}(D) \to \R $ indexed by ${\eps} \in \mathcal E$ satisfies
		\[
		\begin{aligned}
		|\Delta^{{\eps}} u^{{\eps}}| \leq C &\quad \mbox{on ${\VGeps}(D) \setminus {\VGeps}(\partial D)$} \\
		u^{{\eps}} \leq C {\eps}^{-1} &\quad \mbox{on ${\VGeps}(D)$} 
		\end{aligned}
		\]
		uniformly over ${\eps} \in \mathcal{E}$ for some $\sigma(h,\eta)$-measurable $C > 0$, then the following is true. 
		Almost surely, there exists a $\sigma(h,\eta)$-measurable subsequence $\mathcal{E}_0 \subset \mathcal{E}$ and a function $\overline{u} \in C(D)$ such that for every compact subset $O \Subset D$, 
		we have that  $\overline{u}^{{\eps}} \to \overline{u}$ uniformly in $O$ as $\mathcal{E}_0 \ni {\eps} \to 0$. 
		
		Moreover, if $D$ is simply connected, $u^{{\eps}}$ is discrete harmonic in ${\VGeps}(D) \setminus {\VGeps}(\partial D)$, and if $u^{{\eps}}$ has H\"{o}lder continuous boundary data, in the sense that there exists $\chi \in (0,1]$ and
		$C' > 0$ so that 
		\begin{equation} \label{eq:holder-boundary-data}
		|u^{{\eps}}(a) - u^{{\eps}}(b)| \leq C'( {\eps} \vee |\eta(a) - \eta(b)|)^{\chi}, \quad \forall a,b \in {\VGeps}(\partial D), \quad \forall {\eps}  \in (0,1)
		\end{equation}
		then the convergence occurs uniformly in $\cl(D)$. 
	\end{lemma}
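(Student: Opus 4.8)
\emph{Reduction and the local estimate.} The plan is to represent $u^{\eps}$ by the Dynkin (optional stopping) formula for the walk $X^{a,{\eps}}$, absorb the bounded‑Laplacian term into the a priori exit‑time bounds of Section~\ref{sec:preliminaries}, and extract equicontinuity from an oscillation‑decay estimate coming from the elliptic Harnack inequality for the mated‑CRT map; Arzel\`a--Ascoli then yields the subsequential limit, and a barrier argument using the simple‑connectivity of $D$ handles the last assertion. Since ${\eps}\meps\asymp 1$ by~\eqref{eq:meps-scaling}, the hypothesis $u^{\eps}\le C{\eps}^{-1}$ makes $\overline u^{\eps}$ uniformly bounded above; a matching lower bound on compact subsets of $D$ follows from the maximum principle applied to $u^{\eps}\pm C\Qeps{A}$ with $A={\VGeps}(D)\setminus{\VGeps}(\partial D)$ (recall $\Delta^{\eps}\Qeps{A}=-1$ on $A$), together with Lemmas~\ref{lemma:exit-time-upper-bound} and~\ref{lemma:bounded-exit-time}, which bound $\Qeps{A}=\Qeps{D}$ by $C{\eps}^{-1}\Mrho{B_{{\eps}^{\alpha}}(D)}$ and the latter by an a.s.\ finite constant. (In the intended applications the relevant functions are non‑negative, which already gives the lower bound.) Passing to a $\sigma(h,\eta)$‑measurable subsequence of $\mathcal E$ along which ${\eps}\meps$ converges, and intersecting the polynomially‑high‑probability events of Lemmas~\ref{lemma:cell-size-estimate},~\ref{lemma:exit-time-upper-bound},~\ref{lemma:expected-exit-time-explicit-bound} (which occur eventually along $\mathcal E$ by Borel--Cantelli), we may work on a single good event. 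Fix $z$ and $r$ with $B_{2r}(z)\subset D$, and let $A'={\VGeps}(B_{2r}(z))\setminus{\VGeps}(\partial B_{2r}(z))$. For ${\eps}$ small and any $a$ with $\Heps{a}\subset B_{2r}(z)$, Dynkin's formula applied to the martingale $u^{\eps}(X^{a,{\eps}}_j)-\sum_{i<j}\Delta^{\eps}u^{\eps}(X^{a,{\eps}}_i)$ stopped at the exit time $\tau$ of $X^{a,{\eps}}$ from $A'$ gives
\[
\bigl|u^{\eps}(a)-\widehat u^{\eps}(a)\bigr|=\Bigl|\E\bigl[\textstyle\sum_{i<\tau}\Delta^{\eps}u^{\eps}(X^{a,{\eps}}_i)\mid h,\eta\bigr]\Bigr|\le C\,\Qeps{B_{2r}(z)}(a)\le C\,{\eps}^{-1}r^{q},
\]
where $\widehat u^{\eps}$ is the $\Delta^{\eps}$‑harmonic extension of $u^{\eps}|_{\partial A'}$ and the last inequality is Lemma~\ref{lemma:expected-exit-time-explicit-bound}.

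\emph{Equicontinuity and compactness.} The elliptic Harnack inequality for the mated‑CRT map~\cite{gwynne2019harmonic} (equivalently, the two‑sided Green's‑kernel bounds of Lemmas~\ref{lemma:bounded-greens-function} and~\ref{lemma:greens-kernel-lower-bound}) gives a constant $\theta=\theta(\gamma)\in(0,1)$ such that every $\Delta^{\eps}$‑harmonic function $v$ on $A'$ satisfies $\operatorname{osc}_{{\VGeps}(B_r(z))}v\le\theta\operatorname{osc}_{{\VGeps}(B_{2r}(z))}v$ (apply the inequality to $\max v - v$ and to $v-\min v$ on $B_{2r}(z)$). Applying this to $\widehat u^{\eps}$, combining with the displayed bound, and dividing by $\meps\asymp{\eps}^{-1}$ yields
\[
\operatorname{osc}_{B_r(z)}\overline u^{\eps}\le\theta\operatorname{osc}_{B_{2r}(z)}\overline u^{\eps}+C r^{q}.
\]
Iterating over dyadic scales down from a fixed $r_0>0$ and using the uniform bound on $\overline u^{\eps}$ at scale $r_0$ produces a uniform H\"older estimate $\operatorname{osc}_{B_s(z)}\overline u^{\eps}\le C s^{q'}$ for all $z$ with $B_{2s}(z)\subset D$ and all sufficiently small ${\eps}\in\mathcal E$ (how small depending on $s$). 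Since mated‑CRT map cells have Euclidean diameter tending to $0$ (Lemma~\ref{lemma:cell-size-estimate}), the piecewise‑constant functions $\overline u^{\eps}$ inherit a genuine (asymptotic) modulus of continuity on compact subsets of $D$, so by Arzel\`a--Ascoli and a diagonal argument along an exhaustion of $D$ there is a $\sigma(h,\eta)$‑measurable subsequence $\mathcal E_0\subset\mathcal E$ and a locally H\"older $\overline u\in C(D)$ with $\overline u^{\eps}\to\overline u$ uniformly on every $O\Subset D$.

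\emph{Up to the boundary.} Assume now $D$ is simply connected, $u^{\eps}$ is $\Delta^{\eps}$‑harmonic on ${\VGeps}(D)\setminus{\VGeps}(\partial D)$, and~\eqref{eq:holder-boundary-data} holds. By the maximum principle (Lemma~\ref{lemma:maximum-principle} applied to $\pm u^{\eps}$), the oscillation of $u^{\eps}$ over ${\VGeps}(D)$ equals its oscillation over the cells of ${\VGeps}(\partial D)$, which by~\eqref{eq:holder-boundary-data} is at most $C'(\operatorname{diam}\eta(D))^{\chi}$; hence $\overline u^{\eps}$ has uniformly bounded (indeed $o_{\eps}(1)$) oscillation. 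It remains to bound $|\overline u^{\eps}(a)-\overline u^{\eps}(b)|$ for $b\in{\VGeps}(\partial D)$ and $a\in{\VGeps}(D)$ with $|\eta(a)-\eta(b)|\le s$, uniformly in $b$ and ${\eps}$. Since $D$ is simply connected, $\C\setminus D$ is connected, so $(\C\setminus D)\cap\cl(B_{\sqrt s}(\eta(b)))$ contains a continuum joining a neighbourhood of $\eta(b)$ to $\partial B_{\sqrt s}(\eta(b))$. A discrete Beurling‑type estimate --- available from~\cite{gwynne2019harmonic}, or deducible from the Beurling projection inequality together with the invariance principle, Theorem~\ref{theorem:rw-to-lbm} --- then shows that $X^{a,{\eps}}$ reaches ${\VGeps}(\partial D)$ before leaving ${\VGeps}(B_{\sqrt s}(\eta(b)))$ with conditional probability at least $1-Cs^{1/4}$ once ${\eps}$ is small. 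Writing $u^{\eps}(a)=\E[u^{\eps}(X^{a,{\eps}}_T)\mid h,\eta]$ for $T$ the hitting time of ${\VGeps}(\partial D)$, splitting on this event, and using~\eqref{eq:holder-boundary-data} on it and the rescaled oscillation bound off it, we obtain $|\overline u^{\eps}(a)-\overline u^{\eps}(b)|\le C(s^{\chi/2}+s^{1/4})+o_{\eps}(1)$. This uniform boundary modulus of continuity, combined with the local uniform convergence on $D$, upgrades the convergence to uniform convergence on $\cl(D)$.

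\emph{Main obstacle.} The crux is uniformity: the local estimate, the oscillation decay, and the Beurling bound must hold simultaneously for \emph{all} balls $B_r(z)\subset D$ and boundary points, over the quenched field $(h,\eta)$ and over ${\eps}\in\mathcal E$, even though the mated‑CRT map is far from uniformly elliptic --- degrees are unbounded and cell sizes are multifractal. This precludes off‑the‑shelf quantitative homogenization and forces reliance on the robust inputs --- the elliptic Harnack inequality, the invariance principle, and the polynomially‑high‑probability cell‑size and exit‑time bounds of Section~\ref{sec:preliminaries} --- with the residual errors controlled by Borel--Cantelli along $\mathcal E$. The single most delicate ingredient is the discrete Beurling estimate near $\partial D$ on this random graph.
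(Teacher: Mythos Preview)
Your argument is correct in spirit and follows the same architecture as the paper's: decompose $u^{\eps}$ into a discrete harmonic part plus a potential controlled by $\Qeps{B_r(z)}$, use Lemma~\ref{lemma:expected-exit-time-explicit-bound} for the potential, and invoke the regularity theory of~\cite{gwynne2019harmonic} for the harmonic part. The difference is in how that regularity is packaged. The paper simply cites the H\"older continuity statement for discrete harmonic functions on the mated-CRT map (recorded here as Lemma~\ref{lemma:discrete-holder-continuity}, which is Theorem~3.9 of~\cite{gwynne2019harmonic}) and applies it to the harmonic extension $f^{\eps}$ on a ball of radius $\delta^{1/2}+\delta$; this gives equicontinuity of $\overline u^{\eps}$ directly, without the dyadic iteration you perform. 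For the moreover clause, the paper again just applies the boundary H\"older estimate~\eqref{eq:boundary-estimate} in Lemma~\ref{lemma:discrete-holder-continuity}, which already encapsulates the Beurling-type hitting estimate you describe. So your Harnack-iteration and Beurling steps are essentially re-deriving the content of Lemma~\ref{lemma:discrete-holder-continuity} rather than using it as a black box.

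One caution on your boundary argument: deducing a \emph{uniform} discrete Beurling estimate from the invariance principle (Theorem~\ref{theorem:rw-to-lbm}) alone is genuinely delicate, since that theorem is stated for a fixed starting point $z$ and gives no a priori uniformity over $z$ near $\partial D$ or over ${\eps}$. Your primary reference to~\cite{gwynne2019harmonic} is the safe route, and indeed is exactly what the paper does via~\eqref{eq:boundary-estimate}. Also, your claim that the rescaled oscillation is ``indeed $o_{\eps}(1)$'' takes the constant $C'$ in~\eqref{eq:holder-boundary-data} literally as ${\eps}$-independent; in the actual applications (e.g.\ the harmonic part in Lemma~\ref{lemma:discrete-dirichlet-problem}, where the boundary data is $\meps\phi$) the effective $C'$ scales like ${\eps}^{-1}$, so the rescaled oscillation is only $O(1)$, not $o(1)$. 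This does not break your argument---$O(1)$ oscillation times the $s^{1/4}$ Beurling probability still vanishes---but the parenthetical is misleading.
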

	
	Lemma \ref{lemma:discrete-potential-tightness} will follow from the Arz\'ela-Ascoli theorem and the following
	H\"{o}lder continuity of $\Delta^{{\eps}}$-harmonic functions on the mated-CRT map. 
	\begin{lemma}[Theorem 3.9 in \cite{gwynne2019harmonic}] \label{lemma:discrete-holder-continuity}
		There exists $\xi = \xi(\rho, \gamma) \in (0,1)$ and $A = A(\rho, \gamma) > 0$ 
		such that the following holds with polynomially high probability as ${\eps} \to 0$. 
		Let $D \subset B_{\rho}$ be a connected domain and let $f^{{\eps}}: {\VGeps}(D) \to \R$ be 
		$\Delta^{{\eps}}$-harmonic on ${\VGeps}(D) \setminus {\VGeps}(\partial D)$. Then, 
		\begin{equation} 
		|f^{{\eps}}(a) - f^{{\eps}}(b)| \leq A \left( \sup_{a' \in {\VGeps}(D)} |f^{{\eps}}(a')| \right) \left( \frac{{\eps} \vee |\eta(a) - \eta(b)|}{\dist(\eta(a), \partial D)} \right)^\xi, \quad \forall a,b \in {\VGeps}(D).
		\end{equation} 
		Moreover, if $D$ is simply connected and if $f^{{\eps}}$ has H\"{o}lder continuous boundary data in the sense of \eqref{eq:holder-boundary-data}, then 
		there exists a $C > 0$ such that
		\begin{equation} \label{eq:boundary-estimate}
		|f^{{\eps}}(a) - f^{{\eps}}(b)| \leq \max \left\{ C, A \left( \sup_{a' \in {\VGeps}(D)} |f^{{\eps}}(a')| \right) \right \}( {\eps} \vee |\eta(a) - \eta(b)|)^{\xi}, \quad \forall a,b \in {\VGeps}(D).
		\end{equation}
	\end{lemma}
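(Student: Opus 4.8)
\emph{Strategy.} The statement is an interior (and, when $D$ is simply connected, up-to-the-boundary) Hölder estimate for $\Delta^{{\eps}}$-harmonic functions, so I would prove it by the De Giorgi--Nash--Moser oscillation-decay scheme adapted to the mated-CRT map. The crux is to establish one \emph{oscillation-decay inequality}: there are constants $K \geq 2$, $\theta \in (0,1)$ and an exponent $\chi_0 = \chi_0(\gamma) \in (0,1)$ such that, with polynomially high probability as ${\eps} \to 0$, for every $x_0 \in B_{\rho}$ and every $r$ with $B_{Kr}(x_0) \subset B_\rho$ and $r \geq {\eps}^{\chi_0}$, every $f^{{\eps}}$ that is $\Delta^{{\eps}}$-harmonic on ${\VGeps}(B_{Kr}(x_0)) \setminus {\VGeps}(\partial B_{Kr}(x_0))$ satisfies
\[
\operatorname{osc}_{{\VGeps}(B_r(x_0))} f^{{\eps}} \;\leq\; \theta \operatorname{osc}_{{\VGeps}(B_{Kr}(x_0))} f^{{\eps}} .
\]
Granting this, for $a,b \in {\VGeps}(D)$ I set $x_0 = \eta(a)$ and $d = \dist(\eta(a), \partial D)$ and apply the inequality along the radii $r_j = c_0 d K^{-j}$ (with $c_0 \in (0,1)$ fixed so all these balls sit comfortably inside $D$), stopping at the first $j$ for which $r_j \lesssim {\eps}^{\chi_0} \vee |\eta(a)-\eta(b)|$; since $b$ and $f^{{\eps}}$ both live in the last ball, $|f^{{\eps}}(a)-f^{{\eps}}(b)| \leq \theta^{j}\, 2\sup_{{\VGeps}(D)}|f^{{\eps}}| \lesssim \sup|f^{{\eps}}|\,\big(({\eps}^{\chi_0}\vee|\eta(a)-\eta(b)|)/d\big)^{\xi}$ with $\xi = -\log\theta/\log K$. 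Lowering the floor from ${\eps}^{\chi_0}$ to ${\eps}$, as in the statement, is a separate argument at sub-cell scales using the cell-size and volume bounds (Lemmas~\ref{lemma:cell-size-estimate},~\ref{lemma:volume-growth}) and the crude Green's function bound (Lemma~\ref{lemma:greens-function-log-bound}); I expect it to be the most tedious but least conceptually novel part.

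To prove the oscillation-decay inequality, normalize so that $0 \leq f^{{\eps}} \leq M := \operatorname{osc}_{{\VGeps}(B_{Kr}(x_0))} f^{{\eps}}$ on ${\VGeps}(B_{Kr}(x_0))$, take $K = 4$, and split the annular vertices $\mathcal{A} := {\VGeps}(\A_{3r,4r}(x_0))$ into $E_+ = \{a \in \mathcal{A} : f^{{\eps}}(a) \geq M/2\}$ and $E_- = \mathcal{A} \setminus E_+$. The trajectory of $X^{a,{\eps}}$ is a chain of pairwise-intersecting cells, so its image in $\C$ is connected; since ${\eps}^{\chi_0}$ dominates the Euclidean diameter of every cell meeting $B_\rho$ (Lemma~\ref{lemma:cell-size-estimate}), a walk started in ${\VGeps}(B_r(x_0))$ cannot leave ${\VGeps}(B_{4r}(x_0))\setminus{\VGeps}(\partial B_{4r}(x_0))$ without first visiting $\mathcal{A} = E_+ \cup E_-$. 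Writing $\tau$ for that exit time and $\sigma_\pm$ for the hitting times of $E_\pm$, this gives $\P_a[\sigma_+ \leq \tau] + \P_a[\sigma_- \leq \tau] \geq 1$ for all $a \in {\VGeps}(B_r(x_0))$. Optional stopping of the bounded martingale $n \mapsto f^{{\eps}}(X^{a,{\eps}}_{n \wedge \tau})$ at $\sigma_+ \wedge \tau$, and of $M - f^{{\eps}}(X^{a,{\eps}}_{n\wedge\tau})$ at $\sigma_- \wedge \tau$, yields $f^{{\eps}}(a) \geq \tfrac{M}{2}\P_a[\sigma_+ \leq \tau]$ and $M - f^{{\eps}}(a) \geq \tfrac{M}{2}\P_a[\sigma_- \leq \tau]$, so
\[
\operatorname{osc}_{{\VGeps}(B_r(x_0))} f^{{\eps}} \;\leq\; M\Big(1 - \tfrac12 \inf_{{\VGeps}(B_r(x_0))}\P_\cdot[\sigma_+ \leq \tau] - \tfrac12 \inf_{{\VGeps}(B_r(x_0))}\P_\cdot[\sigma_- \leq \tau]\Big).
\]
Now $a \mapsto \P_a[\sigma_\pm \leq \tau]$ is nonnegative and $\Delta^{{\eps}}$-harmonic on ${\VGeps}(B_{3r}(x_0))\setminus{\VGeps}(\partial B_{3r}(x_0))$, and their sum is $\geq 1$; hence one of them, say $\P_\cdot[\sigma_-\leq\tau]$, is $\geq \tfrac12$ at some vertex of ${\VGeps}(B_r(x_0))$, and an \emph{elliptic Harnack inequality} for $\Delta^{{\eps}}$-harmonic functions on the mated-CRT map then forces it to be $\geq c$ throughout ${\VGeps}(B_r(x_0))$, giving $\theta = 1 - c/2$.

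\emph{The main obstacle} is thus the elliptic Harnack inequality: nonnegative $\Delta^{{\eps}}$-harmonic $u$ on ${\VGeps}(B_{2r}(x_0))$ satisfy $\sup_{{\VGeps}(B_r(x_0))} u \leq C \inf_{{\VGeps}(B_r(x_0))} u$, uniformly over $B_{2r}(x_0)\subset B_\rho$ with $r \geq {\eps}^{\chi_0}$ and with polynomially high probability. The classical routes go through volume doubling and a scale-invariant Poincaré inequality, both of which fail drastically for the mated-CRT map (unbounded degrees, multifractal cell sizes), so one cannot argue combinatorially --- a naive chaining through vertices loses a factor of $\log(1/{\eps})$ per step because of the logarithmic on-diagonal behaviour of the planar Green's function. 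Instead I would exploit the SLE/LQG structure: after rescaling $B_{2r}(x_0)$ to unit size, combine the quenched convergence of random walk to Liouville Brownian motion (Theorem~\ref{theorem:rw-to-lbm}) with the two-sided Green's kernel and exit-time bounds of Section~\ref{sec:preliminaries} (Lemmas~\ref{lemma:bounded-greens-function},~\ref{lemma:greens-kernel-lower-bound},~\ref{lemma:exit-time-upper-bound}) to obtain, on an event of high probability, \emph{uniform} lower bounds for the probability that the walk crosses an LQG-typical dyadic annulus in bounded rescaled time; chaining such crossing estimates through a dense family of nested good annuli --- while quarantining the polynomially few atypically large cells --- produces the Harnack comparison. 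This is essentially the content of \cite{gwynne2019harmonic}, and it is where all the difficulty lies.

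\emph{The boundary version.} When $D$ is simply connected, $\C \setminus D$ contains, near each boundary point $x_0$, a connected set of diameter comparable to any prescribed small $r$. Via the Green's kernel lower bound (Lemma~\ref{lemma:greens-kernel-lower-bound}) and effective-resistance estimates, this forces $X^{a,{\eps}}$ started in ${\VGeps}(B_r(x_0)\cap D)$ to hit ${\VGeps}(\partial D)$ before leaving ${\VGeps}(B_{4r}(x_0))$ with probability at least a constant $c > 0$; on that event the walk stops at a boundary vertex where, by \eqref{eq:holder-boundary-data}, $f^{{\eps}}$ differs by at most $C'(4r)^{\chi}$ from its value at a fixed boundary vertex within distance $4r$ of $x_0$. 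Feeding this into the optional-stopping computation above (splitting according to whether the walk hits $\partial D$ or the annulus $\A_{3r,4r}(x_0)$ first) gives a boundary oscillation-decay inequality $\operatorname{osc}_{{\VGeps}(B_r(x_0)\cap D)} f^{{\eps}} \leq \theta \operatorname{osc}_{{\VGeps}(B_{4r}(x_0)\cap D)} f^{{\eps}} + C' (4r)^{\chi}$, which together with the interior estimate and a further iteration yields the uniform Hölder bound on $\cl(D)$, with exponent $\min\{\xi,\chi\}$.
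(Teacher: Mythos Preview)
The paper does not prove this lemma; it is quoted verbatim from \cite{gwynne2019harmonic} (Theorem~3.9 there) and used as a black box. So there is no ``paper's own proof'' to compare against, and your task is really to sketch a proof of the cited result.

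Your overall architecture --- oscillation decay via an elliptic Harnack inequality, then iteration --- is correct and is indeed how \cite{gwynne2019harmonic} proceeds. The optional-stopping computation reducing oscillation decay to a lower bound on $\inf_{{\VGeps}(B_r)}\P_\cdot[\sigma_\pm\le\tau]$ is fine, and the boundary argument using simple connectedness to force the walk to hit ${\VGeps}(\partial D)$ with uniformly positive probability is the right idea.

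The genuine gap is in how you propose to obtain the Harnack inequality. You suggest using Theorem~\ref{theorem:rw-to-lbm} (convergence of random walk to Liouville Brownian motion) together with the Green's kernel and exit-time estimates of Section~\ref{sec:preliminaries}. This is circular: Theorem~\ref{theorem:rw-to-lbm} is the main result of \cite{berestycki2020random}, and its proof \emph{uses} the H\"older continuity of Lemma~\ref{lemma:discrete-holder-continuity} as a key input (it is needed for tightness of the laws of the rescaled walks). Several of the Green's function estimates you cite from \cite{berestycki2020random} likewise build on \cite{gwynne2019harmonic}. So you cannot invoke any of these to establish the Harnack step.

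What \cite{gwynne2019harmonic} actually does for the Harnack/crossing estimate is work directly with the SLE/LQG description of the cells: one proves, via imaginary-geometry techniques and moment bounds for space-filling SLE segments, uniform upper and lower bounds on effective resistances across Euclidean annuli (equivalently, on the Dirichlet energy of discrete harmonic functions), and these feed into a Harnack-type comparison without ever passing through LBM. If you want a self-contained argument, that is the machinery you would need to reconstruct; the convergence results of the present paper are downstream of it, not a substitute.
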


	\begin{proof}[Proof of Lemma \ref{lemma:discrete-potential-tightness}] 
		The conclusions of Lemmas \ref{lemma:cell-size-estimate}, \ref{lemma:expected-exit-time-explicit-bound}, and \ref{lemma:discrete-holder-continuity}  
		hold with probability approaching one as $\mathcal{E} \ni {\eps} \to 0$. In particular, by the Borel-Cantelli lemma, almost surely, there exists a deterministic subsequence $\mathcal{E}_1 \subset \mathcal{E}$ such that the conclusions of all of these lemmas hold for all sufficiently small ${\eps} \in \mathcal{E}_1$. We henceforth restrict to such an event. 
		
		We first prove interior convergence: let $O \Subset D \subset B_{\rho}$ be given. By the Arz\'ela-Ascoli theorem, and the boundedness assumption on $\overline{u}^{{\eps}}$, it suffices to prove the following estimate:
		almost surely, for each small $k > 0$, there exists possibly random choices of $\delta = \delta(k) > 0$ and ${\eps}_1 = {\eps}_1(k, \delta) \in \mathcal{E}_1$ so that for all ${\eps} \in (0, {\eps}_1) \cap \mathcal{E}_1$, 
		\begin{equation} \label{eq:equicontinuity}
		|x-y| \leq \delta \implies |\overline{u}^{{\eps}}(x) - \overline{u}^{{\eps}}(y)| \leq k, \quad \forall x,y \in O.
		\end{equation}

		We now prove \eqref{eq:equicontinuity}. Let $k > 0$ be given, fix $\delta > 0$ to be determined below, in a manner depending on $k$. By Lemma \ref{lemma:cell-size-estimate}, 	
		\begin{equation} \label{eq:cell-size-estimate}
		\sup_{a \in {\VGeps}(B_{\rho})} \diam(\Heps{a}) \leq \delta^2, \quad \forall {\eps} \in \mathcal{E}_1 \cap (0, {\eps}_1)
		\end{equation}
		where ${\eps}_1$ depends on $\delta$ through Lemma \ref{lemma:cell-size-estimate}.  
		
		As $O \Subset D$, there exists $\delta_1 > 0$ so that $O + B_{\delta_1} \Subset D$.	
		Suppose $\delta \in (0,  \min(\delta_1^{2}, \delta_1)/100 )$ and let $x \in O$ be given.  Let $\underline{\delta} = \delta^{1/2} + \delta$ and
		$f^{{\eps}}:{\VGeps}(\cl(B_{\underline{\delta}}(x))) \to \R$ be the harmonic extension of $u^{{\eps}}$,
		\[
		\begin{cases}
		\Delta^{{\eps}} f^{{\eps}} = 0 \quad \mbox{on ${\VGeps}(B_{\underline{\delta}}(x)) \setminus {\VGeps}(\partial B_{\underline{\delta}}(x))$} \\
		f^{{\eps}} = u^{{\eps}} \quad \mbox{on ${\VGeps}(\partial B_{\underline{\delta}}(x))$} .
		\end{cases}
		\]
		By the maximum principle (Lemma \ref{lemma:maximum-principle}) and our uniform boundedness assumption on $u^{{\eps}}$, 
		\begin{equation} \label{eq:harmonic-extension-bounded}
		\sup_{a' \in {\VGeps}(\cl(B_{\delta}(x)))} |f^{{\eps}}(a')| \leq \sup_{a' \in {\VGeps}(\cl(B_{\delta}(x)))} |u^{{\eps}}(a')| \leq  C {\eps}^{-1}  , \quad \forall {\eps} \in (0, {\eps}_1) \cap \mathcal{E}.
		\end{equation}

		By the assumption $|\Delta^{{\eps}} u^{{\eps}}| \leq C$ and \eqref{eq:discrete-exit-laplacian}, the functions $(u^{{\eps}} - f^{{\eps}}) \pm C \times \Qeps{B_{\underline{\delta}}(x)}$ are super (resp. sub) harmonic on ${\VGeps}(B_{\underline{\delta}}(x)) \setminus {\VGeps}(\partial B_{\underline{\delta}}(x))$ and identically equal to 0 on ${\VGeps}(\partial B_{\underline{\delta}}(x))$ for each ${\eps}$. Therefore, by the maximum principle,  
		\[
		|u^{{\eps}}-f^{{\eps}}| \leq  C \times \Qeps{B_{\underline{\delta}}(x)}  \quad \mbox{on ${\VGeps}(\cl(B_{\underline{\delta}}(x)))$}
		\]
		for all ${\eps} \in (0, {\eps}_1) \cap \mathcal{E}_1$.
		Thus,  we have, for such ${\eps}$, by the triangle inequality, 
		\begin{equation} \label{eq:harmonic-decomposition}
		\sup_{a,b \in {\VGeps}(B_{\delta}(x))} |u^{{\eps}}(a) - u^{{\eps}}(b)|  \leq \sup_{a,b \in {\VGeps}(B_{\delta}(x))} |f^{{\eps}}(a) - f^{{\eps}}(b)| + 2 C \times \sup_{a\in  {\VGeps}(B_{\delta}(x))} \Qeps{B_{\underline{\delta}}(x)}(a) .
		\end{equation}
		By Lemma  \ref{lemma:expected-exit-time-explicit-bound}, we may take ${\eps}_1$ smaller, depending only on $\delta$, so that
		\begin{equation} \label{eq:second-term-bound}
		2 C \times \sup_{a\in  {\VGeps}(B_{\delta}(x))} \Qeps{B_{\underline{\delta}}(x)}(a)   \leq {\eps}^{-1} \delta^q, \quad \forall {\eps} \in (0, {\eps}_1) \cap \mathcal{E}_1
		\end{equation}
		for a deterministic $q  = q(\gamma) > 0$. 
		
		It remains to bound the first term in \eqref{eq:harmonic-decomposition}: with $A = A(\rho, \gamma) > 0$ and $\xi \in (0,1)$ as in Lemma \ref{lemma:discrete-holder-continuity},
		\begin{align*}
		&\sup_{a,b \in {\VGeps}(B_{\delta}(x))} |f^{{\eps}}(a) - f^{{\eps}}(b)| \\
		&\leq  A \left( \sup_{a' \in {\VGeps}(B_{\delta}(x))} |f^{{\eps}}(a')| \right) \sup_{a,b \in {\VGeps}(B_{\delta}(x))} \left( \frac{{\eps} \vee |\eta(a) - \eta(b)|}{\dist(\eta(a), \partial B_{\underline{\delta}}(x))} \right)^\xi \quad \mbox{(Lemma \ref{lemma:discrete-holder-continuity})} \\
		&\leq A C {\eps}^{-1} \sup_{a,b \in {\VGeps}(B_{\delta}(x))} \left( \frac{{\eps} \vee |\eta(a) - \eta(b)|}{\dist(\eta(a), \partial B_{\underline{\delta}}(x))} \right)^\xi \quad \mbox{(by \eqref{eq:harmonic-extension-bounded})}  \\
		&\leq A C {\eps}^{-1} \left( \frac{\delta + \delta^2}{\delta^{1/2} - \delta^2} \right)^\xi \quad \mbox{(by \eqref{eq:cell-size-estimate} and since $\underline\delta = \delta^{1/2} + \delta$)}  \\
		&\leq {\eps}^{-1} k/2 \quad \mbox{(for small $\delta = \delta(k, \xi)$)},  \stepcounter{equation}\tag{\theequation}\label{eq:first-term-bound}
		\end{align*}
		for all ${\eps} \in (0, {\eps}_1) \cap \mathcal{E}_1$. 
		
		By plugging \eqref{eq:second-term-bound} and \eqref{eq:first-term-bound} into~\eqref{eq:harmonic-decomposition}, we get that for sufficiently small $\delta = \delta(k, \xi)$,
		\[
		\sup_{y \in {\VGeps}(B_{\delta}(x))} |u^{{\eps}}(y) - u^{{\eps}}(x)|  \leq {\eps}^{-1} k, \quad \forall {\eps} \in (0, {\eps}_1) \cap \mathcal{E}_1.
		\]
		By combining this inequality with \eqref{eq:cell-size-estimate} and the definition~\eqref{eq:discrete-scaling} of $\overline{u}^{{\eps}}$, we have \eqref{eq:equicontinuity}.

		We now assume the conditions of the moreover clause. Since we have assumed $u^{{\eps}}$ is harmonic, we can apply \eqref{eq:boundary-estimate} of Lemma \ref{lemma:discrete-holder-continuity} to get the analogue of \eqref{eq:equicontinuity} up to the boundary: for each $k > 0$, there exists $\delta = \delta(k,C',\chi) > 0$ such that for each small enough ${\eps} \in \mathcal E_1$,
		\begin{equation} \label{eq:up-to-boundaryequicontinuity}
		|x-y| \leq \delta \implies |\overline{u}^{{\eps}}(x) - \overline{u}^{{\eps}}(y)| \leq k, \quad \forall x,y \in {\VGeps}(D).
		\end{equation} 
		And so, we may conclude using the Arz\'ela-Ascoli theorem. 
	\end{proof}

	\subsection{Uniform convergence of the Green's kernel} \label{subsec:green-kernel}
	For $z \in \C$, let 
	\[
	X^{z, {\eps}} : \N \to {\VGeps}
	\]
	be a simple random walk on ${\Geps}$ started from the vertex $a^{{\eps}}_z \in {\VGeps}$, defined in \eqref{eq:plane-to-embedding}, 
	stopped upon exiting ${\VGeps}(B_{\rho}) \setminus {\VGeps}(\partial B_{\rho})$. 
	Let $X^{z}$ be LBM started at $z$ stopped upon exiting $B_{\rho}$
	and  let
	\[
	\hat{X}^{z, {\eps}}: [0,\infty) \to \C
	\]
	be the process 
	$t \to \eta(X^{z, {\eps}}_{{\meps} t})$ extended from ${\meps}^{-1} \N$ to $[0,\infty)$ by piece-wise linear interpolation. 
	The goal of this section is to prove the following. 
	\begin{lemma} \label{lemma:uniform-convergence-of-greens-function}
		Fix a deterministic choice of $r>0$ and $z\in\C$ such that $B_r(z) \subset B_\rho$ and 
		$y \in B_r(z)$. Let $g^{{\eps}}(\cdot) := {\eps}^{-1} \gr{{\eps}}{B_r(z)}(a^{{\eps}}_y , \cdot)$ and define $\overline{g}^{\eps}$ as in~\eqref{eq:discrete-scaling}.
		For each small $\delta > 0$ and $O \subset \cl(B_r(z)) \setminus \{y\}$ which lies at positive distance from $y$, 
		with probability tending to 1 as ${\eps} \to 0$, 
		\begin{equation} \label{eq:green-uniform}
		\sup_{x \in O} \left|\overline{g}^{{\eps}}(x)  - \frac{m_0}{6} \times G_{B_r(z)}(y , x)\right| \leq \delta,
		\end{equation}
		where $G_{B_r(z)}(y,x)$ is the continuum Green's function for standard Brownian motion killed upon exiting $B_r(z)$.
	\end{lemma}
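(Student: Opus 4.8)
The plan is to extract subsequential limits of $\overline g^{\eps}$ using the tightness result of Lemma~\ref{lemma:discrete-potential-tightness}, to identify the limit as a multiple of $G_{B_r(z)}(y,\cdot)$ via the Laplacian it solves plus an exit-time computation, and to upgrade interior convergence to uniform convergence up to $\partial B_r(z)$ by a maximum-principle barrier. Since the assertion is convergence in probability, it suffices (exactly as in the proof of Lemma~\ref{lemma:discrete-potential-tightness}) to fix an arbitrary sequence $\eps_n\to 0$, pass by Borel--Cantelli to a further subsequence along which the polynomially-high-probability conclusions of Lemmas~\ref{lemma:cell-size-estimate}, \ref{lemma:bounded-greens-function}, \ref{lemma:greens-kernel-lower-bound}, \ref{lemma:expected-exit-time-explicit-bound}, \ref{lemma:discrete-holder-continuity}, \ref{lemma:mu_h-convergence}, \ref{lemma:mu_h_degree-convergence} and \ref{lemma:exit-time-upper-bound} all hold for small $\eps$ and along which the invariance principle (Theorem~\ref{theorem:rw-to-lbm}) holds a.s., and then to prove \eqref{eq:green-uniform} along this subsequence. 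For each fixed $\kappa\in(0,\operatorname{dist}(y,\partial B_r(z)))$ put $D_\kappa:=B_r(z)\setminus\cl(B_\kappa(y))$, a connected domain. For $\eps$ small the cell $\Heps{a^{\eps}_y}$ lies inside $B_\kappa(y)$, so $g^{\eps}=\eps^{-1}\gr{\eps}{B_r(z)}(a^{\eps}_y,\cdot)$ is $\Delta^{\eps}$-harmonic on $\mathcal{VG}^{\eps}(D_\kappa)\setminus\mathcal{VG}^{\eps}(\partial D_\kappa)$, and by Lemma~\ref{lemma:bounded-greens-function} together with monotonicity of the Green's kernel in the domain it is bounded by $C_\kappa\eps^{-1}$ on $\mathcal{VG}^{\eps}(D_\kappa)$. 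Hence Lemma~\ref{lemma:discrete-potential-tightness} gives, along a further subsequence, uniform convergence of $\overline g^{\eps}$ on compact subsets of $D_\kappa$; diagonalising over $\kappa\in\{1/n\}$ yields a subsequence along which $\overline g^{\eps}\to\overline g$ uniformly on compact subsets of $B_r(z)\setminus\{y\}$, with $\overline g\in C(B_r(z)\setminus\{y\})$.

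To identify $\overline g$: first, $\overline g$ is $\Delta$-harmonic on $B_r(z)\setminus\{y\}$, which follows by passing the discrete mean-value property to the limit, using that by Theorem~\ref{theorem:rw-to-lbm} the exit position of the embedded walk from a small Euclidean disk converges to that of Brownian motion (the time change is irrelevant for exit positions), just as in the proof of Lemma~\ref{lemma:discrete-dirichlet-problem}. Second, Lemmas~\ref{lemma:greens-kernel-lower-bound} and \ref{lemma:bounded-greens-function} with \eqref{eq:meps-scaling} give $c^{-1}\log(1/|x-y|)-c\le\overline g(x)\le c\log(1/|x-y|)+c$ for $x$ near $y$; thus $\overline g\ge 0$, $\overline g$ is locally integrable, and $\overline g\to\infty$ at $y$, so $\Delta\overline g$ is a negative multiple of $\delta_y$ and $\overline g-c'\log(1/|\cdot-y|)$ extends harmonically across $y$ for some $c'>0$. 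Once we know (next paragraph) that $\overline g$ vanishes on $\partial B_r(z)$, the maximum principle forces $\overline g=c''G_{B_r(z)}(y,\cdot)$ for a positive constant $c''$. To pin down $c''$ we compare expected exit times. On one hand, $\meps^{-1}\Qeps{B_r(z)}(a^{\eps}_y)$ converges to the conditional expectation, given $(h,\eta)$, of the exit time of rescaled $\gamma$-LBM from $B_r(z)$ started at $y$ --- which equals $\tfrac1{m_0}\int_{B_r(z)}G_{B_r(z)}(y,w)\,d\mu_h(w)$ --- the uniform integrability coming from the moment bounds of Lemma~\ref{lemma:exit-time-upper-bound}. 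On the other hand, using \eqref{eq:expected-exit-time} and $\gr{\eps}{B_r(z)}=\eps\,g^{\eps}$, one has $\meps^{-1}\Qeps{B_r(z)}(a^{\eps}_y)=\eps\sum_b\deg^{\eps}(b)\,\overline g^{\eps}(w^{\eps}_b)$, and since $\mu_h(\Heps{b})=\eps$ for every $b$, Lemma~\ref{lemma:mu_h_degree-convergence} gives that this tends to $6\int_{B_r(z)}\overline g\,d\mu_h$ (the contributions near $y$ and near $\partial B_r(z)$ being controlled by the logarithmic bound above and by Lemma~\ref{lemma:bounded-exit-time}). Comparing the two expressions identifies the constant, which works out to $c''=\tfrac{m_0}{6}$ as in the statement; since the subsequence was arbitrary this establishes \eqref{eq:green-uniform} on compact subsets of $B_r(z)\setminus\{y\}$.

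It remains to show $\overline g^{\eps}$ is uniformly small near $\partial B_r(z)$, which also supplies the boundary vanishing of $\overline g$ used above. Fix a small scale $\ell<\operatorname{dist}(y,\partial B_r(z))$ and cover the relevant part of $\partial B_r(z)$ by finitely many balls $B_\ell(x_i)$ with $x_i\in\partial B_r(z)$. For each $i$, $g^{\eps}$ is $\Delta^{\eps}$-harmonic on $R^{\eps}_i:=\{b\in\mathcal{VG}^{\eps}(B_r(z))\setminus\mathcal{VG}^{\eps}(\partial B_r(z)):\eta(b)\in B_\ell(x_i)\}$; it vanishes on the part of $\partial R^{\eps}_i$ lying outside $\mathcal{VG}^{\eps}(B_r(z))\setminus\mathcal{VG}^{\eps}(\partial B_r(z))$, and by Lemma~\ref{lemma:bounded-greens-function} it is at most $C_\ast\eps^{-1}$ on the remaining part of $\partial R^{\eps}_i$ (which lies near $\partial B_\ell(x_i)$, hence at positive distance from $y$). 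By the maximum principle (Lemma~\ref{lemma:maximum-principle}) and $\meps\asymp\eps^{-1}$, for $x$ near $x_i$ we get $\overline g^{\eps}(a^{\eps}_x)\le C\cdot\mathbb P[\text{walk from }a^{\eps}_x\text{ reaches }\{\eta\notin B_\ell(x_i)\}\text{ before exiting }B_r(z)]$; by Theorem~\ref{theorem:rw-to-lbm} this discrete harmonic measure converges to the Brownian one, which by the Beurling estimate tends to $0$ as $x\to x_i$, uniformly in $\eps$. (Equivalently, one may substitute for this the weak harmonic-measure estimates for random walk on the mated-CRT map near a Euclidean disk, of the type proved in \cite{berestycki2020random,gwynne2019harmonic} and later in the paper.) Given $\delta>0$ and $O$ as in the statement, choose $\delta_0$ small enough that $\tfrac{m_0}{6}G_{B_r(z)}(y,\cdot)<\delta/2$ on $\{\operatorname{dist}(\cdot,\partial B_r(z))<\delta_0\}$ and small enough that the barrier bound gives $\overline g^{\eps}<\delta/2$ there for $\eps$ small; on the complementary set $O\cap\{\operatorname{dist}(\cdot,\partial B_r(z))\ge\delta_0\}$, a compact subset of $B_r(z)\setminus\{y\}$, the interior convergence already gives the estimate. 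This proves \eqref{eq:green-uniform}.

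The main obstacle is the uniform-in-$\eps$ boundary harmonic-measure estimate near $\partial B_r(z)$ used in the last paragraph: neither the Hölder estimate of Lemma~\ref{lemma:discrete-holder-continuity} nor the tightness lemma applies up to the boundary of the annular (non–simply-connected) regions that arise, and the rescaled Green's kernel has $O(1)$ --- not $o(1)$ --- oscillation on macroscopic circles, so its boundary data is not Hölder with an $\eps$-uniform constant. Everything else is a fairly routine adaptation of standard discrete potential theory, driven by the cell-size and Green's-kernel estimates of \cite{berestycki2020random} and the invariance principle of Theorem~\ref{theorem:rw-to-lbm}.
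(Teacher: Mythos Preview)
Your interior argument is broadly sound and reaches the same conclusion as the paper, but by a more indirect route. The paper does not try to show the limit is harmonic, pin down a logarithmic singularity, and then fix the multiplicative constant by an exit-time computation. Instead it first proves an integral convergence statement (Lemma~\ref{lemma:integral-convergence-of-greens-function}): for H\"older continuous $f$, ${\meps}^{-1}\sum_b \Gr{\eps}{B_r(z)}(a^{\eps}_y,b)f(\eta(b))\to m_0\int G_{B_r(z)}(y,\cdot)f\,d\mu_h$, which is exactly the functional $\E[\int_0^\infty f(X^y_t)\,dt\mid h,\eta]$ you are invoking but for general $f$ rather than $f\equiv 1$. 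Then, after extracting a subsequential limit $\overline g^*$ via Lemma~\ref{lemma:discrete-potential-tightness}, the paper tests $\overline g^{\eps}$ against polynomials with respect to the degree-weighted measure $\mu_h^{\eps}$ of Lemma~\ref{lemma:mu_h_degree-convergence}; unwinding $\Gr{\eps}{}=\deg^{\eps}\cdot\gr{\eps}{}$ identifies $\overline g^*=\frac{m_0}{6}G_{B_r(z)}(y,\cdot)$ in one step. Your exit-time identification of the constant is essentially this argument specialized to $f\equiv 1$, combined with an extra structural step (harmonicity plus two-sided logarithmic bounds) that the paper never needs. Incidentally, your exit-time computation has a factor slip: the paper's Lemma~\ref{lemma:integral-convergence-of-greens-function} gives ${\meps}^{-1}\Qeps{B_r(z)}(a^{\eps}_y)\to m_0\int G_{B_r(z)}(y,\cdot)\,d\mu_h$, not $m_0^{-1}$ times the integral, though the final constant $m_0/6$ you obtain is correct.

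The real gap, which you flag yourself, is the boundary estimate, and here the paper's fix is much simpler than your proposed Beurling/harmonic-measure barrier. You do not need any uniform-in-$\eps$ estimate on the probability that the walk escapes a half-ball before hitting $\partial B_r(z)$. Once interior convergence on compact subsets of $B_r(z)\setminus\{y\}$ is established, pick $r_1\in(0,r)$ with $y\notin\cl(\A_{r_1,r}(z))$ and $\sup_{\partial B_{r_1}(z)}\frac{m_0}{6}G_{B_r(z)}(y,\cdot)<\delta/4$. On the annulus $\A_{r_1,r}(z)$ the function $\gr{\eps}{B_r(z)}(a^{\eps}_y,\cdot)$ is $\Delta^{\eps}$-harmonic by~\eqref{eq:discrete-normalized-green-laplacian} and vanishes on ${\VGeps}(\partial B_r(z))$, so by the maximum principle its supremum over ${\VGeps}(\A_{r_1,r}(z))$ is attained on ${\VGeps}(\partial B_{r_1}(z))$; interior convergence on that circle then bounds the supremum by $\delta/2$ for all small $\eps$. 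This replaces your entire last paragraph and closes the proof without any appeal to quantitative harmonic-measure estimates near $\partial B_r(z)$.
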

	
	Lemma \ref{lemma:uniform-convergence-of-greens-function} will be shown as a consequence of the following result and Lemma \ref{lemma:discrete-potential-tightness}.
	
	\begin{lemma} \label{lemma:integral-convergence-of-greens-function}
		For each deterministic choice of $z\in\C$ and $r > 0$ such that $B_r(z) \subset B_{\rho}$, each $y \in B_r(z)$, and each bounded, H\"{o}lder-continuous, $\sigma(h, \eta)$-measurable function $f: B_{r}(z) \to \R$, 
		\[
		\begin{aligned}
		{\meps}^{-1}  \sum_{x \in {\VGeps}(B_r(z))} \Gr{{\eps}}{B_r(z)}(a^{{\eps}}_{y}, x) f(\eta(x)) \to m_0 \times \int_{B_r(z)} G_{B_r(z)} (y, x) f(x) d \mu_h(x), 
		\end{aligned}
		\]
		in probability as ${\eps} \to 0$, where $G_{B_r(z)}$ is the continuum Green's function, and $a^{{\eps}}_{y}$ is as in \eqref{eq:plane-to-embedding}.  
	\end{lemma}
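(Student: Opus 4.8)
The plan is to recognize the left-hand side as an expected occupation-time functional of random walk on $\Geps$, to pass to the continuum using the invariance principle of Theorem~\ref{theorem:rw-to-lbm}, and then to identify the resulting occupation functional of rescaled Liouville Brownian motion with the Green's-function integral via the time-change relating LBM to Brownian motion. First I would rewrite the sum probabilistically: with $\tau^{\eps} := \taueps{B_r(z)}{a^{\eps}_y}$ the exit time of $X^{y,\eps}$ from $\VGeps(B_r(z)) \setminus \VGeps(\partial B_r(z))$ (see~\eqref{eq:exit-time}), the definition~\eqref{eq:greens-function} of the Green's function gives
\[
\sum_{x \in \VGeps(B_r(z))} \Gr{\eps}{B_r(z)}(a^{\eps}_y, x)\, f(\eta(x)) \;=\; \E\!\left[ \sum_{j=0}^{\tau^{\eps} - 1} f\big(\eta(X^{y,\eps}_j)\big) \,\Big|\, h, \eta \right].
\]
Writing $\hat X^{y,\eps}_t := \eta(X^{y,\eps}_{\meps t})$ for the linearly interpolated, time-rescaled embedded walk of Section~\ref{subsec:green-kernel}, a left-endpoint Riemann-sum comparison yields $\meps^{-1} \sum_{j=0}^{\tau^{\eps} - 1} f(\eta(X^{y,\eps}_j)) = \int_0^{\tau^{\eps}/\meps} f(\hat X^{y,\eps}_t)\, dt + \mathrm{Err}^{\eps}$, where, since consecutive points $\eta(X^{y,\eps}_j)$ and $\eta(X^{y,\eps}_{j+1})$ lie in a pair of adjacent cells and $f$ is H\"older continuous with exponent $\chi$, one has $|\mathrm{Err}^{\eps}| \le C_f\,(\tau^{\eps}/\meps)\,\big(2 \sup_{a \in \VGeps(B_{\rho})} \diam(\Heps{a})\big)^{\chi}$. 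The cell-diameter supremum tends to $0$ with polynomially high probability by Lemma~\ref{lemma:cell-size-estimate}, while $\E[\tau^{\eps}/\meps \mid h,\eta]$ is bounded by Lemma~\ref{lemma:expected-exit-time-explicit-bound} (using $\meps \asymp \eps^{-1}$, see~\eqref{eq:meps-scaling}), so $\E[|\mathrm{Err}^{\eps}| \mid h,\eta] \to 0$ in probability.

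Next I would pass to the continuum. By Theorem~\ref{theorem:rw-to-lbm}, conditionally on $(h,\eta)$ the law of $\hat X^{y,\eps}$ converges in probability to the law of rescaled LBM $X^y$ started from $y$. Since $B_r(z)$ is a Euclidean ball and $X^y$ is a time-change of Brownian motion, $X^y$ a.s.\ exits $B_r(z)$ through a regular boundary point, so the functional $\omega \mapsto \int_0^{\sigma(\omega)} f(\omega_t)\, dt$, with $\sigma(\omega)$ the exit time of $\omega$ from $B_r(z)$, is a.s.\ continuous at $X^y$; hence $\int_0^{\tau^{\eps}/\meps} f(\hat X^{y,\eps}_t)\, dt$ converges in distribution, conditionally on $(h,\eta)$, to $\int_0^{\sigma} f(X^y_t)\, dt$ with $\sigma$ the exit time of $X^y$ from $B_r(z)$. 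To upgrade this to convergence of the \emph{random} conditional expectations I would establish uniform integrability of the occupation functionals: since the integrand is bounded by $\|f\|_{\infty}\,\tau^{\eps}/\meps$, it suffices that $\sup_{\eps} \E[(\tau^{\eps}/\meps)^2 \mid h,\eta] < \infty$ with high probability, which follows from the $N=2$ moment estimate of Lemma~\ref{lemma:exit-time-upper-bound} together with $\sup_{D \subseteq B_{\rho}} \Mrho{D} < \infty$ a.s.\ (Lemma~\ref{lemma:bounded-exit-time}). Combining the two steps, in probability as $\eps \to 0$,
\[
\meps^{-1} \sum_{x \in \VGeps(B_r(z))} \Gr{\eps}{B_r(z)}(a^{\eps}_y, x)\, f(\eta(x)) \;\longrightarrow\; \E\!\left[ \int_0^{\sigma} f(X^y_t)\, dt \,\Big|\, h \right].
\]

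Finally I would identify the limit. Writing $X^y_t = B^y_{\phi^{-1}(t m_0)}$ as in~\eqref{eq:rescaled-lbm}, with $\phi$ the Liouville clock of Section~\ref{subsec:lbm-and-rw}, the substitutions $u = t m_0$ and $s = \phi^{-1}(u)$ convert $\int_0^{\sigma} f(X^y_t)\, dt$ into a deterministic multiple of $\int_0^{\tau_{B_r(z)}} f(B^y_s)\, d\phi(s)$, where $\tau_{B_r(z)}$ is the exit time of $B^y$ from $B_r(z)$. Approximating $\phi$ by its mollifications and using the occupation-density formula for Brownian motion followed by the convergence of the mollified Liouville measures to $\mu_h$ — which is legitimate because $G_{B_r(z)}(y,\cdot)$ is bounded by $C(\log(1/|y-\cdot|)+1)$ and hence lies in $L^1(\mu_h)$ by Lemma~\ref{lemma:bounded-exit-time} — one obtains $\E[\int_0^{\tau_{B_r(z)}} f(B^y_s)\, d\phi(s) \mid h] = \int_{B_r(z)} G_{B_r(z)}(y,x)\, f(x)\, d\mu_h(x)$, i.e.\ the Green's function of Liouville Brownian motion with respect to $\mu_h$ agrees, up to the time-normalization constant, with that of Brownian motion with respect to Lebesgue measure. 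Tracking this constant through the time-change then produces the claimed right-hand side $m_0 \int_{B_r(z)} G_{B_r(z)}(y,x)\, f(x)\, d\mu_h(x)$.

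I expect the main obstacle to be the second step — passing from the distributional convergence furnished by the invariance principle to convergence of the random conditional expectations — since this is precisely where the worst-case exit-time estimates for the mated-CRT map (Lemmas~\ref{lemma:exit-time-upper-bound} and~\ref{lemma:bounded-exit-time}) and the comparison $\meps \asymp \eps^{-1}$ must be invoked to obtain uniform integrability uniformly in $\eps$ and with high probability in $(h,\eta)$. A secondary subtlety is justifying the mollification limit in the last step, which again rests on the $L^1(\mu_h)$-integrability of the Green's function near its logarithmic singularity; and one must also check (routinely) that $f$ being merely H\"older, rather than smooth, is harmless throughout.
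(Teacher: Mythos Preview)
Your proposal is correct and follows the same overall arc as the paper: rewrite the Green's-function sum as an expected occupation-time functional, pass to the continuum via Theorem~\ref{theorem:rw-to-lbm}, and identify the limit with the $\mu_h$-weighted Brownian Green's integral via the LBM time change.

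The one tactical difference is how you handle the unbounded time horizon. The paper truncates at a finite time $T$, applies the invariance principle to the bounded functional $w\mapsto\int_0^T f(w_t)\,dt$ (so no continuity-of-exit-time argument is needed), and then lets $T\to\infty$ using only the first-moment bound on $\tau^{\eps}/\meps$ from Lemmas~\ref{lemma:exit-time-upper-bound} and~\ref{lemma:bounded-exit-time}. You instead work with the exit-time functional directly, which obliges you to argue continuity of $\omega\mapsto\int_0^{\sigma(\omega)} f(\omega_t)\,dt$ at the LBM path (via boundary regularity of the ball) and to invoke the $N=2$ moment in Lemma~\ref{lemma:exit-time-upper-bound} for uniform integrability. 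Both routes are standard; the paper's truncation is marginally lighter since it sidesteps the exit-time continuity issue, while your route is more direct and makes the role of the second-moment estimate explicit. Your final identification step (time change plus mollification) spells out what the paper simply asserts in its last displayed equation.
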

	\begin{proof}
		This follows from convergence of random walk on ${\Geps}$ to LBM and the fact that the discrete and continuum Green's functions can be represented in terms of these objects. 
		
		The conclusions of Lemmas \ref{lemma:expected-exit-time-explicit-bound} and \ref{lemma:cell-size-estimate}
		hold with probability approaching one as ${\eps} \to \infty$. Restrict to the event that these occur for all small enough ${\eps}$. 
		
		Let the deterministic parameters $y \in B_r(z) \subset B_{\rho}$ and $f: B_r(z) \to \C$ a bounded and H\"{o}lder continuous function which is $\sigma(h, \eta)$-measurable be given. Consider the bounded, continuous 
		functional on the space of continuous curves $w:   [0,\infty)  \to B_{\rho}$, 
		\[
		\phi^{f}_T(w) := \int_0^T  f(w_t)  dt, \quad \forall T > 0 .
		\]   
		By boundedness and 
		Theorem \ref{theorem:rw-to-lbm}, 
		\begin{equation} \label{eq:rw-to-lbm}
		\lim_{{\eps} \to 0} \E[ \phi^{f}_T(\hat{X}^{y, {\eps}}) | h , \eta] = \E\left[ \int_0^T f(X^y_t) dt | h, \eta\right], \quad \forall T > 0 ,
		\end{equation}
		where the convergence is in probability.
		
		The term on the left in \eqref{eq:rw-to-lbm} can be used to control its discrete counterpart.
		Indeed, as $f$ is H\"{o}lder continuous and bounded and the cell size of the mated-CRT map is controlled by Lemma \ref{lemma:cell-size-estimate}, for all ${\eps}$ small depending only on $h, \eta$, 
		\[
		{\meps}^{-1} \sum_{t=0}^{\lceil T {\meps} \rceil} f(\eta(X^{y, {\eps}}_t))  - C {\eps}^{q} \leq \phi^{f}_T(\hat{X}^{y, {\eps}}) \leq {\meps}^{-1} \sum_{t=0}^{\lceil T {\meps} \rceil} f(\eta(X^{y, {\eps}}_t))  + C {\eps}^{q}
		\]
		for all $T > 0$, where $C = C(f) > 0$, $q = q(\gamma, f) > 0$, and $a^{{\eps}}_y$ is as in \eqref{eq:plane-to-embedding}.  Thus, by \eqref{eq:rw-to-lbm},
		\begin{equation} \label{eq:green-trunc-weak-convergence}
		\lim_{{\eps} \to 0} {\meps}^{-1}\E\left[ \sum_{t=0}^{\lceil T {\meps} \rceil } f(\eta(X^{y, {\eps}}_t))  | h , \eta\right] =  \E\left[ \int_0^T f(X^y_t) dt | h, \eta\right], \quad \forall T > 0,
		\end{equation}
		in probability as ${\eps} \to 0$. By Lemmas \ref{lemma:bounded-exit-time}, \ref{lemma:exit-time-upper-bound}  and the boundedness of $f$, with $\taueps{B_r(z)}{a^{{\eps}}_y}$ the first exit time as in~\eqref{eq:exit-time},
		\[
		|{\meps}^{-1}\ \sum_{t=0}^{\lceil T {\meps} \rceil } f(\eta(X^{y, {\eps}}_t)) |
		< {\meps}^{-1} \sup_{x \in B_r(z)} |f(x)| \taueps{B_r(z)}{a^{{\eps}}_y} 
		< \infty, \quad \forall T  >0, \quad \forall {\eps} \in (0,1),
		\]
		and ${\meps}^{-1} \E[\taueps{B_r(z)}{a^{{\eps}}_y}  | h, \eta]$ 
		is a.s.\ bounded uniformly in ${\eps}$. 
		Therefore, as \eqref{eq:green-trunc-weak-convergence} holds for each $T > 0$, we may take $T \to \infty$ and interchange the limits to see that
		\begin{align*}
		{\meps}^{-1} \sum_{b \in {\VGeps}(B_r(z))} \Gr{{\eps}}{B_r(z)}(a^{{\eps}}_y, b) f(\eta(b))  
		&= {\meps}^{-1}\E\left[ \sum_{t=0}^{\infty } f(\eta(X^{y, {\eps}}_t))  | h , \eta\right]  \\
		&\overset{{\eps} \to 0}{\rightarrow}  \E\left[ \int_0^\infty f(X^y_t) dt | h, \eta\right] \\
		&= m_0 \times \int_{B_r(z)} G_{B_r(z)} (y, x) f(x) d \mu_h(x),
		\end{align*}
		completing the proof. 	
	\end{proof}
	
	We now use the integral convergence of the Green's function (Lemma~\ref{lemma:integral-convergence-of-greens-function}) together with Lemma \ref{lemma:mu_h_degree-convergence} and tightness to show uniform convergence of the Green's kernel away from its pole. 
	
	\begin{proof}[Proof of Lemma \ref{lemma:uniform-convergence-of-greens-function}]
		Let a sequence of ${\eps}$s tending to zero be given. Fix $y \in B_r(z) \subset B_{\rho}$ and let $O \Subset B_{r}(z) \setminus \{y\}$ be open.
		Denote by  $\mathcal{P}(B_r(z))$ the set of polynomials with rational coefficients in $B_r(z)$. 
		
		There exists a deterministic subsequence $\mathcal{E}$ along which a.s.\ the convergence in Lemma  \ref{lemma:mu_h_degree-convergence}
		occurs and the convergence in Lemma \ref{lemma:integral-convergence-of-greens-function}  
		occurs for each $f \in \mathcal{P}(B_r(z))$.
		We may further assume, by the Borel-Cantelli lemma, that along this subsequence the conclusions of Lemma \ref{lemma:bounded-greens-function}
		and Lemma \ref{lemma:cell-size-estimate} hold.  Restrict to the subsequence $\mathcal{E}$ and the event of the preceding two sentences.

		By Lemma \ref{lemma:bounded-greens-function},  $\gr{{\eps}}{B_{\rho}}(a^{{\eps}}_y , \cdot)$ is uniformly bounded in ${\VGeps}(O)$ by a random constant along $\mathcal{E}$, and by \eqref{eq:discrete-normalized-green-laplacian} is $\Delta^{{\eps}}$-harmonic in ${\VGeps}(O)$. Thus, by Lemma \ref{lemma:discrete-potential-tightness}, there is a random subsequence along which $\overline g^{\eps}$ converges uniformly in $O$ to some continuous function $\overline g^* : O \to \R$ (possibly depending on the subsequence). 
		
		Fix such a subsequence $\mathcal{E}_1 \subset \mathcal{E}$, and subsequential limit $\overline{g}^*$. Let $\mu_h^{{\eps}}$ be the measure on $\C$ whose restriction to each cell $\eta([x-{\eps}, x])$ for $x \in {\eps} \Z$
		is equal to $\deg^{{\eps}}(x)$ times $\mu_h |_{\eta([x-{\eps},x])}$. As we have assumed a.s.\ convergence of the  
		measure $\mu_h^{{\eps}}$ to $6 \mu_h$ along $\mathcal{E}_1$ with respect to the local Prokhorov topology, we have that as $\mathcal{E}_1 \ni {\eps}  \to 0$, 
		\[
		\int_{O} f(x) \overline{g}^{{\eps}}(x) d \mu^{{\eps}}_h(x)
		\to 6 \times \int_{O} f(x) \overline{g}^*(x) d \mu_h(x),
		\quad \forall \mbox{$f:\mathcal{P}(B_r(z)) \to \R$},
		\]
		almost surely. 
		By unpacking the definitions of $\overline{g}^{{\eps}}$ and $\mu_h^{{\eps}}$,
		and using Lemma \ref{lemma:cell-size-estimate} to control the approximation error, we have that a.s.\ as $\mathcal E \ni {\eps} \to 0$, 
		\[
		\begin{aligned}
		&\left | \int_{O} f(x) \overline{g}^{{\eps}}(x) d \mu^{{\eps}}_h(x) - 	{\meps}^{-1} \sum_{b \in {\VGeps}(O)} f(\eta(b)) \Gr{{\eps}}{B_{r}(z)}(a^{{\eps}}_y, b) \right|  \leq C {\eps}^{q}, \\
		&\quad \mbox{for constants $C = C(f)$ and $q = q(\gamma, f) > 0$,} \\
		& \quad \forall \mbox{$f:\mathcal{P}(B_r(z)) \to \R$}.
		\end{aligned}
		\]
		Combining the previous two indented equations together with Lemma \ref{lemma:integral-convergence-of-greens-function} 
		shows that $\overline{g}^*(x) = \frac{m_0}{6} \times G_{B_{r}(z)}(y, x)$. Hence, \eqref{eq:green-uniform} holds when $O\Subset B_r(z) \setminus \{y\}$.
		
		After identifying the limit, we now argue that the convergence occurs up to $\partial B_r(z)$. By a diagonal argument, we may arrange so that 
		$\overline g^{\eps}$ converges uniformly to $ \frac{m_0}{6} \times G_{B_{r}(z)}(y, x)$ along $\mathcal{E}$ on each compact subset of $B_r(z) \setminus \{y\}$. 
		Let $s > 0$ be given and let $r_1 \in (0, r)$ be such that $y$ lies at positive distance from $\cl(\A_{r_1, r}(z))$ and
		\[
		\sup_{x \in \cl(\A_{r_1, r}(z))} G_{B_{r}(z)}(y, x) < s/4
		\]
		Indeed, this is possible as $G_{B_{r}(z)}(y, \cdot)$ is smooth up to the boundary away from its pole and is identically zero on $\partial B_r(z)$. 
		By the maximum principle, Lemma \ref{lemma:maximum-principle}, and local uniform convergence we have that 
		\[
		\sup_{b \in {\VGeps}(\A_{r_1, r}(z))} \gr{{\eps}}{B_{\rho}}(a^{{\eps}}_y , b)
		=
		\sup_{b \in {\VGeps}(\partial B_{r_1}(z))} \gr{{\eps}}{B_{\rho}}(a^{{\eps}}_y , b)
		< s/2,
		\]
		for all ${\eps}$ sufficiently small. This implies convergence up to the boundary, $\partial B_r(z)$. 
	\end{proof}
	
	\subsection{Convergence of the Dirichlet problem}
	We follow a similar strategy as in the previous subsection to prove the following. 
	\begin{lemma} \label{lemma:discrete-dirichlet-problem}
		Fix a deterministic choice of $z\in\C$ and $r>0$ such that $B_r(z) \subset B_{\rho}$, a bounded, deterministic H\"{o}lder-continuous function $\psi: B_{r}(z) \to \R$, and a deterministic
		H\"{o}lder continuous function $\phi: \cl(B_r(z)) \to \R$. 
		The rescaled solution $\overline{f}^{\eps}$ (as in \eqref{eq:discrete-scaling}) to the discrete Dirichlet problem
		\[
		\begin{cases}
		\Delta^{{\eps}} f^{{\eps}}(a) = \psi(\eta(a)) \quad\mbox{for $a \in {\VGeps}(B_r(z)) \setminus {\VGeps}(\partial B_r(z))$} \\
		f^{{\eps}}(b) = {\meps} \times  \phi(\eta(b)) \quad \mbox{for $b \in {\VGeps}(\partial B_r(z))$}
		\end{cases}
		\]
		converges in probability in the uniform topology on $\cl(B_r(z))$ to the solution of the continuum Dirichlet problem, 
		\[
		\begin{cases}
		\Delta \overline{f} = m_0 \psi \times \mu_h  \quad\mbox{on $B_r(z)$} \\
		\overline{f}  = \phi \quad \mbox{on $\partial B_r(z)$}.
		\end{cases}
		\]
		Moreover, a.s.\ the limiting function $\overline{f}$ is H\"{o}lder continuous in $\cl(B_{r}(z))$. 
	\end{lemma}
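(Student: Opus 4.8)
The plan is to write $f^{{\eps}} = f^{{\eps}}_0 + f^{{\eps}}_1$, where $f^{{\eps}}_0$ is the $\Delta^{{\eps}}$-harmonic extension of the boundary data $b \mapsto {\meps}\,\phi(\eta(b))$ and $f^{{\eps}}_1$ solves $\Delta^{{\eps}} f^{{\eps}}_1 = \psi(\eta(\cdot))$ on ${\VGeps}(B_r(z)) \setminus {\VGeps}(\partial B_r(z))$ with zero boundary values, so that by~\eqref{eq:discrete-green-laplacian} $f^{{\eps}}_1(a) = -\sum_{b} \Gr{{\eps}}{B_r(z)}(a,b)\,\psi(\eta(b))$. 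On the continuum side, set $\overline{f} := \overline{f}_0 + \overline{f}_1$, where $\overline{f}_0$ is the Poisson integral of $\phi$ on $B_r(z)$ and $\overline{f}_1(x) := -m_0 \int_{B_r(z)} G_{B_r(z)}(x,y)\,\psi(y)\,d\mu_h(y)$. That $\overline{f}$ is the unique $C(\cl(B_r(z)))$ solution of the stated continuum Dirichlet problem, and that it is H\"older continuous on $\cl(B_r(z))$, are purely continuum facts: the Poisson integral of a H\"older function is H\"older up to the boundary; the Green potential $\overline{f}_1$ is continuous and H\"older on $\cl(B_r(z))$ because $\sup_{y} \Mrho{B_\delta(y)} \lesssim \delta^q$ (Lemma~\ref{lemma:bounded-exit-time}) controls its oscillation at small scales while interior gradient estimates for harmonic functions control the contribution of far-away mass (cf.~\cite{bou2022harmonic}); and uniqueness follows from the maximum principle applied to the harmonic difference of two solutions. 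This settles the last sentence of the statement and lets us argue convergence toward the fixed function $\overline{f}$.

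The interior convergence is obtained essentially as in the proof of Lemma~\ref{lemma:uniform-convergence-of-greens-function}. The a priori bounds $|\Delta^{{\eps}} f^{{\eps}}_1| \le \|\psi\|_\infty$ and $|f^{{\eps}}_1| \le \|\psi\|_\infty\, \Qeps{B_r(z)} \le C {\eps}^{-1}\Mrho{B_{{\eps}^\alpha}(B_r(z))}$ (maximum principle plus Lemma~\ref{lemma:exit-time-upper-bound}) put $f^{{\eps}}_1$ in the scope of Lemma~\ref{lemma:discrete-potential-tightness}, so along a deterministic subsequence $\overline{f}^{{\eps}}_1$ converges uniformly on compact subsets of $B_r(z)$ to a continuous limit; applying Lemma~\ref{lemma:integral-convergence-of-greens-function} with potential point $x$ and source $-\psi$ shows that for each fixed $x$ the rescaled value ${\meps}^{-1} f^{{\eps}}_1(a^{{\eps}}_x)$ converges in probability to $\overline{f}_1(x)$, and since $\overline{f}_1$ is continuous this identifies the subsequential limit as $\overline{f}_1$ and promotes the convergence to the full sequence. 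For $f^{{\eps}}_0$, its rescaled boundary data $b \mapsto \phi(\eta(b))$ is H\"older with an ${\eps}$-independent constant, so the ``moreover'' clause of Lemma~\ref{lemma:discrete-potential-tightness} (via the boundary estimate~\eqref{eq:boundary-estimate} of Lemma~\ref{lemma:discrete-holder-continuity}) gives, along a subsequence, uniform convergence of $\overline{f}^{{\eps}}_0$ on all of $\cl(B_r(z))$ to a continuous $\overline{f}_0^\ast$ with $\overline{f}_0^\ast = \phi$ on $\partial B_r(z)$; since $f^{{\eps}}_0$ is $\Delta^{{\eps}}$-harmonic, optional stopping at the exit time of a small ball $B_s(y) \Subset B_r(z)$ (for a.e.\ $s$, using the moment bound of Lemma~\ref{lemma:exit-time-upper-bound} for uniform integrability), combined with Theorem~\ref{theorem:rw-to-lbm} and the fact that Liouville Brownian motion exits $B_s(y)$ at the same uniformly distributed point as the underlying Brownian motion, shows that $\overline{f}_0^\ast$ satisfies the Euclidean mean value property, hence is harmonic, hence equals $\overline{f}_0$ by uniqueness of the Dirichlet problem.

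The crux — and where the roughness of the mated-CRT map really bites — is to upgrade the convergence of $\overline{f}^{{\eps}}_1$ so that it is uniform up to $\partial B_r(z)$; here the ``moreover'' clause is unavailable because $f^{{\eps}}_1$ is not discrete harmonic, and there is no usable barrier because $\Delta^{{\eps}}$ applied to a cellwise sampling of a smooth function is not controlled on the mated-CRT map. Instead I would use $|\overline{f}^{{\eps}}_1| \le \|\psi\|_\infty\,\overline{\mathfrak{Q}}^{{\eps}}_{B_r(z)}$ (the rescaling, as in~\eqref{eq:discrete-scaling}, of $\Qeps{B_r(z)}$) and reduce to showing
\[
\lim_{\delta \to 0}\ \limsup_{{\eps} \to 0}\ \sup_{x \in \A_{r-\delta,\, r}(z)} \overline{\mathfrak{Q}}^{{\eps}}_{B_r(z)}(x) = 0 ,
\]
since then, with the triangle inequality $\sup_{\cl(B_r(z))}|\overline{f}^{{\eps}}_1 - \overline{f}_1| \le \sup_{\cl(B_{r-\delta}(z))}|\overline{f}^{{\eps}}_1 - \overline{f}_1| + \|\psi\|_\infty \sup_{\A_{r-\delta,\, r}(z)} \overline{\mathfrak{Q}}^{{\eps}}_{B_r(z)} + \sup_{\A_{r-\delta,\, r}(z)} |\overline{f}_1|$, letting ${\eps}\to 0$ and then $\delta\to 0$ (using the interior convergence of $\overline{f}^{{\eps}}_1$ and the continuity of $\overline{f}_1$ with $\overline{f}_1 = 0$ on $\partial B_r(z)$), one gets uniform convergence of $\overline{f}^{{\eps}}_1$ on $\cl(B_r(z))$. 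To prove the collar estimate, fix $x \in \A_{r-\delta,\,r}(z)$ with nearest boundary point $x_0 \in \partial B_r(z)$ and split the walk from $a^{{\eps}}_x$ at its first exit from $B_r(z) \cap B_{\sqrt{\delta}}(x_0)$: either it has (essentially) already left $B_r(z)$, contributing at most $\Qeps{B_{\sqrt{\delta}}(x_0)}(a^{{\eps}}_x) \le C {\eps}^{-1}\delta^{q/2}$ by Lemma~\ref{lemma:expected-exit-time-explicit-bound}, or it has reached $\partial B_{\sqrt\delta}(x_0)$ while still inside $B_r(z)$, which by standard random-walk estimates for the mated-CRT map (equivalently, an annulus-crossing argument using the elliptic Harnack inequality across the $\asymp \tfrac12\log_2(1/\delta)$ dyadic annuli between radius $\delta$ and radius $\sqrt\delta$) happens with probability at most $C\delta^{c'}$, after which the remaining expected exit time is at most $\sup_{b}\Qeps{B_r(z)}(b) \le C{\eps}^{-1}\Mrho{B_{{\eps}^\alpha}(B_r(z))}$; combining and rescaling by ${\meps}^{-1} \asymp {\eps}$ gives $\overline{\mathfrak{Q}}^{{\eps}}_{B_r(z)}(x) \le C\delta^{q/2} + C\delta^{c'}\Mrho{B_{{\eps}^\alpha}(B_r(z))}$, which tends to $0$ as $\delta\to 0$ uniformly for small ${\eps}$ by Lemma~\ref{lemma:bounded-exit-time}.

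Finally, all of the above runs along a single deterministic subsequence on which the polynomially-high-probability conclusions of Lemmas~\ref{lemma:cell-size-estimate}, \ref{lemma:expected-exit-time-explicit-bound}, \ref{lemma:discrete-holder-continuity} and the in-probability convergences of Lemma~\ref{lemma:integral-convergence-of-greens-function} and Theorem~\ref{theorem:rw-to-lbm} hold for every small ${\eps}$. Since the resulting subsequential uniform limit of $\overline{f}^{{\eps}}$ is always the function $\overline{f}$ (a deterministic function of $h$), the standard subsequence argument promotes this to convergence in probability in the uniform topology on $\cl(B_r(z))$, which, together with the H\"older regularity of $\overline{f}$ noted in the first paragraph, completes the proof.
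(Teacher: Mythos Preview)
Your proof is correct and tracks the paper's closely: the same decomposition $f^{{\eps}} = f^{{\eps}}_0 + f^{{\eps}}_1$ into the discrete-harmonic extension of $\phi$ and the zero-boundary Green potential of $\psi$; pointwise convergence of the two pieces via, respectively, the random-walk--to--LBM convergence (which the paper packages as Lemma~\ref{lemma:convergence-of-harmonic-functions}) and the integral Green's-function convergence of Lemma~\ref{lemma:integral-convergence-of-greens-function}; and the upgrade to local uniform convergence through the tightness Lemma~\ref{lemma:discrete-potential-tightness}, with its ``moreover'' clause handling $f^{{\eps}}_0$ all the way to the boundary.

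The only substantive difference is how the convergence of $\overline f^{{\eps}}_1$ is carried up to $\partial B_r(z)$. Both routes reduce, via $|f^{{\eps}}_1| \le \|\psi\|_\infty\, \Qeps{B_r(z)}$, to showing that the rescaled expected exit time is small in a thin collar. The paper's route (once fleshed out) is to split the exit time at the first hit of $\partial B_{r_1}(z)$ for $r_1$ close to $r$: the time spent in the annulus $\A_{r_1,r}(z)$ is $o({\eps}^{-1})$ by Lemma~\ref{lemma:exit-time-upper-bound} applied to that thin set (since $\Mrho{\A_{r_1,r}(z)} \to 0$ as $r_1\uparrow r$), while the remaining time from any point of $\partial B_{r_1}(z)$ is controlled by the already-established local uniform convergence of the rescaled $\Qeps{B_r(z)}$ on $\partial B_{r_1}(z)$ to the H\"older-on-closure function $\overline{\mathfrak Q}_{B_r(z)}$. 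This stays entirely within the lemmas of Section~\ref{sec:preliminaries}. Your route instead splits at the exit from $B_{\sqrt\delta}(x_0)$ and invokes a Beurling-type annulus-crossing bound (the probability of reaching $\partial B_{\sqrt\delta}(x_0)$ before $\partial B_r(z)$ is $O(\delta^{c'})$); that bound is not stated verbatim in the present paper, though it does follow from iterating the Harnack-type estimates behind Lemma~\ref{lemma:discrete-holder-continuity} across dyadic half-annuli, as you sketch. Both arguments are valid; the paper's is a little more self-contained to the lemmas already on hand, while yours makes the boundary decay fully explicit and quantitative.
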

	We first prove a pointwise convergence result concerning harmonic functions. 
	
	\begin{lemma} \label{lemma:convergence-of-harmonic-functions}
		For each deterministic choice of $z\in\C$, $r>0$ such that $B_r(z) \subset B_{\rho}$ and each deterministic H\"{o}lder continuous function $\psi: \cl(B_{r}(z)) \to \R$
		the following occurs. For $y \in \C$, recall that $\taueps{B_r(z)}{a_y^{{\eps}}}$
		is the first time simple random walk started at $a_y^{{\eps}} \in {\VGeps}$ exits ${\VGeps}(B_r(z)) \setminus {\VGeps}(\partial B_r(z))$. For $y \in B_{r}(z)$, let
		\[
		\tau_{B_r(z)}^{y} = \min \{t \geq 0 : X^{y}_t \not \in B_r(z) \}
		\]
		denote the first time LBM started at $y$ exits $B_r(z)$. 
		Then, 
		\[
		\lim_{{\eps} \to 0}  \E \left[ \psi \left( \eta \left( X^{a^{{\eps}}_y,{\eps}}_{\taueps{B_r(z)}{a_y^{{\eps}}}} \right) \right)  | h, \eta \right]  = \E \left[ \psi \left( X^y_{\tau_{B_r(z)}^{y}} \right) | h, \eta \right], \quad \forall y \in B_{r}(z),
		\]
		where the convergence is in probability as ${\eps} \to 0$. 
	\end{lemma}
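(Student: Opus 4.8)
The plan is to identify both sides of the asserted identity as harmonic-measure integrals and to pass from the discrete to the continuum using the invariance principle for random walk on the mated-CRT map (Theorem~\ref{theorem:rw-to-lbm}), together with the fact that the exit position from a Euclidean ball is a path functional which is continuous at a.e.\ Brownian trajectory. The right-hand side is transparent: since $X^y$ is a time change of an ordinary planar Brownian motion $B^y$ by an a.s.\ homeomorphism of $[0,\infty)$, the exit point $X^y_{\tau_{B_r(z)}^y}$ coincides with the exit point of $B^y$ from $B_r(z)$, so $\E[\psi(X^y_{\tau_{B_r(z)}^y})\mid h,\eta]$ is a.s.\ equal to the classical harmonic-measure integral of $\psi$ over $\partial B_r(z)$ from $y$; in particular it does not depend on $(h,\eta)$, so the claim is convergence in probability to a deterministic constant. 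I would fix $y\in B_r(z)$ and replace $\psi$ by a bounded H\"older function on $\C$ agreeing with it on $\cl(B_r(z))$ (e.g.\ precompose with the nearest-point projection onto $\cl(B_r(z))$), which changes nothing below.

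On $C([0,\infty),\C)$ with the local uniform topology, set $\sigma_s(w):=\inf\{t\ge 0:|w(t)-z|\ge s\}$ and $F(w):=\psi(w(\sigma_r(w)))$. I would first record the classical facts that a.s.\ $\sigma_r(X^y)<\infty$, that $X^y$ exits $B_r(z)$ transversally (a.s., for every $\delta>0$ there is $t\in(\sigma_r(X^y),\sigma_r(X^y)+\delta)$ with $|X^y_t-z|>r$), and that $s\mapsto\sigma_s(X^y)$ is a.s.\ continuous at $s=r$; all of these reduce to the corresponding statements for $B^y$, which hold because $\partial B_r(z)$ is smooth, hence every boundary point is regular for $B_r(z)$ and its complement, and a homeomorphic time change preserves them. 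A short deterministic argument then shows that $w_n\to w$ locally uniformly, together with these properties of $w$, implies $\sigma_r(w_n)\to\sigma_r(w)$ and $w_n(\sigma_r(w_n))\to w(\sigma_r(w))$; hence $F$ is bounded by $\|\psi\|_\infty$ and continuous at a.e.\ realization of $X^y$.

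Next I would apply Theorem~\ref{theorem:rw-to-lbm} with starting point $y$: writing $\hat X^{y,\eps}_t:=\eta\big(X^{a_y^{\eps},\eps}_{\meps t}\big)$, extended to $[0,\infty)$ by piecewise-linear interpolation, the conditional law of $\hat X^{y,\eps}$ given $(h,\eta)$ converges in probability, in the Prokhorov topology for the local uniform metric, to the conditional law of $X^y$. By a routine subsequence argument it suffices to work along a deterministic subsequence on which, a.s., these conditional laws converge weakly and the cell-size estimate Lemma~\ref{lemma:cell-size-estimate} holds for all small $\eps$; invoking Skorokhod representation we may further couple so that $\hat X^{y,\eps}\to X^y$ locally uniformly a.s. Since $F$ is bounded and a.s.\ continuous, $\E[F(\hat X^{y,\eps})\mid h,\eta]\to\E[F(X^y)\mid h,\eta]$, which by the first paragraph is the target quantity.

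The remaining, and most delicate, step is to replace $F(\hat X^{y,\eps})$ by $\psi\big(\eta(X^{a_y^{\eps},\eps}_{\taueps{B_r(z)}{a_y^{\eps}}})\big)$ up to a vanishing error. On the cell-size event there is $c=c(\gamma)>0$ with $\diam(\Heps{a})\le\eps^c$ for every cell meeting $B_\rho$. A cell in $\VGeps(B_r(z))\setminus\VGeps(\partial B_r(z))$ is connected, meets $B_r(z)$ and avoids $\partial B_r(z)$, hence lies in $B_r(z)$; by convexity of balls the interpolated path therefore stays in $B_r(z)$ until just before the discrete exit, and, writing $T_\eps:=\meps^{-1}\taueps{B_r(z)}{a_y^{\eps}}$, at time $T_\eps$ it lies in a cell that meets $\partial B_r(z)$ (the exit vertex is adjacent to a vertex whose cell lies in $B_r(z)$ but is itself not in the discrete domain), hence within Euclidean distance $\eps^c$ of $\partial B_r(z)$. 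This yields the sandwich $\sigma_{r-2\eps^c}(\hat X^{y,\eps})\le T_\eps\le\sigma_{r+2\eps^c}(\hat X^{y,\eps})$, and then transversality of the limiting exit — which forces $\sigma_{r\pm2\eps^c}(\hat X^{y,\eps})\to\sigma_r(X^y)$ in the coupling — gives $T_\eps\to\sigma_r(X^y)$, so that $\hat X^{y,\eps}(T_\eps)$ and $\hat X^{y,\eps}(\sigma_r(\hat X^{y,\eps}))$ have the common limit $X^y_{\sigma_r(X^y)}$. Since $\hat X^{y,\eps}(T_\eps)=\eta(X^{a_y^{\eps},\eps}_{\taueps{B_r(z)}{a_y^{\eps}}})$ and $\psi$ is continuous, $\big|\psi(\eta(X^{a_y^{\eps},\eps}_{\taueps{B_r(z)}{a_y^{\eps}}}))-F(\hat X^{y,\eps})\big|\to 0$ a.s.\ in the coupling; taking expectations (everything is bounded by $\|\psi\|_\infty$) and combining with the previous paragraph proves the convergence along the subsequence, hence in probability. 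I expect this last matching to be the main obstacle: the random walk leaves the lattice domain $\VGeps(B_r(z))\setminus\VGeps(\partial B_r(z))$ as soon as one of its cells touches $\partial B_r(z)$, slightly before the embedded path truly crosses $\partial B_r(z)$, and ruling out that the embedded path lingers in the thin annulus $\A_{r-2\eps^c,\,r+2\eps^c}(z)$ where the two exit rules disagree is exactly what makes transversality of the limiting exit indispensable.
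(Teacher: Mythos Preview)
Your proof is correct and follows essentially the same route as the paper's: apply the invariance principle of Theorem~\ref{theorem:rw-to-lbm} to the exit-position functional, then use the cell-size estimate (Lemma~\ref{lemma:cell-size-estimate}) and H\"older continuity of $\psi$ to reconcile the discrete exit from $\VGeps(B_r(z))\setminus\VGeps(\partial B_r(z))$ with the continuous exit of the interpolated path from $B_r(z)$. The paper's argument is terser---it simply asserts that $\phi^\psi(\hat X^{y,\eps})$ and $\psi(\eta(X^{y,\eps}_{\tau^{\eps,y}}))$ differ by $O(\eps^q)$ via Lemma~\ref{lemma:cell-size-estimate}---whereas you supply the transversality-of-exit argument to justify a.s.\ continuity of the functional at the limiting law and the explicit sandwich $\sigma_{r-2\eps^c}\le T_\eps\le\sigma_{r+2\eps^c}$; these are exactly the details the paper leaves implicit.
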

	\begin{proof}
		The proof is nearly identical to that of Lemma \ref{lemma:integral-convergence-of-greens-function}, but for completeness we sketch the proof. 
		
		Let the parameters $y \in B_{r}(z)$ and $\psi: \cl(B_{r}) \to \R$, a H\"{o}lder continuous function, be given. 
		Consider the bounded
		functional on the space of continuous curves $w: C([0,\infty)) \to \cl(B_{r}(z))$ defined by
		\[
		\phi^{\psi}(w) := \psi(w(\tau)),
		\]  
		where $\tau$ is the first time $w(t) \not \in B_r(z)$.
		Recall from the beginning of Section \ref{subsec:green-kernel} that $\hat X^{y,{\eps}}$ is the piece-wise linear interpolation 
		of random walk started at $a^{{\eps}}_y$ on ${\VGeps}$ stopped upon exiting ${\VGeps}(B_\rho) \setminus {\VGeps}(\partial B_{\rho})$. 	
		By Theorem \ref{theorem:rw-to-lbm}, 
		\begin{equation} \label{eq:rw-to-lbm-harmonic}
		\lim_{{\eps} \to 0} \E \left[ \phi^{\psi}(\hat{X}^{y, {\eps}}) | h , \eta \right] = \E \left[ \psi(X^y_{\tau^{y}}) | h, \eta \right],
		\end{equation}
		in probability as ${\eps} \to 0$. 
		We conclude by observing that the term on the left-hand-side  of \eqref{eq:rw-to-lbm-harmonic} is asymptotically close to its discrete counterpart.
		Indeed, as $\psi$ is H\"{o}lder continuous on $\cl(B_{r}(z))$ and the cell-size of the mated-CRT map can be controlled by Lemma \ref{lemma:cell-size-estimate},
		\[
		|\phi^{\psi}(\hat{X}^{y, {\eps}}) -  \psi(\eta(X^{y}_{\tau^{{\eps}, y}}))| \leq C \times {\eps}^{q},
		\]
		in probability as ${\eps} \to 0$, for some $C = C(\psi) > 0$ and $q = q(\gamma, \psi) > 0$.
	\end{proof}

	This pointwise convergence is used together with tightness and integral convergence of the Green's function to prove uniform convergence of solutions 
	to the discrete Dirichlet problem. 
	
	\begin{proof}[Proof of Lemma \ref{lemma:discrete-dirichlet-problem}] 
		Decompose $f^{{\eps}} = f_1^{{\eps}} + f_2^{{\eps}}$ where 
		\[
		\begin{cases}
		\Delta^{{\eps}} f_1^{{\eps}}(a) = 0 \quad\mbox{for $a \in {\VGeps}(B_r(z)) \setminus {\VGeps}(\partial B_r(z))$} \\
		f_1^{{\eps}}(b) = {\meps} \phi(\eta(b)) \quad \mbox{for $b \in {\VGeps}(\partial B_r(z))$}
		\end{cases}
		\]
		and
		\[
		\begin{cases}
		\Delta^{{\eps}} f_2^{{\eps}}(a) = \psi(\eta(a)) \quad\mbox{for $a \in {\VGeps}(B_r(z)) \setminus {\VGeps}(\partial B_r(z))$} \\
		f_2^{{\eps}}(b) = 0 \quad \mbox{for $b \in {\VGeps}(\partial B_r(z))$}.
		\end{cases}
		\]
		\medskip
		
		{\it Step 1: Pointwise convergence.} \\
		Let a deterministic sequence $\mathcal{E}$ be given. Recall from \cite[Corollary 6.2.4]{lawler-limic-walks} that both $f_1^{{\eps}}$ and $f_2^{{\eps}}$
		can be expressed in terms of the Green's function of random walk.  Hence, by Lemma \ref{lemma:convergence-of-harmonic-functions} and Lemma \ref{lemma:integral-convergence-of-greens-function} respectively, 
		there is a deterministic subsequence $\mathcal{E}_1 \subset \mathcal{E}$ along which a.s.\
		$\overline{f}_1^{{\eps}}$ and $\overline{f}_2^{{\eps}}$  
		converge pointwise at every rational point in $B_r(z)$ to
		$\overline{f}_1$ and $\overline{f}_2$ where
		\[
		\begin{cases}
		\Delta \overline{f}_1 = 0 \quad\mbox{on $B_r(z)$} \\
		\overline{f}_1  = \phi \quad \mbox{on $\partial B_r(z)$}
		\end{cases}
		\]
		and
		\[
		\begin{cases}
		\Delta \overline{f}_2 = m_0 \mu_h \times \psi \quad\mbox{on $B_r(z)$} \\
		\overline{f}_2  = 0 \quad \mbox{on $\partial B_r(z)$}
		\end{cases}
		\]
		respectively. 
		\medskip
		
		{\it Step 2: Local uniform convergence.} \\ 
		We now show local uniform convergence in $B_r(z)$. By the maximum principle (Lemma \ref{lemma:maximum-principle}), $f_1^{{\eps}} \leq {\meps} \sup_{x \in B_r(z)} |\phi(x)|$.
		Moreover, as 
		\[
		f_2^{{\eps}} \pm \left( \sup_{x \in B_r(z)} |\psi(x)| \right) \times \Qeps{B_r(z)}
		\]
		is super (resp.\ sub) harmonic in ${\VGeps}(B_r(z)) \setminus {\VGeps}(\partial B_r(z))$ and zero on ${\VGeps}(\partial B_r(z))$, the maximum principle also implies that $|f_2^{{\eps}}| \leq \left(\sup_{x \in B_r(z)} |\psi(x)| \right) \Qeps{B_r(z)}$ 
		By the previous two sentences and Lemma \ref{lemma:expected-exit-time-explicit-bound} we have that $\overline{f}_1^{{\eps}}$ and $\overline{f}_2^{{\eps}}$ 
		are bounded by a random constant along a deterministic subsequence of $\mathcal{E}_1$. Thus, by Lemma \ref{lemma:discrete-potential-tightness} there is a random subsubsequence $\mathcal{E}_2 \subset \mathcal{E}_1$ along which $\overline{f}_1^{{\eps}}$ converges uniformly in $\overline{B}_r(z)$ and  $\overline{f}_2^{{\eps}}$ locally uniformly in $B_r(z)$ to continuous functions $\overline{f}_1^* : \overline{B}_r(z) \to \R$ and $\overline{f}_2^* : \overline{B}_r(z) \to \R$.
		By Step 1 we have that  $\overline{f}^*_1 = \overline{f}_1$ 
		and $\overline{f}^*_2 = \overline{f}_2$.
		
		\medskip
		{\it Step 3: Uniform convergence.} \\
		We now argue that $\overline{f}_2^{{\eps}}$ converges uniformly in $\cl(B_r(z))$ to $\overline{f_2}$ by a standard barrier argument. 
		Define
		\begin{equation}
		M_{\psi} = \sup_{a' \in B_r(z)} |\psi(a')|.
		\end{equation}
		and recall from \eqref{eq:discrete-exit-laplacian} that the function $\Qeps{B_r(z)}: {\VGeps}(\cl(B_r(z))) \to \R$ 
		satisfies
		\[
		\begin{cases}
		\Delta^{{\eps}} \Qeps{B_r(z)}(a) = -1 \quad\mbox{for $a \in {\VGeps}(B_r(z)) \setminus {\VGeps}(\partial B_r(z))$} \\
		\Qeps{B_r(z)}(b) = 0 \quad \mbox{for $b \in {\VGeps}(\partial B_r(z))$}.
		\end{cases}
		\]
		By the same argument as Steps 1 and 2, $M_{\psi} \times \Qeps{B_r(z)}$ converges locally uniformly in $B_r(z)$
		along $\mathcal{E}_2$
		to $M_{\psi} \times \overline{\mathfrak{Q}}_{B_r(z)}$ where
		\[
		\begin{cases}
		\Delta \overline{\mathfrak{Q}}_{B_r(z)} = -1 \quad\mbox{on $B_r(z)$} \\
		\overline{\mathfrak{Q}}_{B_r(z)}  = 0 \quad \mbox{on $\partial B_r(z)$}.
		\end{cases}
		\]
		By, for example, \cite[Proposition 2.5]{bou2022harmonic}, $\overline{\mathfrak{Q}}_{B_r(z)}$ is H\"{o}lder continuous in $\cl(B_r(z))$.
		Since $f^{{\eps}}_2 \pm M_{\psi} \times \Qeps{B_r(z)}$ is super (resp.\ sub) harmonic in ${\VGeps}(B_{r}(z)) \setminus {\VGeps}(\partial B_{r}(z))$ and is $0$ on ${\VGeps}(\partial B_{r}(z))$, we deduce convergence up to the boundary using the maximum principle exactly as in the proof of  Lemma \ref{lemma:uniform-convergence-of-greens-function}.
	\end{proof}

	\subsection{Convergence of the normalized expected exit time}
	Recall that for a domain $D \subset B_{\rho}$, the normalized exit time is defined as $\qeps{D}(a)= \sum_{b \in {\VGeps}(D) \setminus {\VGeps}(\partial D)} \gr{{\eps}}{D}(a,b)$
	and by \eqref{eq:discrete-normalized-exit-laplacian} solves the following discrete Dirichlet problem, 
	\[
	\begin{cases}
	\Delta^{{\eps}} \qeps{D}(\cdot) = -(\deg^{{\eps}})^{-1}(\cdot) \quad &\mbox{in ${\VGeps}(D) \setminus {\VGeps}(\partial D)$} \\
	\qeps{D}(\cdot) = 0 \quad &\mbox{otherwise}.
	\end{cases} 
	\]
	As the function $(\deg^{{\eps}})^{-1}$ is not H\"{o}lder continuous, Lemma \ref{lemma:discrete-dirichlet-problem} does not directly imply 
	that the normalized expected exit time converges. Nevertheless, we can use tightness and convergence of the Green's kernel to show the following. 
	
	\begin{lemma} \label{lemma:convergence-of-the-normalized-exit-time}
		Fix a deterministic choice of $z \in \C$ and $r > 0$ such that $B_r(z) \subset B_{\rho}$. As ${\eps} \to 0$, $\olqeps{B_r(z)}$ converges 
		uniformly in $\cl(B_r(z))$
		to $\overline{\mathfrak q}_{B_r(z)}$
		in probability
		where
		\[
		\begin{cases}
		\Delta \overline{\mathfrak q}_{B_r(z)} = -m_0/6 \times \mu_h \quad \mbox{on $B_r(z)$} \\
		\overline{\mathfrak q}_{B_r(z)} = 0 \quad \mbox{ on $\partial B_{r}(z)$}.
		\end{cases}
		\]
	\end{lemma}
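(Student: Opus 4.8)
Write $D:=B_r(z)$. The plan is threefold: (1) apply the general tightness lemma to extract subsequential limits of $\olqeps{D}$; (2) identify the limit at deterministic points using the already-established convergence of the Green's kernel; (3) conclude by a density argument.

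\emph{Step 1: tightness and the boundary.} By \eqref{eq:discrete-normalized-exit-laplacian}, $\qeps{D}$ is nonnegative, satisfies $|\Delta^{{\eps}}\qeps{D}|=(\deg^{{\eps}})^{-1}\le 1$ on ${\VGeps}(D)\setminus{\VGeps}(\partial D)$, and $\qeps{D}\le\Qeps{D}\le C{\eps}^{-1}$ on ${\VGeps}(D)$ with polynomially high probability by Lemma~\ref{lemma:expected-exit-time-explicit-bound}. Hence Lemma~\ref{lemma:discrete-potential-tightness} gives that, a.s.\ along $\sigma(h,\eta)$-measurable subsequences, $\olqeps{D}$ converges locally uniformly in $D$ to some $\overline{\mathfrak q}^{*}\in C(D)$. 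To obtain convergence up to $\partial D$ and $\overline{\mathfrak q}^{*}=0$ there, note $0\le\olqeps{D}\le{\meps}^{-1}\Qeps{D}$ and that ${\meps}^{-1}\Qeps{D}$ converges uniformly on $\cl(D)$ to the solution $\overline{\mathfrak Q}_D$ of $\Delta\overline{\mathfrak Q}_D=-m_0\mu_h$ in $D$ with $\overline{\mathfrak Q}_D=0$ on $\partial D$ --- this is Lemma~\ref{lemma:discrete-dirichlet-problem} applied with $\psi\equiv-1$, $\phi\equiv 0$, which also gives that $\overline{\mathfrak Q}_D$ is H\"older continuous up to $\partial D$. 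Squeezing $\olqeps{D}$ between $0$ and ${\meps}^{-1}\Qeps{D}$ near $\partial D$, exactly as in Step~3 of the proof of Lemma~\ref{lemma:discrete-dirichlet-problem}, upgrades the interior convergence to uniform convergence on $\cl(D)$, with $\overline{\mathfrak q}^{*}=0$ on $\partial D$.

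\emph{Step 2: identifying the limit.} Fix a deterministic $x\in D$ and recall from \eqref{eq:normalized-exit-time} that $\olqeps{D}(x)={\meps}^{-1}\sum_{b}\gr{{\eps}}{D}(a^{{\eps}}_x,b)$, the sum over $b\in{\VGeps}(D)\setminus{\VGeps}(\partial D)$. Let $\overline g^{{\eps}}$ be the rescaled Green's kernel of Lemma~\ref{lemma:uniform-convergence-of-greens-function} with its pole at $x$, so that ${\meps}^{-1}\gr{{\eps}}{D}(a^{{\eps}}_x,b)={\eps}\,\overline g^{{\eps}}|_{\Heps{b}}$; since the space-filling SLE is parameterized by $\mu_h$ we have $\mu_h(\Heps{b})={\eps}$ for every cell, so up to a negligible error coming from cells meeting $\partial D$ one gets $\olqeps{D}(x)=\sum_{b}\mu_h(\Heps{b})\,\overline g^{{\eps}}|_{\Heps{b}}$. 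Fix a small $\sigma>0$ and split according to whether $\eta(b)\in B_{\sigma}(x)$. On $\{w\in\cl(D):|w-x|\ge\sigma\}$, Lemma~\ref{lemma:uniform-convergence-of-greens-function} gives $\overline g^{{\eps}}\to\tfrac{m_0}{6}G_D(x,\cdot)$ uniformly, and this limit is bounded and Lipschitz there; since $\mu_h(\Heps{b})={\eps}$ and the measure ${\eps}\sum_{b\in{\VGeps}}\delta_{\eta(b)}$ converges vaguely to $\mu_h$ (Lemma~\ref{lemma:mu_h-convergence}), the far part converges to $\tfrac{m_0}{6}\int_{\{|w-x|\ge\sigma\}\cap D}G_D(x,w)\,d\mu_h(w)$, which tends to $\tfrac{m_0}{6}\int_D G_D(x,w)\,d\mu_h(w)$ as $\sigma\to0$. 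For the near part, bound $\gr{{\eps}}{D}(a^{{\eps}}_x,b)\le\gr{{\eps}}{B_{\rho}}(a^{{\eps}}_x,b)$ and use Lemma~\ref{lemma:bounded-greens-function} when $|\eta(a^{{\eps}}_x)-\eta(b)|\ge{\eps}^{\beta}$, together with the crude bound $\gr{{\eps}}{B_{\rho}}\le\Gr{{\eps}}{B_{\rho}}\le{\eps}^{-\zeta}$ of Lemma~\ref{lemma:greens-function-log-bound} for the remaining $O({\eps}^{-1}\mu_h(B_{2{\eps}^{\beta}}(x)))$ cells (the count controlled by Lemmas~\ref{lemma:volume-growth} and~\ref{lemma:cell-size-estimate}, with $\zeta$ small); since $w\mapsto\log(1/|x-w|)$ is $\mu_h$-integrable, one more application of Lemma~\ref{lemma:mu_h-convergence} (after truncating the logarithm) bounds the near part by $C\,\Mrho{B_{2\sigma}(x)}+o(1)\le C\sigma^{q}+o(1)$ via Lemma~\ref{lemma:bounded-exit-time}. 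Letting ${\eps}\to0$ and then $\sigma\to0$ yields $\olqeps{D}(x)\to\tfrac{m_0}{6}\int_D G_D(x,w)\,d\mu_h(w)$ in probability.

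\emph{Step 3: conclusion and the main difficulty.} The function $x\mapsto\tfrac{m_0}{6}\int_D G_D(x,w)\,d\mu_h(w)$ is the unique continuous solution of $\Delta\overline{\mathfrak q}_{D}=-\tfrac{m_0}{6}\mu_h$ in $D$ with $\overline{\mathfrak q}_{D}=0$ on $\partial D$ (equivalently $\overline{\mathfrak q}_{D}=\tfrac16\overline{\mathfrak Q}_D$), and it is H\"older continuous up to $\partial D$, e.g.\ by \cite[Proposition~2.5]{bou2022harmonic}. By Step~2, every subsequential limit $\overline{\mathfrak q}^{*}$ from Step~1 agrees with $\overline{\mathfrak q}_{D}$ on the dense set of deterministic points of $D$; being continuous on $\cl(D)$, they coincide, so $\olqeps{D}\to\overline{\mathfrak q}_{D}$ uniformly on $\cl(D)$ in probability. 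Everything here is routine except the near-pole estimate in Step~2: $\overline g^{{\eps}}$ has a logarithmic singularity at $x$ and the mated-CRT map has unbounded vertex degrees, so neither Lemma~\ref{lemma:mu_h-convergence} nor the worst-case Green's function bounds apply directly near $x$, and one must carefully combine the normalized Green's-kernel bound (Lemma~\ref{lemma:bounded-greens-function}), the crude degree-sensitive bound (Lemma~\ref{lemma:greens-function-log-bound}), the cell-size and volume estimates, and the $\mu_h$-integrability encoded by $\Mrho{\cdot}<\infty$ (Lemma~\ref{lemma:bounded-exit-time}), tuning the auxiliary exponents so that this contribution is $o(1)$ as ${\eps}\to0$ and $\sigma\to0$.
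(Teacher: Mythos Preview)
Your proof is correct and follows the same overall architecture as the paper's: tightness via Lemma~\ref{lemma:discrete-potential-tightness}, a barrier argument with $\Qeps{D}$ for the boundary, and identification of the limit through the Green's kernel. The genuine difference is in how you control the near-pole contribution.

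The paper handles the near part by a soft maximum-principle comparison: it observes that the function $\mathfrak q^{{\eps},\delta^+}_{D}(\cdot)=\sum_{b\in{\VGeps}(B_\delta(x))}\gr{{\eps}}{D}(\cdot,b)$ satisfies $|\Delta^{{\eps}}\mathfrak q^{{\eps},\delta^+}_{D}|\le(\deg^{{\eps}})^{-1}\mathbf 1_{{\VGeps}(B_\delta(x))}$, compares it via the maximum principle to the Dirichlet solution $f^{{\eps}}_\delta$ with right-hand side a bump function $\phi_\delta$ supported on $B_{2\delta}(x)$, and then invokes Lemma~\ref{lemma:discrete-dirichlet-problem} as a black box to pass to the limit $\overline f_\delta$, which is bounded by $m_0\,\Mrho{B_{2\delta}(x)}\le\delta^q$. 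In particular the paper never touches the pointwise Green's-kernel bounds near the singularity. Your approach instead estimates the near sum directly by combining Lemma~\ref{lemma:bounded-greens-function} (for cells at distance $\ge{\eps}^\beta$) with Lemma~\ref{lemma:greens-function-log-bound} (for the remaining very close cells), together with the volume and cell-size estimates. This is more hands-on and, as you note, requires aligning several exponents: one must check that the $\beta$ of Lemma~\ref{lemma:bounded-greens-function} can be taken small enough that the truncated logarithm is admissible in Lemma~\ref{lemma:mu_h-convergence}, and that $\zeta$ in Lemma~\ref{lemma:greens-function-log-bound} is chosen below $\beta\beta^+$ so the very-near contribution is $o(1)$. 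These checks go through, so your argument is valid; the paper's route simply avoids this bookkeeping by reusing Lemma~\ref{lemma:discrete-dirichlet-problem}.
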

	\begin{proof}
		Fix a deterministic choice of $z \in \C$ and $r > 0$ such that $B_r(z) \subset B_{\rho}$. We split the proof into steps. 
		\medskip
		
		{\it Step 1: Identify a subsequence.} \\
		Let a sequence of ${\eps}$s tending to $0$ be given and fix a deterministic subsequence $\mathcal{E}$ along which the convergence result in Lemma \ref{lemma:uniform-convergence-of-greens-function}
		occurs in $\cl(B_r(z))$ and Lemmas \ref{lemma:mu_h-convergence} and \ref{lemma:expected-exit-time-explicit-bound} occur almost surely. 
		Also suppose that along this subsequence the convergence in Lemma \ref{lemma:discrete-dirichlet-problem} occurs for 
		$\phi \equiv 0$ and for $\psi$ within a countable family of bump functions $\mathcal F(B_r(z))$ with the following property.
		For each rational $\delta' >0$ and rational $x' \in B_r(z)$ such that $B_{\delta'}(x') \subset B_r(z)$, there is a function in $\mathcal F(B_r(z))$ which takes values in $[0,1]$, is 1 on $B_{\delta'}(x')$, and is 0 on $B_r(z) \setminus B_{2 \delta'}(x')$.

		By Lemma \ref{lemma:expected-exit-time-explicit-bound}
		and our choice of subsequence, we have that that $\olqeps{B_r(z)}$
		is uniformly bounded by a random constant along $\mathcal{E}$. Since $\deg^{{\eps}} \geq 1$, by \eqref{eq:discrete-normalized-exit-laplacian}, 
		the unscaled function also has bounded $\Delta^{{\eps}}$-Laplacian. Thus, by Lemma \ref{lemma:discrete-potential-tightness}, there 
		is a random subsequence $\mathcal{E}_0$ along which $\olqeps{B_r(z)}$ converges. In the remainder of the proof we use convergence 
		of the Green's kernel away from its pole to identify the subsequential limit. 
		\medskip	
		
		{\it Step 2: Convergence away from the singularity.} \\
		Let  $x \in B_r(z) \cap \Q^2$ and a rational $\delta >0$ such that $B_{\delta}(x) \Subset B_r(z)$ be given. 
		We would like to apply the uniform convergence of the Green's kernel away from its pole (Lemma \ref{lemma:uniform-convergence-of-greens-function}). 
		So, we `carve out' the singularity at the pole of $\gr{{\eps}}{B_r(z)}(\cdot, \cdot)$ by considering, 
		\[
		\begin{aligned}
		\mathfrak q^{{\eps}, \delta^-}_{B_r(z)}(\cdot) &= \sum_{b \in {\VGeps}(B_r(z)) \setminus ({\VGeps}(\partial B_r(z)) \cup {\VGeps}(B_\delta(x)))} \gr{{\eps}}{B_r(z)}(\cdot, b) \\
		\mathfrak q^{{\eps}, \delta^+}_{B_r(z)}(\cdot) &= \sum_{b \in {\VGeps}(B_\delta(x))} \gr{{\eps}}{B_r(z)}(\cdot, b).
		\end{aligned}
		\]	
		Observe that the same argument leading to \eqref{eq:discrete-normalized-exit-laplacian}
		shows that 
		\begin{equation} \label{eq:trunc-laplacian-delta-}
		\begin{cases}
		\Delta^{{\eps}} \mathfrak q^{{\eps}, \delta^-}_{B_r(z)}(a) &= -(\deg^{{\eps}}(a))^{-1}  \\
		&\times 1\{a \in {\VGeps}(B_r(z)) \setminus ({\VGeps}(\partial B_r(z)) \cup {\VGeps}(B_\delta(x))) \}\\
		& \quad\mbox{on ${\VGeps}(B_r(z)) \setminus {\VGeps}(\partial B_r(z))$} \\
		\mathfrak q^{{\eps}, \delta^-}_{B_r(z)}  = 0 &\quad \mbox{otherwise.}
		\end{cases} 
		\end{equation}
		and
		\begin{equation}\label{eq:trunc-laplacian-delta+}
		\begin{cases} 
		\Delta^{{\eps}} \mathfrak q^{{\eps}, \delta^+}_{B_r(z)}(a) = -(\deg^{{\eps}}(a))^{-1} 1\{a \in {\VGeps}(B_\delta(x)) \} \quad\mbox{on ${\VGeps}(B_r(z)) \setminus {\VGeps}(\partial B_r(z))$} \\
		\mathfrak q^{{\eps}, \delta^+}_{B_r(z)}  = 0 \quad \mbox{otherwise}.
		\end{cases}
		\end{equation}
		Apply the same argument as in the last paragraph of Step 1 to a countable set of rationals $\delta' > 0$ and $x' \in \Q^2$
		to see that there is a random subsequence  of $\mathcal{E}_0$ (independent of $\delta$ and $x$)  along which $\olqeps{B_r(z)}$, $\overline{\mathfrak q}^{{\eps}, \delta^-}_{B_r(z)}$, $\overline{\mathfrak q}^{{\eps}, \delta^+}_{B_r(z)}$ converge uniformly in $\overline{B}_r(z)$ to continuous functions $\overline{\mathfrak q}^{*}_{B_r(z)}$, $\overline{\mathfrak q}^{*, \delta^-}_{B_r(z)}$, $\overline{\mathfrak q}^{*, \delta^+}_{B_r(z)}$ (with the limit possibly depending on the subsequence).
		
		Fix such a subsequence $\mathcal{E}_1 \subset \mathcal{E}_0$ and limits $\overline{\mathfrak q}^{*}_{B_r(z)}$, $\overline{\mathfrak q}^{*, \delta^-}_{B_r(z)}$, $\overline{\mathfrak q}^{*, \delta^+}_{B_r(z)}$ and observe that
		\begin{equation} \label{eq:asymptotic-equal}
		\overline{\mathfrak q}^{*}_{B_r(z)} = \overline{\mathfrak q}^{*, \delta^-}_{B_r(z)} + \overline{\mathfrak q}^{*, \delta^+}_{B_r(z)}.
		\end{equation}

		Let $g^{{\eps}}(\cdot) := {\eps}^{-1} \gr{{\eps}}{B_r(z)}(a^{{\eps}}_x , \cdot)$ so that by Lemma \ref{lemma:uniform-convergence-of-greens-function}, 
		\[
		g^{{\eps}}(\cdot) \to (m_0/6) G_{B_r(z)}(x,\cdot)
		\]
		uniformly in $\cl(B_r(z)) \setminus B_{\delta}(x)$ 
		as $\mathcal{E}_1 \ni {\eps} \to 0$.  As $g^{{\eps}}$ is zero on $\partial B_r(z)$, 
		it satisfies the H\"{o}lder continuity estimate \eqref{eq:boundary-estimate} from Lemma \ref{lemma:discrete-holder-continuity} in $B_r(z) \setminus B_{\delta}(x)$.
		This allows us to use $\overline{g}^{{\eps}}$ as a test function in Lemma \ref{lemma:mu_h-convergence} so that
		\[
		\begin{aligned}
		\int_{B_{r}(z) \setminus B_{\delta}(x)} \overline{g}^{{\eps}}(y) d \mu_h(y)
		&\to \frac{m_0}{6} \int_{B_{r}(z) \setminus B_{\delta}(x)} G_{B_r(z)}(x, y) d \mu_h(y).
		\end{aligned}
		\]
		as	$\mathcal{E}_1 \ni {\eps}  \to 0$.
		
		By unpacking the definition of $\overline{g}^{{\eps}}(y)$ and using Lemma \ref{lemma:cell-size-estimate}, 
		we see that the left-hand-side of the above converges to the same limit as $\overline{\mathfrak q}^{{\eps}, \delta^-}_{B_r(z)}(x)$. 
		This implies 
		\begin{equation} \label{eq:asymptotic-away}
		\overline{\mathfrak q}^{*, \delta^-}_{B_r(z)}(x) = \frac{m_0}{6} \int_{B_{r}(z) \setminus B_{\delta}(x)} G_{B_r(z)}(x, y) d \mu_h(y).
		\end{equation}
		\medskip
		
		{\it Step 3: Remove the singularity.}\\
		We claim that we may conclude once we show that 
		\begin{equation} \label{eq:asymptotic-small}
		\lim_{\delta \to 0} \sup_{y \in B_r(z)} \overline{\mathfrak{q}}^{*, \delta^+}_{B_r(z)}(y) = 0.
		\end{equation}
		Indeed, \eqref{eq:asymptotic-away}, \eqref{eq:asymptotic-small}, and \eqref{eq:asymptotic-equal} together imply that
		\[
		\overline{\mathfrak q}^{*}_{B_r(z)}(x) = \frac{m_0}{6} \int_{B_{r}(z)} G_{B_r(z)}(x, y) d \mu_h(y).
		\]
		Since $\Delta G_{B_r(z)}(x, \cdot) = -\delta_x(\cdot)$ 
		in $B_r(z)$, the integral above 
		is equal to $\overline{\mathfrak{q}}_{B_r(z)}(x)$. This completes the proof as $x$ was an arbitrary rational and $\overline{\mathfrak{q}}^{*}_{B_r(z)}$ is continuous.

		It remains to show \eqref{eq:asymptotic-small}. Recall the definition of smooth bump functions $\mathcal{F}(B_r(z))$ from Step 1. 
		Let $\phi_{\delta} \in \mathcal{F}(B_r(z))$ be a smooth, positive bump function which is 1 on $B_{\delta}(x)$ and 0 on $B_{r}(z) \setminus B_{2 \delta}(x)$ 
		and consider
		the solution to the Dirichlet problem, $f^{{\eps}}_{\delta}: {\VGeps}(\cl(B_r(z))) \to \R$ defined by
		\[
		\begin{cases}
		\Delta^{{\eps}} f^{{\eps}}_{\delta}(a) = \phi_{\delta}(\eta(a)) \quad \mbox{for $a \in {\VGeps}(B_{r}(z)) \setminus {\VGeps}(\partial B_r(z))$} \\
		f^{{\eps}}_{\delta}(b) = 0  \quad \mbox{for $b \in {\VGeps}(\partial B_r(z))$}.
		\end{cases}
		\]	
		Since $q^{{\eps}, \delta^+}_{B_r(z)} \pm f^{{\eps}}_{\delta}$ is sub (resp.\ super) harmonic in ${\VGeps}(B_{r}(z)) \setminus {\VGeps}(\partial B_{r}(z))$ and is $0$ on ${\VGeps}(\partial B_{r}(z))$, by the maximum principle, 
		\begin{equation} \label{eq:bound-q-by-f}
		|\mathfrak q^{{\eps}, \delta^+}_{B_r(z)}| \leq |f^{{\eps}}_{\delta}|, \quad \mbox{in ${\VGeps}(B_{r}(z))$}.
		\end{equation}
		By Lemma \ref{lemma:discrete-dirichlet-problem}, $\overline{f}^{{\eps}}_{\delta}$ converges uniformly in $B_r(z)$ to $\overline{f}_{\delta}$ where
		\[
		\begin{cases}
		\Delta \overline{f}_{\delta} = m_0 \phi_{\delta} \times \mu_h \quad \mbox{on $B_r(z)$} \\
		\overline{f}_{\delta} = 0  \quad \mbox{on $\partial B_r(z)$}.
		\end{cases}
		\]
		We can represent
		\begin{align*}
		\overline{f}_{\delta}(\cdot) &= m_0 \int_{B_r(z)}  \phi_{\delta}(y) G_{B_r(z)}(\cdot, y) d \mu_h(y) \\
		&\leq m_0 \int_{B_{2 \delta}(z)}  G_{B_1}(\cdot, y) d \mu_h(y) \\
		&\leq m_0 \times \Mrho{B_{2 \delta}(z)}.
		\end{align*}
		Hence, by the second half of Lemma \ref{lemma:bounded-exit-time} 
		there exists $q = q(\gamma) > 0$ such that
		\[
		\overline{f}_{\delta} \leq \delta^q  \quad \mbox{for all $\delta$ sufficiently small},
		\]
		which by \eqref{eq:bound-q-by-f}, implies \eqref{eq:asymptotic-small}, completing the proof. 		
	\end{proof}

	\section{Convergence of the divisible sandpile odometer} \label{sec:divisible-sandpile}
	Fix $\rho \in (0,1)$  and $t > 0$. We briefly recall the single-source divisible sandpile model, introduced in Section \ref{subsubsec:div-sandpile-informal-intro}, and define its odometer. 
	In the model, we start with a continuous amount of mass $(t {\eps}^{-1})$ at the origin $0 \in {\VGeps}$ and zero elsewhere. At each time step, vertices $a \in {\VGeps}$ which have mass $\sigma(a) > 1$, where $\sigma$ is the current configuration, are unstable and {\it topple}, distributing the excess mass $(\sigma(a) - 1)$ 
	equally among the neighbors of $a$. 
	Since ${\Geps}$ is an infinite graph eventually each site $a$ will have mass at most $1$.
	Denote the final configuration of mass by $s^{{\eps}}_{t}: {\VGeps} \to [0, 1]$.
	
	The {\it odometer}, which we denote by $v^{{\eps}}_{t}: {\VGeps} \to [0, \infty)$, tracks the total amount of mass each site has emitted during this process. 
	In particular,  by definition of the graph Laplacian, we have that 
	\begin{equation} \label{eq:final-sandpile-definition}
	s^{{\eps}}_{t} = t {\eps}^{-1} \delta_0 +  \deg^{{\eps}} \times \Delta^{{\eps}} \left( \frac{v^{{\eps}}_{t}}{\deg^{{\eps}}} \right).
	\end{equation}
	(The two $\deg^{{\eps}}$ factors in the above expression are needed to ensure the above expression matches the prior description of toppling.)
	
	By definition, we see that the divisible sandpile cluster satisfies
	\begin{equation} \label{eq:cluster-definition}
	D_{{\eps}}(t {\eps}^{-1})  = \{ x \in {\VGeps} : s^{{\eps}}_{t} > 0 \}
	= \cl( \{ x \in {\VGeps} : v_t^{{\eps}} > 0 \}). 
	\end{equation}
	The reason for the closure in \eqref{eq:cluster-definition} is that there are some sites with mass
	in the cluster which have not toppled. These sites, by definition, must have a neighbor which has toppled. 
	
	A fundamental property in the study of the sandpile is the {\it least action principle},
	\begin{equation} \label{eq:least-action-principle}
	\frac{v^{{\eps}}_{t}}{\deg^{{\eps}}} = \min \{ w:{\VGeps} \to [0,\infty) : \Delta^{{\eps}} w + {\deg^{{\eps}}(0)}^{-1} t {\eps}^{-1} \delta_0 \leq (\deg^{{\eps}})^{-1}\},
	\end{equation}
	where the minimum is pointwise. See, \eg, \cite[Lemma 3.2]{levine2009strong} for a proof. This property is closely related to \eqref{eq:least-super-solution}. 
	In fact, one sees from the definition that if $D_{{\eps}}(t {\eps}^{-1})  \subset {\VGeps}(B_{\rho}) \setminus {\VGeps}(\partial B_{\rho})$,
	then we can express, using \eqref{eq:discrete-normalized-green-laplacian}, the solution to 
	\eqref{eq:least-action-principle} as 
	\begin{equation} \label{eq:odometer-to-normalized-odometer}
	\frac{v^{{\eps}}_{t}(a)}{\deg^{{\eps}}(a)} =
	\begin{cases}
	\wteps(a) + (t {\eps}^{-1}) \times \gr{{\eps}}{B_\rho}(0, a) \quad \mbox{for $a \in {\VGeps}(B_{\rho}) \setminus {\VGeps}(\partial B_{\rho})$} \\
	0 \quad \mbox{otherwise}
	\end{cases}
	\end{equation}
	where
	\begin{equation} \label{eq:discrete-least-supersolution}
	\begin{aligned}
	\wteps := &\min \{ w: {\VGeps}(\cl(B_{\rho})) \to \R : \Delta^{{\eps}} w \leq (\deg^{{\eps}})^{-1} \mbox{ on ${\VGeps}(B_{\rho}) \setminus {\VGeps}(\partial B_{\rho})$} \\
	&\quad \mbox{ and $w \geq -(t {\eps}^{-1}) \times  \gr{{\eps}}{B_\rho}(0, \cdot)$}\},
	\end{aligned}
	\end{equation}
	where the minimum is pointwise. And, in general, 
	\begin{equation} \label{eq:lower-bound-odometer}
	\frac{v^{{\eps}}_{t}}{\deg^{{\eps}}} \geq \wteps + (t {\eps}^{-1}) \times \gr{{\eps}}{B_\rho}(0, \cdot) \quad \mbox{on ${\VGeps}(B_{\rho})$}.
	\end{equation}
	
	Let $\overline \wteps$ be the re-scaled version of $\wteps$ as in \eqref{eq:discrete-scaling}.
	We will show the convergence of $\overline \wteps$ to its continuum counterpart defined by \eqref{eq:least-super-solution}. In order to ensure the divisible sandpile cluster is contained in $B_{\rho}$, we consider
	$t \in (0, \TT)$ where 
	\begin{equation} \label{eq:cluster-stopping-time}
	\TT = \sup \{t > 0 : \Lambda_t \subset B_{\rho/3} \},
	\end{equation}
	where $\Lambda_t$ is the LQG harmonic ball as in Theorem \ref{theorem:harmonic-balls}. 
	We take $B_{\rho/3}$ rather than $B_{\rho}$ in the above definition 
	to allow some room for error in the discrete estimates of Section \ref{sec:idla-upper-bound}. 
	
	\begin{theorem} \label{theorem:convergence-divisible}
		Recall the median exit time $m_0$ defined in \eqref{eq:mo}. 
		For each $t > 0$, on the event $\{t < \TT\}$ we have that $\olwteps$ converges uniformly in 
		$B_{\rho}$ as ${\eps} \to 0$ to $\frac{m_0}{6} \overline{w}^{B_\rho}_t$
		in probability, where $\overline w_t^{B_\rho}$ is as in Theorem~\ref{theorem:harmonic-balls}. 
	\end{theorem}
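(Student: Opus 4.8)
The goal is to show that every subsequential limit of $\olwteps$ solves the same continuum obstacle problem (rescaled by $m_0/6$) as the one defining $\overline w_t^{B_\rho}$ in \eqref{eq:least-super-solution}, and then to pin the limit down using the uniqueness built into Theorem \ref{theorem:harmonic-balls}. The inputs are a priori bounds coming from the obstacle problem structure, the tightness result Lemma \ref{lemma:discrete-potential-tightness}, and the three convergence theorems of Section \ref{sec:dirichlet-problem}: convergence of the Green's kernel (Lemma \ref{lemma:uniform-convergence-of-greens-function}), of solutions of the discrete Dirichlet problem (Lemma \ref{lemma:discrete-dirichlet-problem}), and of the normalized exit time (Lemma \ref{lemma:convergence-of-the-normalized-exit-time}). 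As usual, it suffices to fix a deterministic sequence ${\eps}\to 0$ and argue along an a.s.\ choice of subsequence.

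\textbf{A priori bounds and tightness.} First I would record bounds on $\wteps$. Since the zero function is admissible in \eqref{eq:discrete-least-supersolution}, $\wteps\le 0$; since $\deg^{{\eps}}\ge 1$, the function $\wteps+\Qeps{B_\rho}$ is $\Delta^{{\eps}}$-superharmonic on ${\VGeps}(B_\rho)\setminus{\VGeps}(\partial B_\rho)$ and vanishes on ${\VGeps}(\partial B_\rho)$, so Lemma \ref{lemma:maximum-principle} gives $\wteps\ge-\Qeps{B_\rho}$, and then Lemma \ref{lemma:expected-exit-time-explicit-bound} together with \eqref{eq:meps-scaling} yields $|\olwteps|\le C$ on $\cl(B_\rho)$ for a random $C$ not depending on ${\eps}$. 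Moreover, the complementarity property of the discrete obstacle problem --- at an interior vertex $a$ where $\wteps(a)$ strictly exceeds the obstacle one must have $\Delta^{{\eps}}\wteps(a)=(\deg^{{\eps}}(a))^{-1}$, since otherwise $\wteps$ could be lowered at $a$, contradicting minimality --- combined with the facts that the obstacle $-(t{\eps}^{-1})\gr{{\eps}}{B_\rho}(0,\cdot)$ is $\Delta^{{\eps}}$-harmonic off the origin and that the origin lies in the non-coincidence set (it emits positive mass once $t{\eps}^{-1}>1$), gives $0\le\Delta^{{\eps}}\wteps\le 1$ on ${\VGeps}(B_\rho)\setminus{\VGeps}(\partial B_\rho)$. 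Thus $\wteps$ satisfies the hypotheses of Lemma \ref{lemma:discrete-potential-tightness}, so along a subsequence $\olwteps$ converges locally uniformly in $B_\rho$ to a continuous $\overline w^*$; a comparison against the barrier $\Qeps{B_\rho}$ (whose rescaling converges uniformly on $\cl(B_\rho)$, by Lemma \ref{lemma:discrete-dirichlet-problem} with $\psi\equiv-1$, to the continuous solution of $\Delta\overline{\mathfrak{Q}}_{B_\rho}=-m_0\mu_h$ with zero boundary data) upgrades this to uniform convergence on $\cl(B_\rho)$, with $\overline w^*=0$ on $\partial B_\rho$.

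\textbf{Identifying the limit.} By Lemma \ref{lemma:uniform-convergence-of-greens-function} (applied with $y=z=0$, $r=\rho$) the rescaled obstacle converges, uniformly on compacts of $\cl(B_\rho)\setminus\{0\}$, to $-\frac{m_0}{6}\,t\,G_{B_\rho}(0,\cdot)$, so $\overline w^*\ge-\frac{m_0}{6}\,t\,G_{B_\rho}(0,\cdot)$ on $B_\rho$. Comparing $\wteps$ on small balls $B\Subset B_\rho\setminus\{0\}$ with the solution $F^{{\eps}}$ of $\Delta^{{\eps}}F^{{\eps}}=(\deg^{{\eps}})^{-1}$ in $B$ with $F^{{\eps}}=\wteps$ on $\partial B$ gives $\wteps\ge F^{{\eps}}$ by the maximum principle (using $\Delta^{{\eps}}\wteps\le(\deg^{{\eps}})^{-1}$), and, since $F^{{\eps}}$ is the discrete harmonic extension of $\wteps|_{\partial B}$ minus the normalized exit time from $B$, passing to the limit via Lemma \ref{lemma:convergence-of-the-normalized-exit-time} and convergence of discrete harmonic extensions of the (uniformly convergent) boundary data shows that $\overline w^*$ lies above its $\frac{m_0}{6}\mu_h$-harmonic replacement on every such $B$; hence $\Delta\overline w^*\le\frac{m_0}{6}\mu_h$ on $B_\rho$ (boundedness of $\overline w^*$ excludes a singular part at $0$). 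Finally, on the open set $N^*:=\{\overline w^*>-\frac{m_0}{6}\,t\,G_{B_\rho}(0,\cdot)\}$ the strict inequality persists after rescaling, so for small ${\eps}$ the cells meeting a given compact subset of $N^*$ lie in the discrete non-coincidence set, where $\Delta^{{\eps}}\wteps=(\deg^{{\eps}})^{-1}$ and hence $\wteps=F^{{\eps}}$; passing to the limit there gives $\Delta\overline w^*=\frac{m_0}{6}\mu_h$ on $N^*$. On the event $\{t<\TT\}$ we have $\Lambda_t\subset B_{\rho/3}\subset B_\rho$, so Theorem \ref{theorem:harmonic-balls}(c) shows $\frac{m_0}{6}\overline w_t^{B_\rho}$ satisfies the identical four conditions: it lies above the obstacle, has $\Delta=\frac{m_0}{6}\mu_h$ on its non-coincidence set, $\Delta\le\frac{m_0}{6}\mu_h$ on $B_\rho$, and vanishes on $\partial B_\rho$. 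A standard comparison of two such functions (on the set where one exceeds the other their difference is subharmonic with zero boundary data, hence $\le 0$ there) forces them to coincide, so $\overline w^*=\frac{m_0}{6}\overline w_t^{B_\rho}$. As the limit is independent of the subsequence, $\olwteps$ converges to it in probability.

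\textbf{Main obstacle.} The crux is the identification step, and within it the transfer to the continuum of both defining features of the obstacle problem --- the supersolution inequality $\Delta\overline w^*\le\frac{m_0}{6}\mu_h$ and the complementarity relation on the non-coincidence set. Because the relevant source $(\deg^{{\eps}})^{-1}$ is not H\"older continuous, one cannot invoke Lemma \ref{lemma:discrete-dirichlet-problem} directly and must instead route everything through the Green's kernel convergence (Lemma \ref{lemma:uniform-convergence-of-greens-function}) and the normalized-exit-time convergence (Lemma \ref{lemma:convergence-of-the-normalized-exit-time}). A secondary subtlety is that the naive lower bound $\wteps\ge-(t{\eps}^{-1})\gr{{\eps}}{B_\rho}(0,\cdot)$ blows up near the origin, so the uniform control on $\olwteps$ must come from the more robust barrier $-\Qeps{B_\rho}\le\wteps\le 0$, which is precisely why it matters that the origin is not a contact point of the discrete obstacle problem.
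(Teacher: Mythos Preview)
Your proposal is correct and follows essentially the same architecture as the paper: a priori bounds from Lemma~\ref{lemma:discrete-basic-properties} and Lemma~\ref{lemma:discrete-laplacian-bound}, tightness via Lemma~\ref{lemma:discrete-potential-tightness} with a barrier upgrade to the closed ball, and identification of the subsequential limit by passing the discrete obstacle-problem relations to the continuum using Lemmas~\ref{lemma:uniform-convergence-of-greens-function} and~\ref{lemma:convergence-of-the-normalized-exit-time}.

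Two remarks on points of difference. First, in the identification step the paper does not invoke ``convergence of discrete harmonic extensions of the (uniformly convergent) boundary data'' directly, since Lemma~\ref{lemma:discrete-dirichlet-problem} is stated only for deterministic H\"older data; instead it approximates $\overline w^*$ on $\partial B$ by rational polynomials and runs the comparison against the harmonic extension of the polynomial (see Step~2 of Lemma~\ref{lemma:subsequential-properties}). Your argument becomes rigorous with exactly the same approximation trick, so this is a level-of-detail issue rather than a gap. Second, your final uniqueness step is slightly cleaner than the paper's: you use the symmetric obstacle-problem comparison (on $\{u>v\}$ one has $u>\phi$, hence $\Delta u=\tfrac{m_0}{6}\mu_h\ge\Delta v$, and the maximum principle closes), which does \emph{not} require establishing $\Delta\overline w^*\ge 0$ on all of $B_\rho$. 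The paper instead proves this subharmonicity separately (property~(2) of Lemma~\ref{lemma:subsequential-properties}) and uses it together with admissibility in~\eqref{eq:least-super-solution} and Theorem~\ref{theorem:harmonic-balls}(c) to run an asymmetric comparison. Your route saves that step.
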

	The extra factor of $\frac{m_0}{6}$ in Theorem \ref{theorem:convergence-divisible} is a result of the factor ${\meps}$
	appearing in the scaling definition \eqref{eq:discrete-scaling} and the fact that since the mated-CRT map is a planar triangulation,
	$\E[\deg^{{\eps}}(0)] = 6$. 
	
	A result similar to that of Theorem \ref{theorem:convergence-divisible} was proved on $\Z^d$ by Levine and Peres in \cite{levine2009strong}
	using different methods. Levine and Peres used precise estimates on the lattice Green's function in $\Z^d$ which are unavailable in our setting. 
	Our approach is similar to (but easier than) the method used to prove convergence of the {\it Abelian sandpile} --- see \cite{pegden-smart-sandpile-2013, bourabee-sandpile-2021}. (Also see \cite{barles-souganidis-convergence-1991} for a systematic approach to proving convergence of finite-difference schemes.)

	Before proceeding we point out that convergence of the odometer implies a lower bound
	on the divisible sandpile cluster.  
	\begin{prop}\label{prop:div-sandpile-lower-bound}
		For each $\delta \in (0, \rho)$ and $t > 0$, on the event $\{t < \TT\}$, 
		it holds except on an event of probability tending to 0 as ${\eps} \to 0$  that
		\[
		\Bdeltam(\Lambda_t) \subset \overline{D}_{{\eps}}(t {\eps}^{-1}).
		\]
	\end{prop}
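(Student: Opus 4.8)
The plan is to feed the convergence of the discrete odometer (Theorem~\ref{theorem:convergence-divisible}) and of the Green's kernel (Lemma~\ref{lemma:uniform-convergence-of-greens-function}) into the obstacle-problem description of $\Lambda_t$ in Theorem~\ref{theorem:harmonic-balls}(c). We work on the event $\{t < \TT\}$, so that $\Lambda_t \subset B_{\rho/3}$ and the estimates of the previous sections are in force. The first step is a reduction: for $x \in B_\rho$, the lower bound~\eqref{eq:lower-bound-odometer} together with $\deg^{\eps} \ge 1$ shows that $\wteps(a^{\eps}_x) + (t{\eps}^{-1})\gr{{\eps}}{B_\rho}(0,a^{\eps}_x) > 0$ forces $v^{\eps}_t(a^{\eps}_x) > 0$, hence $a^{\eps}_x \in D_{\eps}(t{\eps}^{-1})$ and $x \in \Heps{a^{\eps}_x} \subset \overline D_{\eps}(t{\eps}^{-1})$. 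Dividing by $\meps$ and using $\meps \asymp {\eps}^{-1}$ (from~\eqref{eq:meps-scaling}), this reads: if $\olwteps(x) + t\,\overline g^{\eps}(x) > 0$ then $x \in \overline D_{\eps}(t{\eps}^{-1})$, where $\overline g^{\eps}$ is the rescaled Green's kernel with pole at the origin, as in Lemma~\ref{lemma:uniform-convergence-of-greens-function} for the ball $B_\rho$. So it suffices to show that, with probability tending to $1$, $\olwteps(x) + t\,\overline g^{\eps}(x) > 0$ for every $x \in \Bdeltam(\Lambda_t)$.

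On every compact subset of $B_\rho \setminus \{0\}$, Theorem~\ref{theorem:convergence-divisible} and Lemma~\ref{lemma:uniform-convergence-of-greens-function} give, in probability and uniformly, $\olwteps + t\,\overline g^{\eps} \to \tfrac{m_0}{6}\,u$, where $u := \overline w_t^{B_\rho} + t\,G_{B_\rho}(0,\cdot)$. By Theorem~\ref{theorem:harmonic-balls}(c), $u$ is continuous and nonnegative, $\Delta u = \mu_h$ on $\Lambda_t \setminus\{0\}$, and $\Lambda_t = \inte(\cl(\{u > 0\}))$; since $\mu_h$ charges every open set, $u$ vanishes on no open subset of $\Lambda_t$, and in fact $u > 0$ throughout $\Lambda_t$ (here one invokes the structure of LQG harmonic balls from~\cite{bou2022harmonic}, namely that $\Lambda_t$ \emph{is} the open non-coincidence set $\{u>0\}$). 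Granting this, for a compact $K \Subset \Lambda_t \setminus \{0\}$ we have $u \ge c_K > 0$ on $K$, so by the uniform convergence above, with probability tending to $1$ we get $\olwteps + t\,\overline g^{\eps} > \tfrac{m_0}{6}\tfrac{c_K}{2} > 0$ on $K$; by the reduction above and Lemma~\ref{lemma:cell-size-estimate} (cells eventually have diameter $<\delta/4$), it follows that $\Bdeltam(\Lambda_t) \setminus B_r(0) \subset \overline D_{\eps}(t{\eps}^{-1})$ with probability tending to $1$, for any fixed $r > 0$.

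It remains to cover a neighborhood of the origin, which lies outside the scope of Lemma~\ref{lemma:uniform-convergence-of-greens-function}. Here I would instead use the near-pole lower bound of Lemma~\ref{lemma:greens-kernel-lower-bound}: by monotonicity of the Green's kernel in the domain, $\gr{{\eps}}{B_\rho}(0,a^{\eps}_y) \gtrsim \log(1/|y| \wedge {\eps}^{-1}) - C$ for $y$ near the origin, while Theorem~\ref{theorem:convergence-divisible} gives the uniform bound $\olwteps \ge \tfrac{m_0}{6}\overline w_t^{B_\rho} - o(1) \ge -C' - o(1)$ on $B_\rho$ and $\overline g^{\eps} \ge 0$. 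Since $\overline w_t^{B_\rho}$ is bounded, there is a (random, field-dependent) $r > 0$ such that, for $|y| \le r$, the logarithmic term dominates and $\olwteps(y) + t\,\overline g^{\eps}(y) > 0$ with probability tending to $1$; thus $B_{r/2}(0) \subset \overline D_{\eps}(t{\eps}^{-1})$ with probability tending to $1$. Combining this with the previous paragraph (run with this $r$) covers $\Bdeltam(\Lambda_t) = \bigl(\Bdeltam(\Lambda_t) \cap \cl(B_{r/2}(0))\bigr) \cup \bigl(\Bdeltam(\Lambda_t) \setminus B_{r/2}(0)\bigr)$, completing the proof.

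I expect the main obstacle to be the positivity step flagged in the second paragraph. The two convergence inputs only identify the limit as the nonnegative function $u$, so one must know that $u$ is \emph{strictly} positive --- with a quantitative margin on every compact subset --- on a full inner neighborhood of $\Lambda_t$, equivalently that $\Lambda_t$ is exactly the non-coincidence set of the obstacle problem~\eqref{eq:least-super-solution}; this relies on the fine description of LQG harmonic balls from~\cite{bou2022harmonic} (going beyond the $\inte(\cl(\cdot))$ statement recorded in Theorem~\ref{theorem:harmonic-balls}(c)) together with the non-degeneracy of $\mu_h$ on open sets. By comparison, the reduction via~\eqref{eq:lower-bound-odometer}, the treatment of the pole, and the cell-size bookkeeping are routine.
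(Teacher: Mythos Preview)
Your approach is essentially the same as the paper's: reduce via~\eqref{eq:lower-bound-odometer} to showing $\wteps + (t{\eps}^{-1})\gr{{\eps}}{B_\rho}(0,\cdot) > 0$ on the relevant set, use Theorem~\ref{theorem:convergence-divisible} and Lemma~\ref{lemma:uniform-convergence-of-greens-function} away from the origin, and handle a neighborhood of the origin separately. The only notable difference is in that last step: the paper exploits the $\Delta^{\eps}$-superharmonicity of $\gr{{\eps}}{B_\rho}(0,\cdot)$ and the maximum principle to propagate the positivity from $\partial B_{2r}$ (where the Green's kernel convergence applies) inward to $B_r$, whereas you invoke the explicit logarithmic lower bound of Lemma~\ref{lemma:greens-kernel-lower-bound}. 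Both work; the paper's route is marginally cleaner because it avoids tracking the constants in Lemma~\ref{lemma:greens-kernel-lower-bound} and the $\meps/{\eps}^{-1}$ comparison. As for the strict-positivity concern you flag, the paper makes the same implicit appeal (phrased as ``by~\eqref{eq:recall-cluster-def} and the continuity of the two functions involved''), so you have correctly identified the one place where one leans on the obstacle-problem structure from~\cite{bou2022harmonic}.
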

	\begin{proof}[Proof assuming Theorem \ref{theorem:convergence-divisible}]
		
		Fix $\delta \in (0, \rho)$ and $t > 0$ and restrict to the event that $\{t < \TT\}$.
		Let a deterministic sequence of ${\eps}$s approaching zero be given. Recall from Theorem \ref{theorem:harmonic-balls} that
		\begin{equation} \label{eq:recall-cluster-def}
		\Lambda_t = \inte \left( \overline{\{ x \in B_\rho : \overline w_t^{B_\rho} > - t G_{B_\rho}(0, \cdot)\}} \right),
		\end{equation}
		where $G_{B_\rho}$ is the Green's function for $\Delta$ on $B_{\rho}$. 
		Since $\overline w_t^{B_\rho}$ is continuous and $G_{B_\rho}(0,\cdot)$ blows up at the origin, we can pick $r \in (0,1)$ so that
		\begin{equation} \label{eq:green-singularity-blow-up}
		\inf_{x \in B_{2 r}} t G_{B_\rho}(0, x) > \sup_{y \in B_{r}} -\overline w_t^{B_\rho}(y) .
		\end{equation}
		Since $\gr{{\eps}}{B_\rho}$ is superharmonic, by the maximum principle, Lemma \ref{lemma:maximum-principle},
		\begin{equation} \label{eq:singularity-blow-up}
		\inf_{a \in {\VGeps}(B_{r'})}\gr{{\eps}}{B_\rho}(0, a) 
		=
		\inf_{b \in {\VGeps}(\partial B_{r'})}\gr{{\eps}}{B_\rho}(0, b), \quad \forall r' \in (0, \rho). 
		\end{equation}
		
		Let $g^{{\eps}}(\cdot) := {\eps}^{-1} \gr{{\eps}}{B_\rho}(0 , \cdot)$ and define $\overline{g}^{\eps}$ as in~\eqref{eq:discrete-scaling}.
		By Theorem \ref{theorem:convergence-divisible}, there exists a deterministic subsequence, $\mathcal{E}$, along which a.s.\
		$\olwteps$  converges uniformly in $B_{\rho}$ to $\frac{m_0}{6} \overline w_t^{B_\rho}$. 
		Further, by Lemma \ref{lemma:uniform-convergence-of-greens-function} we may arrange so that, a.s.\ along $\mathcal{E}$,
		$\overline{g}^{{\eps}}$ converges uniformly in $\A_{r, \rho}$ to $\frac{m_0}{6} \times G_{B_\rho}(0 , \cdot)$.  By~\eqref{eq:recall-cluster-def} and the continuity of the two functions involved, there exists $s > 0$ so that
		\begin{equation} \label{eq:strictly-above-in-interior}
		\begin{aligned}
		\overline w_t^{B_\rho}(x) &> - t G_{B_\rho}(0, x) + s  \\
		&\mbox{for all $x \in \Lambda_t$ such that $\dist(x, \partial \Lambda_t) \geq \delta/4$}.
		\end{aligned} 
		\end{equation}
		We may further assume that a.s., the cell-size estimates of Lemma \ref{lemma:cell-size-estimate} hold for all sufficiently small ${\eps}\in \mathcal{E}$.

		Therefore, by the uniform convergence of $\olwteps$ in $B_{\rho}$ and the
		uniform convergence of $\overline{g}^{{\eps}}$ in $\A_{r, \rho}$ 	we have by \eqref{eq:green-singularity-blow-up}
		\[
		\wteps(a)> \sup_{b \in {\VGeps}(\partial B_{2 r})}  -(t {\eps}^{-1}) \times \gr{{\eps}}{B_\rho}(0, b) , \quad \forall a \in {\VGeps}(B_{r}), \quad \forall   {\eps} \in\mathcal E \mbox{ sufficiently small}
		\]
		and thus by \eqref{eq:singularity-blow-up}
		\[
		\wteps(a)> - (t {\eps}^{-1}) \times \gr{{\eps}}{B_\rho}(0, a) , \quad \forall a \in {\VGeps}(B_{r}), \quad \forall   {\eps} \in\mathcal E \mbox{ sufficiently small.}
		\]
		Also, by uniform convergence of $\olwteps$ in $B_{\rho}$ and
		uniform convergence of $\overline{g}^{{\eps}}$ in $\A_{r, \rho}$ 	and \eqref{eq:strictly-above-in-interior} we have
		\[
		\begin{aligned}
		\wteps(a) &> - (t {\eps}^{-1}) \times \gr{{\eps}}{B_\rho}(0, a)  \\
		&\mbox{$\forall a \in {\VGeps}(\Lambda_t \setminus B_{r})$ such that $\dist(\eta(a), \partial \Lambda_t) \geq \delta/2$} \\
		&\mbox{$\forall {\eps} \in \mathcal{E} $ sufficiently small.}
		\end{aligned} 
		\]
		Combining the previous two indented equations with the definition of $\overline{D}_{{\eps}}$, \eqref{eq:cluster-definition}, and \eqref{eq:lower-bound-odometer}  completes the proof. 
	\end{proof}

	\subsection{Basic properties of the divisible sandpile} \label{subsec:basic-properties}
	For completeness, we recall some basic properties of the discrete least supersolution. 
	Since these results are standard, the proofs are given in Appendix \ref{sec:obstacle-appendix}.
	We also note that continuum analogs of these properties are proven in~\cite[Appendix A]{bourabee-sandpile-2021}.

	\begin{lemma} \label{lemma:discrete-basic-properties}
		Recall the definition of $\qeps{B_\rho}$ from \eqref{eq:normalized-exit-time}. 
		For all $t > 0$ and ${\eps} > 0$,  
		\[
		\max(-\qeps{B_{\rho}}, -(t {\eps}^{-1}) \times \gr{{\eps}}{B_\rho}(0, \cdot)) \leq \wteps \leq 0
		\]
		and  $\Delta^{{\eps}} \wteps \leq (\deg^{{\eps}})^{-1}$ on ${\VGeps}(B_{\rho}) \setminus {\VGeps}(\partial B_{\rho})$.
	\end{lemma}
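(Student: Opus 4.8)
The plan is to treat $\wteps$ as the least element of the admissible class $\mathcal{W}$ appearing in \eqref{eq:discrete-least-supersolution} and to verify the four assertions in turn. First I would record the two easy bounds. The zero function lies in $\mathcal{W}$: since $\gr{{\eps}}{B_\rho}(0,\cdot)\geq 0$, the obstacle constraint $w\geq -(t{\eps}^{-1})\gr{{\eps}}{B_\rho}(0,\cdot)$ holds trivially for $w\equiv 0$, and $\Delta^{{\eps}}0=0\leq(\deg^{{\eps}})^{-1}$. As $\wteps$ is the pointwise infimum over $\mathcal{W}$, this gives $\wteps\leq 0$. Likewise $\wteps\geq -(t{\eps}^{-1})\gr{{\eps}}{B_\rho}(0,\cdot)$, since every $w\in\mathcal{W}$ satisfies this and the infimum is pointwise.

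Next I would show that $\mathcal{W}$ is stable under pointwise minima and deduce $\wteps\in\mathcal{W}$, which in particular yields $\Delta^{{\eps}}\wteps\leq(\deg^{{\eps}})^{-1}$. If $w_1,w_2\in\mathcal{W}$ and $a\in{\VGeps}(B_\rho)\setminus{\VGeps}(\partial B_\rho)$, pick $i$ with $\min(w_1,w_2)(a)=w_i(a)$; since $\min(w_1,w_2)(b)\leq w_i(b)$ at every neighbor $b$ of $a$, we get $\Delta^{{\eps}}\min(w_1,w_2)(a)\leq\Delta^{{\eps}}w_i(a)\leq(\deg^{{\eps}}(a))^{-1}$, and the obstacle constraint is plainly preserved. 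Because ${\VGeps}={\eps}\Z$ is countable and a.s.\ every vertex has finite degree, I would enumerate the vertices, choose for each $j$ a $v_j\in\mathcal{W}$ within $1/j$ of the infimum at the $j$-th vertex, and set $w_n:=\min(v_1,\dots,v_n)\in\mathcal{W}$; then $w_n\downarrow\wteps$ pointwise, and passing to the limit in the finite sum defining $\Delta^{{\eps}}w_n(a)$ at each interior vertex gives $\Delta^{{\eps}}\wteps(a)\leq(\deg^{{\eps}}(a))^{-1}$. Hence $\wteps\in\mathcal{W}$ and the infimum is attained.

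Finally, for $\wteps\geq -\qeps{B_\rho}$ I would run a maximum-principle comparison. Let $w\in\mathcal{W}$ be arbitrary, write $A:={\VGeps}(B_\rho)\setminus{\VGeps}(\partial B_\rho)$, and set $u:=w+\qeps{B_\rho}$ on ${\VGeps}(\cl(B_\rho))$. By \eqref{eq:discrete-normalized-exit-laplacian}, on $A$ we have $\Delta^{{\eps}}u=\Delta^{{\eps}}w-(\deg^{{\eps}})^{-1}\leq 0$, so $u$ is superharmonic on $A$; and on ${\VGeps}(\cl(B_\rho))\setminus A$ both $\qeps{B_\rho}$ and $\gr{{\eps}}{B_\rho}(0,\cdot)$ vanish by the conventions below \eqref{eq:greens-kernel} and \eqref{eq:discrete-normalized-exit-laplacian}, so there $u=w\geq 0$. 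The graph boundary $\partial A$ is contained in ${\VGeps}(\cl(B_\rho))\setminus A$ (a cell lying inside $B_\rho$ can only be adjacent to cells meeting $B_\rho$), so applying Lemma \ref{lemma:maximum-principle} to $-u$ on each connected component of $A$ gives $u\geq 0$ on $\cl(A)$, hence $u\geq 0$ on all of ${\VGeps}(\cl(B_\rho))$, i.e.\ $w\geq -\qeps{B_\rho}$. Taking the infimum over $w\in\mathcal{W}$ yields $\wteps\geq -\qeps{B_\rho}$, the inequality being vacuous wherever $\qeps{B_\rho}=+\infty$.

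The main obstacle is really just the attainment step: one has to check that the least supersolution itself satisfies the Laplacian inequality rather than merely being dominated by functions that do, which is where stability of $\mathcal{W}$ under minima and a.s.\ local finiteness of the mated-CRT map enter (so that $\Delta^{{\eps}}$ involves only finite sums and commutes with the monotone limit). One should also note that the maximum principle is invoked componentwise, since $A={\VGeps}(B_\rho)\setminus{\VGeps}(\partial B_\rho)$ need not be connected, and that the maxima are attained because a.s.\ only finitely many cells meet $\cl(B_\rho)$; both points are routine.
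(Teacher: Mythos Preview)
Your proof is correct and follows essentially the same route as the paper's (which proves a general version, Lemma~\ref{lemma:discrete-basic-properties-general}, in the appendix): admissibility of $0$ for the upper bound, closure of the admissible class under pointwise minima to show the infimum is attained and hence satisfies the Laplacian constraint, and a maximum-principle comparison with $\qeps{B_\rho}$ for the remaining lower bound. Your treatment of the graph-boundary inclusion $\partial A\subset{\VGeps}(\cl(B_\rho))\setminus A$ and the componentwise application of Lemma~\ref{lemma:maximum-principle} is more explicit than the paper's.

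One small wrinkle: your diagonal/enumeration argument for attainment is stated a bit loosely. With a single pass through a finite enumeration $a_1,\dots,a_N$ and tolerances $1/j$ at the $j$-th vertex, the limit $w_N$ is only guaranteed to satisfy $w_N(a_j)\leq\wteps(a_j)+1/j$, not $w_N=\wteps$. This is easily repaired (cycle through the vertices with tolerances tending to zero, or---as you yourself note at the end---invoke that a.s.\ only finitely many cells meet $\cl(B_\rho)$, so $\mathcal{W}\cap\{w\leq 0\}$ is a compact subset of a finite-dimensional space closed under minima). The paper simply says ``by compactness'' in the finite abstract setting, which is the cleaner formulation here. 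Also, the final remark about the inequality being vacuous where $\qeps{B_\rho}=+\infty$ is unnecessary: on a finite vertex set the normalized exit time is a finite sum and hence everywhere finite.
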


	For the statement of the next lemma, we denote the {\it discrete cluster} by 
	\begin{equation} \label{eq:discrete-cluster}
	\Lambdateps = \{ a \in {\VGeps}(B_{\rho}) :  \wteps(a) > - (t {\eps}^{-1}) \gr{{\eps}}{B_{\rho}}(0, a)\}.
	\end{equation}
	Observe that by \eqref{eq:cluster-definition} for $t \in (0,\TT)$, 
	\begin{equation} \label{eq:cluster-and-lambdat}
	D_{{\eps}}(t {\eps}^{-1})  = \cl(\Lambdateps). 
	\end{equation}
	\begin{lemma} \label{lemma:discrete-laplacian-bound}
		For all $t > 0$ and ${\eps} > 0$, we have that $\Delta^{{\eps}} \wteps = (\deg^{{\eps}})^{-1}$ on $\Lambdateps$
		and $0 \leq \Delta^{{\eps}} \wteps \leq (\deg^{{\eps}})^{-1}$ on ${\VGeps}(B_{\rho}) \setminus {\VGeps}(\partial B_{\rho})$. 
		Moreover,  $\Lambdateps$ is connected and if $ t {\eps}^{-1}  > 1$,  then $0 \in \Lambdateps$.
	\end{lemma}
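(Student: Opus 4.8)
The plan is to run the usual discrete obstacle-problem arguments. Write $\mathcal I := {\VGeps}(B_{\rho}) \setminus {\VGeps}(\partial B_{\rho})$ and $\phi := -(t{\eps}^{-1})\,\gr{{\eps}}{B_\rho}(0,\cdot)$ for the obstacle. The one substantive input is \eqref{eq:discrete-normalized-green-laplacian}, which gives $\Delta^{{\eps}}\phi = (t{\eps}^{-1})\mathbf 1_{\{\cdot = 0\}}/\deg^{{\eps}}(0) \ge 0$ on $\mathcal I$, with strict positivity only at $0$; in other words the obstacle is subharmonic on $\mathcal I$ and harmonic away from the origin. From Lemma \ref{lemma:discrete-basic-properties} I will use that $\phi \le \wteps \le 0$, that $\Delta^{{\eps}}\wteps \le (\deg^{{\eps}})^{-1}$ on $\mathcal I$, and that $\wteps$ itself belongs to the admissible class in \eqref{eq:discrete-least-supersolution}. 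I will also use that $\gr{{\eps}}{B_\rho}(0,\cdot)$, hence $\phi$, vanishes on ${\VGeps}(\partial B_\rho)$, so that (together with $\wteps \le 0$ and $\wteps \ge \phi$) one has $\wteps = \phi = 0$ there and therefore $\Lambdateps = \{\wteps > \phi\} \subset \mathcal I$; that ${\VGeps}(\cl(B_\rho))$ --- the domain on which $\wteps$ lives --- is a.s.\ finite, so $\Lambdateps$ is finite; and that, as tacitly assumed throughout, $0 \in \mathcal I$. With all this recorded, the statement breaks into four pieces: (i) $\Delta^{{\eps}}\wteps = (\deg^{{\eps}})^{-1}$ on $\Lambdateps$; (ii) $\Delta^{{\eps}}\wteps \ge 0$ on $\mathcal I$; (iii) $\Lambdateps$ is connected; and (iv) $0 \in \Lambdateps$ when $t{\eps}^{-1} > 1$.

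For (i) I would argue by minimality: if $a \in \Lambdateps$ had $\Delta^{{\eps}}\wteps(a) < (\deg^{{\eps}}(a))^{-1}$, then lowering $\wteps$ by a sufficiently small amount at the single vertex $a$ would produce a function still in the admissible class of \eqref{eq:discrete-least-supersolution} --- the perturbation only decreases $\Delta^{{\eps}}$ at the neighbours of $a$, increases $\Delta^{{\eps}}$ at $a$ by exactly the perturbation size (still $\le (\deg^{{\eps}}(a))^{-1}$), and remains above $\phi$ since $\wteps(a) > \phi(a)$ strictly --- contradicting that $\wteps$ is the pointwise minimum of that class. For (ii): on $\Lambdateps$ this is immediate from (i), while at a contact vertex $a$ (where $\wteps(a) = \phi(a)$ and $\wteps \ge \phi$ at every neighbour, all of which lie in ${\VGeps}(\cl(B_\rho))$ because neighbouring cells touch) we get $\Delta^{{\eps}}\wteps(a) \ge \Delta^{{\eps}}\phi(a) \ge 0$. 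Combined with the upper bound from Lemma \ref{lemma:discrete-basic-properties}, this is $0 \le \Delta^{{\eps}}\wteps \le (\deg^{{\eps}})^{-1}$ on $\mathcal I$.

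For (iii) I would use the discrete maximum principle on a bad component. Let $A$ be a connected component of $\Lambdateps$ with $0 \notin A$. Each $b \in \partial A$ lies outside $\Lambdateps$ (otherwise it would merge with $A$), hence is a contact vertex with $\wteps(b) = \phi(b)$, and $\partial A \subset {\VGeps}(\cl(B_\rho))$ since cells adjacent to cells in $\mathcal I$ meet $\cl(B_\rho)$. On $A$ we have $\Delta^{{\eps}}\phi = 0$ (as $0 \notin A$) and $\Delta^{{\eps}}\wteps = (\deg^{{\eps}})^{-1}$ by (i), so $\wteps - \phi$ is strictly subharmonic on $A$ and vanishes on $\partial A$; Lemma \ref{lemma:maximum-principle} (applicable as $A$ is finite) then forces $\wteps - \phi \le 0$ on $A$, contradicting $\wteps > \phi$ there. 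Hence every component of $\Lambdateps$ contains $0$, so $\Lambdateps$ is connected (and trivially so if empty). For (iv), suppose $t{\eps}^{-1} > 1$ but $0 \notin \Lambdateps$, i.e.\ $0$ is a contact vertex. By the contact-vertex computation of (ii), $\Delta^{{\eps}}\wteps(0) \ge \Delta^{{\eps}}\phi(0) = t{\eps}^{-1}/\deg^{{\eps}}(0)$, while Lemma \ref{lemma:discrete-basic-properties} gives $\Delta^{{\eps}}\wteps(0) \le 1/\deg^{{\eps}}(0)$; so $t{\eps}^{-1} \le 1$, a contradiction.

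I do not expect any genuinely hard step here: these are all standard obstacle-problem manipulations, and everything is ultimately driven by the single sign $\Delta^{{\eps}}\phi \ge 0$ coming from \eqref{eq:discrete-normalized-green-laplacian}. The only real care needed is bookkeeping: that $\Lambdateps$ is finite so that Lemma \ref{lemma:maximum-principle} applies on its components; that, for a vertex in $\mathcal I$, all of its mated-CRT neighbours (and hence the boundary vertices of any component of $\Lambdateps$) lie in ${\VGeps}(\cl(B_\rho))$, where $\wteps$ is defined and satisfies $\wteps \ge \phi$; and that the origin is an interior vertex so that the Laplacian identity at $0$ makes sense.
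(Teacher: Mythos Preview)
Your proof is correct and follows essentially the same approach as the paper's proof (given in the appendix as Lemma \ref{lemma:discrete-laplacian-bound-general}): the minimality/perturbation argument for (i), the contact-vertex comparison $\Delta^{{\eps}}\wteps \ge \Delta^{{\eps}}\phi$ for (ii), and the maximum-principle contradiction on a component avoiding $0$ for (iii). The only cosmetic difference is in (iv): the paper first invokes (iii) to reduce to the case $\Lambdateps = \emptyset$ and then reads off $\Delta^{{\eps}}\wteps(0) = \Delta^{{\eps}}\phi(0)$, whereas you go directly via the contact-vertex inequality at $0$, which is slightly cleaner.
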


	We conclude with a discrete conservation of mass lemma. 
	\begin{lemma} \label{lemma:discrete-conservation-of-mass}
		For all $t > 0$ and ${\eps} > 0$, we have that $|\Lambdateps| \leq   t {\eps}^{-1} $. Moreover, 
		if $\Lambdateps \subset \inte({\VGeps}(B_{\rho}) \setminus {\VGeps}(\partial B_{\rho}))$, 
		then  
		\[
		\sum_{a \in {\VGeps}(B_{\rho}) \setminus {\VGeps}(\partial B_{\rho})} \Delta^{{\eps}} \wteps(a) \deg^{{\eps}}(a) =  t {\eps}^{-1} .
		\] 
	\end{lemma}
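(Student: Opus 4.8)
The plan is to derive both assertions from a single discrete Green's identity (summation by parts) for the obstacle solution $\wteps$ on the finite vertex set $A := \VGeps(B_{\rho})\setminus\VGeps(\partial B_{\rho})$, feeding in the structural information about $\wteps$ already recorded in Lemmas~\ref{lemma:discrete-basic-properties} and~\ref{lemma:discrete-laplacian-bound}. First I would clear two pieces of boundary bookkeeping. (i) If $a\in A$, the cell $\Heps{a}$ is a compact connected set meeting the open ball $B_{\rho}$ but disjoint from the circle $\partial B_{\rho}$, hence $\Heps{a}\subset B_{\rho}$; therefore any neighbour $b\sim a$ has $\Heps{b}$ meeting $\Heps{a}\subset B_{\rho}$, so $b\in\VGeps(B_{\rho})$, and consequently every vertex of $\VGeps\setminus A$ adjacent to $A$ lies in $\VGeps(\partial B_{\rho})$. (ii) On $\VGeps(\partial B_{\rho})$ we have $\gr{{\eps}}{B_{\rho}}(0,\cdot)=0$ by the extension convention, so the sandwich $-(t{\eps}^{-1})\gr{{\eps}}{B_{\rho}}(0,\cdot)\le\wteps\le 0$ from Lemma~\ref{lemma:discrete-basic-properties} forces $\wteps\equiv 0$ there. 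I would also note $A$ is a.s.\ finite, since $\eta(t)\to\infty$ as $|t|\to\infty$ and hence $B_{\rho}$ meets only finitely many cells.

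Next, the core computation. Writing $n_A(a)$ for the number of edges from $a\in A$ to $\VGeps\setminus A$, summation by parts on $A$ — interior edges cancelling in pairs, the boundary term vanishing by (i)--(ii) — gives $\sum_{a\in A}\deg^{{\eps}}(a)\Delta^{{\eps}}\wteps(a)=-\sum_{a\in A}\wteps(a)\,n_A(a)$. Using $0\le -\wteps(a)\le (t{\eps}^{-1})\gr{{\eps}}{B_{\rho}}(0,a)$ (Lemma~\ref{lemma:discrete-basic-properties}) together with the reversibility identity $\gr{{\eps}}{B_{\rho}}(0,a)\,n_A(a)=\Gr{{\eps}}{B_{\rho}}(0,a)\,\mathbb{P}[X^{a,{\eps}}_1\notin A]$, the right side is at most $(t{\eps}^{-1})\sum_{a\in A}\Gr{{\eps}}{B_{\rho}}(0,a)\,\mathbb{P}[X^{a,{\eps}}_1\notin A]$. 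I would then identify this last sum with the probability that simple random walk from $0$ ever exits $A$: by a last-exit decomposition $\Gr{{\eps}}{B_{\rho}}(0,a)\,\mathbb{P}[X^{a,{\eps}}_1\notin A]=\mathbb{P}_0[X^{0,{\eps}}_{\tau_A-1}=a,\ \tau_A<\infty]$, which sums over $a\in A$ to $\mathbb{P}_0[\tau_A<\infty]=1$ because $A$ is finite and $\Geps$ is connected and infinite. (If $0\notin A$ then $\Gr{{\eps}}{B_{\rho}}(0,\cdot)\equiv 0$, and Lemma~\ref{lemma:discrete-laplacian-bound} then forces $t{\eps}^{-1}\le 1$, so $\Lambdateps=\emptyset$ and everything is immediate; so I may assume $0\in A$.) This yields $\sum_{a\in A}\deg^{{\eps}}(a)\Delta^{{\eps}}\wteps(a)\le t{\eps}^{-1}$.

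The two conclusions then drop out. For $|\Lambdateps|\le t{\eps}^{-1}$: by Lemma~\ref{lemma:discrete-laplacian-bound}, $\Lambdateps\subset A$ with $\Delta^{{\eps}}\wteps=(\deg^{{\eps}})^{-1}$ on $\Lambdateps$ and $\Delta^{{\eps}}\wteps\ge 0$ on $A$, so $|\Lambdateps|=\sum_{a\in\Lambdateps}\deg^{{\eps}}(a)\Delta^{{\eps}}\wteps(a)\le\sum_{a\in A}\deg^{{\eps}}(a)\Delta^{{\eps}}\wteps(a)\le t{\eps}^{-1}$. For the equality when $\Lambdateps\subset\inte(A)$: any $a\in A$ with $n_A(a)>0$ lies outside $\inte(A)$, hence outside $\Lambdateps$, so by the definition~\eqref{eq:discrete-cluster} of $\Lambdateps$ and the lower bound of Lemma~\ref{lemma:discrete-basic-properties} one has $\wteps(a)=-(t{\eps}^{-1})\gr{{\eps}}{B_{\rho}}(0,a)$ exactly; this turns every inequality in the chain above into an equality, giving $\sum_{a\in\VGeps(B_{\rho})\setminus\VGeps(\partial B_{\rho})}\Delta^{{\eps}}\wteps(a)\deg^{{\eps}}(a)=t{\eps}^{-1}$. (Alternatively, under this containment hypothesis the cluster sits inside $\VGeps(B_{\rho})\setminus\VGeps(\partial B_{\rho})$, so~\eqref{eq:odometer-to-normalized-odometer} applies and one can instead read off $\deg^{{\eps}}(a)\Delta^{{\eps}}\wteps(a)=s^{{\eps}}_t(a)$ on $A$ and sum, using conservation of mass $\sum_a s^{{\eps}}_t(a)=t{\eps}^{-1}$.)

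I expect the only genuine obstacle to be the boundary analysis in steps (i)--(ii) — ensuring that cells of $A$ sit strictly inside $B_{\rho}$ so the discrete flux leaving $A$ lands on $\VGeps(\partial B_{\rho})$ where $\wteps$ vanishes — and, relatedly, pinning down that the boundary-flux sum $\sum_{a\in A}\gr{{\eps}}{B_{\rho}}(0,a)\,n_A(a)$ equals $1$, which is a routine escape-probability identity but where the target value $t{\eps}^{-1}$ actually comes from and so must be set up carefully via the last-exit decomposition.
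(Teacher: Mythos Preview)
Your argument is correct and rests on the same discrete divergence identity as the paper, but the organization differs in a way worth noting. The paper (Lemma~\ref{lemma:discrete-conservation-of-mass-general}) applies summation by parts once, to the nonnegative function $v := \wteps - \phi = \wteps + (t{\eps}^{-1})\gr{{\eps}}{B_\rho}(0,\cdot)$: since $v\ge 0$ and $v=0$ off $A$, the boundary flux $-\sum_{a\in A} v(a)\,n_A(a)$ is immediately $\le 0$, and equals $0$ when $\Lambdateps\subset\inte(A)$; the constant $t{\eps}^{-1}$ then enters via $\sum_{a\in A}\deg^{{\eps}}(a)\Delta^{{\eps}}\phi(a)=-t{\eps}^{-1}$. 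You instead apply the identity to $\wteps$ alone and recover the constant through the last-exit computation $\sum_{a\in A}\Gr{{\eps}}{B_\rho}(0,a)\,\mathbb P_a[X_1\notin A]=1$. These are equivalent --- your last-exit identity is exactly the summation-by-parts formula applied to $\phi$ --- so you are in effect doing the paper's single step in two pieces. The paper's packaging avoids the probabilistic detour, while your route makes explicit where the value $t{\eps}^{-1}$ comes from; both are fine, and your careful boundary check (i)--(ii) is a useful addition that the paper leaves implicit in its abstract setting.
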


	\subsection{Convergence of the odometer along subsequences}
	In this subsection we prove tightness of the re-scaled odometer function $\overline \wteps$ defined in~\eqref{eq:discrete-least-supersolution} and \eqref{eq:discrete-scaling}. In subsequent subsections we show that, for $t \in (0, \TT)$, each subsequential limit is given uniquely by \eqref{eq:least-super-solution}. 
	\begin{lemma} \label{lemma:subsequential-convergence}
		Almost surely, for each $t > 0$ and each deterministic sequence of ${\eps}$s converging to 0, 
		there exists a $\sigma(h,\eta)$-measurable subsequence 
		and a H\"{o}lder continuous function $\overline{w}^*_t \in C(\cl(B_\rho))$ such that $\olwteps \to \overline{w}^*_t$
		uniformly in $\cl(B_{\rho})$. 
	\end{lemma}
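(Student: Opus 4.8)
The plan is to establish uniform $\sigma(h,\eta)$-measurable a priori bounds on $\wteps$ and its discrete Laplacian, and then feed them into the tightness machinery of Section~\ref{sec:dirichlet-problem}. By Lemmas~\ref{lemma:discrete-basic-properties} and~\ref{lemma:discrete-laplacian-bound} one has, deterministically, $-\qeps{B_{\rho}}\le\wteps\le 0$ on ${\VGeps}(\cl(B_{\rho}))$ and $0\le\Delta^{{\eps}}\wteps\le(\deg^{{\eps}})^{-1}\le 1$ on ${\VGeps}(B_{\rho})\setminus{\VGeps}(\partial B_{\rho})$, while $\wteps$ vanishes on ${\VGeps}(\partial B_{\rho})$ (the Green's kernel in~\eqref{eq:discrete-least-supersolution} does). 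By Lemma~\ref{lemma:exit-time-upper-bound} (or Lemma~\ref{lemma:expected-exit-time-explicit-bound}) there is a $\sigma(h,\eta)$-measurable $C>1$ such that $\qeps{B_{\rho}}\le C{\eps}^{-1}$ on ${\VGeps}(B_{\rho})$ with polynomially high probability as ${\eps}\to 0$; hence $|\wteps|\le C{\eps}^{-1}$ there. Passing to a sub-subsequence of the given sequence along which the relevant polynomially-high-probability events are summable (as at the start of the proof of Lemma~\ref{lemma:discrete-potential-tightness}), I would restrict to an a.s.\ event on which this bound, together with the conclusions of Lemmas~\ref{lemma:cell-size-estimate} and~\ref{lemma:discrete-holder-continuity}, holds for all sufficiently small ${\eps}$. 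The key structural observation is that, unlike the odometer $v^{{\eps}}_t$, the least supersolution $\wteps$ itself does \emph{not} inherit the Green's-kernel singularity at the origin from~\eqref{eq:discrete-least-supersolution}: it stays of order ${\eps}^{-1}$ uniformly on ${\VGeps}(B_{\rho})$, so Lemma~\ref{lemma:discrete-potential-tightness} applies to it directly.

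With these bounds in hand, Lemma~\ref{lemma:discrete-potential-tightness} applied with $D=B_{\rho}$ (connected) and $u^{{\eps}}=\wteps$ produces, along a further $\sigma(h,\eta)$-measurable subsequence, a function $\overline w^*_t\in C(B_{\rho})$ with $\olwteps\to\overline w^*_t$ uniformly on every compact $O\Subset B_{\rho}$. Inspecting that proof, the equicontinuity bound~\eqref{eq:equicontinuity} is obtained by decomposing $\wteps$ on a small ball into its $\Delta^{{\eps}}$-harmonic part (controlled by the interior Hölder estimate of Lemma~\ref{lemma:discrete-holder-continuity}) plus a remainder of the order of an expected exit time (controlled by Lemma~\ref{lemma:expected-exit-time-explicit-bound}); keeping track of the quantitative dependence shows that $\overline w^*_t$ is locally Hölder continuous in $B_{\rho}$, with a deterministic exponent and with a constant that degrades only as one approaches $\partial B_{\rho}$.

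Finally --- and I expect this to be the main obstacle --- I would extend the convergence and the regularity up to $\partial B_{\rho}$. Because $\wteps$ is not $\Delta^{{\eps}}$-harmonic, the ``moreover'' clause of Lemma~\ref{lemma:discrete-potential-tightness} is unavailable, so I would run a barrier argument in the spirit of Step~3 of the proof of Lemma~\ref{lemma:discrete-dirichlet-problem}: using $0\le-\wteps\le\qeps{B_{\rho}}$, the vanishing of $\wteps$ on ${\VGeps}(\partial B_{\rho})$, and the fact (Lemma~\ref{lemma:convergence-of-the-normalized-exit-time}) that $\olqeps{B_{\rho}}$ converges uniformly on $\cl(B_{\rho})$ to a continuous function $\overline{\mathfrak q}_{B_{\rho}}$ which vanishes on $\partial B_{\rho}$ and --- being a Green potential of $\mu_h$ --- is Hölder continuous up to the boundary (cf.\ \cite[Proposition 2.5]{bou2022harmonic} and the volume bounds of Lemma~\ref{lemma:volume-growth}), one gets $\limsup_{{\eps}\to 0}\sup_{x\in\cl(B_{\rho})\setminus B_{\rho-\delta}}|\olwteps(x)|\le\sup_{x\in\cl(B_{\rho})\setminus B_{\rho-\delta}}\overline{\mathfrak q}_{B_{\rho}}(x)\to 0$ as $\delta\to 0$. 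Combined with the compact convergence of the previous paragraph, this upgrades the convergence to be uniform on $\cl(B_{\rho})$, so $\overline w^*_t$ extends continuously with zero boundary values and satisfies $|\overline w^*_t|\le\overline{\mathfrak q}_{B_{\rho}}$. Global Hölder continuity of $\overline w^*_t$ on $\cl(B_{\rho})$ then follows by interpolating, in the usual way, the interior Hölder modulus from the previous paragraph against the boundary decay $|\overline w^*_t(x)|\le\overline{\mathfrak q}_{B_{\rho}}(x)\lesssim\dist(x,\partial B_{\rho})^{\theta}$; alternatively it drops out a posteriori once the limit is identified with a multiple of the continuum least supersolution~\eqref{eq:least-super-solution} in the subsequent subsections. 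The remaining bookkeeping --- $\sigma(h,\eta)$-measurability of the extracted subsequence, and the fact that one full-measure event serves all $t>0$ at once, since the a priori bounds $|\wteps|\le\qeps{B_{\rho}}$ and $|\Delta^{{\eps}}\wteps|\le 1$ do not depend on $t$ --- is routine.
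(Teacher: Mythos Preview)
Your proposal is correct and follows essentially the same route as the paper. The paper's proof is terser but structurally identical: verify the hypotheses of Lemma~\ref{lemma:discrete-potential-tightness} via Lemmas~\ref{lemma:discrete-basic-properties}, \ref{lemma:discrete-laplacian-bound}, and~\ref{lemma:equibounded}, obtain local uniform subsequential limits, and then upgrade to $\cl(B_{\rho})$ by the barrier argument of Step~3 of Lemma~\ref{lemma:discrete-dirichlet-problem}. The only cosmetic difference is that the paper phrases the barrier step through the two-sided Laplacian bound $0\le\Delta^{{\eps}}\wteps\le 1$ (hence $\wteps\pm\Qeps{B_{\rho}}$ are sub/superharmonic), whereas you use the pointwise sandwich $0\le -\wteps\le\qeps{B_{\rho}}$ from Lemma~\ref{lemma:discrete-basic-properties} together with Lemma~\ref{lemma:convergence-of-the-normalized-exit-time}; both yield the same boundary control and the same H\"older conclusion via \cite[Proposition~2.5]{bou2022harmonic}.
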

	
	We will prove Lemma \ref{lemma:subsequential-convergence} by combining Lemma \ref{lemma:discrete-potential-tightness}, the results of the previous subsection,
	and the following lemma.  
	\begin{lemma} \label{lemma:equibounded}
		A.s., for each  deterministic sequence of ${\eps}$s converging to 0, there exists a deterministic subsequence $\mathcal{E}$ and a random constant $C$ so that for each $t > 0$, 
		\[
		\sup_{x \in B_{\rho}} |\olwteps(x)| \leq C, \quad \forall {\eps} \in \mathcal{E}.
		\]
	\end{lemma}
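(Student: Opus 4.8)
The plan is to observe that the two-sided bound in Lemma~\ref{lemma:discrete-basic-properties} already controls $|\wteps|$ by a quantity that does not depend on $t$, and then to bound that quantity using the exit-time estimates recalled in Section~\ref{sec:preliminaries}.

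First, Lemma~\ref{lemma:discrete-basic-properties} gives, for every $t>0$ and ${\eps}>0$,
\[
-\qeps{B_{\rho}} \leq \wteps \leq 0 \qquad \text{on } {\VGeps}(\cl(B_{\rho})),
\]
so that $|\wteps(a)| \leq \qeps{B_{\rho}}(a) \leq \Qeps{B_{\rho}}(a)$ for every $a \in {\VGeps}(B_{\rho})$, with the right-hand side independent of $t$. Since $a^{{\eps}}_x \in {\VGeps}(B_{\rho})$ whenever $x \in B_{\rho}$, rescaling by $\meps^{-1}$ as in~\eqref{eq:discrete-scaling} and using~\eqref{eq:meps-scaling} to write $\meps^{-1} \leq C_0\,{\eps}$ for all sufficiently small ${\eps}$, we obtain
\[
\sup_{x \in B_{\rho}} |\olwteps(x)| \;\leq\; C_0\,{\eps}\, \sup_{a \in {\VGeps}(B_{\rho})} \Qeps{B_{\rho}}(a)
\]
for all small ${\eps}$, uniformly in $t>0$. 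Hence it suffices to bound ${\eps}\,\sup_{a \in {\VGeps}(B_{\rho})} \Qeps{B_{\rho}}(a)$ by a random constant along a deterministic subsequence of ${\eps}$s.

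Second, I would fix $\rho' \in (\rho,1)$ and apply Lemma~\ref{lemma:exit-time-upper-bound} with $D = B_{\rho}$: with polynomially high probability as ${\eps} \to 0$, one has $\Qeps{B_{\rho}}(a) \leq C\,{\eps}^{-1}\,\Mrho{B_{{\eps}^{\alpha}}(B_{\rho})}$ for every $a \in {\VGeps}(B_{\rho})$, and for ${\eps}$ small the set $B_{{\eps}^{\alpha}}(B_{\rho})$ lies in $B_{\rho'}$, so $\Mrho{B_{{\eps}^{\alpha}}(B_{\rho})} \leq \sup_{D \subseteq B_{\rho'}} \Mrho{D}$, which is a.s.\ finite by Lemma~\ref{lemma:bounded-exit-time} applied with $\rho'$ in place of $\rho$. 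Multiplying the two displays, ${\eps}\,\sup_{a \in {\VGeps}(B_{\rho})} \Qeps{B_{\rho}}(a) \leq C \sup_{D \subseteq B_{\rho'}} \Mrho{D}$ off an event of probability $O({\eps}^{p})$. Given a deterministic sequence ${\eps}_n \to 0$, the Borel--Cantelli lemma yields a deterministic subsequence $\mathcal{E}$ along which this bound holds for all sufficiently small ${\eps} \in \mathcal{E}$; for the remaining finitely many ${\eps} \in \mathcal{E}$, the quantity $\meps^{-1}\sup_{a \in {\VGeps}(B_{\rho})} |\wteps(a)|$ is a.s.\ finite, since a.s.\ only finitely many cells meet $B_{\rho}$ and each $\Qeps{B_{\rho}}(a)$ is the expected exit time of a random walk from a finite vertex set and hence a.s.\ finite. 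Enlarging the constant by the maximum over those finitely many values produces a single random $C$ that works for all ${\eps} \in \mathcal{E}$, and, since nothing above depended on $t$, for all $t > 0$ simultaneously.

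The argument is essentially bookkeeping on top of Lemmas~\ref{lemma:discrete-basic-properties}, \ref{lemma:exit-time-upper-bound}, and~\ref{lemma:bounded-exit-time}; the only point requiring care is keeping $C$ and $\mathcal{E}$ independent of $t$, which is automatic here because the $t$-dependence of $\wteps$ enters only through the obstacle $-(t{\eps}^{-1})\gr{{\eps}}{B_{\rho}}(0,\cdot)$ and is absorbed by the $t$-free lower bound $-\qeps{B_{\rho}} \leq \wteps \leq 0$. I do not expect a genuine analytic obstacle.
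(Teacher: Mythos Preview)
Your proposal is correct and follows essentially the same approach as the paper: bound $\wteps$ via Lemma~\ref{lemma:discrete-basic-properties}, then control $\qeps{B_{\rho}}$ (you use $\Qeps{B_{\rho}}$, which dominates it) by Lemma~\ref{lemma:exit-time-upper-bound} and Lemma~\ref{lemma:bounded-exit-time}, and conclude via Borel--Cantelli. Your treatment is in fact slightly more careful than the paper's in two places --- you introduce $\rho'\in(\rho,1)$ to absorb the ${\eps}^{\alpha}$-enlargement from Lemma~\ref{lemma:exit-time-upper-bound}, and you explicitly handle the finitely many initial ${\eps}\in\mathcal{E}$ --- but the underlying argument is the same.
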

	
	\begin{proof}
		By Lemma \ref{lemma:discrete-basic-properties}, 
		\begin{equation} \label{eq:odometer-bounds}
		-\qeps{B_{\rho}} \leq \wteps \leq 0, \quad \forall t > 0, \forall {\eps} > 0 
		\end{equation}
		and by Lemma \ref{lemma:exit-time-upper-bound} and \eqref{eq:meps-scaling}
		\begin{equation} 
		-\qeps{B_{\rho}} \geq -C_0 {\meps}  \Mrho{B_{\rho}}
		\end{equation}
		for some deterministic constant $C_0 = C_0(\rho, \gamma)$
		with polynomially high probability as ${\eps} \to 0$, where $\Mrho{B_{\rho}}$ is as in \eqref{eq:largest-exit-time-rv}.
		By Lemma  \ref{lemma:bounded-exit-time},  $\Mrho{B_{\rho}}$ is finite almost surely. 
		Hence, the previous two indented inequalities complete the proof by the definition of $\olwteps$ and the Borel-Cantelli lemma. 
	\end{proof}
	We now use the tightness lemma,  Lemma \ref{lemma:discrete-potential-tightness}, to prove subsequential convergence. 
	\begin{proof}[Proof of Lemma \ref{lemma:subsequential-convergence}] 	
		By Lemma \ref{lemma:equibounded} and Lemma \ref{lemma:discrete-laplacian-bound}, for each $t>0$, the assumptions of Lemma \ref{lemma:discrete-potential-tightness}
		are satisfied along a deterministic subsequence for the functions $\wteps$. Hence, $\olwteps$ converges a.s.\ locally uniformly in $B_{\rho}$ along a random subsequence. To see that the convergence occurs uniformly in $\cl(B_{\rho})$, we use the standard barrier argument given in Step 3 of the proof of Lemma \ref{lemma:discrete-dirichlet-problem}. To implement the barrier argument we use that $0 \leq \Delta^{{\eps}} w^{{\eps}}_t \leq 1$. 
	\end{proof}
	
	\subsection{Properties of the subsequential limit}
	Convergence of the discrete Dirichlet problem, Lemma \ref{lemma:discrete-dirichlet-problem}, is used
	to establish some properties of each subsequential limit of the odometer.  
	We will use this in the next subsection to uniquely identify the limit. 
	
	\begin{lemma} \label{lemma:subsequential-properties}
		A.s., for each deterministic sequence of ${\eps} \to 0$, 
		there exists a deterministic subsequence so that a subsequential limit $\overline{w}^*_t$ of $\overline \wteps$ along this subsequence (as given by Lemma \ref{lemma:subsequential-convergence}) satisfies the following:
		\begin{enumerate}
			\item $0 \geq \overline{w}^*_t \geq -t \frac{m_0}{6} G_{B_{\rho}}(0, \cdot)$ on $\cl(B_{\rho})$;
			\item $\Delta \overline{w}^*_t \geq 0$ on $B_{\rho}$;
			\item $\Delta \overline{w}^*_t \leq \frac{m_0}{6} \mu_h$ on $B_{\rho}$ and $\Delta \overline{w}^*_t = \frac{m_0}{6} \mu_h$ on $\{x \in B_{\rho} : \overline{w}^*_t(x) > -t \frac{m_0}{6} G_{B_{\rho}}(0, x)\}$.
		\end{enumerate}
	\end{lemma}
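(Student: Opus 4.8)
The plan is to transfer each of the three properties from the discrete level, using the convergence results of Section~\ref{sec:dirichlet-problem} together with the principle that a uniform limit of discrete sub-/super-/harmonic functions of order ${\eps}^{-1}$ is continuum sub-/super-/harmonic. I work on an a.s.\ event and, along the deterministic subsequence produced by Lemma~\ref{lemma:subsequential-convergence}, pass to a further deterministic subsequence (invoking Borel--Cantelli for the polynomially-high-probability statements below) so that, besides $\olwteps \to \overline w^*_t$ uniformly on $\cl(B_{\rho})$, one also has: $\overline g^{{\eps}} := {\meps}^{-1}{\eps}^{-1}\gr{{\eps}}{B_\rho}(0,\cdot) \to \tfrac{m_0}{6} G_{B_\rho}(0,\cdot)$ locally uniformly on $B_\rho\setminus\{0\}$ (Lemma~\ref{lemma:uniform-convergence-of-greens-function}); $\olqeps{B} \to \overline{\mathfrak q}_B$ uniformly on $\cl(B)$ with $\Delta\overline{\mathfrak q}_B = -\tfrac{m_0}{6}\mu_h$ on $B$, for $B = B_\rho$ and for every ball $B$ in a fixed countable family with rational data and $\cl(B)\subset B_\rho$ (Lemma~\ref{lemma:convergence-of-the-normalized-exit-time}); the convergence of the discrete Dirichlet problem (Lemma~\ref{lemma:discrete-dirichlet-problem} with $\psi\equiv 0$) holds for every $\phi$ in a fixed countable family of smooth functions dense in $C(\partial B)$ for each such $B$; and the cell-size bound of Lemma~\ref{lemma:cell-size-estimate} holds. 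Property~(1) is then immediate from Lemma~\ref{lemma:discrete-basic-properties}: rescaling $-(t{\eps}^{-1})\gr{{\eps}}{B_\rho}(0,\cdot) \leq \wteps \leq 0$ gives $-t\,\overline g^{{\eps}} \leq \olwteps \leq 0$, and letting ${\eps}\to 0$ yields $0 \geq \overline w^*_t \geq -t\tfrac{m_0}{6}G_{B_\rho}(0,\cdot)$ on $B_\rho\setminus\{0\}$, which extends to $\cl(B_\rho)$ by continuity of $\overline w^*_t$ (the bound being vacuous at the pole).

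Next I will record the following observation and use it for (2) and (3): if $B'$ is a ball with $\cl(B')\subset B_\rho$, $u^{{\eps}}\colon{\VGeps}(\cl(B'))\to\R$ is discrete-superharmonic (resp.\ subharmonic) on ${\VGeps}(B')\setminus{\VGeps}(\partial B')$, and $\overline u^{{\eps}}\to\overline u$ uniformly on $\cl(B')$, then $\overline u$ is superharmonic (resp.\ subharmonic) on $B'$; in particular if $u^{{\eps}}$ is discrete-harmonic then $\overline u$ is harmonic. To prove the superharmonic case, fix a ball $B\Subset B'$, let $h^{{\eps}}$ be the discrete harmonic extension of $u^{{\eps}}|_{{\VGeps}(\partial B)}$ into ${\VGeps}(B)$, so that $u^{{\eps}}\geq h^{{\eps}}$ on ${\VGeps}(B)$ by the maximum principle (Lemma~\ref{lemma:maximum-principle}). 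Choosing deterministic $\phi^{-},\phi^{+}$ from the dense countable family, lying within a small distance of $\overline u$ on $\partial B$ with $\phi^{-}\leq\overline u\leq\phi^{+}$, the maximum principle bounds $h^{{\eps}}$ (up to vanishing cell-discretization errors controlled by Lemma~\ref{lemma:cell-size-estimate}) between the discrete harmonic extensions of ${\meps}\phi^{\pm}$, whose rescalings converge uniformly to the continuum harmonic extensions of $\phi^{\pm}$ by Lemma~\ref{lemma:discrete-dirichlet-problem}; squeezing shows $\overline h^{{\eps}}\to\overline h$ uniformly on $\cl(B)$, where $\overline h$ is the continuum harmonic function on $B$ with boundary values $\overline u|_{\partial B}$. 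Passing to the limit in $u^{{\eps}}\geq h^{{\eps}}$ gives $\overline u\geq\overline h$ on $B$, i.e.\ $\overline u$ dominates its harmonic replacement on every such $B$, which is superharmonicity; the subharmonic case is identical.

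Property~(2) follows by applying the observation with $B'=B_\rho$ to $\wteps$, which is discrete-subharmonic on ${\VGeps}(B_\rho)\setminus{\VGeps}(\partial B_\rho)$ by Lemma~\ref{lemma:discrete-laplacian-bound}. For the inequality in~(3), apply the observation with $B'=B_\rho$ to $\wteps + \qeps{B_\rho}$: by \eqref{eq:discrete-normalized-exit-laplacian} and Lemma~\ref{lemma:discrete-laplacian-bound}, $\Delta^{{\eps}}(\wteps+\qeps{B_\rho}) = \Delta^{{\eps}}\wteps - (\deg^{{\eps}})^{-1}\leq 0$ on ${\VGeps}(B_\rho)\setminus{\VGeps}(\partial B_\rho)$, and its rescaled limit $\overline w^*_t + \overline{\mathfrak q}_{B_\rho}$ is therefore superharmonic, so $\Delta\overline w^*_t \leq -\Delta\overline{\mathfrak q}_{B_\rho} = \tfrac{m_0}{6}\mu_h$ on $B_\rho$. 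For the equality in~(3), set $U := \{x\in B_\rho : \overline w^*_t(x) > -t\tfrac{m_0}{6}G_{B_\rho}(0,x)\}$ and fix $x_0\in U\setminus\{0\}$; pick a ball $B$ from the countable family with $x_0\in B$ and $\cl(B)\subset U\setminus\{0\}$. Since $\olwteps\to\overline w^*_t$ and $\overline g^{{\eps}}\to\tfrac{m_0}{6}G_{B_\rho}(0,\cdot)$ uniformly on $\cl(B)$ while $\overline w^*_t + t\tfrac{m_0}{6}G_{B_\rho}(0,\cdot)\geq c>0$ there, for all small ${\eps}$ every cell meeting $B$ lies in the discrete cluster $\Lambdateps$ (converting between cells and points via Lemma~\ref{lemma:cell-size-estimate}, as in the proof of Proposition~\ref{prop:div-sandpile-lower-bound}), so $\Delta^{{\eps}}\wteps = (\deg^{{\eps}})^{-1}$ on ${\VGeps}(B)$ by Lemma~\ref{lemma:discrete-laplacian-bound}, and hence $\wteps + \qeps{B}$ is discrete-harmonic on ${\VGeps}(B)\setminus{\VGeps}(\partial B)$. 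The observation gives that $\overline w^*_t + \overline{\mathfrak q}_B$ is harmonic on $B$, so $\Delta\overline w^*_t = -\Delta\overline{\mathfrak q}_B = \tfrac{m_0}{6}\mu_h$ on $B$. As $x_0$ was arbitrary, $\Delta\overline w^*_t = \tfrac{m_0}{6}\mu_h$ on $U\setminus\{0\}$, and this extends to $U$ because both sides are measures with no atom at $0$ (using $\Delta\overline w^*_t\leq\tfrac{m_0}{6}\mu_h$ and non-atomicity of $\mu_h$).

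The step I expect to be the main obstacle is the observation of the second paragraph — that uniform limits of discrete (super/sub)harmonic functions of order ${\eps}^{-1}$ are continuum (super/sub)harmonic — since it is the only place requiring genuine work: one must combine the discrete maximum principle with the convergence of the discrete Dirichlet problem and handle the fact that the natural boundary data $\overline u|_{\partial B}$ is random, which forces the approximation by a countable family of deterministic smooth functions and the squeezing argument. The remaining points — the cell-discretization errors and the behaviour near the pole of $\gr{{\eps}}{B_\rho}(0,\cdot)$ at the origin — are routine and already appear in the proof of Proposition~\ref{prop:div-sandpile-lower-bound}.
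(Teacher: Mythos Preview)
Your proof is correct and follows essentially the same approach as the paper's: both transfer the discrete bounds of Lemmas~\ref{lemma:discrete-basic-properties} and~\ref{lemma:discrete-laplacian-bound} to the continuum by comparing with discrete harmonic extensions on test balls, approximating the (random) boundary data by a countable deterministic family, and invoking Lemmas~\ref{lemma:discrete-dirichlet-problem} and~\ref{lemma:convergence-of-the-normalized-exit-time}. The only cosmetic differences are that the paper uses rational polynomials with a one-sided shift $\phi^\delta + \delta$ rather than your two-sided $\phi^{\pm}$ squeeze, and carries out the argument inline for each property rather than abstracting it as your reusable ``observation''.
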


	\begin{proof}
		By Lemma \ref{lemma:uniform-convergence-of-greens-function}, we may take a deterministic subsequence along which, 
		the convergence of 	$\overline g^{\eps}$ from that lemma, with $y = 0$ and $B_r(z) = B_{\rho}$, occurs almost surely. 
		Take another subsequence
		for which the convergence of Lemma \ref{lemma:discrete-dirichlet-problem}
		occurs a.s.\ for all $z \in \Q^2$ and rational $r > 0$,  $\psi \equiv 0$, and 
		all $\phi \in \mathcal{F}(B_r(z))$, the set of polynomials with rational coefficients
		restricted to $\cl(B_r(z))$. Further suppose that along this subsequence the convergence in Lemma \ref{lemma:convergence-of-the-normalized-exit-time} holds for all 
		$z \in \Q^2$ and rational $r > 0$.  By Lemma \ref{lemma:subsequential-convergence}, there is a random subsequence $\mathcal E$ of the preceding subsequence such that $\lim_{\mathcal{E} \ni {\eps} \to 0} \olwteps \to \overline{w}^*_t$.  
		\medskip
		
		{\it Step 1: Proof of (1).} \\
		By Lemma \ref{lemma:discrete-basic-properties}, 
		\begin{equation}
		0 \geq \wteps \geq -(t {\eps}^{-1}) \gr{{\eps}}{B_{\rho}}(0, \cdot) \quad \mbox{on ${\VGeps}(\cl(B_{\rho}))$} , \quad \forall {\eps}>0.
		\end{equation}
		This inequality persists in the limit by the uniform convergence of $\wteps$ and the convergence of the rescaled version of ${\eps}^{-1} \gr{\eps}{B_{\rho}}$ to $\frac{m_0}{6} G_{B_{\rho}}$, for $\mathcal{E} \ni {\eps} \to 0$ as given by Lemma \ref{lemma:uniform-convergence-of-greens-function}. 
		\medskip
		
		{\it Step 2: Proof of (2).}  \\
		Let $\delta > 0$, a rational $r > 0$, and a $z \in \Q^2$ such that $B_r(z) \Subset B_{\rho}$ be given. 
		For each ${\eps} \in \mathcal{E}$ consider the discrete harmonic function $f_\delta^{\eps}$ defined by
		\[
		\begin{cases}
		\Delta^{{\eps}} f^{{\eps}}_{\delta}(a) = 0 \quad\mbox{for $a \in {\VGeps}(B_r(z)) \setminus {\VGeps}(\partial B_r(z))$} \\
		f^{{\eps}}_{\delta}(b)  = m_{{\eps}} \phi^{\delta}(\eta(b)) + \delta \quad \mbox{for $b  \in {\VGeps}(\partial B_r(z))$}
		\end{cases}
		\]
		where $\phi^{\delta} \in \mathcal{F}(B_r(z))$ is a rational polynomial satisfying 
		\begin{equation} \label{eq:approximation-by-delta}
		\sup_{x \in \cl(B_r(z))} |\phi^{\delta}(x) - \overline{w}^*_t(x)| < \delta/4.
		\end{equation}
		By Lemma \ref{lemma:discrete-laplacian-bound},  $\wteps$ is $\Delta^{{\eps}}$-subharmonic. 
		Thus, by the maximum principle, Lemma \ref{lemma:maximum-principle}, we have
		\[
		\sup_{a \in {\VGeps}(B_r(z)) \setminus {\VGeps}(\partial B_r(z))}(\wteps - f^{{\eps}}_{\delta})(a) \leq \sup_{b  \in {\VGeps}(\partial B_r(z))}
		(\wteps - f^{{\eps}}_{\delta})(b).
		\]
		As $\olwteps$ converges uniformly to $\overline{w}^*_t$ in $\cl(B_{\rho})$, by definition of $f_\delta^{\eps}$
		and \eqref{eq:approximation-by-delta}, we have
		\[
		\sup_{b  \in {\VGeps}(\partial B_r(z))}
		(\wteps - f^{{\eps}}_{\delta})(b) < 0, \quad \forall \mbox{ small ${\eps} \in \mathcal{E}$}.
		\]
		By combining the previous two indented equations, we have 
		\begin{equation} \label{eq:lower-bound}
		f^{{\eps}}_{\delta}(a) \geq \wteps(a), \quad \forall a \in {\VGeps}(B_r(z)), \quad \forall \mbox{ small ${\eps} \in \mathcal{E}$}.
		\end{equation}
		
		By Lemma \ref{lemma:discrete-dirichlet-problem}, we have that $\overline f^{{\eps}}_{\delta}$ converges uniformly in 
		$\cl(B_r(z))$ to a harmonic function $\overline f^{*}_{\delta}: \cl(B_r(z)) \to \R$ which satisfies
		\begin{equation} \label{eq:harmonic-function-same-bdry-values}
		\begin{cases}
		\Delta \overline f^{*}_{\delta} = 0 \quad\mbox{on $B_r(z)$} \\
		\overline f^{*}_{\delta}  =  \phi^{\delta} + \delta \quad \mbox{on $\partial B_r(z)$}.
		\end{cases}
		\end{equation}
		Therefore, 
		\begin{align*}
		\overline{w}^*_t(z) &\leq \overline f^{*}_{\delta}(z) \qquad \mbox{(by \eqref{eq:lower-bound} and uniform convergence)} \\
		&= \int_{\partial B_{r}(z)} \overline{f}^*_{\delta} \qquad \mbox{(by harmonicity)} \\
		&= \int_{\partial B_{r}(z)} (\phi^{\delta} + \delta) \qquad \mbox{(by \eqref{eq:harmonic-function-same-bdry-values})}. 
		\end{align*}
		As the choice of rational $r > 0$ and $z \in \Q^2$ was arbitrary and the above inequality holds for any $\delta > 0$, by \eqref{eq:approximation-by-delta},
		and the continuity of $\overline{w}^*_t(z)$, we have
		\[
		\overline{w}^*_t(z) \leq \int_{\partial B_{r}(z)} \overline{w}^*_t, \quad \mbox{for all $r > 0$ and $z \in \C$ such that $B_r(z) \subset B_{\rho}$},
		\]
		completing the proof.
		\medskip
		
		{\it Step 3: Proof of (3).} \\ 
		We will show that
		\begin{equation} \label{eq:limit-admissible}
		\Delta \overline{w}^*_t \leq \frac{m_0}{6} \mu_h  \quad \mbox{on $B_{\rho}$},
		\end{equation}
		and then 
		\begin{equation} \label{eq:limit-equal}
		\Delta \overline{w}^*_t = \frac{m_0}{6} \mu_h  \quad \mbox{on $\{\overline{w}^*_t > -t \frac{m_0}{6} G_{B_{\rho}}\}$},
		\end{equation}
		appealing to the argument of Step 2. 
		
		Let rational $r > 0$ and $z \in \Q^2$ such that $B_r(z) \subset B_{\rho}$ and $\delta > 0$ be given. Recall that for each ${\eps} > 0$, 
		$\qeps{B_r(z)}$ satisfies 
		\[
		\begin{cases}
		\Delta^{{\eps}} \qeps{B_r(z)}(a) = -(\deg^{{\eps}}(a))^{-1} \quad\mbox{for $a \in {\VGeps}(B_r(z)) \setminus {\VGeps}(\partial B_r(z))$} \\
		\qeps{B_r(z)}(b)  = 0 \quad \mbox{for $b  \in {\VGeps}(\partial B_r(z))$}.
		\end{cases}
		\]
		By Lemma \ref{lemma:discrete-laplacian-bound}, 
		\begin{equation} \label{eq:difference-super-harmonic}
		\Delta^{{\eps}} (\qeps{B_r(z)} + \wteps)(a) \leq 0 \quad\mbox{for $a \in {\VGeps}(B_r(z)) \setminus {\VGeps}(\partial B_r(z))$}.
		\end{equation}
		By Lemma \ref{lemma:convergence-of-the-normalized-exit-time}, $\olqeps{B_r(z)}$ converges uniformly 
		in $\cl(B_r(z))$ to $\overline{\mathfrak q}_{B_r(z)}$
		where	
		\begin{equation} \label{eq:limit-equation-laplacian}
		\begin{cases}
		\Delta \overline{\mathfrak q}_{B_r(z)} = -m_0/6 \times \mu_h \quad \mbox{on $B_r(z)$} \\
		\overline{\mathfrak q}_{B_r(z)} = 0 \quad \mbox{ on $\partial B_{r}(z)$}.
		\end{cases}
		\end{equation}
		
		As $(\qeps{B_r(z)} + \wteps)$ is discrete superharmonic \eqref{eq:difference-super-harmonic}, the same argument as in Step 2 shows that
		\begin{equation} \label{eq:limit-equation-sub-harmonic}
		\Delta (\overline{\mathfrak q}_{B_r(z)} + \overline{w}^{*}_t) \leq 0 \quad\mbox{on $B_r(z)$}.
		\end{equation}
		Combining this with \eqref{eq:limit-equation-laplacian} shows \eqref{eq:limit-admissible}. 
		
		We now show \eqref{eq:limit-equal}. Fix rational $r > 0$ and $z \in \Q^2$ so that 
		\[
		B_r(z) \subset \{x \in B_{\rho}: \overline{w}^*_t(x) > -t \frac{m_0}{6} G_{B_{\rho}}(x)\}.
		\]
		By the continuity and uniform convergence of the two functions involved, for all ${\eps} \in \mathcal{E}$ sufficiently small, we have that 
		\begin{equation} \label{discrete-cluster-contained}
		\wteps(a) > -(t {\eps}^{-1}) \gr{B_{\rho}}{{\eps}}(0,a), \quad \forall a \in {\VGeps}(B_{r}(z)). 
		\end{equation}
		The inequality \eqref{discrete-cluster-contained}
		and Lemma \ref{lemma:discrete-laplacian-bound} together show that
		\begin{equation} \label{eq:difference-harmonic}
		\Delta^{{\eps}} (\qeps{B_r(z)} + \wteps)(a) = 0 \quad \forall a \in {\VGeps}(B_r(z)) \setminus {\VGeps}(\partial B_r(z))
		\end{equation}
		for all ${\eps} \in \mathcal{E}$ sufficiently small.
		Again applying the argument of Step 2 twice, we see that
		\begin{equation} \label{eq:limit-equation-harmonic}
		\Delta (\overline{\mathfrak q}_{B_r(z)} + \overline{w}^{*}_t) = 0 \quad\mbox{on $B_r(z)$},
		\end{equation}
		completing the proof by \eqref{eq:limit-equation-laplacian}.	
	\end{proof}

	\subsection{Proof of Theorem \ref{theorem:convergence-divisible}}
	Note that by the definition of $\TT$ and Theorem \ref{theorem:harmonic-balls},
	\[
	\Lambda_t = \inte\left( \cl \left( \{x \in B_\rho : \overline{w}^{B_\rho}_t > - G_{B_{\rho}}(0, \cdot) \} \right) \right), \quad \forall t \in (0, \TT).
	\]
	By (1) and (3) of Lemma \ref{lemma:subsequential-properties} we have that each possible subsequential limit $\frac{6}{m_0} \overline{w}^*_t$ as in Lemma \ref{lemma:subsequential-convergence} is admissible in  
	\eqref{eq:least-super-solution} 
	and hence
	\begin{equation} \label{eq:admissible-supersolution}
	\frac{m_0}{6} \overline{w}^{B_\rho}_t \leq \overline{w}^*_t, \quad \forall t \in (0, \TT).
	\end{equation}
	For the other direction, we note that by part (c) of Theorem~\ref{theorem:harmonic-balls}, we have, for each $t \in (0,\TT)$, 
	\[
	\Delta \overline w_t^{B_\rho} = \mu_h |_{\Lambda_t}  \quad \mbox{on $B_\rho$} 
	\]
	and by (3) of Lemma~\ref{lemma:subsequential-properties}
	\[
	\Delta \overline{w}^*_t = \frac{m_0}{6} \mu_h \quad \mbox{on}  \quad \{x \in B_{\rho} : \overline{w}^*_t(x) > -t \frac{m_0}{6} G_{B_{\rho}}(0, x)\}
	\]
	and by~\eqref{eq:admissible-supersolution}, 
	\[
	\Lambda_t \subset   \inte \left( \cl \left( \{x \in B_\rho : \overline{w}^*_t > - \frac{m_0}{6} G_{B_{\rho}}(0, \cdot) \} \right) \right), \quad \forall t \in (0, \TT).
	\]
	Consequently, by the above three displays we have for each $t \in (0,\TT)$, (in the weak sense)
	\[
	\Delta(\overline{w}^*_t - \frac{m_0}{6} \overline{w}_t^{B_{\rho}}) = 0 \qquad \mbox{on $\Lambda_t$} 
	\]
	and 
	\[
	\Delta(\overline{w}^*_t - \frac{m_0}{6} \overline{w}_t^{B_{\rho}}) = \Delta \overline{w}^*_t \geq 0 \qquad \mbox{on $B_{\rho} \setminus \Lambda_t$}  \, 
	\]
	with the last inequality coming from (2) of Lemma~\ref{lemma:subsequential-properties}.
	
	Therefore, for each $t \in (0,\TT)$,
	\[
	\begin{cases}
	\Delta(\overline{w}^*_t - \frac{m_0}{6} \overline{w}^{B_\rho}_t) \geq 0 \quad \mbox{on $B_\rho$} \\
	\overline{w}^*_t - \frac{m_0}{6} \overline{w}^{B_\rho}_t = 0 \quad \mbox{on $\partial B_\rho$} 
	\end{cases}
	\]
	and hence by the maximum principle in the continuum, 
	\begin{equation} \label{eq:admissible-subsolution}
	\frac{m_0}{6} \overline{w}^{B_\rho}_t \geq \overline{w}^*_t
	\end{equation}
	which, together with \eqref{eq:admissible-supersolution} shows that $\overline{w}^*_t = \overline{w}^{B_\rho}_t$. \qed

	\section{IDLA lower bound} \label{sec:idla-lower-bound}
	We use the convergence of the divisible sandpile and the argument of Lawler-Bramson-Griffeath \cite{lawler1992internal}
	to prove the following lower bound on the IDLA cluster. As in the previous section we fix a $\rho \in (0,1)$ so that $B_{\rho} \Subset B_1$. 
	Recall the notation for the subset of $\C$ corresponding to a subset of ${\VGeps}$, \eqref{eq:vertex-scaling}.

	\begin{prop} \label{prop:IDLA-lower-bound}
		Recall the time $\TT$ from~\eqref{eq:cluster-stopping-time}.
		For each $\delta \in (0, \rho)$ and $t > 0$, on the event $\{t < \TT\}$, 
		it holds except on an event of probability tending to 0 as ${\eps} \to 0$  that
		\[
		\Bdeltam(\Lambda_t) \subset \overline{A}_{{\eps}}(\lfloor t {\eps}^{-1} \rfloor)
		\]
		where $\Lambda_t$ is as in Theorem \ref{theorem:harmonic-balls}.
		Moreover, this lower bound holds even when walkers are stopped upon 
		exiting ${\VGeps}(\Lambda_t)$. 	
	\end{prop}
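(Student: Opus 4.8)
The plan is to deduce the unstopped lower bound from the stopped one, since the unstopped IDLA cluster contains the cluster $\widetilde A_\eps(m)$ obtained when walkers are stopped upon exiting $\VGeps(\Lambda_t)$; so I will prove the ``moreover'' statement. Write $m:=\lfloor t\eps^{-1}\rfloor$ and work on $\{t<\TT\}$. By Lemma~\ref{lemma:cell-size-estimate} and Theorem~\ref{theorem:harmonic-balls}, with probability tending to $1$ there are only $O(\eps^{-1})$ cells of $\Geps$ meeting $\Bdeltam(\Lambda_t)$, so by a union bound it suffices to produce $c=c(\delta,t)>0$ such that, on an event of probability tending to $1$, $\P[z\notin\widetilde A_\eps(m)\mid h,\eta]\le e^{-\eps^{-c}}$ for every $z\in\VGeps$ with $\Heps{z}\subset\Bdeltam(\Lambda_t)$; being stretched-exponential, such a bound survives the polynomial number $O(\eps^{-1})$ of terms in the union bound.

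To estimate $\P[z\notin\widetilde A_\eps(m)\mid h,\eta]$ I would use the Lawler--Bramson--Griffeath coupling. Run the $m$ stopped walkers, and after walker $i$ first reaches a vertex $y_i$ not in the current cluster (its deposit site) continue it by an independent simple random walk run until it exits $\VGeps(\Lambda_t)$. Because a walker's trajectory after depositing is never used by IDLA, this does not change $\widetilde A_\eps(m)$, the deposit sites $\{y_i\}$ are exactly $\widetilde A_\eps(m)$, and the continuations are conditionally independent given the deposit sites (Lemma~\ref{lemma:abelian-IDLA}). Fix $z$ with $\Heps{z}\subset\Bdeltam(\Lambda_t)$ and let $\mathcal N_z$, $\mathcal M_z$ be the numbers of visits to $z$ by the extended walkers before, resp.\ after, their deposit times, with $\mathcal L_z:=\mathcal N_z+\mathcal M_z$. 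A visit to $z$ before the corresponding deposit time would have occupied $z$; hence $\mathcal N_z=0$ on $\{z\notin\widetilde A_\eps(m)\}$, and therefore $\P[z\notin\widetilde A_\eps(m)\mid h,\eta]\le\P[\mathcal L_z\le\mathcal M_z\mid h,\eta]$.

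Next I would show the ``expected gap'' is of order $\eps^{-1}\deg^{\eps}(z)$. Conditionally on $(h,\eta)$ we have $\E[\mathcal L_z]=m\,\Gr{\eps}{\Lambda_t}(0,z)$, while conditioning further on the deposit sites and using reversibility, $\E[\mathcal M_z]=\sum_{y\in\widetilde A_\eps(m)}\Gr{\eps}{\Lambda_t}(y,z)\le\sum_{y\in\VGeps(\Lambda_t)}\Gr{\eps}{\Lambda_t}(y,z)=\deg^{\eps}(z)\,\qeps{\Lambda_t}(z)$. Thus $\E[\mathcal L_z]-\E[\mathcal M_z]\ge\deg^{\eps}(z)\bigl(m\,\gr{\eps}{\Lambda_t}(0,z)-\qeps{\Lambda_t}(z)\bigr)$, and the parenthesis is the discrete odometer of Section~\ref{sec:divisible-sandpile}. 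Away from the origin, $\meps^{-1}$ times this parenthesis converges uniformly over the relevant $z$ by the convergence of the rescaled Green's kernel (Lemma~\ref{lemma:uniform-convergence-of-greens-function}) and of the rescaled normalized exit time (Lemma~\ref{lemma:convergence-of-the-normalized-exit-time}); near the origin it is bounded below directly using the Green's-kernel lower bound (Lemma~\ref{lemma:greens-kernel-lower-bound}), which forces the parenthesis to exceed $\qeps{\Lambda_t}(z)=O(\eps^{-1})$. The limit is $\tfrac{m_0}{6}\,\overline u_t$, where $\overline u_t:=\overline w^{B_\rho}_t+t\,G_{B_\rho}(0,\cdot)$ is the continuum odometer (this identification is exactly Theorem~\ref{theorem:convergence-divisible}; one also needs Lemmas~\ref{lemma:uniform-convergence-of-greens-function} and~\ref{lemma:convergence-of-the-normalized-exit-time} for the random domain $\Lambda_t$, which follows from their proofs by sandwiching $\Lambda_t$ between deterministic domains). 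By Theorem~\ref{theorem:harmonic-balls}, $\overline u_t$ is continuous and strictly positive on $\Lambda_t$, hence bounded below by some $c_0(\delta,t)>0$ on the compact set $\cl(\Bdeltam(\Lambda_t))$; combined with \eqref{eq:meps-scaling} this gives, on an event of probability tending to $1$, $m\,\Gr{\eps}{\Lambda_t}(0,z)-\deg^{\eps}(z)\,\qeps{\Lambda_t}(z)\ge c_1\eps^{-1}\deg^{\eps}(z)$ uniformly over such $z$.

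Finally, I would turn this into a statement about the random variables: on the above event, if $\mathcal L_z\ge\E[\mathcal L_z]-\tfrac{c_1}{3}\eps^{-1}\deg^{\eps}(z)$ and $\mathcal M_z\le\deg^{\eps}(z)\qeps{\Lambda_t}(z)+\tfrac{c_1}{3}\eps^{-1}\deg^{\eps}(z)$, then $\mathcal L_z-\mathcal M_z\ge\tfrac{c_1}{3}\eps^{-1}\deg^{\eps}(z)>0$, forcing $z\in\widetilde A_\eps(m)$. Now $\mathcal L_z$ is a sum of $m$ i.i.d.\ (given $(h,\eta)$) nonnegative variables whose mean and exponential tails are controlled by $\Gr{\eps}{\Lambda_t}(0,z)\le\Gr{\eps}{B_\rho}(0,z)$ (Lemma~\ref{lemma:bounded-greens-function}) and $\Gr{\eps}{\Lambda_t}(z,z)\le\eps^{-\zeta}$ (Lemma~\ref{lemma:greens-function-log-bound}); a Bernstein-type inequality then bounds $\P[\mathcal L_z<\E\mathcal L_z-\tfrac{c_1}{3}\eps^{-1}\deg^{\eps}(z)\mid h,\eta]$ by $e^{-\eps^{-c}}$, and the same argument applied conditionally on the deposit sites (the $\mathcal M_z$-terms are then independent, with conditional mean $\le\deg^{\eps}(z)\qeps{\Lambda_t}(z)$ and $\qeps{\Lambda_t}(z)=O(\eps^{-1})$ by Lemma~\ref{lemma:exit-time-upper-bound}) bounds $\P[\mathcal M_z>\deg^{\eps}(z)\qeps{\Lambda_t}(z)+\tfrac{c_1}{3}\eps^{-1}\deg^{\eps}(z)\mid h,\eta]$ by $e^{-\eps^{-c}}$ as well. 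Collecting these with the $(h,\eta)$-events and summing over the $O(\eps^{-1})$ cells completes the proof. The hard part will be precisely this concentration step with unbounded vertex degrees and an unbounded discrete Green's function: one must exploit the cancellation of $\deg^{\eps}(z)$ between the positive gap (of order $\eps^{-1}\deg^{\eps}(z)$) and the fluctuations (variance of order $\eps^{-1-\zeta'}\deg^{\eps}(z)$), and genuinely use the exponential tails of the visit counts — a second-moment bound would only give per-vertex probability $O(\eps^{c})$, which does not survive the union bound over $\asymp\eps^{-1}$ cells. A secondary technical point is upgrading the Green's-kernel and exit-time convergence from fixed Euclidean balls to the random domain $\Lambda_t$.
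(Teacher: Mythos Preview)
Your approach follows the Lawler--Bramson--Griffeath strategy and is broadly correct, but it differs from the paper's implementation in two ways that each make your version harder than necessary.

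First, the paper stops walkers upon exiting the \emph{discrete} divisible-sandpile cluster $\Lambdateps$ rather than $\VGeps(\Lambda_t)$. This buys an exact identity: on $\Lambdateps$ the function $\tilde v_t^{\eps}(a)=(t\eps^{-1})\gr{\eps}{\Lambdateps}(a,0)-\qeps{\Lambdateps}(a)$ and the odometer $v_t^{\eps}(a)=\wteps(a)+(t\eps^{-1})\gr{\eps}{B_\rho}(0,a)$ have the same Laplacian and boundary values, hence coincide by the maximum principle (Lemma~\ref{lemma:mean-value-ineq}). The gap $\tilde v_t^{\eps}\ge c\eps^{-1}$ then follows immediately from Theorem~\ref{theorem:convergence-divisible} and Lemma~\ref{lemma:uniform-convergence-of-greens-function}, with no need to extend Green's-kernel or exit-time convergence to the random, non-ball domain $\Lambda_t$. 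Your ``sandwiching'' remark glosses over a real issue: Lemmas~\ref{lemma:uniform-convergence-of-greens-function} and~\ref{lemma:convergence-of-the-normalized-exit-time} are proved only for deterministic Euclidean balls. After establishing the lower bound on $\Lambdateps$, the paper transfers to $\Lambda_t$ via Proposition~\ref{prop:div-sandpile-lower-bound}.

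Second, the paper counts hit \emph{indicators} $M=\sum_i \mathbf 1\{\tau_z^i\le T^i\}$ and $L=\sum_i \mathbf 1\{\sigma^i\le\tau_z^i<T^i\}$ rather than visit counts. These are bounded by $1$, so concentration is the one-line bound~\eqref{eq:concentration-inequality}, and the mean-value inequality~\eqref{eq:mean-value-ineq} together with the crude estimate~\eqref{eq:L-lower-bound} (Lemma~\ref{lemma:L-lower-bound}) suffice. Your visit-count formulation forces you into sub-exponential Bernstein inequalities with variance bounds involving $\Gr{\eps}{\Lambda_t}(z,z)\le\eps^{-\zeta}$; this works, but is exactly the ``hard part'' you flag, and the paper sidesteps it entirely.
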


	Our proof of Proposition \ref{prop:IDLA-lower-bound} relies on the following result of Lawler-Bramson-Griffeath \cite{lawler1992internal}. 
	Recall Remark \ref{remark:overload-notation-potential}.
	\begin{lemma}[Reformulation of Section 3 in \cite{lawler1992internal}] \label{lemma:general-idla-lower-bound}
		Suppose  $O_1^{{\eps}} \subset O_2^{{\eps}} \subset {\VGeps}(B_{\rho})$ are $\sigma(h,\eta)$-measurable domains containing the origin
		such that for $K_{\eps} > 0$,  some deterministic $q > 0$ and random $\sigma(h,\eta)$-measurable finite random variable $\mathfrak{s} > 0$, 
		\begin{equation} \label{eq:mean-value-ineq}
		K_{\eps} \Gr{{\eps}}{O_2^{{\eps}}}(0, a) \geq (1 + \mathfrak{s}) \sum_{b \in O_2^{{\eps}}} \Gr{{\eps}}{O_2^{{\eps}}}(b, a), \quad \forall a \in O_1^{{\eps}},
		\end{equation}
		and 
		\begin{equation} \label{eq:L-lower-bound}
		\frac{ \sum_{b \in O^{{\eps}}_2} \Gr{{\eps}}{O_2^{{\eps}}}(b, a) }{ {\Gr{{\eps}}{O_2^{{\eps}}}(a, a)}} \geq {\eps}^{-q}, \quad \forall a \in O_1^{{\eps}}, 
		\end{equation}
		with probability approaching one as ${\eps}$ goes to zero,  then 
		\begin{equation} \label{eq:lower-bound-arbitrary}
		O_1^{{\eps}} \subset A_{{\eps}}(\lfloor K_{\eps} \rfloor)
		\end{equation}
		with probability approaching one as ${\eps}$ goes to zero.
		Moreover, the bound \eqref{eq:lower-bound-arbitrary} holds even when walkers are stopped upon exiting $O_2^{{\eps}}$. 
	\end{lemma}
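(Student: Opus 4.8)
The plan is to run the Lawler--Bramson--Griffeath argument from \cite{lawler1992internal} on the mated-CRT map, working conditionally on $(h,\eta)$ throughout. First I would fix $a \in O_1^{{\eps}}$ and couple the IDLA process with a family of (conditionally on $(h,\eta)$) independent simple random walks $W^{(1)}, W^{(2)}, \dots$ on $\Geps$, each started at the origin, where $W^{(i)}$ is the trajectory of the $i$-th explorer; the $i$-th explorer then occupies $Y_i := W^{(i)}_{\sigma_i}$, with $\sigma_i$ the first time $W^{(i)}$ reaches a vertex unoccupied in $A_{{\eps}}(i-1)$ (or, in the version where walkers are stopped, a vertex outside $O_2^{{\eps}}$). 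The two bookkeeping quantities are $M_a$, the number of $i \leq \lfloor K_{{\eps}}\rfloor$ for which $W^{(i)}$ visits $a$ by time $\sigma_i$, and $L_a$, the number of $i \leq \lfloor K_{{\eps}}\rfloor$ for which $W^{(i)}$ visits $a$ at all before exiting $O_2^{{\eps}}$. I would first record the identity $\{a \notin A_{{\eps}}(\lfloor K_{{\eps}}\rfloor)\} = \{M_a = 0\}$ --- if an explorer visits $a$ by its stopping time then $a$ is occupied by that step, and conversely the explorer that first occupies $a$ visits it exactly at its stopping time --- so that it suffices to bound $\P[M_a = 0 \mid h,\eta]$, which I would do via the decomposition $M_a = L_a - (L_a - M_a)$ together with a lower bound on $L_a$ and an upper bound on $L_a - M_a$.

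For the lower bound: conditionally on $(h,\eta)$, $L_a$ is a sum of $\lfloor K_{{\eps}}\rfloor$ independent Bernoulli variables of parameter $p_0 := \Gr{{\eps}}{O_2^{{\eps}}}(0,a)/\Gr{{\eps}}{O_2^{{\eps}}}(a,a)$, the probability that a walk from $0$ hits $a$ before leaving $O_2^{{\eps}}$ (the usual strong Markov formula, legitimate since $\Gr{{\eps}}{O_2^{{\eps}}}(a,a) \geq 1$); hence $\E[L_a \mid h,\eta] = \lfloor K_{{\eps}}\rfloor\, p_0$, which by \eqref{eq:mean-value-ineq} is at least $(1+\mathfrak s)R - 1$, where $R := \sum_{b \in O_2^{{\eps}}} \Gr{{\eps}}{O_2^{{\eps}}}(b,a)/\Gr{{\eps}}{O_2^{{\eps}}}(a,a)$. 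For the upper bound: any explorer counted in $L_a - M_a$ visits $a$ only strictly after $\sigma_i$, so its post-$\sigma_i$ continuation --- a fresh walk from $Y_i$ by the strong Markov property --- hits $a$ before leaving $O_2^{{\eps}}$; because the $Y_i$ are distinct and, revealing the explorers one at a time, each continuation is (given the past) an independent walk from a vertex not used before, $L_a - M_a$ is stochastically dominated, given $(h,\eta)$, by $\sum_{b \in O_2^{{\eps}}} \xi_b$ with $\{\xi_b\}$ independent Bernoulli of parameter $\Gr{{\eps}}{O_2^{{\eps}}}(b,a)/\Gr{{\eps}}{O_2^{{\eps}}}(a,a)$, a sum of conditional mean $R$. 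By \eqref{eq:L-lower-bound} we have $R \geq {\eps}^{-q} \to \infty$, so $\E[L_a \mid h,\eta] \geq (1+\tfrac34\mathfrak s)R$ once ${\eps}$ is small; Chernoff's lower-tail bound for the binomial $L_a$ and Chernoff's upper-tail bound for $\sum_b \xi_b$ then give, on the event where \eqref{eq:mean-value-ineq} and \eqref{eq:L-lower-bound} hold, $\P[L_a \leq (1+\tfrac12\mathfrak s)R \mid h,\eta] + \P[L_a - M_a > (1+\tfrac12\mathfrak s)R \mid h,\eta] \leq 2\exp(-c\,{\eps}^{-q})$ with $c = c(\mathfrak s) > 0$, and since $\{M_a = 0\} \subseteq \{L_a \leq (1+\tfrac12\mathfrak s)R\} \cup \{L_a - M_a > (1+\tfrac12\mathfrak s)R\}$ this gives $\P[M_a = 0 \mid h,\eta] \leq 2\exp(-c\,{\eps}^{-q})$.

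To conclude I would union bound over $a \in O_1^{{\eps}} \subseteq {\VGeps}(B_\rho)$: the number of mated-CRT cells meeting $B_\rho$ grows at most polynomially in ${\eps}^{-1}$ with probability tending to one (a routine consequence of the cell-size bounds, e.g.\ Lemma \ref{lemma:cell-size-estimate}), whereas $\exp(-c\,{\eps}^{-q})$ decays faster than any power of ${\eps}$ for every fixed realization with $\mathfrak s > 0$; hence the probability that some $a \in O_1^{{\eps}}$ fails to lie in $A_{{\eps}}(\lfloor K_{{\eps}}\rfloor)$ tends to $0$, and adding the vanishing probabilities of the bad events where the hypotheses or the polynomial volume bound fail proves \eqref{eq:lower-bound-arbitrary}. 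Exactly the same reasoning applies when walkers are stopped on exiting $O_2^{{\eps}}$: an explorer killed at $\partial O_2^{{\eps}}$ has $W^{(i)}_{\sigma_i} \notin O_2^{{\eps}}$, so it contributes $0$ to every sum $\sum_b \Gr{{\eps}}{O_2^{{\eps}}}(b,a)$ above. I expect the main obstacle to be making the upper bound on $L_a - M_a$ precise, since $L_a - M_a$ is genuinely correlated with the cluster $A_{{\eps}}(\lfloor K_{{\eps}}\rfloor)$; the sequential-revealing stochastic-domination step --- using the strong Markov property at each $\sigma_i$ to see the continuations as independent fresh walks from distinct, previously unused starting vertices --- is what decouples the two and must be carried out carefully.
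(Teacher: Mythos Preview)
Your proposal is correct and follows essentially the same argument as the paper. Your $L_a$, $M_a$, and $L_a-M_a$ are exactly the paper's $M$, $M-L$, and $L$ respectively (just relabeled), and the key steps---the identity $\{a\notin A_{\eps}(\lfloor K_{\eps}\rfloor)\}=\{M_a=0\}$, the strong-Markov stochastic domination of $L_a-M_a$ by a sum of independent indicators indexed by the distinct stopping points $Y_i\in O_2^{\eps}$, the Chernoff concentration using \eqref{eq:mean-value-ineq} and \eqref{eq:L-lower-bound}, and the polynomial-volume union bound---match the paper line for line; the paper bounds $|{\VGeps}(B_\rho)|\le C{\eps}^{-1}$ directly from the fact that each cell has $\mu_h$-mass ${\eps}$ rather than via Lemma~\ref{lemma:cell-size-estimate}, but either works.
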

	
	We will give the proof of Lemma~\ref{lemma:general-idla-lower-bound} at the end of this section, following the argument of \cite{lawler1992internal}. We will first prove Proposition \ref{prop:IDLA-lower-bound} by verifying the two conditions of Lemma \ref{lemma:general-idla-lower-bound}. 
	Specifically, we will show that an approximate mean-value inequality \eqref{eq:mean-value-ineq} is satisfied for the discrete divisible sandpile cluster $\Lambdateps$.
	
	\begin{lemma} \label{lemma:mean-value-ineq}
		For each $\delta \in (0, \rho)$ and $t > 0$ on the event $\{t < \TT\}$, 
		there exists a $\sigma(h,\eta)$-measurable finite random variable $\mathfrak{s} = \mathfrak{s}(\delta) > 0$ such that, except on an event of probability tending to 0 as ${\eps} \to 0$ , 
		\begin{equation} \label{eq:mean-value-ineq-verify}
		(t {\eps}^{-1}) \Gr{{\eps}}{\Lambdateps}(0, a) \geq (1 + \mathfrak{s}) \sum_{b \in \Lambdateps} \Gr{{\eps}}{\Lambdateps}(b, a), \quad  \mbox{for all $a \in \Lambdateps$ s.t. $\dist(\eta(a), \partial \eta(\Lambdateps)) > \delta$}.
		\end{equation}
	\end{lemma}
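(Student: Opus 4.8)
The plan is to read \eqref{eq:mean-value-ineq-verify} as a disguised lower bound on the divisible sandpile odometer, and to deduce it from the odometer and Green's kernel convergence of Sections~\ref{sec:dirichlet-problem}--\ref{sec:divisible-sandpile} together with the cluster lower bound of Proposition~\ref{prop:div-sandpile-lower-bound}.

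First I would rewrite both sides in terms of the odometer. For small ${\eps}$ we have $0\in\Lambda^{{\eps}}_t$, so Lemma~\ref{lemma:discrete-laplacian-bound}, \eqref{eq:final-sandpile-definition} and \eqref{eq:odometer-to-normalized-odometer} give that $u^{{\eps}}:=v^{{\eps}}_t/\deg^{{\eps}}$ satisfies $\Delta^{{\eps}}u^{{\eps}}=(\deg^{{\eps}})^{-1}-(t{\eps}^{-1})\deg^{{\eps}}(0)^{-1}\,1\{\cdot=0\}$ on $\Lambda^{{\eps}}_t$ and vanishes on $\partial\Lambda^{{\eps}}_t$; the discrete Green's identity associated with \eqref{eq:discrete-green-laplacian} (in the overloaded form of Remark~\ref{remark:overload-notation-potential}) then yields, on $\Lambda^{{\eps}}_t$,
\[
\frac{v^{{\eps}}_t(a)}{\deg^{{\eps}}(a)}\;=\;(t{\eps}^{-1})\,\gr{{\eps}}{\Lambda^{{\eps}}_t}(a,0)\;-\;\sum_{b\in\Lambda^{{\eps}}_t}\gr{{\eps}}{\Lambda^{{\eps}}_t}(a,b),
\]
the exact discrete analogue of $\overline u_t=tG_{\Lambda_t}(0,\cdot)-\int_{\Lambda_t}G_{\Lambda_t}(\cdot,y)\,d\mu_h(y)$. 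Reversibility, $\deg^{{\eps}}(a)\gr{{\eps}}{\Lambda^{{\eps}}_t}(a,b)=\Gr{{\eps}}{\Lambda^{{\eps}}_t}(b,a)$, shows that $\sum_b\Gr{{\eps}}{\Lambda^{{\eps}}_t}(b,a)=\deg^{{\eps}}(a)\,\qeps{\Lambda^{{\eps}}_t}(a)$ and $(t{\eps}^{-1})\Gr{{\eps}}{\Lambda^{{\eps}}_t}(0,a)=\deg^{{\eps}}(a)(t{\eps}^{-1})\gr{{\eps}}{\Lambda^{{\eps}}_t}(a,0)$, so \eqref{eq:mean-value-ineq-verify} is \emph{equivalent} to the transparent inequality
\[
\frac{v^{{\eps}}_t(a)}{\deg^{{\eps}}(a)}\;\ge\;\mathfrak s\cdot\qeps{\Lambda^{{\eps}}_t}(a)\qquad\text{for all }a\in\Lambda^{{\eps}}_t\text{ with }\dist(\eta(a),\partial\eta(\Lambda^{{\eps}}_t))>\delta ;
\]
the case $\mathfrak s=0$ is automatic from $v^{{\eps}}_t\ge0$, so the content is only the strictly positive gap.

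Then I would bound the two sides. For the right side, monotonicity of the Green's kernel in the domain gives $\qeps{\Lambda^{{\eps}}_t}(a)\le\qeps{B_\rho}(a)$, and Lemmas~\ref{lemma:exit-time-upper-bound} and~\ref{lemma:bounded-exit-time} give $\qeps{B_\rho}(a)\le C{\eps}^{-1}\Mrho{B_\rho}$ with $\Mrho{B_\rho}<\infty$ a.s. For the left side I want $v^{{\eps}}_t(a)/\deg^{{\eps}}(a)\ge c_0(\delta){\eps}^{-1}$ for every deep $a$. By \eqref{eq:odometer-to-normalized-odometer} the rescaled left side equals $\olwteps(\eta(a))+t\,\overline g^{{\eps}}(\eta(a))$, where $\overline g^{{\eps}}$ is the rescaling of ${\eps}^{-1}\gr{{\eps}}{B_\rho}(0,\cdot)$, and by Theorem~\ref{theorem:convergence-divisible} and Lemma~\ref{lemma:uniform-convergence-of-greens-function} this converges, uniformly on compact subsets of $B_\rho\setminus\{0\}$, to $\tfrac{m_0}{6}(\overline w^{B_\rho}_t+tG_{B_\rho}(0,\cdot))=\tfrac{m_0}{6}\overline u_t$ (it only blows up near $0$). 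Since $\overline u_t$ is continuous and strictly positive on $\Lambda_t$ --- exactly the interior positivity already used in \eqref{eq:strictly-above-in-interior} --- and vanishes off $\cl(\Lambda_t)$, it is $\ge2c_0(\delta)>0$ on the compact set $K_\delta:=\{x\in\cl(\Lambda_t):\dist(x,\partial\Lambda_t)\ge c\delta\}$ for a suitable absolute $c>0$; hence $v^{{\eps}}_t(a)/\deg^{{\eps}}(a)\ge c_0(\delta)\meps\ge c_0(\delta)C^{-1}{\eps}^{-1}$ whenever $\eta(a)\in K_\delta$. Choosing $\mathfrak s(\delta):=c_0(\delta)/(C^2\Mrho{B_\rho})$, which is a.s.\ finite and positive, then finishes the argument \emph{provided} every deep $a$ has $\eta(a)\in K_\delta$.

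The hard part is exactly that last proviso: showing that no $a$ with $\dist(\eta(a),\partial\eta(\Lambda^{{\eps}}_t))>\delta$ can have $\eta(a)$ outside $K_\delta$ --- i.e.\ the discrete cluster does not grow a macroscopically thick piece near or beyond $\partial\Lambda_t$ --- for small ${\eps}$. I cannot use the divisible sandpile upper bound of Section~\ref{sec:div-upper-bound} here, since that relies on Section~\ref{sec:idla-upper-bound}, which in turn uses the (stopped) IDLA lower bound of this section; the argument must be self-contained. The key ingredient is the deterministic mass identity $\mu_h(\eta(\Lambda^{{\eps}}_t))={\eps}\,|\Lambda^{{\eps}}_t|\le t$, valid because $\eta$ is parameterized so that $\mu_h(\Heps{a})={\eps}$ and $|\Lambda^{{\eps}}_t|\le t{\eps}^{-1}$ by Lemma~\ref{lemma:discrete-conservation-of-mass}. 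Combined with $\mu_h(\Lambda_t)=t$, $\mu_h(\partial\Lambda_t)=0$ (Theorem~\ref{theorem:harmonic-balls}) and the lower containment ${\VGeps}(\Bdeltam(\Lambda_t))\subset\Lambda^{{\eps}}_t$ from Proposition~\ref{prop:div-sandpile-lower-bound}, this forces $\mu_h(\eta(\Lambda^{{\eps}}_t)\setminus\Lambda_t)\to0$; then if some deep $a$ had $\eta(a)$ at distance $\ge\delta$ from $\Lambda_t$ we would get $B_{\delta/2}(\eta(a))\subset\eta(\Lambda^{{\eps}}_t)\setminus\Lambda_t$ (using Lemma~\ref{lemma:cell-size-estimate} so cells are tiny), whence $\mu_h(\eta(\Lambda^{{\eps}}_t)\setminus\Lambda_t)\ge\mu_h(B_{\delta/2}(\eta(a)))\ge(\delta/2)^{\beta^-}>0$ by Lemma~\ref{lemma:volume-growth}, a contradiction. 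Upgrading this from ``$\eta(a)$ bounded away from $\Lambda_t$'' to ``$\eta(a)\in K_\delta$'' --- that is, pushing the overshoot control all the way down to the free boundary --- is the genuinely delicate step; I expect to handle it by combining the $\mu_h$-mass budget above with strict subharmonicity of $u^{{\eps}}$ on ${\VGeps}(B_{\delta/2}(\eta(a)))\subset\Lambda^{{\eps}}_t$, the fact (from the odometer convergence, since $\overline u_t$ vanishes off $\cl(\Lambda_t)$) that $u^{{\eps}}$ is $o({\eps}^{-1})$ near and outside $\partial\Lambda_t$, and the lower bound $\qeps{B_{\delta/4}(\eta(a))}(a)\gtrsim{\eps}^{-1}\delta^{\beta^-}$ furnished by Lemma~\ref{lemma:greens-kernel-lower-bound}. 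Everything else is routine bookkeeping with the convergence results already proved.
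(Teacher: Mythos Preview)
Your approach is essentially the paper's, step for step. The paper also rewrites \eqref{eq:mean-value-ineq-verify} via reversibility as $(t{\eps}^{-1})\gr{{\eps}}{\Lambdateps}(a,0)\ge(1+\mathfrak s)\,\qeps{\Lambdateps}(a)$, introduces $\tilde v_t^{{\eps}}(a):=(t{\eps}^{-1})\gr{{\eps}}{\Lambdateps}(a,0)-\qeps{\Lambdateps}(a)$, and shows via a maximum-principle computation (your Green's-identity derivation gives the same thing) that $\tilde v_t^{{\eps}}$ agrees with $v_t^{{\eps}}/\deg^{{\eps}}$ on $\Lambdateps$. It then bounds $\qeps{\Lambdateps}\le C_1{\eps}^{-1}$ by Lemma~\ref{lemma:exit-time-upper-bound} and claims $v_t^{{\eps}}(a)>C(\delta){\eps}^{-1}$ for all deep $a$ from the convergence of Theorem~\ref{theorem:convergence-divisible} and Lemma~\ref{lemma:uniform-convergence-of-greens-function}, taking $\mathfrak s=C/C_1$. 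The paper does \emph{not} carry out your mass-budget argument or discuss the distinction between ``deep in $\Lambdateps$'' and ``deep in $\Lambda_t$''; it simply asserts the odometer lower bound, citing uniform convergence of $\overline v_t^{{\eps}}$.

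One remark on your circularity concern: it is not quite right. Proposition~\ref{prop:div-sandpile-upper-bound} depends only on Lemma~\ref{lemma:constant-order-lower-bound-div} (whose proof uses just the good-annuli construction of Section~\ref{subsec:good-annuli} and the Section~\ref{sec:preliminaries} estimates), Theorem~\ref{theorem:convergence-divisible}, and the elementary Lemmas~\ref{lemma:mass-bound-odometer}--\ref{lemma:crude-upper-bound}; none of these invoke Section~\ref{sec:idla-lower-bound}. It is Proposition~\ref{prop:IDLA-upper-bound} that calls on Proposition~\ref{prop:IDLA-lower-bound}, and Proposition~\ref{prop:div-sandpile-upper-bound} does not use Proposition~\ref{prop:IDLA-upper-bound}. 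So the sandwich $\Bdeltamm{\delta'}(\Lambda_t)\subset\overline{\Lambdateps}\subset\Bdeltapp{\delta'}(\Lambda_t)$ is logically available here. That said, even this sandwich does not trivially force deep points of $\Lambdateps$ into $\Bdeltamm{c\delta}(\Lambda_t)$ if $\Lambda_t^c$ could have thin pieces, so your ``delicate step'' is genuine; the cleanest workaround, sufficient for Proposition~\ref{prop:IDLA-lower-bound}, is to prove \eqref{eq:mean-value-ineq-verify} only for $a$ with $\eta(a)\in\Bdeltamm{c\delta}(\Lambda_t)$ (immediate from convergence) and take $O_1^{{\eps}}={\VGeps}(\Bdeltamm{c\delta}(\Lambda_t))$ in Lemma~\ref{lemma:general-idla-lower-bound}.
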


	We will also prove the lower bound in  \eqref{eq:L-lower-bound}. 
	\begin{lemma} \label{lemma:L-lower-bound}
		There exists a deterministic $q > 0$ such that, for each $\delta \in (0, \rho)$ and $t > 0$ on the event $\{t < \TT\}$, 
		it holds except on an event of probability tending to 0 as ${\eps} \to 0$  that
		\begin{equation} \label{eq:L-lower-bound-verify}
		\frac{\sum_{b \in \Lambdateps} \Gr{{\eps}}{\Lambdateps}(b, a)  }{ {\Gr{{\eps}}{\Lambdateps}(a, a)}} \geq {\eps}^{-q},
		\quad  \mbox{for all $a \in \Lambdateps$ s.t. $\dist(\eta(a), \partial \eta(\Lambdateps)) > \delta$}.
		\end{equation} 
	\end{lemma}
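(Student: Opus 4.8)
The plan is to rewrite the left-hand side of~\eqref{eq:L-lower-bound-verify} using reversibility and then bound numerator and denominator separately. Since simple random walk on ${\Geps}$ is reversible with respect to $\deg^{{\eps}}$, the Green's kernel is symmetric, so $\Gr{{\eps}}{\Lambdateps}(b,a) = \deg^{{\eps}}(a)\,\gr{{\eps}}{\Lambdateps}(a,b)$; summing over $b\in\Lambdateps$ and recalling Remark~\ref{remark:overload-notation-potential} gives $\sum_{b\in\Lambdateps}\Gr{{\eps}}{\Lambdateps}(b,a) = \deg^{{\eps}}(a)\,\qeps{\Lambdateps}(a)$, while $\Gr{{\eps}}{\Lambdateps}(a,a) = \deg^{{\eps}}(a)\,\gr{{\eps}}{\Lambdateps}(a,a)$. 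Hence the ratio in~\eqref{eq:L-lower-bound-verify} equals $\qeps{\Lambdateps}(a)/\gr{{\eps}}{\Lambdateps}(a,a)$, and it suffices to show, with probability tending to $1$ as ${\eps}\to 0$: (i) $\gr{{\eps}}{\Lambdateps}(a,a)\le{\eps}^{-1/2}$ for every $a$; and (ii) $\qeps{\Lambdateps}(a)\ge c\,{\eps}^{-1}$ for every $a$ with $\dist(\eta(a),\partial\eta(\Lambdateps))>\delta$, for some $c=c(\delta,\rho,\gamma)>0$. The lemma then holds with $q=1/4$, since $c\,{\eps}^{-1}/{\eps}^{-1/2}\ge{\eps}^{-1/4}$ for small ${\eps}$.

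For (i): on $\{t<\TT\}$, the uniform convergence of $\olwteps$ from Theorem~\ref{theorem:convergence-divisible} together with $\Lambda_t\subset B_{\rho/3}$ forces, with probability tending to $1$, that $\Lambdateps\subset{\VGeps}(B_\rho)\setminus{\VGeps}(\partial B_\rho)$; domain monotonicity of the Green's function and Lemma~\ref{lemma:greens-function-log-bound} (with $\zeta=1/2$) then give $\Gr{{\eps}}{\Lambdateps}(a,a)\le\Gr{{\eps}}{B_\rho}(a,a)\le{\eps}^{-1/2}$, and since $\deg^{{\eps}}\ge 1$ also $\gr{{\eps}}{\Lambdateps}(a,a)\le{\eps}^{-1/2}$.

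For (ii): if $\dist(\eta(a),\partial\eta(\Lambdateps))>\delta$ then $B_\delta(\eta(a))\subset\eta(\Lambdateps)$, and by the cell-size estimate (Lemma~\ref{lemma:cell-size-estimate}) every cell meeting $B_{\delta/2}(\eta(a))$ has diameter $\ll\delta$ for small ${\eps}$, hence lies inside $B_\delta(\eta(a))\subset\eta(\Lambdateps)=\bigcup_{a'\in\Lambdateps}\Heps{a'}$; since cells of distinct vertices overlap only on their boundaries, this forces ${\VGeps}(B_{\delta/2}(\eta(a)))\subset\Lambdateps$. Domain monotonicity gives $\gr{{\eps}}{\Lambdateps}(a,b)\ge\gr{{\eps}}{B_{\delta/2}(\eta(a))}(a,b)$ for $b\in{\VGeps}(B_{\delta/6}(\eta(a)))$. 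Since $\eta(a)\in B_{\rho/3}$ on the good event, Lemma~\ref{lemma:greens-kernel-lower-bound} applies with $r=\delta/2$: choosing $\delta'=\delta'(\delta,\rho,\gamma)\in(0,\delta/6)$ small enough that $\log(\delta/(2\delta'))$ exceeds twice the constant $C$ of that lemma, we get $\gr{{\eps}}{B_{\delta/2}(\eta(a))}(a,b)\ge 1$ whenever $|\eta(a)-\eta(b)|\le\delta'$, in particular for all $b\in{\VGeps}(B_{\delta'/2}(\eta(a)))$ once ${\eps}$ is small. By Lemma~\ref{lemma:mu_h-convergence} (with $f\equiv 1$) and the volume lower bound Lemma~\ref{lemma:volume-growth}, the number of such $b$ is at least ${\eps}^{-1}\mu_h(B_{\delta'/2}(\eta(a)))-{\eps}^{-1+\alpha}\ge\tfrac12{\eps}^{-1}(\delta'/2)^{\beta^-}$ for small ${\eps}$. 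Summing, $\qeps{\Lambdateps}(a)\ge\tfrac12(\delta'/2)^{\beta^-}\,{\eps}^{-1}$.

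There is no serious obstacle here; the only point requiring attention is uniformity of all the invoked estimates over the random point $\eta(a)$ and the random cluster $\Lambdateps$. This is supplied automatically because Lemmas~\ref{lemma:cell-size-estimate}, \ref{lemma:volume-growth}, \ref{lemma:mu_h-convergence}, \ref{lemma:greens-function-log-bound} and~\ref{lemma:greens-kernel-lower-bound} all hold with polynomially high probability simultaneously over every admissible point and ball in $B_\rho$, and on the good event $\{t<\TT\}$ one has $\eta(a)\in B_{\rho/3}$ and $\Lambdateps\subset{\VGeps}(B_\rho)\setminus{\VGeps}(\partial B_\rho)$; combining (i) and (ii) then yields~\eqref{eq:L-lower-bound-verify} with $q=1/4$.
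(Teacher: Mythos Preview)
Your proof is correct and follows essentially the same approach as the paper's: bound the diagonal term by ${\eps}^{-1/2}$ via domain monotonicity and Lemma~\ref{lemma:greens-function-log-bound}, and lower-bound the sum by a constant times ${\eps}^{-1}$ by passing to a Euclidean ball of radius $\delta/2$ around $\eta(a)$, invoking the Green's kernel lower bound of Lemma~\ref{lemma:greens-kernel-lower-bound}, and counting vertices via Lemmas~\ref{lemma:mu_h-convergence} and~\ref{lemma:volume-growth}. Your reversibility rewriting of the ratio as $\qeps{\Lambdateps}(a)/\gr{{\eps}}{\Lambdateps}(a,a)$ is a cosmetic difference from the paper's direct treatment of $\sum_b \Gr{{\eps}}{\Lambdateps}(b,a)$ and $\Gr{{\eps}}{\Lambdateps}(a,a)$; note also that $\Lambdateps \subset {\VGeps}(B_\rho)\setminus{\VGeps}(\partial B_\rho)$ follows immediately from the definition (both $\wteps$ and $\gr{{\eps}}{B_\rho}(0,\cdot)$ vanish on ${\VGeps}(\partial B_\rho)$), so Theorem~\ref{theorem:convergence-divisible} is not needed for this containment.
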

	These lemmas imply Proposition \ref{prop:IDLA-lower-bound}. 
	
	\begin{proof}[Proof of Proposition \ref{prop:IDLA-lower-bound}, assuming Lemmas \ref{lemma:general-idla-lower-bound}, \ref{lemma:mean-value-ineq}, and \ref{lemma:L-lower-bound}]
		We restrict to the event that $\{t < \TT\}$ for $t > 0$. 
		By Lemmas \ref{lemma:mean-value-ineq} and  \ref{lemma:L-lower-bound} (with $\delta/4$ instead of $\delta$) we may apply Lemma \ref{lemma:general-idla-lower-bound} 
		with the sets $O^{{\eps}}_1 = \{a \in \Lambdateps:\dist(\eta(a), \partial \eta(\Lambdateps)) > \delta/4 \}$ and $O_2^{{\eps}} = \Lambdateps$ 
		and $K_{\eps} = t \eps^{-1}$ 
		which shows, together 
		with the cell-size estimates given by 
		Lemma \ref{lemma:cell-size-estimate}
		\[
		\Bdeltamm{\delta/2}(\overline{\Lambdateps}) \subset \overline{A}_{{\eps}}(\lfloor t {\eps}^{-1} \rfloor),
		\]
		except on an event of probability going to zero as ${\eps} \to 0$,  with the bound holding even when walkers are stopped upon exiting $\Lambdateps$. 
		By Proposition \ref{prop:div-sandpile-lower-bound} and \eqref{eq:cluster-and-lambdat}, (and the cell-size estimates again) we have
		\[
		\Bdeltamm{\delta/2}(\Lambda_t) \subset \overline{\Lambdateps},
		\]
		except on an event of probability going to zero as ${\eps} \to 0$. 
	\end{proof}

	We now prove Lemmas \ref{lemma:general-idla-lower-bound}, \ref{lemma:mean-value-ineq}, and \ref{lemma:L-lower-bound}.
	We start with Lemma~\ref{lemma:L-lower-bound}, which relies on the estimates established in \cite{berestycki2020random} as recalled in Section \ref{sec:preliminaries}. 
	\begin{proof}[Proof of Lemma \ref{lemma:L-lower-bound}]
		We restrict to the event that $\{t < \TT\}$ for $t > 0$.
		By the fact $\Lambdateps \subset {\VGeps}(B_{\rho})$ and Lemma \ref{lemma:greens-function-log-bound}, we have
		\begin{equation}
		\Gr{{\eps}}{\Lambdateps}(a,a) \leq \Gr{{\eps}}{B_{\rho}}(a,a) \leq {\eps}^{-1/2}, \quad \forall a \in {\VGeps}(B_{\rho})
		\end{equation} 
		except on an event of probability going to zero as ${\eps} \to 0$. 
		Therefore, it suffices to show that there exists a deterministic constant $C > 0$ independent of ${\eps}$ such that
		\begin{equation} \label{eq:lower-bound-exit}
		\sum_{b \in \Lambdateps} \Gr{{\eps}}{\Lambdateps}(b,a) \geq C \delta {\eps}^{-1}, \quad \forall 
		a \in \Lambdateps:\dist(\eta(a), \partial \eta(\Lambdateps)) > \delta 
		\end{equation}
		except on an event of probability going to zero as ${\eps} \to 0$. Indeed, we just need the desired estimate for ${\eps}$ small
		and we may absorb the factor of $C \delta$ into ${\eps}^{1/2}$ for ${\eps}$ small enough, depending on $C$ and $\delta$. 
		
		Let $a \in \Lambdateps$ such that $\dist(\eta(a), \partial \eta(\Lambdateps)) > \delta$ be given 
		and observe that
		\[
		\sum_{b \in \Lambdateps}  \Gr{{\eps}}{\Lambdateps}(b, a) \geq 
		\sum_{b \in {\VGeps}(B_{\delta/2}(a))} \Gr{{\eps}}{B_{\delta/2}(a)}(b,a).
		\]
		By Lemma \ref{lemma:greens-kernel-lower-bound} and Lemma \ref{lemma:mu_h-convergence}
		\[
		\sum_{b \in {\VGeps}(B_{\delta/2}(a))}  \Gr{{\eps}}{B_{\delta/2}(a)}(b,a)   \geq C \delta {\eps}^{-1}, 
		\]
		for some deterministic constant $C = C(\rho, \gamma)$ except on an event of probability going to zero as ${\eps} \to 0$. 
	\end{proof}
	
	We next prove the approximate mean-value inequality using the basic properties of the discrete obstacle problem
	and the convergence afforded by Theorem \ref{theorem:convergence-divisible}. 
	\begin{proof}[Proof of Lemma \ref{lemma:mean-value-ineq}]
		We restrict to the event that $\{t < \TT\}$ for $t > 0$. 
		As the function $\deg^{{\eps}}$ is reversible for simple random walk on ${\VGeps}$, by, for example, \cite[Exercise 2.1]{lyons-peres}, 
		\[
		\Gr{{\eps}}{\Lambdateps}(a,b)/\deg^{{\eps}}(b) = 	\Gr{{\eps}}{\Lambdateps}(b,a)/\deg^{{\eps}}(a), \quad \forall a,b \in \Lambdateps.
		\]
		Therefore, we can rewrite the desired inequality as 
		\[
		(t \eps^{-1}) \Gr{{\eps}}{\Lambdateps}(a, 0) \frac{\deg^{{\eps}}(a)}{\deg^{{\eps}}(0)} \geq (1 + \mathfrak{s}) \sum_{b \in \Lambdateps} \Gr{{\eps}}{\Lambdateps}(a, b)  \frac{\deg^{{\eps}}(a)}{\deg^{{\eps}}(b)} .
		\]
		By the definitions in Section \ref{subsec:discrete-potential-definitions} and Remark \ref{remark:overload-notation-potential}, this shows that it suffices to establish 
		\begin{equation} \label{eq:mean-value-ineq-verify-changed}
		(t \eps^{-1})  \gr{{\eps}}{\Lambdateps}(a, 0) \geq (1 + \mathfrak{s}) \qeps{\Lambdateps}(a) , \quad \forall 
		a \in \Lambdateps:\dist(\eta(a), \partial \eta(\Lambdateps)) > \delta,
		\end{equation}
		for some finite $\sigma(h,\eta)$-measurable random variable $\mathfrak{s} = \mathfrak{s}(\rho, \gamma, \delta)> 0$ not depending on ${\eps}$,
		except on an event of probability going to zero as ${\eps} \to 0$. 
		Define 
		\begin{equation} \label{eq:odometer-tilde}
		\begin{aligned}
		v_t^{{\eps}}(a) &= \wteps(a) + (t \eps^{-1})  \gr{{\eps}}{B_{\rho}}(a, 0) \\
		\tilde v_t^{{\eps}}(a) &= (t \eps^{-1})  \gr{{\eps}}{\Lambdateps}(a, 0) - \qeps{\Lambdateps}(a) 
		\end{aligned}
		\end{equation} 
		and note that on $\Lambdateps$
		\begin{align*}
		&\Delta^{{\eps}} (v_t^{{\eps}} - \tilde v_t^{{\eps}}) \\
		&= (\deg^{{\eps}})^{-1} - \deg^{{\eps}}(0)^{-1} (t \eps^{-1})  1\{\cdot = 0\} - \Delta^{{\eps}} \tilde v_t^{{\eps}} 
		\quad \mbox{(by \eqref{eq:discrete-normalized-green-laplacian} and Lemma \ref{lemma:discrete-laplacian-bound})} \\
		&= (\deg^{{\eps}})^{-1} - \deg^{{\eps}}(0)^{-1} (t \eps^{-1})  1\{\cdot = 0\} -  (-\deg^{{\eps}}(0)^{-1}  (t \eps^{-1})  1\{\cdot = 0 \}
		+ (\deg^{{\eps}})^{-1}) \\
		&\quad \mbox{(by \eqref{eq:discrete-normalized-green-laplacian} and \eqref{eq:discrete-normalized-exit-laplacian})} \\
		&= 0 \, . 
		\end{align*}
		Since $\wteps = \gr{{\eps}}{B_{\rho}}(\cdot, 0) = \qeps{\Lambdateps} = 0$ on $\partial \Lambdateps$,
		this implies that 
		\[
		\begin{cases}
		\Delta^{{\eps}} (v_t^{{\eps}} - \tilde v_t^{{\eps}})= 0 \quad \mbox{on $\Lambdateps$} \\
		(v_t^{{\eps}} - \tilde v_t^{{\eps}}) = 0 \quad \mbox{on $\partial \Lambdateps$}.
		\end{cases}
		\]
		By the maximum principle, Lemma \ref{lemma:maximum-principle}, this implies that
		\[
		\tilde v_t^{{\eps}} \geq v_t^{{\eps}} \quad \mbox{on $\Lambdateps$}.
		\]
		By uniform convergence of $\overline v^{{\eps}}_t$,  (scaling defined in \eqref{eq:discrete-scaling})
		which comes from Theorem \ref{theorem:convergence-divisible} and Lemma \ref{lemma:uniform-convergence-of-greens-function}, for some finite
		$\sigma(h,\eta)$-measurable random variable $C = C(\delta) > 0$, we have that
		\[
		\tilde v_t^{{\eps}}(a) \geq	v_t^{{\eps}}(a) > C {\eps}^{-1} , \quad \forall  a \in \Lambdateps:\dist(\eta(a), \partial \eta(\Lambdateps)) > \delta,
		\]
		except on an event of probability going to zero as ${\eps} \to 0$. 
		Also, by Lemma \ref{lemma:exit-time-upper-bound} 
		\[
		\sup_{a \in \Lambdateps}  \qeps{\Lambdateps}(a) \leq C_1 {\eps}^{-1}, 
		\]
		with probability approaching one as ${\eps} \to 0$ for some finite $\sigma(h,\eta)$-measurable random variable $C_1>0$ not depending on ${\eps}$. By combining the previous two indented equations with the definition~\eqref{eq:odometer-tilde} of $\tilde v_t^{{\eps}}$, 
		we have \eqref{eq:mean-value-ineq-verify-changed} 
		with $\mathfrak{s} := (C/C_1)$. 
	\end{proof}
	
	We conclude by including, for completeness, the arguments of \cite{lawler1992internal}.

	\begin{proof}[Proof of Lemma \ref{lemma:general-idla-lower-bound}] 
		Recall that for each ${\eps} > 0$, the IDLA cluster is formed by running independent random walks
		$X^{0, {\eps}}(i) := X^{{\eps}}(i)$ started at the origin on ${\Geps}$ and stopped upon exiting the cluster. 
		We let each independent random walk evolve forever even after it has left the occupied cluster
		and introduce the stopping times:
		\begin{align*}
		\sigma^i &= \min \{ t: X^{{\eps}}_t(i) \not \in A_{{\eps}}(i -1) \} \\
		&= \mbox{the time it takes the $i$th walker to leave the occupied cluster} \\
		\tau_b^i &= \min \{ t: X^{{\eps}}_t(i) = b \}  \\
		&= \mbox{the time it takes the $i$th walker to hit site $b$} \\
		T^i &= \min\{ t : X^{{\eps}}_t(i) \not \in O_2^{{\eps}} \} \\ 
		&= \mbox{the time it takes the $i$th walker to leave $O_2^{{\eps}}$}.
		\end{align*}
		
		For $a\in {\VGeps}$, let $E_a({\eps})$ denote the event that $a$ does not belong to the cluster $A_{{\eps}}(K_{\eps})$.
		We claim that to prove \eqref{eq:lower-bound-arbitrary}, it suffices to show that
		with probability approaching one as ${\eps} \to 0$, 
		\begin{equation} \label{eq:exponential-bound}
		\P[E_a({\eps}) | h, \eta] \leq \exp(-c {\eps}^{-c}), \quad \forall a \in O_1^{{\eps}},
		\end{equation}
		for some deterministic $c > 0$ which is independent of ${\eps}$ and $a$. 
		Indeed, since the cells of ${\VGeps}$ have $\mu_h$-mass ${\eps}$ and intersect only along their boundaries there exists a random $C  = C(\rho, \gamma) > 0$ so that
		\[
		|{\VGeps}(B_{\rho})| \leq C {\eps}^{-1},
		\]
		with polynomially high probability as ${\eps} \to 0$.  Therefore, \eqref{eq:lower-bound-arbitrary} can be obtained from \eqref{eq:exponential-bound} and a union bound over all $a \in O_1^{{\eps}}$.
		
		Our aim now is to show \eqref{eq:exponential-bound}. Fix an ${\eps}$ for which \eqref{eq:mean-value-ineq} holds and a $b \in O_2^{{\eps}}$. 
		Consider the random variables 
		\begin{align*}
		M &= \sum_{i \leq K_{\eps}}  1\{ \tau_b^i \leq T^i \}  \\
		&= \mbox{number of walks that visit $b$ before leaving $O_2^{{\eps}}$} \\
		L &= \sum_{i \leq K_{\eps}} 1\{ \sigma^i \leq \tau_b^i < T^i  \} \\
		&= \mbox{number of walks that visit $b$ before leaving $O_2^{{\eps}}$ but after exiting the current cluster}  .
		\end{align*}
		Observe that for all $k \geq 0$, 
		\begin{equation} \label{eq:upper-bound}
		\begin{aligned}
		\P[E_b({\eps}) | h , \eta]
		&\leq \P[M = L | h, \eta] \\
		&\leq \P[\{M \leq k \}\cup \{L \geq k\} | h , \eta]  \\
		&\leq \P[M \leq k| h, \eta] + \P[L \geq k | h , \eta].
		\end{aligned}
		\end{equation} 
		We will choose $k$ below to minimize this bound.  First note that by, for example, \cite[Lemma 4.6.1]{lawler-limic-walks}
		\begin{equation} \label{eq:markov-chain-renewal}
		\P^a[\tau_b < T | h ,\eta] =  \frac{\Gr{{\eps}}{O_2^{{\eps}}}(a, b)}{\Gr{{\eps}}{O_2^{{\eps}}}(b, b)}	
		\end{equation}
		for all $a \in {\VGeps}$ where $\P^a[\tau_b < T | h ,\eta]$ denotes the probability
		that an independent walker started at $a \in {\VGeps}$ hits site $b$ before it leaves  $O^{{\eps}}_2$.

		As $M$ is a sum of i.i.d.\ random variables, by \eqref{eq:markov-chain-renewal}
		\begin{equation} \label{eq:expected-value-of-m}
		\E[M | h, \eta] = K_{\eps} \P[ \tau_b^{1} \leq T^{1} | h , \eta] = K_{\eps} \times \frac{\Gr{{\eps}}{O_2^{{\eps}}}(0, b)}{\Gr{{\eps}}{O_2^{{\eps}}}(b, b)}.
		\end{equation}
		The random variable $L$ is not a sum of independent random variables, but each walk which contributes to the sum in $L$ can be mapped to the point at which it exits the current cluster. Therefore, 
		by the strong Markov property, 
		\begin{equation} \label{eq:strong-markov-reduction}
		L \leq \hat{L} := \sum_{a \in O^{{\eps}}_2} 1^a\{\tau_b < T\}
		\end{equation}
		where $1^a\{\tau_b < T\}$ denotes the event that an independent walker started at $a \in {\VGeps}$ hits site $b$ before it leaves $O^{{\eps}}_2$.
		Hence, by \eqref{eq:markov-chain-renewal} we have that 
		\begin{equation} \label{eq:expected-value-of-l}
		\E[\hat{L} | h , \eta] = \sum_{a \in O^{\eps}_2} \P^a[\tau_b < T | h, \eta] =  \frac{ \sum_{a \in O^{{\eps}}_2} \Gr{{\eps}}{O_2^{{\eps}}}(a, b)}{ {\Gr{{\eps}}{O_2^{{\eps}}}(b, b)}}.
		\end{equation}

		By the assumption \eqref{eq:mean-value-ineq}, we have that \eqref{eq:expected-value-of-m} and  \eqref{eq:expected-value-of-l} imply that
		there exists a deterministic $q > 0$ and $\sigma(h,\eta)$-measurable finite random variable $\mathfrak{s} > 0$ so that with probability approaching one as ${\eps} \to 0$, 
		\begin{equation} \label{eq:m-bounded-below-by-l}
		\E[M | h ,\eta] \geq (1 + \mathfrak{s}) \E[\hat{L} | h , \eta] \geq (1 + \mathfrak{s}) {\eps}^{-q},
		\end{equation}
		with the latter inequality following by the assumption \eqref{eq:L-lower-bound}.
		
		Recall that if $S$ is a finite sum of independent indicators with mean $\mu$, then 
		\begin{equation} \label{eq:concentration-inequality}
		\P\left[ |S - \mu| \geq \mu^{3/4} \right] \leq 2 \exp\left( -\frac{1}{4} \mu^{1/2} \right)
		\end{equation} 
		for all sufficiently large $\mu$ (see, \eg, \cite[Lemma 3.3]{lawler1992internal}).  Now, set $k = (1 + \mathfrak{s}/4) \E[\hat{L} |h,\eta]$ in \eqref{eq:upper-bound}
		and compute
		\begin{align*}
		&\P\left[L \geq (1 + \mathfrak{s}/4) \E[\hat{L} | h , \eta] | h , \eta \right] \\
		&\leq \P \left[ \hat{L} \geq (1 + \mathfrak{s}/4) \E[\hat{L} | h ,\eta] | h , \eta \right] \quad \mbox{(by \eqref{eq:strong-markov-reduction})} \\
		&\leq \P \left[ \hat{L} \geq \E[\hat{L} | h ,\eta] + \E[\hat{L} | h ,\eta]^{3/4} | h , \eta \right] 
		\quad\mbox{(by \eqref{eq:m-bounded-below-by-l} for all ${\eps}$ small depending on $\mathfrak{s}$)} \\
		&\leq 2 \exp \left(-\frac{1}{4} \E[\hat{L} | h , \eta]^{1/2}\right)  \quad \mbox{(by \eqref{eq:concentration-inequality} with $\mu = \E[\hat{L} | h, \eta])$} \\
		&\leq 2 \exp\left(-\frac{1}{4} {\eps}^{-q/2} \right) \quad \mbox{(by \eqref{eq:m-bounded-below-by-l})}.
		\end{align*}
		Similarly, 
		\begin{align*}
		&\P \left[ M \leq (1 + \mathfrak{s}/4) \E[\hat{L} | h , \eta] | h , \eta \right]  \\
		&\leq  \P \left[ M \leq \E[M | h , \eta] - \E[M | h , \eta]^{3/4} | h , \eta \right]
		\quad\mbox{(by \eqref{eq:m-bounded-below-by-l} for all ${\eps}$ small depending on $\mathfrak{s}$)} \\
		&\leq 2 \exp\left(-\frac{1}{4} {\eps}^{-q/2} \right)  \quad \mbox{(by \eqref{eq:m-bounded-below-by-l} and \eqref{eq:concentration-inequality} with $\mu = \E[M | h, \eta])$}. 
		\end{align*}
		Therefore, if we set $k = (1 + \mathfrak{s}/4) \E[\hat{L} | h, \eta]$ and use \eqref{eq:upper-bound}, we have \eqref{eq:exponential-bound}, completing the proof.
	\end{proof}

	\section{IDLA upper bound}	\label{sec:idla-upper-bound}
	As in the previous section we fix a $\rho \in (0,1)$ so that $B_{\rho} \Subset B_1$. 
	Recall the notation for the subset of $\C$ corresponding to a subset of ${\VGeps}$, \eqref{eq:vertex-scaling}.
	Here we prove that the IDLA cluster is contained in a harmonic ball.  
	\begin{prop} \label{prop:IDLA-upper-bound}
		Recall the time $\TT$ from~\eqref{eq:cluster-stopping-time}.
		For each $\delta \in (0, \rho)$ and $t > 0$, on the event $\{t < \TT\}$, 
		it holds except on an event of probability tending to 0 as ${\eps} \to 0$  that	
		\[
		\overline{A}_{{\eps}}(\lfloor t {\eps}^{-1} \rfloor) \subset \Bdeltap(\Lambda_t),
		\]
		where $\Lambda_t$ is as in Theorem \ref{theorem:harmonic-balls}.
	\end{prop}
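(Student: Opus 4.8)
The plan is to use the Abelian property of IDLA to build the cluster in two phases --- a \emph{stopped} phase in which each walker is frozen the instant it leaves a mated-CRT map approximation of $\Lambda_t$, followed by a \emph{restarting} phase in which the frozen walkers are released --- and then to argue that the stopped phase already fills almost all of $\Lambda_t$, so that only a negligible number of walkers get restarted, and that these few restarted walkers cannot travel a Euclidean distance more than $\delta$ beyond $\Lambda_t$. Throughout I work on $\{t < \TT\}$, so that $\Lambda_t \Subset B_{\rho/3}$ and, by Theorem~\ref{theorem:harmonic-balls}, $\mu_h(\Lambda_t) = t$ and $\mu_h(\partial \Lambda_t) = 0$. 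Define the stopped cluster $A_\eps^{\mathrm{st}}(m)$ exactly as $A_\eps(m)$ except that any walker which exits ${\VGeps}(\Lambda_t) \setminus {\VGeps}(\partial \Lambda_t)$ is frozen at the first such vertex, and let $N_\eps$ be the number among the first $\lfloor t\eps^{-1}\rfloor$ walkers that get frozen. By the Abelian property of IDLA (Lemma~\ref{lemma:abelian-IDLA}), the cluster $A_\eps(\lfloor t\eps^{-1}\rfloor)$ is obtained from $A_\eps^{\mathrm{st}}(\lfloor t\eps^{-1}\rfloor)$ by restarting these $N_\eps$ walkers as ordinary IDLA walkers in some order. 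It therefore suffices to prove: \textbf{(i)} $\eps N_\eps \le c(\delta) + o_\eps(1)$ with probability $\to 1$, where $c(\delta) \to 0$ as $\delta \to 0$; and \textbf{(ii)} with probability $\to 1$, running any $\lfloor c(\delta)\eps^{-1}\rfloor$ IDLA walkers started from vertices of ${\VGeps}(\cl(\Lambda_t))$ produces no occupied vertex $a$ whose cell $\Heps{a}$ meets $\Bdeltap(\Lambda_t)^c$.

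For \textbf{(i)}, observe that each of the first $\lfloor t\eps^{-1}\rfloor$ walkers either occupies a (previously unoccupied) vertex of ${\VGeps}(\Lambda_t)\setminus{\VGeps}(\partial\Lambda_t)$ in the stopped process or is frozen, so $\eps N_\eps = t - \eps\,|A_\eps^{\mathrm{st}}(\lfloor t\eps^{-1}\rfloor) \cap ({\VGeps}(\Lambda_t)\setminus{\VGeps}(\partial\Lambda_t))| + o_\eps(1)$. The stopped lower bound of Proposition~\ref{prop:IDLA-lower-bound} gives $\Bdeltamm{\delta/2}(\Lambda_t) \subset \overline A_\eps^{\mathrm{st}}(\lfloor t\eps^{-1}\rfloor)$, and since $\Bdeltamm{\delta/2}(\Lambda_t)$ lies a definite distance inside $\Lambda_t$, the cell-diameter bound of Lemma~\ref{lemma:cell-size-estimate} together with Lemma~\ref{lemma:mu_h-convergence} turns this into $\eps\,|A_\eps^{\mathrm{st}} \cap ({\VGeps}(\Lambda_t)\setminus{\VGeps}(\partial\Lambda_t))| \ge \mu_h(\Bdeltamm{\delta/2}(\Lambda_t)) - o_\eps(1)$. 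Since $\mu_h(\partial\Lambda_t)=0$, we have $\mu_h(\Bdeltamm{\delta/2}(\Lambda_t)) \uparrow \mu_h(\Lambda_t) = t$ as $\delta \downarrow 0$, which yields \textbf{(i)} with $c(\delta) := t - \mu_h(\Bdeltamm{\delta/2}(\Lambda_t))$.

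Part \textbf{(ii)} is the heart of the argument and is where the geometry of the mated-CRT map intervenes: since the map is neither volume doubling nor comparable to the Euclidean metric, the argument of Duminil-Copin--Lucas--Yadin--Yehudayoff \cite{duminil2013containing} does not apply directly. I would first establish, using the SLE/LQG structure and the estimates recalled in Section~\ref{sec:preliminaries}, a weak harmonic measure bound --- roughly, a uniform lower bound on the probability that simple random walk on ${\Geps}$ started anywhere in ${\VGeps}(B_\rho)$ exits a dyadic annulus $\A_{r,2r}(z)$ through its inner boundary --- together with the cell-diameter bound of Lemma~\ref{lemma:cell-size-estimate}, so that crossing a Euclidean distance $\delta$ forces a walk to cross order $\log(1/\delta)$ such annuli. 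One then runs a modified version of the DCLYY induction: for the cluster of $\le c(\delta)\eps^{-1}$ restarted walkers to reach distance $2^{-k}$ beyond $\Lambda_t$, a definite fraction of those walkers must be absorbed in the dyadic shell at distance $\asymp 2^{-k}$ of $\Lambda_t$; but each such shell contains $O(\eps^{-1}\cdot(\text{shell area}))$ vertices by Lemma~\ref{lemma:mu_h-convergence}, which for $\delta$ small is much less than $c(\delta)\eps^{-1}$. Iterating over the $O(\log(1/\delta))$ shells contradicts the total walker count on an event of probability $1 - \exp(-\eps^{-c})$, and a union bound over the $O(\eps^{-1})$ vertices of ${\VGeps}(B_\rho)$ gives \textbf{(ii)}. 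Combining \textbf{(i)} and \textbf{(ii)} shows $\overline A_\eps(\lfloor t\eps^{-1}\rfloor) \subset \Bdeltap(\Lambda_t)$.

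The main obstacle is precisely part \textbf{(ii)}: one must replace the volume-doubling and metric-comparison inputs of \cite{duminil2013containing} --- both of which fail for the mated-CRT map because of the multifractal cell sizes --- by the LQG-theoretic harmonic measure and cell-size estimates, and rework the iterative scheme so that it tolerates the highly irregular shell volumes, while also carrying the estimates uniformly over starting points so that the union bound over $O(\eps^{-1})$ vertices survives. This is carried out in detail in Section~\ref{sec:idla-upper-bound}; I would expect the harmonic-measure estimate and the bookkeeping of the modified induction to be by far the most delicate steps, with \textbf{(i)} being comparatively routine given Propositions~\ref{prop:IDLA-lower-bound} and the properties of $\Lambda_t$ recorded in Theorem~\ref{theorem:harmonic-balls}.
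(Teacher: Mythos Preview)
Your two-phase decomposition via the Abelian property and your argument for \textbf{(i)} match the paper's proof exactly (Section~\ref{subsec:proof-of-idla-upper-bound}): stop walkers on exiting ${\VGeps}(\Lambda_t)$, use the stopped lower bound and $\mu_h(\partial\Lambda_t)=0$ together with Lemma~\ref{lemma:mu_h-convergence} to show the number of frozen walkers is $\le \boldsymbol{\delta}_0\eps^{-1}$ for an arbitrarily small $\boldsymbol{\delta}_0$, then invoke a spreading bound (Lemma~\ref{lemma:iterative-bound}) for the restarted walkers.

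Your sketch of \textbf{(ii)}, however, diverges from the paper in ways that matter. The paper does \emph{not} use fixed dyadic shells or a volume-counting contradiction. Instead it (a) identifies a class of ``good'' annuli via the balance condition~\eqref{eq:good-annulus-event} and shows, using near-independence of the GFF across concentric annuli (Lemma~\ref{lemma:annulus-iterate}), that around every point some scale in $[\delta,\sqrt{\delta}]$ is good; (b) on a good annulus runs a second-moment argument (Lemma~\ref{lemma:constant-order-lower-bound}) to get a uniform lower bound $s_2$ on the probability that a walker hits an unoccupied vertex before exiting the $r$-neighborhood of the current cluster, \emph{provided} the number of newly occupied vertices is below $s_1 r^{2\beta^-}\eps^{-1}$; and (c) iterates with \emph{adaptive} radii $r_k \asymp (|P_{k-1}|\,\eps)^{1/2\beta^-}$ chosen precisely so that this proviso holds at every step. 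The geometric decay of $|P_k|$ then makes $\sum r_k \lesssim \delta_0^{1/2\beta^-}$.

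The specific mechanism you propose --- a uniform inner-boundary harmonic-measure bound on every dyadic annulus, plus the inequality ``shell has fewer vertices than $c(\delta)\eps^{-1}$'' --- does not close. A shell having \emph{few} vertices makes it \emph{easier}, not harder, for walkers to saturate it and pass through; and the ``definite fraction absorbed'' statement you need is exactly what fails without the size condition $|S\setminus\Theta|<s_1 r^{2\beta^-}\eps^{-1}$, which a fixed dyadic scheme does not guarantee. The adaptive radius choice is what makes the induction self-consistent, and the good-annulus machinery is what replaces the volume-doubling and metric-comparison inputs of~\cite{duminil2013containing}. You correctly flagged this as the delicate step; the actual fix is the one just described rather than a shell-volume count.
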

	Our proof of the upper bound combines ideas of Lawler \cite{lawler1995subdiffusive} and Duminil-Copin-Lucas-Yadin-Yehudayoff \cite{duminil2013containing}. 
	The idea is utilize the diffusive, `smoothing' nature of IDLA. That is, it is extremely unlikely that a random walk travels a
	long distance without hitting many sites while doing so. Since the IDLA cluster is built by running independent random walks, 
	the cluster itself has this same property. 
	
	In our setting, we would like to use this smoothing to assert the following: given the current cluster $\Theta \subset {\VGeps}(B_{\rho})$, after adding $\delta {\eps}^{-1}$ walkers to the origin, the resulting cluster must be contained in ${\VGeps}(\Bdeltapp{C \delta^c}(\overline{\Theta}))$, (for some constants $c, C$) with extremely high probability. 
	
	Duminil-Copin-Lucas-Yadin-Yehudayoff \cite{duminil2013containing} showed that IDLA does indeed have this property if the underlying graph 
	is regular enough. Roughly, they required the graph to obey volume doubling and have an intrinsic metric which can be controlled by the Euclidean metric. 
	Unfortunately, these required properties do not hold on the mated-CRT map. In particular, with positive probability, it is possible for random walkers
	to travel a long Euclidean distance in the mated-CRT map without hitting many cells. Fortunately, this is not the case
	for `most' of the mated-CRT map. 
	
	We formalize `most' by considering the behavior of random walk on a Euclidean annulus intersected with the mated-CRT map (under the usual SLE/LQG embedding). 
	Roughly, for a `good' annulus, for all sufficiently small ${\eps}$, it is difficult for a random walk on ${\VGeps}$ 
	to travel across the annulus without hitting many cells along the way. 
	
	We start in Section \ref{subsec:good-annuli} by defining what it means for an annulus to be good. 
	We then use the definition of good annulus in Section \ref{subsec:comparison-estimates} to provide 
	estimates for the hitting measure of random walk on the mated-CRT map. We then use these estimates 
	together with an iterative argument to prove the upper bound in Section \ref{subsec:iteration}. 
	
	Below we frequently make use of the Abelian nature of IDLA. This Abelian property was first observed by Diaconis-Fulton in \cite{diaconis1991growth}
	and states that the distribution of the final IDLA cluster is unaffected by the order in which walkers are sent out. In particular, 
	we may use this to `start' and `stop' walkers as they go through certain sets.  To that end, we introduce the following definitions. 
	For continuity of literature, we use the same notation as \cite{duminil2013containing}. 
	\begin{definition} \label{def:stopped-idla}
		Given sets $\Theta \subset \Theta' \subset {\VGeps}$, and vertices $a_1, \ldots, a_k \in \Theta$, 
		denote by 
		\[
		A(\Theta; a_1, \ldots, a_k \to \Theta') \subset {\VGeps} 
		\]
		the IDLA aggregate with the following initial condition: the set $\Theta$ is completely occupied, $k$ independent simple random walks are started at $a_1, \ldots, a_k$, and the walkers are stopped upon exiting $\Theta'$. Let
		\[
		P(\Theta; a_1, \ldots, a_k \to \Theta') \subset \partial \Theta'
		\] 
		be the final positions of those walkers among $a_1, \ldots, a_k$, which are stopped before being absorbed into the aggregate. 
		We write
		\[
		A(\Theta; a_1, \ldots, a_k) := A(\Theta; a_1, \ldots, a_k \to {\VGeps}),
		\]
		when the walkers are not stopped, \ie, $\Theta' = {\VGeps}$. 
	\end{definition}
	In this notation, the Abelian property of IDLA is the following. 
	\begin{lemma}[Section 4 \cite{diaconis1991growth}] \label{lemma:abelian-IDLA}
		For every $\sigma(h,\eta)$-measurable choice of $\Theta \subset \Theta' \subset {\VGeps}$ and $a_1, \ldots, a_k \subset \Theta$, the conditional laws given $(h,\eta)$ of
		\[
		\begin{aligned}
		A(\Theta; a_1, \ldots, a_k) \quad \text{and} \quad
		A( A(\Theta; a_1, \ldots, a_k \to \Theta'); P(\Theta; a_1, \ldots, a_k \to \Theta'))
		\end{aligned}
		\]
		agree. That is, the IDLA aggregate has the same (conditional) distribution as any restarted, stopped IDLA aggregate.
	\end{lemma}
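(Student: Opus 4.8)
The plan is to follow Diaconis--Fulton~\cite{diaconis1991growth} and couple all the IDLA aggregates appearing in the statement to one family of i.i.d.\ \emph{instruction stacks}, thereby reducing the claimed equality of conditional laws to a combinatorial identity plus a routine re-sampling argument. Since a.s.\ every vertex of ${\Geps}$ has finite degree, we may attach to each $a\in{\VGeps}$ an independent sequence $(\xi^a_j)_{j\ge 1}$ of uniformly chosen neighbours of $a$, jointly independent and independent of $(h,\eta)$. Working conditionally on $(h,\eta)$, so that $\Theta\subset\Theta'$ and $a_1,\dots,a_k$ are deterministic, call a \emph{state} a triple $(S;\,\mathbf a;\,c)$ where $S\subset{\VGeps}$ is the current occupied set, $\mathbf a$ is a finite multiset of \emph{explorer} locations in ${\VGeps}$, and $c=(c_a)_{a\in{\VGeps}}$ with $c_a\in\Z_{\ge 0}$ the number of instructions at $a$ already used. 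There are two legal \emph{moves}: an explorer at a vertex $a\in S$ may \emph{topple} (move to $\xi^a_{c_a+1}$ and increment $c_a$); an explorer at a vertex $a\notin S$ may \emph{settle} (it is deleted and $S$ is enlarged to $S\cup\{a\}$). Building $A(\Theta;\,a_1,\dots,a_k)$ amounts to applying legal moves to the initial state $(\Theta;\,a_1,\dots,a_k;\,0)$ until no explorers remain; building $A(\Theta;\,a_1,\dots,a_k\to\Theta')$ is the same except that an explorer is declared \emph{frozen}, and thereafter ineligible to topple, as soon as it reaches a vertex of $\partial\Theta'$, and $P(\Theta;\,a_1,\dots,a_k\to\Theta')$ is the multiset of frozen positions.

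The heart of the matter is the \emph{abelian lemma}: from any state, every maximal sequence of legal (un-frozen) moves is finite and ends at the same terminal state. Finiteness holds because the occupied set only grows, each settling adds exactly one vertex, and each explorer a.s.\ performs only finitely many topples before settling --- since ${\Geps}$ is connected and infinite, a walking explorer a.s.\ reaches the non-empty complement of the current, finite, occupied set --- so only finitely many moves occur. Local confluence is a short case check on two distinct simultaneously-legal moves: two topples commute (explorers are tracked as an unlabelled multiset, so if the two sit at a common vertex $a$ they simply consume $\xi^a_{c_a+1}$ and $\xi^a_{c_a+2}$ in one order or the other); a topple and a settle occur at distinct vertices and commute, since settling only enlarges $S$ --- which can keep a topple legal but never disables it, and never disables a settle at a different vertex --- and does not change which instruction the toppling explorer reads; and two settles attempted at a common unoccupied vertex $a$ also commute, since after one settles, the other explorer sits at $a\in S$ and must topple, reading $\xi^a_1$ either way. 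By the standard diamond-lemma principle (a terminating, locally confluent rewriting system has unique normal forms), every initial state has a unique terminal state, whose occupied set is by definition the aggregate.

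The identity now follows. Couple the right-hand side to the same stacks: first run the frozen dynamics to produce $\Theta^\ast:=A(\Theta;\,a_1,\dots,a_k\to\Theta')$ together with the frozen positions $P(\dots)$ --- this phase terminates a.s.\ by the same finiteness reasoning, since a non-frozen explorer is confined to $\Theta'$ while it moves and either settles inside $\Theta'$ or reaches $\partial\Theta'$ and freezes, both in finitely many steps a.s. --- then drop the freeze rule and resume with the explorers now located at $P(\dots)$. Every move made in this two-phase run is a legal move of the un-frozen system, the first phase having merely postponed moving the explorers that were temporarily frozen, so the whole run is a maximal legal run started from $(\Theta;\,a_1,\dots,a_k;\,0)$; by the abelian lemma its terminal occupied set equals $A(\Theta;\,a_1,\dots,a_k)$ as an \emph{identity} of random sets given $(h,\eta)$, not merely in law. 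It remains only to observe that the statement re-starts the stopped walkers with \emph{fresh} randomness, whereas our coupling re-uses the left-over stack tails $(\xi^a_{c_a+1},\xi^a_{c_a+2},\dots)$; since only finitely many stack entries were consumed in the first phase, these tails are, conditionally on $(h,\eta)$ and on the first-phase execution, again i.i.d.\ uniform neighbours, so re-using them yields the same conditional law as sampling fresh instructions --- which is exactly the equality of conditional laws asserted. The only point needing genuine care is the local-confluence case analysis above, and specifically the interaction of settling --- the unique move that alters $S$, and hence the set of legal moves --- with toppling; this is, however, entirely routine.
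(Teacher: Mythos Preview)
Your proof is correct and follows exactly the Diaconis--Fulton instruction-stack approach that the paper invokes; the paper itself does not give a proof but simply cites \cite{diaconis1991growth}, so you have supplied the argument the paper defers to the literature. The one step worth a word of extra care is the final re-sampling claim: you need that the counter vector $c^*=(c^*_a)_a$ recording how many instructions the first phase consumed is a ``stopping line'' for the stacks, i.e.\ $\{c^*=n\}\in\sigma\bigl((\xi^a_j)_{j\le n_a,\,a}\bigr)$, which follows from the abelian lemma applied to the frozen dynamics (the terminal state, and hence $c^*$, is determined by the consumed prefixes alone); once this is noted, independence of the tails is immediate and your conclusion follows.
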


	\subsection{Good annuli} \label{subsec:good-annuli}
	Let $\boldsymbol c_0 =\boldsymbol c_0 (\rho, \gamma) \in (0,1/5)$ be a deterministic constant chosen so that 
	\begin{equation} \label{eq:choice-of-constant-annulus}
	C_0^{-1} \log \left(\frac{1}{\boldsymbol c_0} \right) -1 \geq \frac{1}{2}. 
	\end{equation}
	where $C_0 = C_0(\rho, \gamma)$ is the constant from Lemma \ref{lemma:greens-kernel-lower-bound}. This will be used in Lemma \ref{lemma:chance-of-being-absorbed} below. 
	
	For $z \in \C$, $r > 0$, and parameter $C >0$, let $E_r(z) := E_r(z; C)$ be the event that the following
	is true:
	\begin{equation} \label{eq:good-annulus-event}
	\sup_{v \in \A_{r, 2r}(z)} \int_{\A_{r,2r}(z)} \left(\log \frac{1}{|u-v|} + 1 \right) d \mu_h(u) \leq C \min_{w \in \A_{r, 2 r}(z)} \mu_h(B_{\boldsymbol{c_0} r}(w)).
	\end{equation}
	An annulus $\A_{r, 2 r}(z)$ for which $E_r(z)$ holds it said to be {\it good}. In this subsection we prove that there are many good annuli in $B_{\rho}$. 
	\begin{lemma} \label{lemma:good-prevalance}
		There exists a constant $C$ depending only on $\gamma$ such that the following holds with polynomially high 
		probability as $\delta \to 0$.
		For each $z \in ( B_{\rho - 3\sqrt{\delta}} \setminus B_{10 \sqrt{\delta}} ) \cap \frac{\delta}{100} \Z^2$ there is at least one radius $r \in [\delta,\delta^{1/2}] \cap \{7^{-n}\}_{n \in \N}$ for which $E_r(z)$ occurs, 
		where $E_r(z) = E_r(z;C)$ is as in \eqref{eq:good-annulus-event}. 
	\end{lemma}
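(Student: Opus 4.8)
Our plan is to prove this by a union bound over the grid, after reducing to a single point and exploiting the freedom in the choice of radius through a multi-scale argument. Fix $z \in (B_{\rho-3\sqrt{\delta}}\setminus B_{10\sqrt{\delta}})\cap\frac{\delta}{100}\Z^2$. Since this grid has $O(\delta^{-2})$ points, it suffices to show that, for an appropriate deterministic $C=C(\gamma)$, the probability that $E_r(z;C)$ fails for \emph{every} admissible radius $r\in[\delta,\delta^{1/2}]\cap\{7^{-n}\}_{n\in\N}$ is at most $\delta^{2+p}$ for some $p>0$, and then sum over $z$. Write $n_-=\lceil\tfrac12\log_7(1/\delta)\rceil$ and $n_+=\lfloor\log_7(1/\delta)\rfloor$, so the admissible scales are $7^{-n}$ for $n_-\le n\le n_+$, and there are $N=N(\delta)=n_+-n_-+1\asymp\log(1/\delta)$ of them. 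The factor $7$ is chosen for a geometric reason: since $2\cdot 7^{-(n+1)}=\tfrac27\,7^{-n}<7^{-n}$ and $\boldsymbol{c_0}<1/5$, the $\boldsymbol{c_0}r$-neighbourhoods of the annuli $\A_{7^{-n},2\cdot 7^{-n}}(z)$ for $n_-\le n\le n_+$ are pairwise disjoint, with definite gaps between consecutive ones.

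The heart of the argument is a single-scale estimate: for $C=C(\gamma)$ sufficiently large, and uniformly over $z$ in the grid and over admissible $r$, the event $E_r(z;C)$ of \eqref{eq:good-annulus-event} holds with probability at least $p_0=p_0(C)$, where $1-p_0(C)\to 0$ as $C\to\infty$. This is obtained by bounding the two sides of \eqref{eq:good-annulus-event} by $\mu_h$-masses of Euclidean balls of radius comparable to $r$. For the left-hand side, use $\A_{r,2r}(z)\subset B_{2r}(z)\subset B_\rho$ so that it is at most $\mathcal{M}(B_{2r}(z))$ in the notation of \eqref{eq:largest-exit-time-rv}; splitting this integral into dyadic sub-annuli around the maximising point, as in the proof of Lemma~\ref{lemma:bounded-exit-time} (see \cite[Lemma A.3]{berestycki2020random}), controls it, up to a $\gamma$-dependent factor, by the mass of a ball of radius $\asymp r$ together with standard moment bounds for $\mu_h$. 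For the right-hand side, cover $\A_{r,2r}(z)$ by $K=O(\boldsymbol{c_0}^{-2})$ balls of radius $\asymp\boldsymbol{c_0}r$ centred in the annulus and bound the $\mu_h$-mass of each from below in the spirit of Lemma~\ref{lemma:volume-growth}, giving $\min_{w\in\A_{r,2r}(z)}\mu_h(B_{\boldsymbol{c_0}r}(w))\ge c_2\,\mu_h(\A_{r,2r}(z))$ off an event of probability bounded away from $1$. With probability bounded away from zero and uniformly in $z,r$, the masses on the two sides are within a $\gamma$-dependent constant of one another; since $C$ is at our disposal, enlarging $C$ forces $p_0$ as close to $1$ as we like.

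It remains to combine the single-scale bound across scales. By the locality of the LQG measure (Fact~\ref{fact:lqg-measure}), $E_r(z;C)$ is measurable with respect to the restriction of $h$ to the $\boldsymbol{c_0}r$-neighbourhood of $\A_{r,2r}(z)$. As these neighbourhoods are pairwise disjoint over the admissible scales, the events $\{E_{7^{-n}}(z;C)\}_{n_-\le n\le n_+}$ are conditionally independent given the ``coarse'' field, namely given the conditional expectation of $h$ inside each of these regions (the usual decomposition of the GFF into a field harmonic off the region plus an independent zero-boundary GFF). Since the harmonic correction inside each region has controlled oscillation and enlarging $C$ only helps, the conditional probability of $E_{7^{-n}}(z;C)$ given the coarse field is still at least $p_0'$ with $1-p_0'$ small; alternatively one runs this directly using the exact scale-invariance \eqref{eq:quantum-cone-scaling} of the quantum cone together with the domain Markov property. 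Consequently
\[
\P\big[E_r(z;C)\text{ fails for every admissible }r\big]\ \le\ (1-p_0')^{N(\delta)}\ \le\ \delta^{\kappa(C)}\qquad\text{for small }\delta,
\]
where $\kappa(C)=\frac{-\log(1-p_0')}{2\log 7}\to\infty$ as $C\to\infty$. Choosing $C=C(\gamma)$ so that $\kappa(C)>2$ and summing over the $O(\delta^{-2})$ grid points $z$ yields the claimed polynomially high probability bound, with exponent $p=\kappa(C)-2$.

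The main difficulty is the single-scale estimate with $p_0$ \emph{uniformly close to $1$}: this requires two-sided control of the random quantities in \eqref{eq:good-annulus-event} that is uniform both in the location $z$ and across the whole range $r\in[\delta,\delta^{1/2}]$, which is subtle because of the multifractality of $\mu_h$ — at any fixed scale there is a positive chance of an atypically thick spot (inflating the left side) or thin spot (deflating the right side) in the annulus, so the good event is genuinely random and one must extract tail bounds for the failure events that decay as $C\to\infty$. The second delicate point is making the cross-scale conditional independence rigorous while retaining the quantitative lower bound $p_0'$. Both ingredients are standard in the LQG literature and are modelled on the ``good scale'' arguments of \cite{berestycki2020random}, but it is there that essentially all of the work lies.
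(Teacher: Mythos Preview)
Your overall architecture matches the paper's: establish a single-scale bound $\P[E_r(z;C)]\ge p$ with $p\to 1$ as $C\to\infty$ uniformly in $(r,z)$, amplify across the $\asymp\log(1/\delta)$ disjoint scales $r\in\{7^{-n}\}\cap[\delta,\delta^{1/2}]$ using near-independence of the GFF over concentric annuli, and take a union bound over the $O(\delta^{-2})$ grid points. For the multi-scale step the paper simply cites \cite[Lemma~3.1]{gwynne2020local} (stated here as Lemma~\ref{lemma:annulus-iterate}) rather than sketching the domain-Markov argument as you do, but the content is the same.

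The real divergence is in the single-scale estimate (Lemma~\ref{lemma:annulus-hit}), and here your argument has a gap. You try to control the left and right sides of~\eqref{eq:good-annulus-event} separately via dyadic decomposition and covering plus Lemma~\ref{lemma:volume-growth}-type bounds, and then assert that the resulting masses are comparable ``with probability bounded away from zero and uniformly in $z,r$.'' But Lemma~\ref{lemma:volume-growth} gives bounds of the form $\delta^{\beta^\pm}$ with $\beta^+\ne\beta^-$, which do not yield scale-free comparability of $\mu_h(B_{\boldsymbol{c_0}r}(w))$ to $\mu_h(\A_{r,2r}(z))$; the claim $\min_w\mu_h(B_{\boldsymbol{c_0}r}(w))\ge c_2\,\mu_h(\A_{r,2r}(z))$ with probability close to $1$ uniformly in $r$ is exactly the hard statement, and your sketch does not supply it.

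The paper sidesteps this entirely. It observes that the event $E_r(z)$ depends only on $\mu_h|_{\A_{r/3,3r}(z)}$ and is invariant under adding a constant to $h$. Since $z$ is bounded away from the origin, the log singularity $-\gamma\log|\cdot|$ is bounded on $\A_{r/3,3r}(z)$, so by Weyl scaling and~\eqref{eq:gff} one may replace $h$ by the whole-plane GFF $h^\C$ at the cost of a multiplicative constant depending only on $\gamma$. Then the exact scale and translation invariance~\eqref{eq:gff-scaling} of $h^\C$ modulo additive constant shows that $\P[E_r(z;C)]$ is literally independent of $r$ and $z$, reducing everything to the single observation that $\mathcal L^{h^\C}_1(0)/\mathcal R^{h^\C}_1(0)$ is an a.s.\ finite positive random variable. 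This gives the uniformity in $(r,z)$ for free and is the key idea you are missing.
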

	The argument is similar to \cite[Section 6.2]{bou2022harmonic}.  We start by showing that for each $z \in \C$ and $r > 0$, the event $E_r(z)$ occurs 
	with high probability provided $C > 0$ is chosen to be large (Lemma \ref{lemma:annulus-hit}).
	We will then use the near-independence of the GFF across disjoint concentric annuli (Lemma~\ref{lemma:annulus-iterate}) 
	to show that for each fixed $z \in \C$, it holds with very high probability when $\delta$ is small that there is at least one radius $r \in [\delta, \delta^{1/2}] \cap \{7^{-n}\}_{n \in \N}$ for which $E_r(z)$ occurs. 
	Finally, we will take a union bound over all $z \in B_{\rho - 3\sqrt{\delta}} \setminus B_{10 \sqrt{\delta}} \cap \frac{\delta}{100} \Z^2$. 
	
	Throughout this section, we use the fact, recalled in \eqref{eq:gff},  that in $B_1$ the $\gamma$-quantum cone field $h$ agrees in law with  
	the whole-plane GFF plus $-\gamma \log|\cdot|$ normalized so that its circle average over $B_1$ is 0. Also recall 
	the notation for annuli from \eqref{eq:annulus}.
	\begin{lemma} \label{lemma:annulus-hit}
		For each $p  \in (0,1)$, there exists $C  = C(p) > 0$ such that the event $E_r(z) := E_r(z; C)$ of \eqref{eq:good-annulus-event} satisfies 
		\[
		\begin{aligned}
		\P\left[ E_r(z) \right] \geq p,&\quad \forall r > 0, \\
		&\quad \forall z \in\C \mbox{ such that $\A_{r/3, 3r}(z) \subset B_{\rho}$ and $\dist(\A_{r/3, 3r}(z), \{0\}) \geq r/100$}.
		\end{aligned}
		\]
	\end{lemma}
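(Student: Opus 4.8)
The plan is to prove the lemma by a scaling argument: every admissible pair $(r,z)$ is mapped to a single fixed annulus, so that the probability in question reduces to a tightness statement for one $(r,z)$‑independent random variable. First I would rescale. Let $\phi(w)=rw+z$, a conformal map carrying the fixed annulus $\mathbb A:=\A_{1,2}(0)$ onto $\A_{r,2r}(z)$ (and $\A_{1/2,3}(0)$ onto $\A_{r/2,3r}(z)\subset\A_{r/3,3r}(z)\subset B_\rho$), and set $h^r:=h\circ\phi+Q\log|\phi'|=h(r\cdot+z)+Q\log r$. By the conformal covariance of the LQG measure (Fact~\ref{fact:lqg-measure}(IV)), $\mu_{h^r}(A')=\mu_h(\phi(A'))$ for Borel $A'$, so after the substitutions $u=\phi(u')$, $v=\phi(v')$, $w=\phi(w')$ both sides of \eqref{eq:good-annulus-event} become explicit functionals of $\mu_{h^r}$ restricted to a fixed neighbourhood $N$ of $\mathbb A$ (note $B_{\boldsymbol{c_0}}(w')\subset\A_{1/2,3}(0)$ for $w'\in\mathbb A$ since $\boldsymbol{c_0}<1/5$), and the $r$‑dependence generated inside the logarithm is arranged so that, combined with the rescaled balls, the resulting inequality is scale invariant.

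Next I would identify the law of $h^r$ on $N$. Since $\phi(N)\subset B_\rho\subset B_1$, by \eqref{eq:gff} I may replace $h$ on $\phi(N)$ by $h^{\C}-\gamma\log|\cdot|$, and then use the exact scale invariance \eqref{eq:gff-scaling} of the whole‑plane GFF modulo additive constant to write
\[
h^r|_N \overset{d}{=} \widetilde h + \mathcal C_r + b_r,
\]
where $\widetilde h\overset{d}{=}h^{\C}$ has law not depending on $(r,z)$, $\mathcal C_r=h^{\C}_r(z)+Q\log r+(\text{reference constant})$ is a random additive constant, and $b_r(w')=-\gamma\log|rw'+z|+(\text{reference constant})$ is a deterministic continuous function on $N$. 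The crucial point is that $b_r$ is bounded above and below by constants \emph{uniformly over all admissible $(r,z)$}: the two hypotheses $\A_{r/3,3r}(z)\subset B_\rho$ and $\dist(\A_{r/3,3r}(z),\{0\})\ge r/100$ force $\sup_N|\phi|/\inf_N|\phi|$ to be at most a universal constant (they also force $\inf_N|\phi|\asymp r$ up to a universal constant, which is all that enters), so $|b_r|\le C(\rho,\gamma)$.

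Then I would apply Weyl scaling (Fact~\ref{fact:lqg-measure}(III)) to get $\mu_{h^r}=e^{\gamma\mathcal C_r}e^{\gamma b_r}\,\mu_{\widetilde h}$ on $N$. The common factor $e^{\gamma\mathcal C_r}$ multiplies both sides of the rescaled inequality and cancels, and $e^{\gamma b_r}$ lies in a fixed interval $[m,M]\subset(0,\infty)$ uniform in $(r,z)$; hence the event \eqref{eq:good-annulus-event} contains (for $C$ a suitable multiple, depending only on $m,M$ and on $C(p)$ below, of the constant $C(p)$) the event $\{\Psi(\widetilde h)\le C(p)\,\Phi(\widetilde h)\}$, where
\[
\Psi(\widetilde h):=\sup_{v'\in\mathbb A}\int_{\mathbb A}\!\Bigl(\log\tfrac{1}{|u'-v'|}+1\Bigr)d\mu_{\widetilde h}(u'),\qquad \Phi(\widetilde h):=\min_{w'\in\mathbb A}\mu_{\widetilde h}\bigl(B_{\boldsymbol{c_0}}(w')\bigr)
\]
are fixed functionals whose joint law does not depend on $(r,z)$. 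Now $\Psi(\widetilde h)<\infty$ a.s.\ by the same dyadic‑annulus estimate used to prove Lemma~\ref{lemma:bounded-exit-time} (which applies to any fixed bounded set), and $\Phi(\widetilde h)>0$ a.s.\ by compactness: cover $\mathbb A$ by finitely many balls $B_{\boldsymbol{c_0}/2}(w_i')$, note $\mu_{\widetilde h}(B_{\boldsymbol{c_0}/2}(w_i'))>0$ a.s.\ since the LQG measure charges every open set, and observe every $w'\in\mathbb A$ lies in some $B_{\boldsymbol{c_0}/2}(w_i')$, so $\mu_{\widetilde h}(B_{\boldsymbol{c_0}}(w'))\ge\min_i\mu_{\widetilde h}(B_{\boldsymbol{c_0}/2}(w_i'))>0$. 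Thus $\Psi(\widetilde h)/\Phi(\widetilde h)$ is an a.s.\ finite random variable with an $(r,z)$‑independent law, so for each $p\in(0,1)$ there is $C(p)$ with $\P[\Psi(\widetilde h)\le C(p)\Phi(\widetilde h)]\ge p$; enlarging $C(p)$ by the uniform factor from $m,M$ gives the claimed $C=C(p)$, valid for all admissible $(r,z)$ simultaneously.

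The main obstacle is the second step: tracking the decomposition $h^r|_N\overset{d}{=}\widetilde h+\mathcal C_r+b_r$ carefully — isolating the random additive constant $\mathcal C_r$ (so that it cancels against the identical factor on the right side) and, above all, proving the \emph{uniform} two‑sided boundedness of the deterministic drift $b_r$ over all admissible $(r,z)$, which is precisely where the two geometric hypotheses on the annulus are used. A minor point to be careful about is that \eqref{eq:gff} is an equality in law only for the restriction to $B_1$, so $N$ must be chosen small enough that $\phi(N)$ stays inside $B_\rho\subset B_1$; this is possible because admissibility already controls $\A_{r/3,3r}(z)$.
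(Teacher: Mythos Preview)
Your proposal is correct and follows essentially the same strategy as the paper: reduce to the whole-plane GFF using that $h|_{B_1}\overset{d}{=}(h^{\C}-\gamma\log|\cdot|)|_{B_1}$ and the uniform boundedness of $\log|\cdot|$ on the annulus (this is exactly where the hypothesis $\dist(\A_{r/3,3r}(z),\{0\})\ge r/100$ enters), then use the scale/translation invariance of $h^{\C}$ modulo additive constant together with Weyl scaling to reduce to a single $(r,z)$-independent ratio of positive finite random variables. The only difference is order of operations --- the paper strips off the log singularity first (Step~1) and then rescales (Step~2), whereas you rescale first and then isolate the bounded drift $b_r$ --- but the content is the same.
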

	
	\begin{proof}
		Denote the left and right sides of the inequality in the event \eqref{eq:good-annulus-event}
		by $\mathcal{L}^h_r(z)$ and $\mathcal{R}^h_r(z)$ respectively.  We first show that it suffices to prove the lemma with $h$ replaced by $h^{\C}$, \ie, the GFF without a log-singularity ($\gamma = 0$ in \eqref{eq:gff}). We then prove the lemma for $h^{\C}$. 
		\medskip
		
		{\it Step 1: Reduction to $h^{\C}$.} \\
		Recall that $h$ restricted to $B_1$ agrees in law with $h^\C -\gamma \log |\cdot|$ where $h^\C$ is a whole-plane GFF. 
		So, we can couple $h^{\C}$ and $h$ so that $h|_{B_1} = h^\C|_{B_1} - \gamma\log|\cdot|$.
		Suppose the statement of the lemma holds for $h^{\C}$ with $C_1 > 0$ in place of $C$. 
		
		Fix $r > 0$ and then $z$ such that the annulus $\A_{r/3, 3 r}(z)$ lies at Euclidean distance at least $r/100$ from the origin. 
		Note that $E_r(z)$ depends only on $h|_{\A_{r/3,3r}(z)}$. Thus, the log function is bounded above and below in the region of dependence of $E_r(z)$. 
		Hence, by Weyl scaling (Fact~\ref{fact:lqg-measure}), there is a constant $A>0$ depending only on $\gamma$ such that 
		\begin{equation}
		\frac{\mathcal L^{h}_{r}(z)}{\mathcal{R}^h_r(z)} \leq A  \frac{\mathcal L^{h^\C}_{r}(z)}{\mathcal{R}^{h^\C}_r(z)}.
		\end{equation} 
		Thus, 
		\[
		\frac{\mathcal L^{h^\C}_{r}(z)}{\mathcal{R}^{h^\C}_r(z)}  \leq C_1 \implies \frac{\mathcal L^{h}_{r}(z)}{\mathcal{R}^h_r(z)} \leq C_1 A
		\]	
		and hence \eqref{eq:good-annulus-event} occurs for $C := A \times C_1$ if it occurs for $h^{\C}$.
		\medskip
		
		{\it Step 2: Whole-plane GFF.} \\
		We use the fact that the law of $  h^{\C}$ is both scale and translation invariant modulo additive constant \eqref{eq:gff-scaling}. By the Weyl scaling property of the measure $\mu_h$ (Fact~\ref{fact:lqg-measure}), the event $E_r(z)$ is a.s.\ determined by $h$ viewed modulo additive constant. From this and the LQG coordinate change formula for $\mu_h$, we infer that the probability of $E_r(z)$ does not depend on $r$ or $z$. Hence, it suffices to find $C > 0$ as in the lemma statement such that $\P[E_1(0)] \geq p$. This however, follows immediately from the fact that $\frac{\mathcal{L}^h_1(0)}{\mathcal{R}^h_1(0)}$ is a positive, finite random variable. 
	\end{proof}
	
	The following lemma is a special case of~\cite[Lemma 3.1]{gwynne2020local}. 
	\begin{lemma}[\cite{gwynne2020local}] \label{lemma:annulus-iterate}
		Fix $0 < s_1<s_2 < 1$. Let $\{r_k\}_{k\in\N}$ be a decreasing sequence of positive numbers such that $r_{k+1} / r_k \leq s_1$ for each $k\in\N$ and let $\{E_{r_k} \}_{k\in\N}$ be events such that $E_{r_k}$ is a.s.\ determined by $h |_{\A_{s_1 r_k , s_2 r_k}(0)  } $, viewed modulo additive constant, for each $k\in\N$. 
		For $K\in\N$, let $N(K)$ be the number of $k\in [1,K] \cap \Z$ for which $E_{r_k}$ occurs. 
		For each $\alpha > 0$ there exists $p = p(\alpha,s_1,s_2) \in (0,1)$ and $C = C(\alpha,s_1,s_2) > 0$ (independent of the particular choice of $\{r_k\}$ and $\{E_{r_k}\}$) such that if  
		\begin{equation} \label{eqn-annulus-iterate-prob}
		\P\left[ E_{r_k}  \right] \geq p , \quad \forall k\in\N  ,
		\end{equation} 
		then 
		\begin{equation} \label{eqn-annulus-iterate}
		\P\left[ N(K)  = 0 \right] \leq C e^{-\alpha K} ,\quad\forall K \in \N. 
		\end{equation}  
	\end{lemma}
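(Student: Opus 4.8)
The plan is to deduce \eqref{eqn-annulus-iterate} from the near-independence of the whole-plane GFF, viewed modulo additive constant, across \emph{well-separated} concentric annuli, together with a Chernoff-type bound. I would first record the elementary geometry: since $s_2<1$ and $r_{k+1}\le s_1 r_k$, the annuli $A_k:=\A_{s_1 r_k,s_2 r_k}(0)$ are pairwise disjoint, and moreover between $A_k$ and $A_{k+1}$ there sits a concentric buffer annulus of definite modulus; iterating, $A_k$ and $A_{k+J}$ are separated by $J$ such buffers, i.e.\ by a scale ratio $\le s_1^{J}$. I would also reduce to the case $h=h^{\C}$ (with the paper's normalization $h^{\C}_1(0)=0$): since each $E_{r_k}$ is a.s.\ determined by $h$ on $\A_{s_1 r_k,s_2 r_k}(0)\subset B_1$ away from $0$, modulo additive constant, passing between the quantum-cone field and $h^{\C}-\gamma\log|\cdot|$ only changes things by a fixed harmonic function on each $A_k$ (which is Cameron--Martin), so I may assume $h=h^{\C}$ and exploit its exact scale- and translation-invariance modulo additive constant.

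Next I would fix a large integer $J$ (to be chosen as a function of $\alpha,s_1,s_2$) and consider only the sparse subfamily $\{A_{J},A_{2J},A_{3J},\dots\}$. I would condition on the restriction of $h$ to the union of the buffer regions separating these annuli (say the middle third of each annular gap between $A_{iJ}$ and $A_{(i+1)J}$). By the domain Markov property of the GFF, conditionally on this data the restrictions of $h$ to the distinct components of the complement --- each a large annulus containing exactly one $A_{iJ}$ well in its interior --- are \emph{independent}, and each is a bounded random harmonic function plus an independent zero-boundary GFF on that component. Restricted to $A_{iJ}$, which lies at log-distance $\gtrsim J$ from the boundary of its component, the harmonic piece is smooth and, by scale invariance, its Dirichlet energy on $A_{iJ}$ has a law (independent of $i$) concentrated near $0$ at scale $s_1^{cJ}$ for a fixed $c>0$; for $J$ large it is $\le 1$ off an event whose probability decays faster than any exponential in $J$. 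On that event, the Cameron--Martin theorem bounds in $L^2$ the Radon--Nikodym derivative between the conditional law of $h|_{A_{iJ}}$ modulo additive constant and its unconditional law, and Cauchy--Schwarz applied to $E_{r_{iJ}}^c$ gives
\[
\P\!\left[\,E_{r_{iJ}}\ \middle|\ \sigma\big(h|_{\mathrm{buffers}}\big)\right]\ \ge\ 1-e^{1/2}\sqrt{1-\P[E_{r_{iJ}}]}\ \ge\ 1-e^{1/2}\sqrt{1-p}
\]
on that event. The modulo-additive-constant hypothesis is essential here: it removes the one genuinely uncontrolled degree of freedom (the overall additive level of the harmonic extension), leaving only its fluctuation, which decays in $J$.

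With this in hand, the conditionally independent events $E_{r_{iJ}}$, $1\le i\le\lfloor K/J\rfloor$, each have conditional probability $\ge 1-e^{1/2}\sqrt{1-p}$ off a ``bad'' event; and the bad events across scales are themselves conditionally independent (Markov property), with individual probability as small as desired once $J$ is large. Bounding $\P[N(K)=0]$ above by the probability that \emph{all} the $E_{r_{iJ}}$ fail and splitting on the bad event then yields $\P[N(K)=0]\le\big(e^{1/2}\sqrt{1-p}\big)^{\lfloor K/J\rfloor}+(\text{negligible})$. Choosing $J=J(\alpha,s_1,s_2)$ large and then $p=p(\alpha,s_1,s_2)$ close enough to $1$ makes the exponent $\ge\alpha K$, giving \eqref{eqn-annulus-iterate} with $C=C(\alpha,s_1,s_2)$; all thresholds depend only on $\alpha,s_1,s_2$ and not on the particular $\{r_k\}$ or $\{E_{r_k}\}$ because every estimate above is uniform in the scale index by scale invariance.

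The main obstacle is the quantitative near-independence step, and within it the fact that the exponential rate must be \emph{tunable}. A naive one-scale conditioning (on everything at larger scales) is insufficient, since the harmonic piece it produces on an \emph{adjacent} annulus has $\Theta(1)$ Dirichlet energy and cannot be absorbed; one is genuinely forced to send the separation $J\to\infty$ and to balance the Cameron--Martin cost (which decays exponentially in $J$) against the number $\approx K/J$ of independent scales. Keeping the ``bad'' events from accumulating over those $\approx K/J$ scales is the second delicate point, resolved via their conditional independence. One must also set up the two-sided Markov decomposition on the large component annuli carefully enough that the zero-boundary parts match, in law, the corresponding unconditional restrictions up to a small Cameron--Martin shift --- this is where the exact scale invariance of $h^{\C}$ modulo additive constant does the work.
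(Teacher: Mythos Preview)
The paper does not prove this lemma; it is quoted verbatim as a special case of \cite[Lemma 3.1]{gwynne2020local} and used as a black box. Your sketch is essentially the standard argument behind that cited result: sparsify to every $J$th scale, invoke the domain Markov property of the GFF to decouple the restrictions, control the Radon--Nikodym derivative between conditional and unconditional laws via Cameron--Martin (using the modulo-additive-constant hypothesis to kill the one non-decaying harmonic mode), and then take a product. So at the level of strategy you are aligned with the literature.

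One point deserves more care than your write-up gives it. After conditioning on the buffer data, the ``bad'' events $G_i^c$ (harmonic part has large Dirichlet energy on $A_{iJ}$) are measurable with respect to that conditioning, hence deterministic --- they are not ``conditionally independent'' in any useful sense. A naive union bound $\P[\bigcup_i G_i^c]\le (K/J)\epsilon_J$ then contributes a term that grows in $K$ for fixed $J$, which would kill the exponential decay. The usual fix is to run the argument \emph{inductively from outside in} rather than conditioning on all buffers at once: at step $i$ one conditions on $h|_{B_{s_2 r_{(i-1)J}}^c}$, bounds the conditional probability $\P[E_{r_{iJ}}^c\mid \mathcal F_{i-1}]$ by a quantity of the form $(1-p')+\mathbf 1_{G_i^c}$, and then absorbs the bad event into the recursion so that the geometric factor becomes $(1-p')+\epsilon_J$ rather than producing a separate additive error at each step. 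With that modification your outline goes through and yields the stated bound with $p,C$ depending only on $\alpha,s_1,s_2$. Your reduction to $h^{\C}$ is also slightly garbled (the quantum-cone field on $B_1$ already \emph{equals} $h^{\C}-\gamma\log|\cdot|$ in law; the point is that $-\gamma\log|\cdot|$ has uniformly bounded Dirichlet energy on each $A_k$, so a single Cameron--Martin absorption handles it), but this is cosmetic.
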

	
	We now prove the desired claim.

	\begin{proof}[Proof of Lemma~\ref{lemma:good-prevalance}]
		The event $E_r(z)$ depends only on the measure $\mu_h|_{\A_{r/3,3r}(z)}$ 
		Moreover, multiplying this measure by a constant does not change whether $E_r(z)$ occurs. 
		Therefore, $E_r(z)$ is a.s.\ determined by $h|_{\A_{r/3,3r}(z)}$ viewed modulo additive constant.
		
		We now apply Lemma~\ref{lemma:annulus-iterate} with $K = \lfloor \log_7  \epsilon^{-1/2} \rfloor$, the radii $r_1,\dots,r_K \in [ \delta, \delta^{1/2}] \cap \{7^{-n}\}_{n\in\N}$, the events $E_{r_k} = E_{r_k}(z)$, and an appropriate universal constant choice of $\alpha$. We find that there exist universal constants $p\in (0,1)$ and $c>0$ such that if $\P[E_r(z)] \geq p$
		for each $r > 0$ and each $z \in B_{\rho} \setminus B_{10 r}$,  then for all $z \in B_{\rho} \setminus B_{10 \sqrt{\delta}}$, 
		\begin{equation} \label{eqn-annulus-hit-iterate}
		\P\left[ \text{$E_r(z)$ occurs at least once for $r \in [ \delta, \delta^{1/2}] \cap \{7^{-n}\}_{n\in\N}$} \right] \geq 1 - O_\delta(\delta^3) 
		\end{equation} 
		with a universal implicit constant in the $O_\delta(\cdot)$. 
		
		By Lemma~\ref{lemma:annulus-hit}, there exists $C = C(\gamma) > 0$ such that for this choice of $C$, one has $\P[E_r(z)] \geq p$ for each $r > 0$ and each $z \in B_{\rho} \setminus B_{10 r}$. Therefore, the estimate~\eqref{eqn-annulus-hit-iterate} holds for this choice of $C$. We then conclude by means of a union bound over all $z\in  (B_{\rho-3 \sqrt{\delta}} \setminus B_{10 \sqrt{\delta}}) \cap \frac{\delta}{100} \Z^2$. 
	\end{proof}

	\subsection{Harmonic measure estimates} \label{subsec:comparison-estimates}
	Equipped with the prevalence of good annuli, we are now ready to prove the following random walk estimate.
	Roughly, it states that with probability bounded away from zero, random walk cannot travel through an annulus of constant size without hitting at least a mesoscopic number of vertices.	See Figure \ref{fig:harmonic-measure-exit} for an illustration. 
	
	Much stronger versions of this estimate have been proved on $\Z^d$ \cite[Lemma 11]{lawler1995subdiffusive} and the supercritical percolation cluster \cite[Lemma 5]{duminil2013containing}.	

	\begin{lemma} \label{lemma:constant-order-lower-bound}
		There exists deterministic $\omega = \omega(\gamma) > 0$ so that for each $s_0 \in (0,1)$, there exists $s_1, s_2 \in (0,1)$ 
		and a random  $\sigma(h)$-measurable $\boldsymbol{\delta}>0$ such that with probability approaching one as ${\eps}$ goes to zero, the following is true for all $r \in [10 {\eps}^{\omega}, \boldsymbol{\delta}]$ and all sets $\Theta \subset {\VGeps}$ such that $\overline{\Theta} \Subset B_{\rho/2}$ and $B_{s_0} \subset \overline{\Theta}$ with $\overline{\Theta}$ as in \eqref{eq:vertex-scaling}.
		
		Let $a \in \cl(\Theta)$ and let $S \subset {\VGeps}(\overline{\Theta} + B_r)$ be such that $|S \setminus \Theta| < s_1 r^{2 \beta^-} {\eps}^{-1}$
		where $\beta^-$ is as in Lemma \ref{lemma:volume-growth}.
		Let $\Xi$ denote the trace of a random walk started at $a$ and stopped upon exiting 
		${\VGeps}(\overline{\Theta} + B_r)$. Then, 
		\[
		\P[\Xi \cap {\VGeps}(\overline{\Theta} + B_r) \setminus (S \cup \Theta) \neq \emptyset | h, \eta] \geq s_2.
		\]
	\end{lemma}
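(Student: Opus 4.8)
The plan is to force the walk $\Xi$ — which, being stopped only on exiting the \emph{finite} vertex set ${\VGeps}(\overline\Theta+B_r)$, must travel from a cell touching $\overline\Theta$ to a point at Euclidean distance $>r$ from $\overline\Theta$ — to cross a \emph{good} annulus sitting inside the collar $(\overline\Theta+B_r)\setminus\overline\Theta$, and then to show that once the walk enters such an annulus it hits one of the many cells there avoiding $S\cup\Theta$ with probability bounded below by a universal constant. The relevant good annuli will live at the mesoscopic scale $r'\asymp r^2$; this is exactly why the hypothesis on $S$ involves $r^{2\beta^-}$, since by Lemma~\ref{lemma:volume-growth} a Euclidean ball of radius $\asymp r^2$ inside $B_\rho$ carries $\mu_h$-mass $\gtrsim r^{2\beta^-}$, hence by Lemma~\ref{lemma:mu_h-convergence} contains $\gtrsim r^{2\beta^-}{\eps}^{-1}$ vertices of ${\Geps}$. \emph{Parameters.} Take $\omega>0$ to be a small multiple of the minimum of the exponents appearing in Lemmas~\ref{lemma:cell-size-estimate}, \ref{lemma:greens-kernel-lower-bound}, \ref{lemma:mu_h-convergence} and~\ref{lemma:exit-time-upper-bound}, so that all of these estimates are available at every scale in $[{\eps}^{\omega},\rho]$ and the error ${\eps}^{-1+\alpha}$ of Lemma~\ref{lemma:mu_h-convergence} is negligible against ${\eps}^{-1}r^{2\beta^-}$ whenever $r\ge 10{\eps}^{\omega}$. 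Since the probability in Lemma~\ref{lemma:good-prevalance} is polynomially high as $\delta\to0$, Borel--Cantelli produces a $\sigma(h)$-measurable $\boldsymbol\delta\in(0,\tfrac\rho4\wedge\tfrac{s_0}2)$ such that a.s.\ the conclusion of Lemma~\ref{lemma:good-prevalance} holds for every $\delta$ in a fixed geometric sequence lying below $\boldsymbol\delta^2$; by geometric spacing, for each $r\in[10{\eps}^{\omega},\boldsymbol\delta]$ there is one such $\delta$ with $\delta^{1/2}\in[r/100,\,r/10]$. We then restrict to the event, of probability $\to1$ as ${\eps}\to0$, on which Lemmas~\ref{lemma:cell-size-estimate}, \ref{lemma:volume-growth}, \ref{lemma:mu_h-convergence}, \ref{lemma:greens-kernel-lower-bound} and~\ref{lemma:exit-time-upper-bound} hold down to scale ${\eps}^{\omega}$, every cell meeting $B_\rho$ has diameter $\le{\eps}^{\omega}$, and $|{\VGeps}(B_\rho)|\le C{\eps}^{-1}$.

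\emph{Crossing a good annulus.} Fix $\Theta,a,S,r$ and any realization of $\Xi$. Since $a\in\cl(\Theta)$ we have $\dist(\eta(a),\overline\Theta)\le{\eps}^{\omega}$, while the final cell of $\Xi$ is disjoint from $\overline\Theta+B_r$; hence along $\Xi$ the quantity $\dist(\eta(\cdot),\overline\Theta)$ grows from $\le{\eps}^{\omega}$ to $>r$ in increments $\le2{\eps}^{\omega}$, so $\Xi$ visits a vertex $a^*$ with $\dist(\eta(a^*),\overline\Theta)\in(r/2,\,r/2+2{\eps}^{\omega}]$; in particular $a^*\in{\VGeps}(\overline\Theta+B_r)$ is not the exit vertex, and $|\eta(a^*)|\ge s_0$ because $B_{s_0}\subset\overline\Theta$. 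Choosing the prepared $\delta$ with $\delta^{1/2}\in[r/100,r/10]$, the point $\eta(a^*)$ lies within $\sqrt2\,\delta/100$ of a lattice point $z\in(B_{\rho-3\sqrt\delta}\setminus B_{10\sqrt\delta})\cap\tfrac\delta{100}\Z^2$ admitting a good radius $r'\in[\delta,\delta^{1/2}]$. Then $|\eta(a^*)-z|<\delta\le r'$ while $|\eta(a)-z|\ge r/2-{\eps}^{\omega}-\delta>2r'$, so $\Xi$ must cross $\A_{r',2r'}(z)$; as its steps have size $\ll r'$, it in fact visits a vertex $\tilde a$ with $\eta(\tilde a)$ in the middle portion $\A_{1.3r',1.7r'}(z)$, and it does so \emph{before} exiting $\overline\Theta+B_r$ (the walk is killed only at the very end). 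Since $r'\le r/10$, the ball $B_{3r'}(z)$ — and with it $B_{\boldsymbol c_0 r'}(\eta(\tilde a))\subset\A_{r',2r'}(z)$, using $\boldsymbol c_0<1/5$ — lies inside $\overline\Theta+B_r$ and is disjoint from $\overline\Theta$, hence contains no vertex of $\Theta$.

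\emph{Hitting a fresh vertex.} Set $\mathcal T:={\VGeps}(B_{\boldsymbol c_0 r'}(\eta(\tilde a)))\setminus(S\cup\Theta)\subset{\VGeps}(\overline\Theta+B_r)\setminus(S\cup\Theta)$. No vertex of ${\VGeps}(B_{\boldsymbol c_0 r'}(\eta(\tilde a)))$ lies in $\Theta$, and Lemmas~\ref{lemma:volume-growth} and~\ref{lemma:mu_h-convergence} give $|{\VGeps}(B_{\boldsymbol c_0 r'}(\eta(\tilde a)))|\ge\tfrac12{\eps}^{-1}(\boldsymbol c_0 r')^{\beta^-}\ge\tfrac12{\eps}^{-1}(\boldsymbol c_0\delta)^{\beta^-}\ge2s_1 r^{2\beta^-}{\eps}^{-1}$ once $s_1=s_1(\gamma,s_0)$ is small enough, so with $|S\setminus\Theta|<s_1 r^{2\beta^-}{\eps}^{-1}$ we get $|\mathcal T|\ge\tfrac12|{\VGeps}(B_{\boldsymbol c_0 r'}(\eta(\tilde a)))|$. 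Now apply the strong Markov property at the first time $\Xi$ reaches such a $\tilde a$ — which, by the previous paragraph, happens almost surely — and run the walk from $\tilde a$ until it exits a Euclidean ball of radius of order $r'$ around $\eta(\tilde a)$ contained in $\A_{r',2r'}(z)\subset\overline\Theta+B_r$. By monotonicity of Green's functions in the domain and Lemma~\ref{lemma:greens-kernel-lower-bound}, together with the defining property~\eqref{eq:choice-of-constant-annulus} of $\boldsymbol c_0$, one gets $\Gr{{\eps}}{\A_{r',2r'}(z)}(\tilde a,b)\ge\tfrac14$ for $b\in{\VGeps}(B_{\boldsymbol c_0 r'}(\eta(\tilde a)))$, hence $\E^{\tilde a}[\#\{\text{visits to }\mathcal T\text{ before exit}\}]\ge\tfrac14|\mathcal T|$. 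On the other hand, for any $b\in\mathcal T$, Lemma~\ref{lemma:exit-time-upper-bound}, the identity $\Mrho{\A_{r',2r'}(z)}=\mathcal L^h_{r'}(z)$ (the left-hand side of~\eqref{eq:good-annulus-event}) and the good-annulus inequality~\eqref{eq:good-annulus-event} bound $\E^{b}[\#\{\text{visits to }\mathcal T\text{ before exit}\}]$ by $C{\eps}^{-1}\mathcal L^h_{r'}(z)\le C'{\eps}^{-1}\mu_h(B_{\boldsymbol c_0 r'}(\eta(\tilde a)))\le C''|\mathcal T|$, using again Lemma~\ref{lemma:mu_h-convergence}. A first-visit decomposition of the total visit count to $\mathcal T$ then gives $\P^{\tilde a}[\Xi\cap\mathcal T\ne\emptyset\text{ before exit}]\ge\tfrac{\E^{\tilde a}[\#\text{visits}]}{\max_{b\in\mathcal T}\E^{b}[\#\text{visits}]}\ge s_2$ for a universal $s_2=s_2(\rho,\gamma)>0$, which is the assertion.

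\emph{Main obstacle.} The heart of the argument is the last paragraph. On the mated-CRT map a random walk does not spread out at a controlled rate, so neither the expected number of returns to a single target nor the expected exit time from a Euclidean ball is comparable to the vertex count of a mesoscopic ball; what makes the ratio in the first-visit decomposition bounded below is precisely the good-annulus inequality~\eqref{eq:good-annulus-event}, which pins the exit-time scale $\mathcal L^h_{r'}(z)$ against the \emph{minimal} mesoscopic ball volume $\mu_h(B_{\boldsymbol c_0 r'}(\cdot))$. Carefully matching up the Euclidean regions on which one invokes Lemmas~\ref{lemma:greens-kernel-lower-bound} and~\ref{lemma:exit-time-upper-bound} with the annulus of~\eqref{eq:good-annulus-event} (so that the lower bound on $\Gr{{\eps}}{\cdot}$ and the upper bound on the exit time use mutually compatible stopping domains) is the main bookkeeping one has to get right. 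The secondary point, already packaged into Lemma~\ref{lemma:good-prevalance}, is that good annuli are plentiful at \emph{every} mesoscopic scale, so that an adversarially placed $S$ cannot block all of the annuli the walk might cross; the choice of the good scale to be $\asymp r^2$ rather than $\asymp r$ is forced by the grid spacing $\delta/100$ and the radius range $[\delta,\delta^{1/2}]$ in that lemma.
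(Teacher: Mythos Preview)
Your approach is essentially the paper's: force the walk into a good annulus sitting in the collar, then show that inside that annulus the walk hits a fresh cell with probability bounded below, the key being that the good-annulus inequality~\eqref{eq:good-annulus-event} pins the exit-time scale against the minimal mesoscopic ball mass. The paper packages this through a stronger statement (Lemma~\ref{lemma:constant-order-lower-bound-div}) and uses the second-moment bound $\P[V\ge1]\ge(\E V)^2/\E[V^2]$ with $\E[V^2]\le\E[\xi^2]$, whereas you use the first-visit decomposition $\P[\text{hit }\mathcal T]\ge\E[N]/\max_{b\in\mathcal T}\E^b[N]$ with $\E^b[N]\le\E^b[\xi]$; both lead to the same ratio and the same use of~\eqref{eq:good-annulus-event}, and your route is if anything slightly more elementary.

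There is, however, one genuine gap. You determine the lattice point $z$ (and hence the good radius $r'$ and the target annulus) from $a^*$, and then invoke the strong Markov property ``at the first time $\Xi$ reaches such a $\tilde a$'' in $\A_{1.3r',1.7r'}(z)$. But $\tilde a$ is visited \emph{before} $a^*$ along $\Xi$, so at the time the walk sits at $\tilde a$ the point $a^*$---and with it $z,r'$ and the very set $\A_{1.3r',1.7r'}(z)$---has not yet been revealed; the hitting time of a set that depends on the future of the walk is not a stopping time, and strong Markov does not apply. The fix is short: first apply strong Markov at $\tau^*$, the first time the walk is at distance $>r/2$ from $\overline\Theta$, which makes $z,r'$ measurable with respect to $\mathcal F_{\tau^*}$. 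From $a^*\in B_{r'}(z)$ the walk must subsequently exit $B_{2r'}(z)\subset\overline\Theta+B_r$ before being killed, hence crosses the annulus \emph{outward}; the first visit after $\tau^*$ to $\A_{1.3r',1.7r'}(z)$ is then a genuine stopping time, and your hitting estimate goes through unchanged. The paper sidesteps the issue by fixing in advance a cover of a shell of $\overline\Theta$ by good balls (equations~\eqref{eq:cover-continuous}--\eqref{eq:cover-in-complement}) and applying strong Markov at the first entrance to the cover.
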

	A law of large numbers version of this estimate is Lemma \ref{lemma:chance-of-being-absorbed} below and this will be used to show that the IDLA cluster
	cannot develop long tentacles in Lemma \ref{lemma:iterative-bound}. 
	
	The set $\Theta$ in Lemma \ref{lemma:constant-order-lower-bound} may be thought of as the current IDLA cluster
	and $S$ as the cluster after releasing an additional, small number of walkers (fewer than $s_1 r^{2 \beta^-} {\eps}^{-1}$). 
	The lemma states that with (conditional) probability bounded from below by a constant, $s_2$, which is independent 
	of $r$ and ${\eps}$, a random walk started at an arbitrary vertex in $\cl(\Theta)$ 
	will be absorbed into the intermediate cluster (\ie, hit a vertex not in $S \cup \Theta$) before exiting the $B_r$ neighborhood
	of the initial cluster, ${\VGeps}(\overline{\Theta} + B_r)$. In the case when $\Theta$ is a large IDLA cluster, the condition $B_{s_0} \subset \overline{\Theta}$
	is satisfied for some small $s_0 \in (0,\rho)$ for all small ${\eps}$ due the lower bound, Proposition \ref{prop:IDLA-lower-bound} 
	and the corresponding property for harmonic balls, Theorem \ref{theorem:harmonic-balls}.

	To prove the divisible sandpile upper bound in Section \ref{sec:div-upper-bound}, we require the following stronger version of Lemma \ref{lemma:constant-order-lower-bound}. 
	\begin{lemma} \label{lemma:constant-order-lower-bound-div}
		There exists deterministic $\omega = \omega(\gamma) > 0$ so that for each $s_0 \in (0,1)$, there exists $s_1, s_2 \in (0,1)$ 
		and a random  $\sigma(h)$-measurable $\boldsymbol{\delta}>0$ such that with probability approaching one as ${\eps}$ goes to zero, the following is true for all $r \in [10 {\eps}^{\omega}, \boldsymbol{\delta}]$ and all sets $\Theta \subset {\VGeps}$ such that $\overline{\Theta} \Subset B_{\rho/2}$ and $B_{s_0} \subset \overline{\Theta}$ with $\overline{\Theta}$ as in \eqref{eq:vertex-scaling}.  
		
		For all $A \subset {\VGeps}(\overline{\Theta} + B_{r}) \setminus \Theta$
		such that, for all $z \in \C$,  $|A \cap {\VGeps}(B_r(z))| < s_1 r^{2 \beta^-} {\eps}^{-1}$, where $\beta^-$ is as in Lemma \ref{lemma:volume-growth}, we have
		\[
		\sup_{a \in {\VGeps}(\partial (\overline{\Theta} + B_{r/2}))} \P[\mbox{$X^{a, {\eps}}$ exits $ {\VGeps}(\overline{\Theta} + B_{r}) \setminus \Theta $ through $A$} | h , \eta] \leq  1 - s_2 ,
		\]
		where $X^{a, {\eps}}$ is simple random walk started at $a \in {\VGeps}$. 
	\end{lemma}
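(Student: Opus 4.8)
The plan is to prove Lemma~\ref{lemma:constant-order-lower-bound-div} by the same method as Lemma~\ref{lemma:constant-order-lower-bound} (and the companion Lemma~\ref{lemma:chance-of-being-absorbed}), adapted to the two changes in the hypotheses: the walk now starts on $\partial(\overline{\Theta}+B_{r/2})$ rather than inside $\cl(\Theta)$, and $A$ is assumed sparse only inside every Euclidean ball of radius $r$. The starting-point change is a simplification. Choosing $\omega=\omega(\gamma)$ small (so that, by Lemma~\ref{lemma:cell-size-estimate}, every cell meeting $B_\rho$ has diameter $\ll r$ whenever $r\ge 10{\eps}^\omega$), any $a\in{\VGeps}(\partial(\overline{\Theta}+B_{r/2}))$ has $\eta(a)$ at Euclidean distance at least $r/2-o(r)>r/4$ from $\overline{\Theta}$; hence $B_{r/4}(\eta(a))$ is disjoint from $\overline{\Theta}$ and contained in $\overline{\Theta}+B_r$, so every cell meeting $B_{r/4}(\eta(a))$ lies in ${\VGeps}(\overline{\Theta}+B_r)\setminus\Theta$. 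Since the walk has not yet exited ${\VGeps}(\overline{\Theta}+B_r)\setminus\Theta$ as long as it stays in ${\VGeps}(B_{r/4}(\eta(a)))$, it suffices to show: with conditional probability at least $s_2$, the walk from $a$ visits some vertex of ${\VGeps}(B_{r/4}(\eta(a)))\setminus A$ before exiting ${\VGeps}(B_{r/4}(\eta(a)))$ — this is exactly the complement of the walk ``exiting ${\VGeps}(\overline{\Theta}+B_r)\setminus\Theta$ through $A$''.

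\emph{Selecting a good annulus.} To force the walk to hit many cells inside $B_{r/4}(\eta(a))$, I would locate a good annulus surrounding $\eta(a)$ at a scale comfortably smaller than $r$. Applying Lemma~\ref{lemma:good-prevalance} at each base scale $\delta_0\in\{7^{-n}\}_{n\in\N}$ and using Borel--Cantelli (the failure probabilities $O(\delta_0^p)$ are summable), one gets a.s.\ a random $\boldsymbol\delta\in(0,\rho)$ such that for every $r\in(0,\boldsymbol\delta]$ and every $z\in B_\rho$ with $s_0\le |z|\le\rho/2+1$ there is a point $z'$ with $|z'-z|<r'$ and a scale $r'\in[c\,r^2,\,r/16]$ (universal $c$) for which $\A_{r',2r'}(z')$ is good in the sense of \eqref{eq:good-annulus-event}; here $\omega$ is chosen small enough (in particular smaller than the exponent appearing in Lemma~\ref{lemma:greens-kernel-lower-bound}) that $r'\ge c\,r^2\ge 100\,c\,{\eps}^{2\omega}$ dominates the relevant power of ${\eps}$ for small ${\eps}$, so that Lemma~\ref{lemma:greens-kernel-lower-bound} applies at scale $\sim r'$. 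Taking $z=\eta(a)$ — which has $|\eta(a)|\ge s_0$ because $B_{s_0}\subset\overline{\Theta}$ and $\eta(a)\notin\overline{\Theta}$ — the inner disk $B_{r'}(z')$ contains $\eta(a)$, while $\A_{r',2r'}(z')\subset B_{3r'}(z')\subset B_{r/4}(\eta(a))$, so the walk from $a$ must cross $\A_{r',2r'}(z')$ (travel from $B_{r'}(z')$ to outside $B_{2r'}(z')$) before leaving $B_{r/4}(\eta(a))$.

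\emph{The crossing estimate and conclusion.} The core input, which I would transcribe from the proof of Lemma~\ref{lemma:constant-order-lower-bound}, is that a random walk crossing a good annulus $\A_{r',2r'}(z')$ visits at least $c'\,{\eps}^{-1}\min_{w}\mu_h(B_{\boldsymbol{c_0} r'}(w))$ distinct cells of that annulus before exiting $B_{3r'}(z')$, with conditional probability at least a universal $s_2>0$. The lower bound on the expected number of distinct cells hit follows from the Green's-kernel lower bound (Lemma~\ref{lemma:greens-kernel-lower-bound}) together with the conversions of cell-sums to $\mu_h$-integrals in Lemmas~\ref{lemma:mu_h-convergence} and~\ref{lemma:mu_h_degree-convergence} (crucially, the vertex degrees cancel in $\Gr{{\eps}}{D}(a,b)/\Gr{{\eps}}{D}(b,b)=\gr{{\eps}}{D}(a,b)/\gr{{\eps}}{D}(b,b)$); the matching upper bound needed for the second-moment/Paley--Zygmund step is precisely the defining inequality \eqref{eq:good-annulus-event} of a good annulus, combined with the expected-exit-time bound of Lemma~\ref{lemma:exit-time-upper-bound} and the crude Green's-function bound of Lemma~\ref{lemma:greens-function-log-bound}. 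Granting this: since $\A_{r',2r'}(z')\subset B_r(z')$ the sparsity hypothesis yields $|A\cap{\VGeps}(\A_{r',2r'}(z'))|<s_1 r^{2\beta^-}{\eps}^{-1}$, whereas $\min_w\mu_h(B_{\boldsymbol{c_0} r'}(w))\ge(\boldsymbol{c_0} r')^{\beta^-}\ge(\boldsymbol{c_0} c)^{\beta^-}r^{2\beta^-}$ by Lemma~\ref{lemma:volume-growth} and $r'\ge c\,r^2$; choosing $s_1=s_1(\gamma)<c'(\boldsymbol{c_0} c)^{\beta^-}$ makes the distinct-cell count strictly exceed $|A\cap{\VGeps}(\A_{r',2r'}(z'))|$, so on the probability-$\ge s_2$ event the walk hits an annulus cell outside $A$ — hence, being in $B_{r/4}(\eta(a))$, outside $\Theta$ and still inside ${\VGeps}(\overline{\Theta}+B_r)\setminus\Theta$, as required.

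\emph{Main obstacle.} The genuinely delicate step is the crossing estimate: it must be obtained with the correct, $\log{\eps}^{-1}$-free, dependence on $\min_w\mu_h(B_{\boldsymbol{c_0} r'}(w))$, since the cruder bounds coming from the diagonal behaviour of the discrete Green's function lose a factor of $\log{\eps}^{-1}$ and would only permit an ${\eps}$-dependent $s_1$. Avoiding this loss is exactly what the good-annulus hypothesis \eqref{eq:good-annulus-event} is designed for — it couples the worst-case logarithmic exit-time integral over the annulus to the minimal mass of a $\boldsymbol{c_0} r'$-ball — and one must carry the unbounded vertex degrees (via Lemma~\ref{lemma:mu_h_degree-convergence}) carefully through every estimate. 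Since all of this is done in the proof of Lemma~\ref{lemma:constant-order-lower-bound}, I would reuse that argument verbatim with $B_{r/4}(\eta(a))$ and $\A_{r',2r'}(z')$ above in place of the objects there; the only remaining bookkeeping is the routine choice of $\omega=\omega(\gamma)$ small enough to accommodate all the cited polynomial-scale estimates and the negligibility of cell diameters, and of $\boldsymbol\delta$ small enough to be past the Borel--Cantelli threshold.
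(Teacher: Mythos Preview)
Your approach is correct and is essentially the paper's own proof: locate a good annulus near the starting point and run a second-moment (Paley--Zygmund) argument on the number of visits there, with the defining inequality~\eqref{eq:good-annulus-event} balancing the first and second moments so that no $\log\eps^{-1}$ is lost. Two minor corrections are worth noting.

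First, the dependency in the paper runs the other way: Lemma~\ref{lemma:constant-order-lower-bound-div} is proved directly with exactly this good-annulus/second-moment argument, and Lemma~\ref{lemma:constant-order-lower-bound} is then deduced from it in a few lines via the strong Markov property. So your plan to ``transcribe from the proof of Lemma~\ref{lemma:constant-order-lower-bound}'' is circular as stated; what you actually sketch \emph{is} the proof of the present lemma.

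Second, your intermediate formulation of the crossing estimate --- ``visits at least $c'\eps^{-1}\min_w\mu_h(B_{\boldsymbol{c_0} r'}(w))$ \emph{distinct} cells with probability $\ge s_2$'' --- is not what the second-moment method yields, and a distinct-cell count would reintroduce the diagonal Green's function you are trying to avoid. The paper applies Paley--Zygmund directly to $V=\#\{\text{visits to }\mathfrak E\}$ with $\mathfrak E:={\VGeps}(B_{\mathbf r_z/4}(\eta(b)))\setminus A$, i.e.\ with $A$ already excised. The sparsity hypothesis on $A$ is used to lower-bound $|\mathfrak E\cap{\VGeps}(B_{\boldsymbol{c_0}\mathbf r_z}(\eta(b)))|$ (and hence $\E V$, via Lemma~\ref{lemma:greens-kernel-lower-bound}), not to compare a cell count against $|A|$ after the fact. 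Since you ultimately say you would ``reuse that argument verbatim'', this is an expository slip rather than a genuine gap.
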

	\begin{proof}[Proof of Lemma \ref{lemma:constant-order-lower-bound} assuming Lemma \ref{lemma:constant-order-lower-bound-div}]	
		By the strong Markov property and Lemma \ref{lemma:cell-size-estimate}, the trace of a random walk started at $a \in \cl(\Theta)$ 
		contains the trace of a random walk started at some site $b \in {\VGeps}(\partial (\overline{\Theta} + B_{r/2}))$ with probability approaching one as ${\eps} \to 0$. Moreover, if $S \subset {\VGeps}(\overline{\Theta} + B_r)$ satisfies $|S \setminus \Theta| < s_1 r^{2 \beta^-} {\eps}^{-1}$
		then $A := S   \setminus \Theta$ satisfies  $|A| < s_1 r^{2 \beta^-} {\eps}^{-1}$.
		Therefore, for all $z \in \C$,  $|A \cap {\VGeps}(B_r(z))| < s_1 r^{2 \beta^-} {\eps}^{-1}$. 
		
		The previous three sentences imply, by Lemma \ref{lemma:constant-order-lower-bound-div} (under the conditions of the lemma)
		that 
		\[
		\P[\Xi \cap {\VGeps}(\overline{\Theta} + B_r) \setminus (S \cup \Theta) = \emptyset | h, \eta] \leq  1-s_2 ,
		\]
		which implies Lemma \ref{lemma:constant-order-lower-bound}. 
	\end{proof}

	\begin{figure}
		\includegraphics[width=0.5\textwidth]{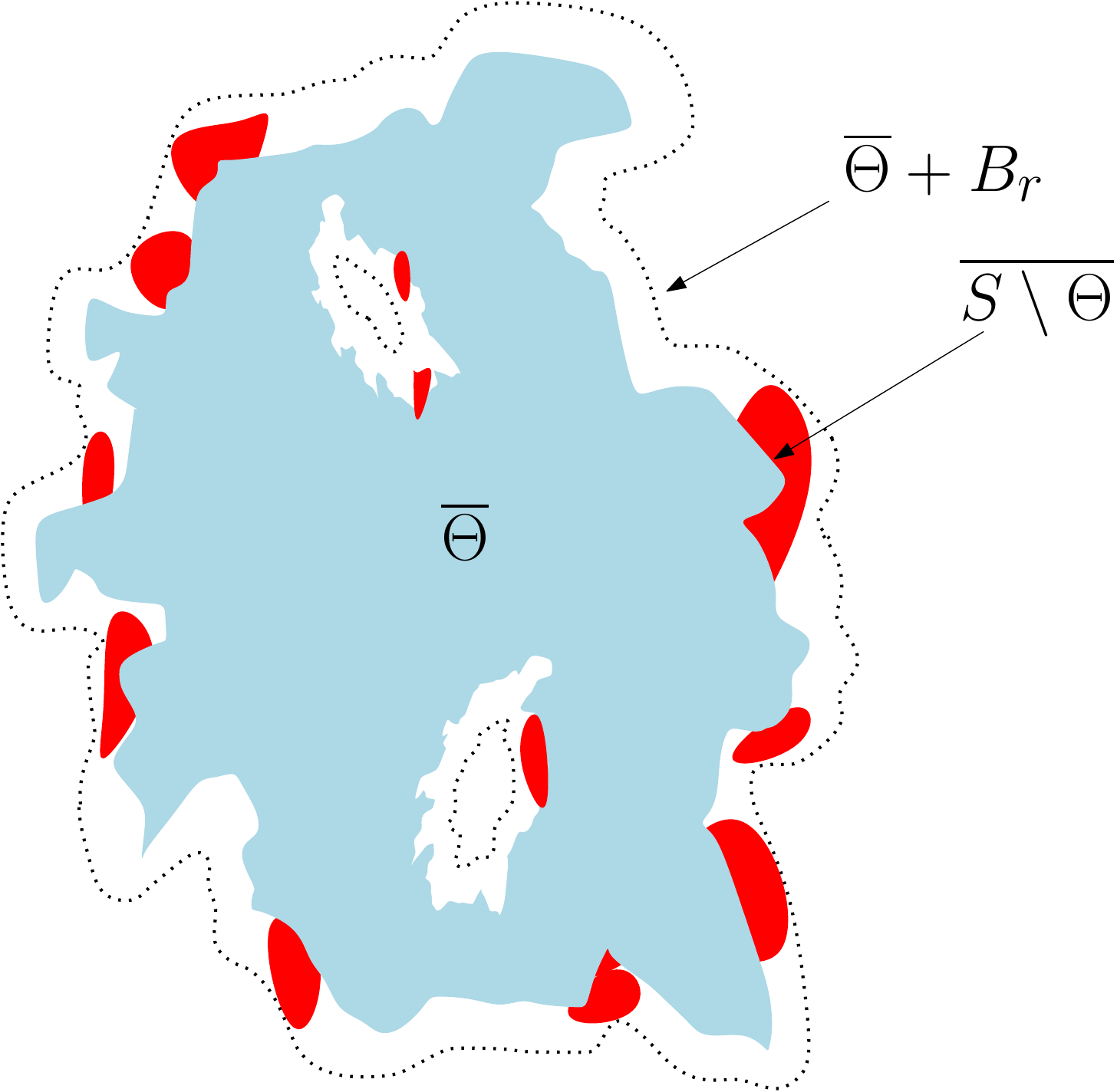}
		\caption{An illustration of the statement in Lemma \ref{lemma:constant-order-lower-bound}. The filled in cells corresponding to vertices in the cluster $\overline{\Theta}$ are shown in light blue and a neighborhood, $\overline{\Theta} + B_r$ is outlined by a dashed black line.  The set $\overline{S \setminus \Theta}$ is shown in red. Random walk started at a vertex in $\cl(\Theta)$
			is unlikely to exit ${\VGeps}(\overline{\Theta} + B_r)$ via paths which stay in $S$.}
		\label{fig:harmonic-measure-exit}
	\end{figure}

	\begin{proof}[Proof of Lemma \ref{lemma:constant-order-lower-bound-div}] 
		Let $\omega = \omega(\gamma) > 0$ be as in Lemma \ref{lemma:greens-kernel-lower-bound}.
		We may assume, by possibly taking $\omega$ smaller, that $2 \beta^{-} \omega \leq \alpha$, where $\beta^-$ and $\alpha$ are as in 
		Lemmas \ref{lemma:volume-growth} and \ref{lemma:mu_h-convergence}. This will be used in \eqref{eq:use-of-assumption-on-omega} below. 
		
		Let $E_{r}(z) = E_r(z; C)$
		and $C > 0$ be the event and parameter from Lemma \ref{lemma:good-prevalance}.
		By Lemma \ref{lemma:good-prevalance}, it holds with polynomially high probability as $\delta \to 0$ that for each $z \in (B_{\rho - 3 \sqrt{\delta}} \setminus B_{10 \sqrt{\delta}}) \cap \frac{\delta}{100} \Z^2$, there exists $\mathbf{r}_z = \mathbf{r}_z(\delta) \in [\delta,\delta^{1/2}]$ such that $E_{\mathbf{r}_z}(z)$ occurs. 
		By Lemma~\ref{lemma:volume-growth}, it also holds with polynomially high probability as $\delta \to 0$ that 
		\begin{equation} \label{eq:volume-growth-lower-bound}
		\mu_h\left( B_{\delta}(x) \right)  \geq \delta^{\beta^-}, \quad \forall x \in B_{1 - \delta}.
		\end{equation}
		By the Borel-Cantelli lemma, a.s.\ there exists a random $\boldsymbol{\delta}' > 0$ such that the preceding two sentences hold for all dyadic $\delta < \boldsymbol{\delta}'$. 
		We restrict to the event that the statements in Lemmas \ref{lemma:mu_h-convergence}, \ref{lemma:greens-kernel-lower-bound}, and \ref{lemma:exit-time-upper-bound} all hold, which happens with probability tending to 1 as ${\eps}\to 0$.  (For Lemma \ref{lemma:mu_h-convergence} we suppose that convergence holds for some countable family of polynomials which are dense in the uniform topology in $B_{\rho}$.) Our calculations below will require ${\eps}$ to be small in a deterministic way, so we fix a small deterministic $\boldsymbol{\pi}$ (which we will determine below) and an ${\eps} < \boldsymbol{\pi}$.  
		
		\medskip
		{\it Step 1: Reduction to random walk in a good annulus.}  \\
		We surround $\Theta$ by good annuli. By our assumptions on $\Theta$, by possibly taking $\boldsymbol{\pi}$ smaller, depending only on $s_0$, we can arrange that
		\begin{equation} \label{eq:boundary-inside}
		(\overline{\Theta} + B_{20 \sqrt{\delta }}) \setminus (\overline{\Theta} + B_{\sqrt{\delta }}) \Subset  B_\rho \setminus B_{10 \sqrt{\delta }}, \quad \forall \delta < \boldsymbol{\pi} \wedge \boldsymbol{\delta}'.
		\end{equation}
		By~\eqref{eq:boundary-inside} and the fact that $\mathbf{r}_z \in [\delta , \delta^{1/2}]$, we obtain that for each $\delta < \boldsymbol{\pi} \wedge \boldsymbol{\delta}'$ there exists $\mathbf{Z} = \mathbf{Z}(\delta) \subset (B_{\rho - \delta } \setminus B_{10 \sqrt{\delta }})  \cap \frac{\delta }{100} \Z^2$ for which
		\begin{equation} \label{eq:cover-continuous}
		(\overline{\Theta} + B_{4 \sqrt{\delta }}) \setminus (\overline{\Theta} + B_{3 \sqrt{\delta }}) \Subset \bigcup_{{z} \in \mathbf{Z}} B_{\mathbf{r}_{{z}}/2}({z}) \Subset B_{\rho- 10 \sqrt{\delta}}
		\end{equation} 
		and 
		\begin{equation} \label{eq:cover-in-complement}
		\bigcup_{{z} \in \mathbf{Z}} B_{3 \mathbf{r}_{{z}}}({z}) \Subset (\overline \Theta + B_{2 \sqrt{\delta}})^c.
		\end{equation}
		We now fix a dyadic $\delta \in (10 {\eps}^{\omega}, \boldsymbol{\pi} \wedge \boldsymbol{\delta}')$ 	
		and let $r \in [\delta, \sqrt{\delta}] < \rho/4$. We continue with this choice of $r$ as the lemma statement follows once we choose
		$\boldsymbol{\delta} = \sqrt{\boldsymbol{\pi} \wedge \boldsymbol{\delta}'}$.

		By \eqref{eq:cover-continuous} and the strong Markov property of simple random walk, it suffices to prove the lemma 
		with $a$ replaced by $b \in {\VGeps}(\partial B_{{(4/3)} \mathbf{r}_{z}}({z}))$ for ${z} \in \mathbf{Z}$. We in fact prove the following stronger statement: 
		a simple random walk $X^{b, {\eps}}_t$ started at such $b$ hits
		\[
		\mathfrak{E} := {\VGeps}(B_{\mathbf{r}_{z/4}}(\eta(b))) \setminus A
		\]
		before exiting ${\VGeps}(B_{\mathbf{r}_{z/4}}(\eta(b)))$ with conditional probability given $(h,\eta)$ bounded from below by a constant.
		See Figure \ref{fig:path-decomposition}.  
		\medskip
		
		{\it Step 2: Random walk in a good annulus.} \\
		Let $\xi_{\mathbf{r}_{z}}$ 
		be the first time that $X^{b, {\eps}}_t$ hits ${\VGeps}(\partial B_{\mathbf{r}_{z/4}}(\eta(b)))$ and let 
		\begin{equation} \label{eq:visit-count}
		V = \sum_{t=0}^{\xi_{\mathbf{r}_{z}}} 1 \{ X^{b, {\eps}}_t \in \mathfrak{E}\}  = \left( \text{number of visits of $\mathfrak{E}$ by $X^{b, {\eps}}$ before time $\xi_{\mathbf{r}_{z}}$} \right) .
		\end{equation}
		By the second moment method, 
		\begin{equation} \label{eq:second-moment-lower-bound}
		\P[V \geq 1 | h, \eta] \geq \frac{(\E[V | h, \eta])^2}{\E[V^2 | h , \eta]}.
		\end{equation}		
		In the next two steps we show that 
		\begin{equation} \label{eq:lower-bound-on-V}
		\E[ V | h ,\eta] \geq  {\eps}^{-1} C^{-1} \times \mu_h(B_{\boldsymbol{c_0} \mathbf{r}_{z}}(\eta(b)))
		\end{equation}
		and
		\begin{equation} \label{eq:upper-bound-on-V}
		\E[ V^2 | h ,\eta]   \leq  C {\eps}^{-2} \left(\sup_{v \in \A_{\mathbf{r}_{z}, 2\mathbf{r}_{z}}(z)} \int_{\A_{\mathbf{r}_{z},2 \mathbf{r}_{z}}(z)} \left( \log \frac{1}{|u-v|} + 1  \right) d \mu_h(u)\right)^2,
		\end{equation}	
		for some large deterministic constant $C = C(\rho, \gamma) > 0$. 
		Combining \eqref{eq:lower-bound-on-V} and \eqref{eq:upper-bound-on-V}  
		together with the definition of the event $E_r(z)$ in \eqref{eq:good-annulus-event}, leads to 
		a constant order lower bound in \eqref{eq:second-moment-lower-bound}. This completes the proof by the definition~\eqref{eq:visit-count} of $V$. 
		\medskip 
		
		{\it Step 3: Lower bound on the numerator.} \\ 
		We lower bound the numerator:
		\begin{align*}
		\E[ V | h ,\eta] &= \sum_{b' \in \mathfrak{E}} \Gr{{\eps}}{B_{\mathbf{r}_{z}/4}(\eta(b))}(b, b') \qquad \mbox{(definition of $V$ and $\Gr{{\eps}}{B_{\mathbf{r}_{z}}(\eta(b))}$)} \\
		&\geq \sum_{b' \in {\VGeps}(B_{\boldsymbol{c_0} \mathbf{r}_{z}}(\eta(b))) \cap \mathfrak{E}} \Gr{{\eps}}{B_{\mathbf{r}_{z}/4}(\eta(b))}(b, b') \quad \mbox{(where $\boldsymbol{c_0}$
			is as in \eqref{eq:choice-of-constant-annulus})} \\
		&\geq \left( \inf_{b' \in {\VGeps}(B_{\boldsymbol{c_0} \mathbf{r}_{z}}(\eta(b)))} \Gr{{\eps}}{B_{\mathbf{r}_{z}/4}(\eta(b))}(b, b') \right) (|{\VGeps}(B_{\boldsymbol{c_0} \mathbf{r}_{z}}(\eta(b))| - |A \cap {\VGeps}(B_{\mathbf{r}_{z}}(\eta(b))| ) \\	
		&\geq \left( \inf_{b' \in {\VGeps}(B_{\boldsymbol{c_0} \mathbf{r}_{z}}(\eta(b)))} \Gr{{\eps}}{B_{\mathbf{r}_{z}/4}(\eta(b))}(b, b') \right) {\eps}^{-1} (\mu_h(B_{\boldsymbol{c_0} \mathbf{r}_{z}}(\eta(b))) -  s_1 r^{2 \beta^-} 
		- {\eps}^{\alpha}) 		\stepcounter{equation}\tag{\theequation}\label{eq:lower-bound-on-V-intermediate} \\
		&\qquad \mbox{(by the assumption on $A$ and Lemma \ref{lemma:mu_h-convergence})}.
		\end{align*}
		We claim that for a sufficiently small, deterministic choice of $\boldsymbol{\pi} > 0$ and $s_1 > 0$, 
		\begin{equation}\label{eq:use-of-assumption-on-omega}
		\mu_h(B_{\boldsymbol{c_0} \mathbf{r}_{z}}(\eta(b))) -  s_1 r^{2 \beta^-} 
		- {\eps}^{\alpha} \geq  C'\mu_h(B_{\boldsymbol{c_0} \mathbf{r}_{z}}(\eta(b)))
		\end{equation} 
		for some deterministic constant $C' = C'(\rho, \gamma)$.
		Indeed, for a deterministic constant $c = c(\rho, \gamma)$ which will change from line to line we have
		\begin{align*}
		&\mu_h(B_{\boldsymbol{c_0} \mathbf{r}_{z}}(\eta(b))) -  s_1 r^{2 \beta^-} 
		- {\eps}^{\alpha} \\
		&= \frac{1}{2} \mu_h(B_{\boldsymbol{c_0} \mathbf{r}_{z}}(\eta(b)))  + \frac{1}{2} \mu_h(B_{\boldsymbol{c_0} \mathbf{r}_{z}}(\eta(b))) - 
		s_1 r^{2 \beta^-} 
		- {\eps}^{\alpha}  \\
		&\geq \frac{1}{2} \mu_h(B_{\boldsymbol{c_0} \mathbf{r}_{z}}(\eta(b)))  + c \times \mathbf{r}_{z}^{\beta^-} -
		s_1 r^{2 \beta^-} 
		- {\eps}^{\alpha}  \quad \mbox{(by Lemma \ref{lemma:volume-growth})} \\
		&\geq \frac{1}{2} \mu_h(B_{\boldsymbol{c_0} \mathbf{r}_{z}}(\eta(b)))  + c \times \delta^{\beta^-} -
		s_1 \times \delta^{\beta^-}
		- {\eps}^{\alpha}  \quad \mbox{(since $\mathbf{r}_{z} > \delta$ and $r < \sqrt{\delta}$)} \\
		&\geq \frac{1}{2} \mu_h(B_{\boldsymbol{c_0} \mathbf{r}_{z}}(\eta(b)))  + c \times \delta^{\beta^-} 
		- {\eps}^{\alpha}  \quad \mbox{(for small $s_1 < c$)}  \\
		&\geq \frac{1}{2} \mu_h(B_{\boldsymbol{c_0} \mathbf{r}_{z}}(\eta(b)))  + c \times {\eps}^{\omega \beta^-} 
		- {\eps}^{2 \beta^- \omega}  \quad \mbox{($\delta > {\eps}^{\omega}$ and $\alpha \geq 2 \beta^- \omega$ by assumption)}  \\
		&\geq \frac{1}{2} \mu_h(B_{\boldsymbol{c_0} \mathbf{r}_{z}}(\eta(b))) \quad \mbox{(for $\boldsymbol{\pi}$ small depending only on $c$).}
		\end{align*}
		Therefore, 
		\begin{align*}
		&\E[ V | h ,\eta] \\
		&\geq \left( \inf_{b' \in {\VGeps}(B_{\boldsymbol{c_0} \mathbf{r}_{z}}(\eta(b)))} \Gr{{\eps}}{B_{\mathbf{r}_{z}/4}(\eta(b))}(b, b') \right) {\eps}^{-1} (\mu_h(B_{\boldsymbol{c_0} \mathbf{r}_{z}}(\eta(b))) -  s_1 r^{2 \beta^-} 
		- {\eps}^{\alpha}) 
		\quad \mbox{(by \eqref{eq:lower-bound-on-V-intermediate})} \\
		&\geq \frac{1}{2} {\eps}^{-1} (\mu_h(B_{\boldsymbol{c_0} \mathbf{r}_{z}}(\eta(b))) -  s_1 r^{2 \beta^-} 
		- {\eps}^{\alpha})  
		\quad \mbox{(by \eqref{eq:choice-of-constant-annulus} and Lemma \ref{lemma:greens-kernel-lower-bound})} \\
		&\geq \frac{1}{2} {\eps}^{-1} \times  C'\mu_h(B_{\boldsymbol{c_0} \mathbf{r}_{z}}(\eta(b)))
		\quad \mbox{(by \eqref{eq:use-of-assumption-on-omega})},
		\end{align*}
		which implies \eqref{eq:lower-bound-on-V} with $C^{-1} := C'/2$.
		\medskip
		
		{\it Step 4: Upper bound on the denominator} \\ 
		We finally upper bound the denominator in~\eqref{eq:second-moment-lower-bound}; here the deterministic constant $C = C(\rho, \gamma) > 0$ also changes from line to-line: 
		\begin{align*}
		&\E[ V^2 | h ,\eta] \\
		&\leq \E[ \xi_{\mathbf{r}_{z}}^2 | h ,\eta] \\
		&\leq C {\eps}^{-2} \left(\sup_{v \in B_{\mathbf{r}_{z/4}}(\eta(b))} \int_{B_{\mathbf{r}_{z/4}+{\eps}^{\alpha}}(\eta(b))} \left( \log \frac{1}{|u-v|} + 1  \right) d \mu_h(u) \right)^2  \quad \mbox{(by Lemma \ref{lemma:exit-time-upper-bound})} \\
		&\leq C {\eps}^{-2} \left(\sup_{v \in \A_{\mathbf{r}_{z}, 2\mathbf{r}_{z}}(z)} \int_{\A_{\mathbf{r}_{z},2 \mathbf{r}_{z}}(z)} \left( \log \frac{1}{|u-v|} + 1  \right) d \mu_h(u) \right)^2 \\
		&\qquad \mbox{(since $r_{z} \geq \delta \geq 10 {\eps}^{\alpha}$ and $b \in {\VGeps}(\partial B_{(4/3) \mathbf{r}_z}(z)$)} .
		\end{align*}
		This shows \eqref{eq:upper-bound-on-V}. 	\end{proof}
	\begin{figure}
		\includegraphics[width=0.5\textwidth]{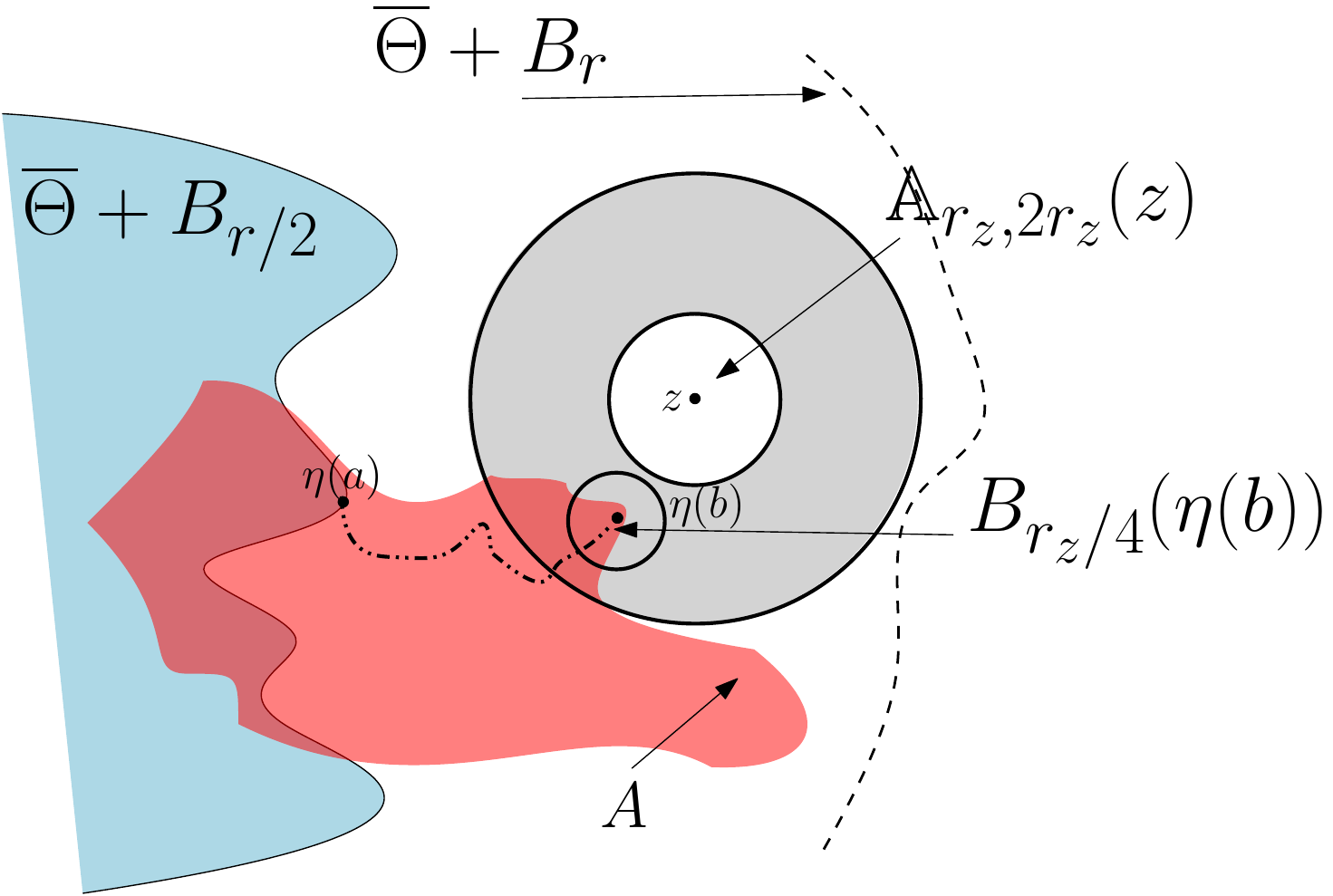}
		\caption{Illustration of part of the proof of Lemma \ref{lemma:constant-order-lower-bound}; only parts of the sets are shown and aspect ratios are not drawn to scale.
			The set $\overline{\Theta} + B_{r/2}$ is shown in blue with a solid black boundary, 
			the boundary of $\overline{\Theta} + B_{r}$ is shown as a dashed black line.
			A good annulus $\A_{r_z, 2 r_z}(z)$ is shown in gray, an interior ball is outlined in black, and the set $A$ is shown 
			in light-red. A random walk started at $a$ must hit ${\VGeps}(B_{{(4/3)} r_{z}}(z))$ for some ${z} \in \mathbf{Z}$
			at a point $b \in {\VGeps}(\partial B_{{(4/3)} r_{z}}(z))$. The proof then reduces to showing a random walk started at $b$ with 
			probability bounded from below must exit 
			$A$ before exiting ${\VGeps}(B_{r_{z}/4}(b))$.} \label{fig:path-decomposition}
	\end{figure}

	We next show that with exponentially high probability in $r$, if one starts $k \leq r^{2 \beta^-} {\eps}^{-1}$ walkers somewhere 
	in the closure of the current cluster $\cl(\Theta)$, then the number of walkers which are absorbed 
	before exiting ${\VGeps}(\overline\Theta + B_r)$ is at least a constant fraction of $k$. 
	The proof uses the constant-order lower bound given by Lemma \ref{lemma:constant-order-lower-bound} together with a concentration inequality for negative binomials. 
	The statement is a modification of \cite[Lemma 6]{duminil2013containing}. Note that \cite[Lemma 6]{duminil2013containing} is much stronger than what we have below, 
	but that stronger statement is not needed to complete the proof of the IDLA upper bound. For the statement, we recall the definition of the modified IDLA aggregate from Definition~\ref{def:stopped-idla}.
	
	\begin{lemma} \label{lemma:chance-of-being-absorbed}		
		There exists deterministic $\omega = \omega(\gamma) > 0$ and $s_3 > 0$ so that for each $s_0 \in (0,1)$, there exists $s_4, s_5 \in (0,1)$ 
		and a random $\sigma(h)$-measurable $\boldsymbol{\delta} >0$ such that with probability approaching one as ${\eps}$ goes to zero, the following holds for $r \in [10 {\eps}^{\omega}, \boldsymbol{\delta}]$ 
		for all sets $\Theta \subset {\VGeps}$ 
		such that $\overline{\Theta} \Subset B_{\rho/2}$ and $B_{s_0} \subset \overline{\Theta}$ with $\overline{\Theta}$ as in \eqref{eq:vertex-scaling}:
		for each $k \leq s_5 r^{2 \beta^-} {\eps}^{-1}$ and $a_1, \ldots, a_{k} \in \cl(\Theta)$, 
		\[
		\P[ |A(\Theta; a_1, \ldots, a_{k} \to {\VGeps}(\overline{\Theta} + B_r)) \setminus \Theta| \leq s_4 k | h ,\eta] \leq \exp(-s_3 k).
		\]
	\end{lemma}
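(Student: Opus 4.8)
The plan is to release the $k$ walkers one at a time --- this is legitimate by the Abelian property (Lemma~\ref{lemma:abelian-IDLA}), which makes $A(\Theta;a_1,\dots,a_k\to{\VGeps}(\overline\Theta+B_r))$ independent of the order --- and to show that, conditionally on the past, each walker adds a new vertex to the aggregate with probability at least the constant $s_2$ from Lemma~\ref{lemma:constant-order-lower-bound}. Take $\omega$ and $\boldsymbol\delta$ to be those of Lemma~\ref{lemma:constant-order-lower-bound}, with associated constants $s_1,s_2$, and set $s_5:=s_1$. Work on the high-probability event supplied by that lemma, and fix a set $\Theta$ and points $a_1,\dots,a_k\in\cl(\Theta)$ with $k\le s_5 r^{2\beta^-}{\eps}^{-1}$. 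Let $S_0=\Theta$, let $S_i$ be the aggregate after sending the $i$-th walker from $a_i$ and stopping it upon exiting ${\VGeps}(\overline\Theta+B_r)$, let $\mathcal F_i$ be the $\sigma$-algebra generated by $(h,\eta)$ and the traces of the first $i$ walkers, and put $\xi_i:=\mathbf 1\{S_i\setminus S_{i-1}\ne\emptyset\}$. Since an absorbed walker occupies exactly one previously unoccupied vertex, $N:=|A(\Theta;a_1,\dots,a_k\to{\VGeps}(\overline\Theta+B_r))\setminus\Theta|=\sum_{i=1}^k\xi_i$.

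The key step is the conditional bound $\P[\xi_i=1\mid\mathcal F_{i-1}]\ge s_2$. Given $\mathcal F_{i-1}$, walker $i$ is a fresh simple random walk started from $a_i\in\cl(\Theta)$ and independent of the past, and $\{\xi_i=1\}$ coincides with the event $\{\Xi_i\cap({\VGeps}(\overline\Theta+B_r)\setminus(S_{i-1}\cup\Theta))\ne\emptyset\}$ of Lemma~\ref{lemma:constant-order-lower-bound}, where $\Xi_i$ is the trace of walker $i$ stopped upon exiting ${\VGeps}(\overline\Theta+B_r)$ (here we used $\Theta\subseteq S_{i-1}$, so $S_{i-1}\cup\Theta=S_{i-1}$, and that the walker is absorbed precisely when its trace first meets a vertex outside the current aggregate). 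The hypothesis $|S\setminus\Theta|<s_1 r^{2\beta^-}{\eps}^{-1}$ of Lemma~\ref{lemma:constant-order-lower-bound} holds with $S=S_{i-1}$, since $|S_{i-1}\setminus\Theta|\le i-1<k\le s_5 r^{2\beta^-}{\eps}^{-1}=s_1 r^{2\beta^-}{\eps}^{-1}$. Hence Lemma~\ref{lemma:constant-order-lower-bound} gives $\P[\xi_i=1\mid\mathcal F_{i-1}]\ge s_2$ on our event.

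To conclude I would use a quantile coupling to construct i.i.d.\ $\mathrm{Bernoulli}(s_2)$ variables $\zeta_1,\dots,\zeta_k$ with $\xi_i\ge\zeta_i$ a.s., so that, conditionally on $(h,\eta)$, $N$ stochastically dominates a $\mathrm{Binomial}(k,s_2)$ variable. Choosing $s_4:=s_2/2$, a standard Chernoff bound for the lower tail of the binomial (of the same type as~\eqref{eq:concentration-inequality}; e.g.\ $\P[\mathrm{Binomial}(k,s_2)\le s_2 k/2]\le e^{-s_2 k/8}$) then yields $\P[N\le s_4 k\mid h,\eta]\le e^{-s_3 k}$ for a suitable $s_3>0$, which is the assertion.

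This argument is essentially routine given Lemma~\ref{lemma:constant-order-lower-bound}; the only points that need care are (i) verifying that the hypothesis $|S_{i-1}\setminus\Theta|<s_1 r^{2\beta^-}{\eps}^{-1}$ is preserved along the whole sequential exploration, which is why $s_5\le s_1$ is imposed and why it matters that each walker contributes at most one vertex, and (ii) making the conditional-on-$(h,\eta)$ stochastic domination precise before invoking the Chernoff estimate. If one wants $s_3$ (and $s_4$) to be genuinely deterministic --- depending only on $\gamma$ (and the fixed $\rho$) --- one observes that the constant $s_2$ produced in the proof of Lemma~\ref{lemma:constant-order-lower-bound} arises from the second-moment estimate in a good annulus and thus does not depend on $s_0$, only the random scale $\boldsymbol\delta$ does.
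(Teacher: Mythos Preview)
Your proof is correct and follows essentially the same route as the paper: both iteratively apply Lemma~\ref{lemma:constant-order-lower-bound} to show that each successive walker, conditioned on the past, is absorbed with probability at least $s_2$ (the bound $|S_{i-1}\setminus\Theta|\le i-1<k\le s_1 r^{2\beta^-}{\eps}^{-1}$ being preserved exactly as you note), and then invoke a concentration inequality. The only cosmetic difference is that the paper phrases the concentration step via a negative binomial (counting walkers until $k_1$ absorptions) rather than your binomial/Chernoff formulation (counting absorptions among $k$ walkers); these are dual and your version is arguably cleaner, allowing simply $s_5=s_1$.
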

	
	\begin{proof}
		For $s_0 \in (0,1)$, let $\omega$, $s_1, s_2$ be as in Lemma \ref{lemma:constant-order-lower-bound} 
		and fix ${\eps}$ and $r \geq 10 {\eps}^{\omega}$ for which the event of Lemma \ref{lemma:constant-order-lower-bound}  occurs. Set $s_5 = \min(\frac{s_1}{s^{-1}_2 + C}, s_1)$ for a
		deterministic constant $C > 0$ from \eqref{eq:number-of-walkers-nb-bound} below.  Let $k \leq s_5 r^{2 \beta^-} {\eps}^{-1}$ and $a_1, \ldots, a_{k} \in \Theta$
		be given. 
		
		Start with the set $\Theta$ completely occupied and consider an infinite number of walkers each started at an arbitrary site in $\cl(\Theta)$ and stopped upon exiting ${\VGeps}(\overline\Theta + B_r)$. Let $\mathcal{N}(k_1)$ denote the number of such walkers 
		needed until $k_1$ walkers have been absorbed into the cluster. That is, $\mathcal N(k_1)$ counts the number of walkers needed
		until the aggregate has an additional $k_1$ walkers in ${\VGeps}(\overline\Theta + B_r) \setminus \Theta$. 
		
		We seek to iteratively apply Lemma \ref{lemma:constant-order-lower-bound}. Define $S_0 = \Theta$ and for each $k' \in \{1, \ldots, k_1\}$
		condition on the first $(k'-1)$ walkers to determine the current aggregate $S_{k'-1}$. 
		We then apply Lemma \ref{lemma:constant-order-lower-bound} with $S := S_{k'-1}$ to the $k'$-th walker under the conditional law given the first $(k'-1)$ 
		walkers. By the Markov property of IDLA this implies that
		\[
		\mathcal{N}(k_1) \leq \mathrm{NegBinom}(k_1, s_2), \quad \forall k_1 \leq s_1 r^{2 \beta^-} {\eps}^{-1}
		\]
		where $\mathrm{NegBinom}(k_1,s_2)$ denotes a random variable drawn from a negative binomial distribution with $n = k_1$ and $p = s_2$, sampled independently from $(h,\eta)$. 
		Therefore, by a concentration inequality for negative binomial random variables, for each $k_1 \leq s_1 r^{2 \beta^-} {\eps}^{-1}$, 
		\begin{equation} \label{eq:number-of-walkers-nb-bound}
		\P[ \mathcal{N}(k_1) \leq (s_2^{-1} + C) k_1 | h, \eta] \leq \exp(-c k_1), \quad \mbox{for deterministic constants $C, c > 0$}.
		\end{equation}
		Thus, if we choose $k_1 = (s_2^{-1} + C)^{-1} \times k$, we have by our choice of $s_5$ and 
		\eqref{eq:number-of-walkers-nb-bound}, 
		\[
		\P\left[ \mathcal{N}((s_2^{-1} + C)^{-1} \times k) \leq k | h , \eta \right]  \leq \exp(-c k),
		\] 
		where the deterministic constant $c$ has changed compared to \eqref{eq:number-of-walkers-nb-bound}.
		By definition of $\mathcal{N}$, this implies the desired conclusion with $s_3 = c$ and $s_4 = (s_2^{-1} + C)^{-1}$.
	\end{proof}

	\subsection{Iteration} \label{subsec:iteration}
	We now prove a general bound on how far walkers can spread by iterating Lemma \ref{lemma:chance-of-being-absorbed}. 
	The iteration uses the Abelian property of IDLA and involves starting and stopping walkers which reach a certain distance
	of the current cluster. The proof is similar to that of \cite[Theorem 2]{duminil2013containing}.

	\begin{lemma}  \label{lemma:iterative-bound}
		There exists a deterministic exponent $\beta = \beta(\rho, \gamma) > 0$ and a constant $C = C(\rho, \gamma)>0$ so that 
		for each $s_0 \in (0,1)$ there exists a $\sigma(h)$-measurable random variable $\boldsymbol{\delta}_0$
		such that with probability approaching one as ${\eps} \to 0$,  for each $\overline{\Theta} \Subset B_{\rho/2}$ with $B_{s_0} \subset \Theta$ 
		and each $\delta_0 < \boldsymbol{\delta}_0$ the following is true. 
		The IDLA cluster started with  $\Theta$ completely occupied and
		$j \leq \lceil \delta_0 {\eps}^{-1} \rceil$ walkers at sites $\{a_1, \ldots, a_{j}\} \subset \cl(\Theta)$ 
		is contained in a $C \delta_0^\beta$ neighborhood of $\Theta$:
		\[
		A(\Theta; a_1, \ldots, a_{j}) \subset {\VGeps}(\Bdeltapp{C \delta_0^\beta}(\overline{\Theta})) \subset {\VGeps}(B_{\rho}).
		\]
	\end{lemma}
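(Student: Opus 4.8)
The idea is to iterate Lemma~\ref{lemma:chance-of-being-absorbed} across a geometrically increasing sequence of Euclidean radii, using the Abelian property of IDLA (Lemma~\ref{lemma:abelian-IDLA}) to ``start and stop'' walkers as they reach successive shells around the growing cluster. Set $\omega = \omega(\gamma)$ and $s_3, s_4, s_5$ as in Lemma~\ref{lemma:chance-of-being-absorbed} (with the given $s_0$), and let $\boldsymbol\delta$ be the associated random threshold. First I would fix the scales: let $\delta_0$ be small, and choose a base radius $r_0$ comparable to $\delta_0^{\theta}$ for a suitable exponent $\theta \in (0,1)$ chosen so that $r_0 \gg {\eps}^{\omega}$ for all small ${\eps}$ (so that Lemma~\ref{lemma:chance-of-being-absorbed} applies at every scale we use) while also $r_0 \le \boldsymbol\delta$. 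The number of walkers is $j \le \lceil \delta_0 {\eps}^{-1}\rceil$, so we need $\delta_0 {\eps}^{-1} \le s_5 r_0^{2\beta^-}{\eps}^{-1}$, i.e. $\delta_0 \le s_5 r_0^{2\beta^-} = s_5 \delta_0^{2\theta\beta^-}$; this forces $\theta < 1/(2\beta^-)$, which is fine. With $r_0$ fixed, define $\Theta_0 = \Theta$ and, inductively, generate the stopped IDLA aggregate $\Theta_{i+1} := A(\Theta_i; \text{walkers currently alive} \to {\VGeps}(\overline{\Theta_i} + B_{r_i}))$, where $r_i = r_0 2^{-i}$ (or any summable geometric sequence). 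Each step adds a shell of Euclidean width $r_i$ to the occupied region, so after $N$ steps the cluster is contained in ${\VGeps}(\overline\Theta + B_{\sum_i r_i}) \subset {\VGeps}(\Bdeltapp{2 r_0}(\overline\Theta))$.

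**Controlling the walker count.** The key point is that the number of walkers still alive decreases geometrically. Let $k_i$ be the number of walkers that are stopped on $\partial(\overline{\Theta_i} + B_{r_i})$, i.e. not yet absorbed, at stage $i$; so $k_0 = j$ and, by Lemma~\ref{lemma:abelian-IDLA}, at stage $i+1$ we restart exactly these $k_i$ walkers from their stopping positions, which lie in $\partial\Theta_{i+1} \subset \cl(\Theta_{i+1})$. Applying Lemma~\ref{lemma:chance-of-being-absorbed} at stage $i$ with $r = r_i$: provided $k_i \le s_5 r_i^{2\beta^-}{\eps}^{-1}$, with conditional probability at least $1 - \exp(-s_3 k_i)$ at least $s_4 k_i$ of these walkers are absorbed into $\Theta_{i+1} \setminus \Theta_i$, hence
\[
k_{i+1} \le (1 - s_4) k_i .
\]
Iterating, $k_i \le (1-s_4)^i j$. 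The hypothesis $k_i \le s_5 r_i^{2\beta^-}{\eps}^{-1}$ must be checked along the way: since $k_i \le (1-s_4)^i \delta_0 {\eps}^{-1}$ and $r_i^{2\beta^-} = r_0^{2\beta^-} 2^{-2\beta^- i}$, this holds as long as $(1-s_4)^i \delta_0 \le s_5 r_0^{2\beta^-} 2^{-2\beta^- i}$; as we already arranged $\delta_0 \le s_5 r_0^{2\beta^-}$, it suffices that $(1-s_4)^i \le 2^{-2\beta^- i}$, i.e. $1-s_4 \le 2^{-2\beta^-}$. If $s_4$ is not small enough for this we simply replace $r_i = r_0 2^{-i}$ by $r_i = r_0 \lambda^{-i}$ for $\lambda$ close enough to $1$ that $1 - s_4 \le \lambda^{-2\beta^-}$, which keeps $\sum r_i = r_0 \lambda/(\lambda-1) \le C r_0$ finite — absorbing the resulting constant into $C$. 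The process terminates at the first stage $N$ with $k_N = 0$; since $k_i$ strictly decreases while positive and $k_i \le (1-s_4)^i j$, we have $N \le C \log(\delta_0{\eps}^{-1})$, so $N$ is at most polylogarithmic in ${\eps}^{-1}$ and in particular finite a.s.

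**Assembling the probability bound and the statement.** The total failure probability is bounded by $\sum_{i=0}^{N-1} \exp(-s_3 k_i)$. This is harmless only when $k_i$ is not too small; for the many initial stages where $k_i \ge 1$ the terms are summable, and once $k_i = 0$ there is nothing to fail. More carefully: the ``bad'' event at stage $i$ has probability $\le \exp(-s_3 k_i) \le \exp(-s_3 (1-s_4)^i j)$ only while this is a valid bound, but in fact if $k_i \ge 1$ the bound $\exp(-s_3 k_i) \le \exp(-s_3)$ is crude but the better route is: on the complement of all bad events, $k_i$ decays geometrically so $k_{\lceil C\log j\rceil} < 1$, i.e. $=0$; and the union bound over the $O(\log j)$ relevant stages of $\exp(-s_3 k_i)$ — each at least, say, $\exp(-s_3 (1-s_4)^i j)$ with $(1-s_4)^i j \ge 1$ for $i$ in this range — gives total probability $\le C' \exp(-c' )$ for a constant, which does \emph{not} go to zero. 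To fix this, note we only need the \emph{containment} statement with probability tending to $1$: the conclusion fails only if some walker escapes the $C r_0$-neighborhood, which requires \emph{all} $k_0 = j \ge \lceil \delta_0{\eps}^{-1}\rceil$ walkers to survive many stages, an event whose probability is bounded by the first bad-event probability $\exp(-s_3 j) \le \exp(-s_3 \delta_0 {\eps}^{-1}) \to 0$ as ${\eps} \to 0$. Thus on an event of probability tending to $1$, the cluster is contained in ${\VGeps}(\Bdeltapp{C r_0}(\overline\Theta)) = {\VGeps}(\Bdeltapp{C\delta_0^\theta}(\overline\Theta))$; relabeling $\beta := \theta$ and noting $\Bdeltapp{C\delta_0^\beta}(\overline\Theta) \subset B_\rho$ for $\delta_0$ small (since $\overline\Theta \Subset B_{\rho/2}$), we obtain the claim, with $\boldsymbol\delta_0 := \boldsymbol\delta^{1/\theta} \wedge (\text{constant})$.

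**The main obstacle.** The delicate point is the bookkeeping that makes the union bound over stages harmless: one must argue that failure of the \emph{global} containment forces a large deviation at an \emph{early} stage where $k_i$ is of order $\delta_0{\eps}^{-1}$ (so $\exp(-s_3 k_i)$ genuinely vanishes as ${\eps}\to 0$), rather than naively summing small-$k_i$ terms which do not. This is exactly the structure of the argument in~\cite[Theorem 2]{duminil2013containing}, and the cleanest formulation is: the event ``$A(\Theta; a_1,\dots,a_j) \not\subset {\VGeps}(\Bdeltapp{C r_0}(\overline\Theta))$'' is contained in the union, over $i < N$, of the bad events, but since the bad event at stage $i$ is only relevant when no earlier bad event occurred and $k_i \ge 1$, and on that event $k_i \le (1-s_4)^i j$ with the geometric decay guaranteeing $k_i \ge 1$ only for $i \le C\log j$, one bounds the total by $\sum_{i \le C\log j}\exp(-s_3 k_i)$ where in the worst case $k_i = \lceil(1-s_4)^i j\rceil$; summing a geometric-in-exponent series dominated by its largest term $\exp(-s_3)$ times $O(\log j)$ is \emph{not} enough, so instead one keeps track that the very first stage already has $k_0 = j \asymp \delta_0 {\eps}^{-1}$ and any global failure implies in particular at least one walker survives all $N$ stages, an event of probability at most the stagewise survival probabilities multiplied, $\le \prod_i (1-s_4)$-type — more simply $\le \exp(-c\, j)$ — which tends to $0$. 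Making this last implication airtight (walker survival vs.\ aggregate-growth failure) is the step I would write out most carefully, following~\cite{duminil2013containing}.
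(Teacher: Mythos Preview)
Your overall strategy---iterating Lemma~\ref{lemma:chance-of-being-absorbed} across shells via the Abelian property---matches the paper's. The paper also uses radii that decay geometrically (adapted to the current walker count, $r_k = (|P_{k-1}|\,{\eps}\,s_5^{-1})^{1/2\beta^-}$, which is essentially your fixed geometric sequence once $|P_k| \le (1-s_4)^k j$ is established), and the exponent $\beta = 1/(2\beta^-)$ that falls out is the same as your $\theta$.

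The gap is exactly the point you flag as the ``main obstacle,'' and your proposed repair does not work. The event ``some walker escapes the $Cr_0$-neighborhood'' is \emph{not} bounded by $\exp(-c j)$ via a product of stagewise survival probabilities: the stages are not independent, a single walker's survival across stages is not a product of Bernoullis, and in any case one surviving walker on the boundary of the $N$th shell has not escaped anything (it is still within $\sum r_i \le Cr_0$). The real issue is twofold. First, the hypothesis $r_i \ge 10{\eps}^\omega$ of Lemma~\ref{lemma:chance-of-being-absorbed} fails once $r_i$ shrinks below that threshold, so the iteration cannot continue indefinitely; whatever walkers remain at that point are uncontrolled by your argument. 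Second, even before that, the stagewise failure probability $\exp(-s_3 k_i)$ is only small when $k_i$ is large, so a naive union bound over all stages does not vanish.

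The paper's fix is to introduce a lower cutoff ${\eps}^{-\omega_0}$ on the walker count (with $\omega_0$ small enough that ${\eps}^{-\omega_0}$ walkers each occupying a cell of diameter $\le{\eps}^{2/(2+\gamma)^2-\zeta}$ can only span Euclidean distance ${\eps}^{\omega_0/2}$). One iterates Lemma~\ref{lemma:chance-of-being-absorbed} only while $k_i \ge {\eps}^{-\omega_0}$; then every stage has failure probability $\le \exp(-s_3{\eps}^{-\omega_0})$, there are at most ${\eps}^{-1}$ stages (since at least one walker is absorbed per stage), and the union bound gives ${\eps}^{-1}\exp(-s_3{\eps}^{-\omega_0}) \to 0$. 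Once $k_i$ drops below ${\eps}^{-\omega_0}$, the remaining walkers are handled crudely by the cell-size estimate (Lemma~\ref{lemma:cell-size-estimate}): they can travel at most ${\eps}^{-\omega_0}\cdot{\eps}^{2/(2+\gamma)^2-\zeta} \le {\eps}^{\omega_0/2}$ before each one is absorbed. This endgame, not a survival-probability product, is what closes the argument.
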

	\begin{proof}
		For $s_0 \in (0,1)$, let the parameters $\omega, s_3, s_4, s_5$ and random variable $\boldsymbol{\delta}$ be as in  Lemma \ref{lemma:chance-of-being-absorbed}.
		We truncate on the event of Lemma \ref{lemma:chance-of-being-absorbed} and fix an ${\eps}$. 
		Below we use the fact that Lemma \ref{lemma:chance-of-being-absorbed} is stated for {\it all}  choices of $\Theta$ satisfying the hypotheses and IDLA is a Markov process, 
		Lemma \ref{lemma:abelian-IDLA}. 	Throughout the proof we introduce several other events 
		which occur with probability approaching one as ${\eps} \to 0$ and will truncate on those events. 
		
		Let $\beta^-$ be as in Lemma \ref{lemma:volume-growth}.
		Let $\boldsymbol{\delta_0} = \boldsymbol{\delta_0}(\boldsymbol{\delta}, \rho, \gamma) < \boldsymbol{\delta}$ be such that $(\boldsymbol{\delta_0} s_5^{-1})^{1/2 \beta^-} \leq \boldsymbol{\delta}$, so that Lemma \ref{lemma:chance-of-being-absorbed} holds with $r \in ( 10 {\eps}^{\omega}, (\boldsymbol{\delta_0} s_5^{-1})^{1/2 \beta^-})$.  
		At the end of the proof, we reveal, in a deterministic fashion,  how small  $\boldsymbol{\delta_0}$ must be
		so that the final cluster is contained in ${\VGeps}(B_{\rho})$. Fix $\delta_0 < \boldsymbol{\delta_0}$.

		Fix $\omega_0 \in (0,\omega)$ which we will choose in \eqref{eq:choice-of-omega0} below. 
		Set $\Theta^0 = \Theta$, $P_0 =\{a_1, \ldots, a_j\}$, 
		and for $k \geq 1$, inductively define (using the notation of Definition~\ref{def:stopped-idla})
		\[
		\begin{cases}
		\mbox{Current radius: } & r_k :=  \max\left( (|P_{k-1}| \times {\eps} \times s_5^{-1})^{1/{2 \beta^-}}, 10 {\eps}^{\omega_0} \right) \\
		\mbox{Current cluster: } & \Theta^{k} := A(\Theta^{k-1}; P_{k-1} \to {\VGeps}(\overline{\Theta^{k-1}} + B_{r_k})) \\
		\mbox{Stopped walkers: } & P_k := P(\Theta^{k-1}; P_{k-1} \to {\VGeps}(\overline{\Theta^{k-1}} + B_{r_k})).
		\end{cases}
		\]
		Define the stopping time
		\begin{equation} \label{eq:iterate-stop-time3}
		\tau_{{\eps},3} = \min \{ k \geq 0 : \overline{\Theta^{k}} \not \subset B_{\rho}\}.
		\end{equation}
		In the analysis below, we will truncate on a high probability event which ensures that $\tau_{{\eps},3}  =\infty$.
		
		Denote the first time there are fewer than 
		${\eps}^{-\omega_0}$ stopped walkers by 
		\begin{equation} \label{eq:iterate-stop-time2}
		\tau_{{\eps},2} = \min \left( \min \{ k \geq 0 : |P_k| \leq {\eps}^{-\omega_0}\}, \tau_{{\eps},3} \right)
		\end{equation}
		and the first time (before $\tau_{{\eps},2}$) that the current radius is less than $10 {\eps}^{\omega_0}$ by 
		\begin{equation}
		\tau_{{\eps},1} = \min \left( \min \{ k \geq 0 : r_k \leq 10 {\eps}^{\omega_0} \}, \tau_{{\eps},2} \right).
		\end{equation}
		By the Abelian property of IDLA, Lemma \ref{lemma:abelian-IDLA}, the distribution of the final IDLA cluster $A(\Theta; a_1, \ldots, a_{\lceil \delta_0 {\eps}^{-1} \rceil})$
		coincides with that of $\Theta^{\infty}$. Thus, it suffices to iteratively control how far the clusters $\Theta^{k}$ spread. In the first step, we control how far the aggregate spreads until 
		time $\tau_{{\eps},1}$, in the second, until time $\tau_{{\eps},2}$. In the third step, we control how far ${\eps}^{-\omega_0}$ walkers can spread.
		We show in the final step, after making $\boldsymbol{\delta_0}$ smaller in a deterministic way, that in fact we have $\tau_{{\eps},2} < \tau_{{\eps},3}$ and 
		$\tau_{{\eps},3} = \infty$. 
		
		Before proceeding, we note that by Lemma \ref{lemma:cell-size-estimate} (after truncating on another high probability event),  for each $\zeta > 0$, 
		\begin{equation} \label{eq:cell-size-estimate-local}
		\diam(H^{{\eps}}_a) \leq {\eps}^{2/(2+\gamma)^2 - \zeta} \quad \forall a \in {\VGeps}(B_{\rho}). 
		\end{equation}
		Choose $\zeta$ in a way which depends only on $\gamma$ and then choose $\omega_0$ small enough so that 
		\begin{equation} \label{eq:choice-of-omega0}
		{\eps}_0^{-\omega_0}  \times {\eps}_0^{2/(2+\gamma)^2 - \zeta} \leq {\eps}_0^{\omega_0/2} ,\quad \forall {\eps}_0 \in (0,1) .
		\end{equation}
		\medskip
		
		{\it Step 1: $t < \tau_{{\eps},1}$.} \\
		All of our parameter choices so far have been so that we can iteratively apply 
		Lemma \ref{lemma:chance-of-being-absorbed} for each $k \in \{1, \ldots, \tau_{{\eps},1}-1\}$ with $\Theta = \Theta^{k}$, 
		$r = r_k$, and $k = |P_{k-1}| \geq {\eps}^{-\omega_0}$.
		
		By \eqref{eq:cell-size-estimate-local} and \eqref{eq:choice-of-omega0} and the fact $r_k \geq 10 {\eps}^{\omega_0}$, there is a `shell' of additional vertices in
		$\Theta^{k}$ that are not in $\Theta^{k-1}$; that is, $\partial \Theta^{k} \cup \Theta^{k-1}$ is not connected. 
		Therefore, at least one walker is absorbed in each step and hence we have $\tau_{{\eps},1} \leq \tau_{{\eps},2} \leq {\eps}^{-1}$.
		Thus, by Lemma \ref{lemma:chance-of-being-absorbed} (with $s_3 > 0$ and $s_4 \in (0,1)$ from there)  and a union bound over all $k=1,\dots,\lfloor{\eps}^{-1} \rfloor$, 
		\begin{equation} \label{eqn:use-absorb}
		\P \left[ \exists k \in \{1, \ldots, \tau_{{\eps},1}-1\} : |P_k| \geq (1-s_4) |P_{k-1}| | h, \eta \right] \leq {\eps}^{-1} \exp(- s_3 {\eps}^{-\omega_0}).
		\end{equation}
		We henceforth truncate on the event in~\eqref{eqn:use-absorb}, which happens with probability tending to 1 as ${\eps}\to 0$
		to see that 	
		\begin{equation} \label{eq:good-event} 
		|P_k| \leq (1-s_4) |P_{k-1}|, \quad \forall k \in \{1, \ldots, \tau_{{\eps},1}-1\} .
		\end{equation}
		This implies that $|P_k| \leq (1-s_4)^k |P_0| \leq (1-s_4)^k \delta_0 {\eps}^{-1}$ for each $k\leq \tau_{{\eps},1}-1$. By the definition of $r_k$, this, in turn, implies that
		\begin{equation} \label{eq:good-event2} 
		r_k \leq (\delta_0\times (1-s_4)^{k-1} \times s_5^{-1})^{1/2\beta^-}, \quad \forall k \in \{1, \ldots, \tau_{{\eps},1}-1\} .
		\end{equation} 
		Therefore, 
		\begin{equation} \label{eq:bound-on-radii-sum}
		\sum_{k=1}^{\tau_{{\eps},1}-1} r_k \leq \sum_{k=1}^{\tau_{{\eps},1}-1} (\delta_0\times (1-s_4)^{k-1} \times s_5^{-1})^{1/2\beta^-} \leq C \times \delta_0^{1/2\beta^-} \times \frac{1}{1 - (1-s_4)^{1/2\beta^-}},
		\end{equation}
		where $C = C(s_5) > 0$ is deterministic. 
		\medskip

		{\it Step 2: $k \in [\tau_{{\eps},1}, \tau_{{\eps},2})$.} \\
		In this step we again apply Lemma \ref{lemma:chance-of-being-absorbed} for each $k \in \{\tau_{{\eps},1}, \ldots, \tau_{{\eps},2}\}$ 
		with the same choice of parameters as in Step 1 except now the radius is fixed $r_k = 10 {\eps}^{\omega_0}$. 
		Note that $r_k \geq (|P_{k-1}| \times {\eps} \times s_5^{-1})^{1/{2 \beta^-}}$ by definition and hence the conditions of Lemma \ref{lemma:chance-of-being-absorbed}
		are satisfied. 
		We show that 
		\begin{equation} \label{eq:time-upper-bound}
		\tau_{{\eps},2} \leq C \log {\eps}^{-1} + \tau_{{\eps},1}, \quad \mbox{for deterministic $C = C(s_4, \gamma) > 0$} 
		\end{equation}
		and hence 
		\begin{equation}
		\sum_{k=\tau_{{\eps},1}}^{\tau_{{\eps},2}} r_k \leq C \times {\eps}^{\omega_0} \times \log {\eps}^{-1}, \quad \mbox{for deterministic $C = C(s_4, \gamma) > 0$}.
		\end{equation}
		To see \eqref{eq:time-upper-bound}, we first use exactly the same argument as in Step 1 (and truncate on another event) to get that	
		\[
		|P_k| \leq (1-s_4) |P_{k-1}|, \quad \forall k \in \{\tau_{{\eps},1}, \ldots, \tau_{{\eps},2} \} .
		\]
		Hence, by the definition~\eqref{eq:iterate-stop-time2} of $\tau_{{\eps},2}$, 
		\[
		{\eps}^{-\omega_0} \leq |P_{\tau_{{\eps},2}-1}| \leq (1-s_4)^{\tau_{{\eps},2}- \tau_{{\eps},1}-1} {\eps}^{-1}
		\]
		which implies \eqref{eq:time-upper-bound} upon re-arranging. 
		\medskip
		
		{\it Step 3: $k \geq \tau_{{\eps},2}$.} \\
		It remains to control how far the remaining ${\eps}^{-\omega_0}$ walkers can go before being absorbed. 
		We do this crudely by appealing to the upper bound on the Euclidean diameter of cells in the mated-CRT map:
		by \eqref{eq:cell-size-estimate-local} and \eqref{eq:choice-of-omega0}
		we have that ${\eps}^{-\omega_0}$ walkers cannot go farther than Euclidean distance ${\eps}^{\omega_0/2}$ before being absorbed into the cluster,
		\begin{equation} \label{eq:distance-of-final-cluster}
		\dist(\overline{\Theta^{\infty}}, \overline{\Theta^{\tau_{{\eps},2}}}) < {\eps}^{\omega_0/2},
		\end{equation}
		since each walker occupies at least one cell.
		\medskip
		
		{\it Step 4: Reduce ${\eps}$ and $\boldsymbol{\delta_0}$ and conclude.} \\
		By combining Steps 1-3, the final aggregate $\overline{\Theta^{\infty}}$ is contained in $\overline{\Theta} + B_R$, where
		\[
		\begin{aligned}
		R &=  C \times \delta_0^{1/2\beta^-}  \quad \mbox{(by \eqref{eq:bound-on-radii-sum})} \\
		&+  C \times {\eps}^{\omega} \times \log {\eps}^{-1}  \quad \mbox{(by \eqref{eq:time-upper-bound})} \\
		&+ {\eps}^{\omega_0/2} \quad \mbox{(by \eqref{eq:distance-of-final-cluster})} 
		\end{aligned}
		\]
		for deterministic constants $C = C(s_4, s_5, \beta^-, \gamma)$. 
		The first term in $R$ dominates for small ${\eps}$.
		This completes the proof after we decrease $\boldsymbol{\delta_0}$ in a deterministic fashion and then ${\eps}$ depending on $\delta_0$ so that $R < \rho/4$ and hence the time $\tau_{{\eps},3} $ of \eqref{eq:iterate-stop-time3} is infinite, as $\overline{\Theta} \Subset B_{\rho/2}$ by assumption.
	\end{proof}

	\subsection{Proof of upper bound} \label{subsec:proof-of-idla-upper-bound}
	We now prove the upper bound by combining the asymptotically correct lower bound together with the general bound of the previous subsection. 
	The idea is the following. By the Abelian property, we may construct the IDLA cluster by first letting walkers evolve until they exit ${\VGeps}(\Lambda_t)$,
	where $\Lambda_t$ is as in Theorem \ref{theorem:harmonic-balls}.
	By the lower bound, Proposition \ref{prop:IDLA-lower-bound}, 
	most of the walkers will have been absorbed into the aggregate at this point. In fact, as the LQG measure of the boundary of harmonic balls is zero (Theorem \ref{theorem:harmonic-balls}),
	the number of remaining walkers can be made arbitrarily small. Thus, we may apply Lemma  \ref{lemma:iterative-bound} to see that the remaining walkers do not spread too far. 
	
	\begin{proof}[Proof of Proposition \ref{prop:IDLA-upper-bound}]
		Let the parameters $\beta, C$ be as in Lemma \ref{lemma:iterative-bound}. 
		Let $\delta \in (0, \rho)$ be given. Truncate on the event that $t < \TT$ and let $\Lambda_t$ be the harmonic ball satisfying the conditions in Theorem \ref{theorem:harmonic-balls}. 
		Since $\Lambda_t$ is open and contains the origin, we have that there exists a random $\sigma(h)$-measurable $s_0 \in (0,1)$ so that $B_{s_0} \Subset \Lambda_t$.
		Recall the notation $B_{\delta}^{\pm}$ from \eqref{eq:outer-inner-neighborhoods}. 
		Since $\mu_h(\partial \Lambda_t) = 0$ (Theorem \ref{theorem:harmonic-balls}), 
		\begin{equation} \label{eq:zero-measure-boundary}
		\lim_{\delta \downarrow 0} \left( \mu_h(\Bdeltap(\Lambda_t)) - \mu_h(\Bdeltam(\Lambda_t)) \right) = 0.
		\end{equation}
		Denote by 
		\[
		A_{{\eps}}(\lfloor t {\eps}^{-1} \rfloor \to {\VGeps}(\Lambda_t)) := A(\emptyset ; 0,\dots,0 \to {\VGeps}(\Lambda_t))
		\]
		the IDLA aggregate formed by $\lfloor t {\eps}^{-1} \rfloor$ walkers started at the origin 
		stopped upon exiting ${\VGeps}(\Lambda_t)$  and let $P_{{\eps}}(\lfloor t {\eps}^{-1} \rfloor \to {\VGeps}(\Lambda_t))$ denote the positions of the stopped (not absorbed) walkers
		on the boundary of ${\VGeps}(\Lambda_t)$. 
		We have by Proposition \ref{prop:IDLA-lower-bound}  
		that for each $\delta_0 \in (0, \rho)$, 
		\begin{equation} \label{eq:asymptotic-lower-bound}
		\Bdeltamm{\delta_0}(\Lambda_t)  \subset \overline{A}_{{\eps}}(\lfloor t {\eps}^{-1} \rfloor \to {\VGeps}(\Lambda_t))
		\end{equation}
		except on an event of probability tending to 0 as ${\eps} \to 0$.
		
		Choose, as in the statement of Lemma \ref{lemma:iterative-bound}, a $\sigma(h)$-measurable $\boldsymbol{\delta_0}$ depending on $s_0$. Assume that $C \boldsymbol{\delta_0}^\beta < \delta$. 
		By \eqref{eq:zero-measure-boundary}, we may choose a $\delta_0 < \boldsymbol{\delta_0}$ sufficiently small so that 
		\begin{equation} \label{eq:small-boundary}
		\mu_h\left(	\Bdeltapp{\delta_0}(\Lambda_t) \setminus	\Bdeltamm{\delta_0}(\Lambda_t) \right) \leq  \boldsymbol{\delta_0}/2. 
		\end{equation}
		The aggregate lower bound \eqref{eq:asymptotic-lower-bound} implies 
		\[
		|P_{{\eps}}(\lfloor t {\eps}^{-1} \rfloor \to {\VGeps}(\Lambda_t))| \leq t {\eps}^{-1} - |{\VGeps}(\Bdeltamm{\delta_0}(\Lambda_t))| 
		\]
		except on an event of probability tending to 0 as ${\eps} \to 0$.
		Let $\alpha = \alpha(\gamma) > 0$ be the parameter from Lemma \ref{lemma:mu_h-convergence}.
		Since by Theorem \ref{theorem:harmonic-balls}, $\mu_h(\Lambda_t) = t$, we have by Lemma \ref{lemma:mu_h-convergence},
		except on an event of probability tending to 0 as ${\eps} \to 0$, 
		\[
		{\eps}^{-1} \left( t - {\eps}^{\alpha}  \right) \leq |{\VGeps}(\Bdeltapp{\delta_0}(\Lambda_t))|  \leq {\eps}^{-1} \left( \mu_h(\Bdeltapp{\delta_0}(\Lambda_t)) + {\eps}^{\alpha} \right)
		\]
		and  
		\[
		|{\VGeps}(\Bdeltamm{\delta_0}(\Lambda_t))| \geq {\eps}^{-1} \left( \mu_h(\Bdeltamm{\delta_0}(\Lambda_t)) - {\eps}^{\alpha} \right). 
		\]
		The previous three indented equations imply that except on an event of probability tending to 0 as ${\eps} \to 0$,
		\[
		|P_{{\eps}}(\lfloor t {\eps}^{-1} \rfloor \to {\VGeps}(\Lambda_t))| \leq {\eps}^{-1} \left( \mu_h(\Bdeltapp{\delta_0}(\Lambda_t) \setminus \Bdeltamm{\delta_0}(\Lambda_t)) 
		+ 3 {\eps}^{\alpha} \right).
		\]		
		This implies by \eqref{eq:small-boundary} that
		\begin{equation}
		|P_{{\eps}}(\lfloor t {\eps}^{-1} \rfloor \to {\VGeps}(\Lambda_t))|  
		\leq \boldsymbol{\delta}_0 {\eps}^{-1}
		\end{equation}
		except on an event of probability tending to 0 as ${\eps} \to 0$.
		
		By the Abelian property of IDLA (Lemma \ref{lemma:abelian-IDLA}), the distribution of the final aggregate 
		coincides with that created after releasing the paused walkers: 
		\[
		A_{{\eps}}(\lfloor t {\eps}^{-1}\rfloor ) \overset{d}{=} A( A_{{\eps}}(\lfloor t {\eps}^{-1} \rfloor \to {\VGeps}(\Lambda_t));P_{{\eps}}(\lfloor t {\eps}^{-1} \rfloor \to {\VGeps}(\Lambda_t))),
		\]
		where the equality in distribution is conditional on $h$ and $\eta$. 
		Since IDLA is a Markov process and $\overline{\Lambda}_t \Subset B_{\rho/2}$  ($t < \TT$), we may apply Lemma \ref{lemma:iterative-bound} with $\Theta = A_{{\eps}}(\lfloor t {\eps}^{-1} \rfloor \to {\VGeps}(\Lambda_t))$ and at most $\boldsymbol{\delta}_0 {\eps}^{-1}$ walkers started at $P_{{\eps}}(\lfloor t {\eps}^{-1} \rfloor \to {\VGeps}(\Lambda_t))$ to see that 
		except on an event of probability approaching zero as ${\eps} \to 0$, 
		\[
		\overline{A}_{{\eps}}(\lfloor t {\eps}^{-1} \rfloor )  
		\subset 
		\Bdeltapp{C \boldsymbol{\delta_0}^\beta}(\overline{A_{{\eps}}}(\lfloor t {\eps}^{-1} \rfloor \to {\VGeps}(\Lambda_t))
		\subset 
		\Bdeltapp{C \boldsymbol{\delta_0}^\beta}(\Lambda_t).
		\]
		This concludes the proof upon recalling $\delta > C \boldsymbol{\delta_0}^\beta$. 	
	\end{proof}

	\section{Divisible sandpile upper bound} \label{sec:div-upper-bound}
	We combine Lemma \ref{lemma:constant-order-lower-bound} together with ideas from \cite[Section 6.5]{bou2022harmonic} 
	to prove an upper bound on the divisible sandpile cluster. For the statement, recall the notation of the cluster $D_{{\eps}}$ from \eqref{eq:cluster-definition}
	and $\overline{D_{{\eps}}}$ from \eqref{eq:vertex-scaling}. 
	\begin{prop} \label{prop:div-sandpile-upper-bound}
		Recall the time $\TT$ from~\eqref{eq:cluster-stopping-time}.
		For each $\delta \in (0, \rho)$ and $t > 0$, on the event $\{t < \TT\}$, 
		it holds except on an event of probability tending to 0 as ${\eps} \to 0$  that	
		\[
		\overline{D}_{{\eps}}(t {\eps}^{-1}) \subset \Bdeltap(\Lambda_t),
		\]
		where $\Lambda_t$ is as in Theorem \ref{theorem:harmonic-balls}. 
	\end{prop}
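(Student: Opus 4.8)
The plan is to combine the divisible sandpile lower bound (Proposition~\ref{prop:div-sandpile-lower-bound}), conservation of mass, and the harmonic measure estimates of Section~\ref{subsec:comparison-estimates}, following the strategy used for continuum LQG harmonic balls in \cite[Section 6.5]{bou2022harmonic}. Throughout we work on the event $\{t < \TT\}$, so by Theorem~\ref{theorem:harmonic-balls} the set $\Lambda_t$ is bounded, open, connected, contains the origin, and satisfies $\Lambda_t \Subset B_{\rho/3}$, $\mu_h(\Lambda_t) = t$ and $\mu_h(\partial\Lambda_t) = 0$. By \eqref{eq:cluster-and-lambdat} we have $\overline{D}_{{\eps}}(t{\eps}^{-1}) = \cl(\Lambdateps)$ with $\Lambdateps$ connected and containing the origin (Lemma~\ref{lemma:discrete-laplacian-bound}), and the cells of ${\Geps}$ meeting $B_\rho$ have diameter tending to $0$ (Lemma~\ref{lemma:cell-size-estimate}); hence it suffices to fix $\delta \in (0,\rho)$ and show that, with probability tending to $1$ as ${\eps} \to 0$, every $a \in \Lambdateps$ satisfies $\dist(\eta(a), \Lambda_t) < \delta$.

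First I would make the part of $\Lambdateps$ lying away from $\Lambda_t$ a negligible fraction of ${\eps}^{-1}$. Fix an auxiliary $\delta_0 \in (0,\delta)$. By Proposition~\ref{prop:div-sandpile-lower-bound} and Lemma~\ref{lemma:cell-size-estimate}, with probability tending to $1$ the ``core'' $\Theta := \VGeps(\Bdeltamm{2\delta_0}(\Lambda_t))$ satisfies $\Theta \subset \Lambdateps$; moreover $\overline{\Theta} \Subset B_{\rho/2}$, $\overline{\Theta} \subset \Bdeltapp{3\delta_0}(\Lambda_t)$, and $B_{s_0} \subset \overline{\Theta}$ for some random $\sigma(h)$-measurable $s_0 > 0$ once $\delta_0$ is small. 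Combining $|\Lambdateps| \le t{\eps}^{-1}$ (Lemma~\ref{lemma:discrete-conservation-of-mass}) with $|\Theta| \ge {\eps}^{-1}(\mu_h(\Bdeltamm{2\delta_0}(\Lambda_t)) - {\eps}^{\alpha})$ (Lemma~\ref{lemma:mu_h-convergence}, applied with $f\equiv 1$), the ``excess'' set $A := \Lambdateps \setminus \Theta$ obeys
\[
|A| \;\le\; {\eps}^{-1}\bigl( t - \mu_h(\Bdeltamm{2\delta_0}(\Lambda_t)) + {\eps}^{\alpha}\bigr) \;\le\; \kappa(\delta_0)\,{\eps}^{-1},
\]
where $\kappa(\delta_0) \to 0$ as $\delta_0 \to 0$ since $\mu_h(\partial\Lambda_t) = 0$. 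So, up to a vanishingly small fraction of ${\eps}^{-1}$, the cluster coincides with a set $\overline{\Theta}$ that is within $3\delta_0$ of $\Lambda_t$.

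The remaining task — and the main obstacle — is to show that the excess $A$ cannot spread a macroscopic distance beyond $\overline{\Theta}$: concretely, that for $\delta_0$ small one has $\Lambdateps \subset \Bdeltapp{\delta/2}(\overline{\Theta})$. This is the divisible-sandpile counterpart of the IDLA spreading estimate of Section~\ref{subsec:iteration}, and I would prove it by transcribing Lemmas~\ref{lemma:chance-of-being-absorbed} and~\ref{lemma:iterative-bound} to the divisible sandpile. The divisible sandpile also enjoys an Abelian property, so its cluster may be generated by first stabilizing the core $\Theta$ and then, by conservation of mass, redistributing the at-most $\kappa(\delta_0){\eps}^{-1}$ units of excess mass starting from $\partial\Theta$; and Lemma~\ref{lemma:constant-order-lower-bound-div} — in its ``mass-flux'' reading, namely that for a cluster $\Theta'$ with $B_{s_0}\subset\overline{\Theta'}$ and a set $A'$ with $|A'\cap\VGeps(B_r(z))| < s_1 r^{2\beta^-}{\eps}^{-1}$ for all $z$, a definite fraction $s_2$ of any mass released in $\cl(\Theta')$ is retained within $\VGeps(\overline{\Theta'}+B_r)$ rather than escaping through $A'$ — plays the role of the IDLA absorption estimate. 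Running the iteration exactly as in the proof of Lemma~\ref{lemma:iterative-bound} (current radius $r_k \asymp (|\text{remaining mass}|\,{\eps}\,s_5^{-1})^{1/2\beta^-}$ tied to the remaining excess mass, so that this mass drops by a constant factor and the radii $r_k$ decrease geometrically, with the last ${\eps}^{-\omega_0}$ units handled crudely via the cell-diameter bound) yields that the final cluster is contained in $\Bdeltapp{C\kappa(\delta_0)^{\beta}}(\overline{\Theta})$ for deterministic $\beta,C$ depending only on $(\rho,\gamma)$. Since $\kappa(\delta_0)\to 0$, choosing $\delta_0$ small gives $C\kappa(\delta_0)^\beta + 3\delta_0 < \delta$, and then shrinking ${\eps}$ we obtain $\Lambdateps \subset \Bdeltap(\Lambda_t)$, hence $\overline{D}_{\eps}(t{\eps}^{-1}) = \cl(\Lambdateps) \subset \Bdeltap(\Lambda_t)$ after one more appeal to Lemma~\ref{lemma:cell-size-estimate}.

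The genuinely delicate point is establishing the mass-flux version of Lemma~\ref{lemma:constant-order-lower-bound-div}: unlike IDLA, the divisible sandpile is deterministic, so ``a definite fraction of released mass is retained'' must be extracted from the obstacle-problem / least-action description of the odometer together with the random-walk harmonic measure bound — for instance via the identity $v^{{\eps}}_t(a) \le (t{\eps}^{-1})\gr{{\eps}}{\Lambdateps}(a,0) - \qeps{\Lambdateps}(a)$ from the proof of Lemma~\ref{lemma:mean-value-ineq}, the hitting-probability decomposition $\gr{{\eps}}{\Lambdateps}(a,0) = \P^0[\tau_a < \tau_{\partial\Lambdateps}\mid h,\eta]\,\gr{{\eps}}{\Lambdateps}(a,a)$, the crude Green's-function bounds of Lemmas~\ref{lemma:greens-function-log-bound} and~\ref{lemma:greens-kernel-lower-bound}, and the good-annuli geometry of Section~\ref{subsec:good-annuli} localized near $a$ — which is precisely the content of \cite[Section 6.5]{bou2022harmonic} adapted to ${\Geps}$. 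The rest is bookkeeping: coordinating the (polynomially high probability) good-annuli and volume-convergence events with the iteration, and taking the limits in the order ${\eps}\to 0$, then $\delta_0\to 0$.
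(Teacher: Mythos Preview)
Your overall strategy (small excess outside a core, then show the excess cannot spread far using the harmonic-measure input of Lemma~\ref{lemma:constant-order-lower-bound-div}) matches the paper's, but the iteration variable is different, and this difference matters.

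The paper does \emph{not} iterate on mass. It iterates on the \emph{odometer}: it introduces two short harmonic-comparison lemmas (Lemma~\ref{lemma:mass-bound-odometer}: $\sup_{\partial\Theta} u_t^{\eps}$ controls $|\Lambdateps\cap {\VGeps}(B_{\boldsymbol c_0 s}(z))|$ for any ball outside $\Theta$; and Lemma~\ref{lemma:odometer-bound-mass}: $u_t^{\eps}(b)\le (\sup_{\partial B} u_t^{\eps})\cdot \P[X^{b,{\eps}}\ \text{exits}\ B\ \text{before}\ B\setminus\Lambdateps\mid h,\eta]$), and then sets $O_k=\max_{\partial\Theta_k} u_t^{\eps}$ on nested shells $\Theta_k$. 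The first lemma feeds Lemma~\ref{lemma:constant-order-lower-bound-div} the required local mass bound on $A=\Lambdateps\setminus\Theta$; the second converts the harmonic-measure bound directly into $O_k\le (1-s_2)O_{k-1}$. The initial smallness comes from the convergence of the odometer itself (Theorem~\ref{theorem:convergence-divisible} plus Lemma~\ref{lemma:uniform-convergence-of-greens-function}), which yields $\sup_{{\VGeps}(\partial\Lambda_t)} u_t^{\eps}<\delta_0{\eps}^{-1}$ on $\{t<\TT\}$. The endgame is not the cell-diameter trick but Lemma~\ref{lemma:crude-upper-bound} (a monotone path along which $u_t^{\eps}$ increases by at least $(\log{\eps}^{-1})^{-2}$ per step), which bounds the reach of the support once $O_k\le{\eps}^{-\omega_0}$.

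Your route tries to transplant the IDLA walker-counting iteration to a deterministic model, and the step you flag as ``genuinely delicate'' is exactly where it stalls: there are no walkers to stop and count, and the Abelian property for the sandpile does not by itself give ``a definite fraction of released mass is retained in the shell.'' Your own proposed fix---pass through odometer/Green's-function identities---is precisely what the paper does, but then there is no reason to detour through mass at all; one should iterate on $u_t^{\eps}$ from the start. In short, your plan is not wrong in spirit, but the missing lemma you need is already the content of Lemmas~\ref{lemma:mass-bound-odometer} and~\ref{lemma:odometer-bound-mass}, and once you have those, the paper's odometer iteration is both shorter and avoids formulating a separate ``mass-flux'' statement.
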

	Recall from \eqref{eq:least-action-principle} the least action principle for the divisible sandpile odometer, 
	\[
	\frac{v^{{\eps}}_t}{\deg^{{\eps}}}  = \min \{ w:{\VGeps} \to [0,\infty) : \deg^{{\eps}} \times \Delta^{{\eps}} w + t {\eps}^{-1} \delta_0 \leq 1 \},
	\] 
	and that 
	\begin{equation} \label{eq:cluster-contained-in}
	\begin{aligned}
	\Lambdateps &= \{a \in \VGeps: v^{{\eps}}_t > 0 \} \\
	D_{{\eps}}(t {\eps}^{-1}) &=  \cl(\Lambdateps).
	\end{aligned}
	\end{equation}
	In this section we mainly work with the stopped odometer, defined by 
	\begin{equation} \label{eq:stopped-odometer-de}
	u_t^{{\eps}} = \min \{ w: {\VGeps}(B_{\rho}) \to [0,\infty) : \Delta^{{\eps}} w + {\deg^{{\eps}}(0)}^{-1} t {\eps}^{-1} \delta_0 \leq (\deg^{{\eps}})^{-1}\},
	\end{equation}
	which, as discussed in the beginning of Section \ref{sec:divisible-sandpile}, relates to $\wteps$ via  $u_t^{{\eps}} = \wteps + (t {\eps}^{-1}) \times  \gr{{\eps}}{B_\rho}(0, \cdot)$. 
	The reason for doing so is that the results of Section \ref{sec:divisible-sandpile}, a priori, apply just to the stopped odometer.  However, as we will show, for small $t$, the odometer 
	$\frac{v^{{\eps}}_t}{\deg^{{\eps}}}$ agrees with the stopped odometer. In particular, 
	it is straightforward to see that 
	\begin{equation} \label{eq:consistency}
	u_t^{{\eps}} = 0 \quad \mbox{on $\cl({\VGeps}(\partial B_{\rho}))$} 
	\implies
	u_t^{{\eps}} = \frac{v^{{\eps}}_t}{\deg^{{\eps}}} \quad  \mbox{in ${\VGeps}(\cl(B_{\rho}))$}.
	\end{equation}
	Therefore, the upper bound for the divisible sandpile cluster will follow from \eqref{eq:cluster-contained-in} once we show that $u_t^{{\eps}}$ is zero near the boundary of $\Lambda_t$ for $t < \TT$. 
	
	Our strategy for the upper bound is similar to the argument in \cite[Section 6]{bou2022harmonic}, but with some simplifications. 
	We start by establishing harmonic comparison lemmas in Section \ref{subsec:harmonic-comparison} which allow us to compare the mass
	of the cluster to the size of the stopped odometer.  We then use these lemmas together with an iteration to show the upper bound in Section \ref{subsec:div-sandpile-iteration} as follows. By convergence of the stopped odometer away from the origin, we have that $u_t^{{\eps}}$ is small near ${\VGeps}(\partial \Lambda_t)$. This implies that the amount of mass in the shell ${\VGeps}(\Lambda_t + B_r) \setminus {\VGeps}(\Lambda_t)$ is small. 
	Hence, by Lemma \ref{lemma:constant-order-lower-bound-div} the probability that a random walk exits the cluster
	before exiting this shell is bounded from below. By comparison, this implies the size of the odometer must be small on ${\VGeps}(\partial (\Lambda_t + B_r))$. Iterate this argument with shells of decreasing size to see that the odometer must be zero outside of 
	${\VGeps}(\Lambda_t + B_R)$ for some small $R$.
	
	This is simpler than \cite[Section 6]{bou2022harmonic} as we do not have to redo \cite[Sections 6.1-6.3]{bou2022harmonic} since we have already proved a harmonic measure estimate in Lemma \ref{lemma:constant-order-lower-bound}. In particular, we do not consider `very good annuli' explicitly. Further, since we have an a priori strong upper bound on the size of the odometer on ${\VGeps}(\partial \Lambda_t)$ due to its convergence (Theorem \ref{theorem:convergence-divisible}), the inductive counterpart of \cite[Section 6.5]{bou2022harmonic} is shorter. 
	
	\subsection{Harmonic comparison} \label{subsec:harmonic-comparison}
	In this subsection, we prove discrete analogues of the harmonic comparison lemmas for the Hele-Shaw odometer from \cite[Section 6.4]{bou2022harmonic}. First note that since  $\Delta^{\eps} u_t^{\eps} = \Delta^{\eps} \wteps + (t {\eps}^{-1}) \times \Delta^{\eps} \gr{{\eps}}{B_{\rho}}$,  by combining \eqref{eq:discrete-normalized-green-laplacian} and Lemma \ref{lemma:discrete-basic-properties} we have that 
	\begin{equation} \label{eq:laplacian-of-uteps}
	\begin{aligned}
	0 \leq \Delta^{{\eps}} u_t^{{\eps}}  + {\deg^{{\eps}}(0)}^{-1} t {\eps}^{-1} \delta_0 &\leq (\deg^{{\eps}})^{-1}
	\quad \mbox{on ${\VGeps}(B_{\rho})$} \\
	u_t^{{\eps}} &\geq 0  	\quad \mbox{on ${\VGeps}(B_{\rho})$} \\
	\Delta^{{\eps}} u_t^{{\eps}}  + {\deg^{{\eps}}(0)}^{-1} t {\eps}^{-1} \delta_0 &= (\deg^{{\eps}})^{-1}
	\quad \mbox{on $\Lambdateps$} \\
	u_t^{{\eps}} &= 0 \quad \mbox{on ${\VGeps}(B_{\rho}) \setminus \Lambdateps$}.
	\end{aligned}
	\end{equation}
	We use this to show the following. 
	\begin{lemma} \label{lemma:mass-bound-odometer}
		There exists deterministic constants $C = C(\rho, \gamma) > 0$, $\boldsymbol{c}_0 = \boldsymbol{c}_0(\rho, \gamma) \in (0, 1)$ such that with polynomially high probability as ${\eps} \to 0$, the following holds. For all $t > 0$, 
		all connected sets $\Theta \subset {\VGeps}(B_{\rho})$ containing the origin, 
		and all $s > 0$, $z \in \C$ such that ${\VGeps}(B_{s}(z)) \subset {\VGeps}(B_{\rho}) \setminus \Theta$, 
		\[
		\left| {\VGeps}(B_{\boldsymbol{c}_0 s}(z)) \cap \Lambdateps \right| \leq C \times \sup_{\partial \Theta} u_t^{{\eps}}.
		\]
	\end{lemma}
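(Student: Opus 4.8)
\textbf{Plan for the proof of Lemma \ref{lemma:mass-bound-odometer}.}

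The plan is to exhibit an explicit $\Delta^{\eps}$-harmonic ``test function'' on the ball $B_s(z)$ whose boundary values are controlled by $\sup_{\partial\Theta} u_t^{\eps}$ and whose Laplacian, after subtracting the odometer, detects the mass of $\Lambdateps$ inside a smaller concentric ball. More precisely, fix $z$ and $s$ as in the statement, write $a_z^{\eps}$ for the vertex whose cell contains $z$, and consider the random walk Green's function $\Gr{{\eps}}{B_s(z)}(a_z^{\eps},\cdot)$ on the discretized ball. Since ${\VGeps}(B_s(z))$ lies in the complement of $\Theta$ and $\Theta$ is connected containing the origin, any vertex of $\Lambdateps\cap{\VGeps}(B_{\boldsymbol c_0 s}(z))$ — recall $\Lambdateps$ is connected and contains $0$ — can be reached from the origin only by passing through $\partial\Theta$; combined with \eqref{eq:laplacian-of-uteps} (which says $u_t^{\eps}$ is $\Delta^{\eps}$-subharmonic on ${\VGeps}(B_\rho)$, with $\Delta^{\eps} u_t^{\eps}=(\deg^{\eps})^{-1}$ exactly on $\Lambdateps$ away from the origin) this forces $u_t^{\eps}$ restricted to ${\VGeps}(B_s(z))$ to solve a discrete obstacle-type problem whose source term is $(\deg^{\eps})^{-1}\mathbf 1_{\Lambdateps}$ and whose boundary data on ${\VGeps}(\partial B_s(z))$ is at most $\sup_{\partial\Theta}u_t^{\eps}$ by the maximum principle (Lemma \ref{lemma:maximum-principle}), since $0\notin{\VGeps}(B_s(z))$.

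The key computation is then a Green's identity on ${\VGeps}(B_s(z))$. Write $u_t^{\eps} = \Phi^{\eps} + \sum_{b\in\Lambdateps\cap({\VGeps}(B_s(z))\setminus{\VGeps}(\partial B_s(z)))}\gr{{\eps}}{B_s(z)}(\cdot,b)$, where $\Phi^{\eps}$ is the $\Delta^{\eps}$-harmonic extension of $u_t^{\eps}|_{{\VGeps}(\partial B_s(z))}$; this follows from \eqref{eq:discrete-normalized-green-laplacian} together with \eqref{eq:laplacian-of-uteps} and the fact that $u_t^{\eps}\ge 0$ vanishes outside $\Lambdateps$. By the maximum principle $0\le\Phi^{\eps}\le\sup_{\partial\Theta}u_t^{\eps}$ on all of ${\VGeps}(B_s(z))$, so evaluating at the center $a_z^{\eps}$,
\[
\sup_{\partial\Theta}u_t^{\eps}\ \ge\ u_t^{\eps}(a_z^{\eps})\ -\ \Phi^{\eps}(a_z^{\eps})\ =\ \sum_{b\in\Lambdateps\cap{\VGeps}(B_s(z))}\gr{{\eps}}{B_s(z)}(a_z^{\eps},b)\ -\ (\text{boundary correction}),
\]
and dropping the nonnegative boundary correction and restricting the sum to $b\in{\VGeps}(B_{\boldsymbol c_0 s}(z))$ gives a lower bound on $\sum_{b\in\Lambdateps\cap{\VGeps}(B_{\boldsymbol c_0 s}(z))}\gr{{\eps}}{B_s(z)}(a_z^{\eps},b)$. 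Now invoke the Green's kernel lower bound, Lemma \ref{lemma:greens-kernel-lower-bound}: for a suitable deterministic $\boldsymbol c_0=\boldsymbol c_0(\rho,\gamma)$ chosen so that $C^{-1}\log(1/\boldsymbol c_0)-1\ge 1$ (precisely as in \eqref{eq:choice-of-constant-annulus}), each kernel term $\gr{{\eps}}{B_s(z)}(a_z^{\eps},b)$ with $b\in{\VGeps}(B_{\boldsymbol c_0 s}(z))$ is bounded below by a positive deterministic constant $c=c(\rho,\gamma)$, with polynomially high probability as ${\eps}\to 0$ (after restricting $s$ to the regime $s\ge{\eps}^\omega$ where that lemma applies; the complementary case of tiny $s$ is handled trivially since then ${\VGeps}(B_{\boldsymbol c_0 s}(z))$ has $O(1)$ vertices by Lemma \ref{lemma:cell-size-estimate}). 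Summing, $\sup_{\partial\Theta}u_t^{\eps}\ge c\,|\Lambdateps\cap{\VGeps}(B_{\boldsymbol c_0 s}(z))|$, which is the claim with $C=c^{-1}$.

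\textbf{Main obstacle.} The delicate point is the reduction to a \emph{stopped} problem on $B_s(z)$: one must be sure that $u_t^{\eps}$ restricted to ${\VGeps}(B_s(z))$ genuinely decomposes as harmonic-extension-plus-Green's-potential-of-$\mathbf 1_{\Lambdateps}$ with the correct sign, even though $u_t^{\eps}$ was defined globally (on ${\VGeps}(B_\rho)$) with a delta mass at the origin. This is where connectedness of $\Theta$ and of $\Lambdateps$, the containment ${\VGeps}(B_s(z))\subset{\VGeps}(B_\rho)\setminus\Theta$, and the explicit Laplacian identities \eqref{eq:laplacian-of-uteps} all have to be combined carefully; in particular one needs that the source $\Delta^{\eps}u_t^{\eps}$ on ${\VGeps}(B_s(z))\setminus{\VGeps}(\partial B_s(z))$ is exactly $(\deg^{\eps})^{-1}\mathbf1_{\Lambdateps}$ there (no origin term, since $0\in\Theta$ so $0\notin B_s(z)$) and that the boundary trace is nonnegative and bounded by $\sup_{\partial\Theta}u_t^{\eps}$, the latter using that $\partial\Theta$ separates ${\VGeps}(B_s(z))$ from $0$ together with subharmonicity. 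The remaining ingredients — choosing $\boldsymbol c_0$ via \eqref{eq:choice-of-constant-annulus}, applying Lemma \ref{lemma:greens-kernel-lower-bound}, and the trivial small-$s$ case — are routine.
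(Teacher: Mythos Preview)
Your overall strategy matches the paper's: compare $u_t^{\eps}$ on ${\VGeps}(B_s(z))$ to the Green's potential $\mathfrak f^{\eps}(\cdot)=\sum_{b\in\Lambdateps\cap\text{interior}}\gr{{\eps}}{B_s(z)}(\cdot,b)$, bound the latter by $\sup_{\partial\Theta}u_t^{\eps}$ via the maximum principle, and then invoke Lemma~\ref{lemma:greens-kernel-lower-bound} with $\boldsymbol c_0$ chosen as in~\eqref{eq:choice-of-constant-annulus} to get a uniform lower bound on each kernel term. The paper carries this out by noting that $u_t^{\eps}+\mathfrak f^{\eps}$ is $\Delta^{\eps}$-subharmonic on the interior of ${\VGeps}(B_s(z))$ and equals $u_t^{\eps}$ on the boundary, so $\sup\mathfrak f^{\eps}\le\sup(u_t^{\eps}+\mathfrak f^{\eps})\le\sup_{{\VGeps}(\partial B_s(z))}u_t^{\eps}\le\sup_{\partial\Theta}u_t^{\eps}$.

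However, your written decomposition has a sign error that, as stated, makes the key inequality fail. By~\eqref{eq:laplacian-of-uteps}, $\Delta^{\eps}u_t^{\eps}=+(\deg^{\eps})^{-1}$ on $\Lambdateps$ (away from $0$), whereas by~\eqref{eq:discrete-normalized-green-laplacian} the Green's sum satisfies $\Delta^{\eps}\mathfrak f^{\eps}=-(\deg^{\eps})^{-1}\mathbf 1_{\Lambdateps}$. So $u_t^{\eps}$ is \emph{sub}harmonic on $B_s(z)$ while $\mathfrak f^{\eps}$ is \emph{super}harmonic; consequently $u_t^{\eps}\le\Phi^{\eps}$ (not $\ge$), and the correct relation is $u_t^{\eps}-\Phi^{\eps}\le-\mathfrak f^{\eps}$, i.e.\ $\mathfrak f^{\eps}\le\Phi^{\eps}-u_t^{\eps}\le\Phi^{\eps}\le\sup_{\partial\Theta}u_t^{\eps}$. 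Your displayed line $u_t^{\eps}(a_z^{\eps})-\Phi^{\eps}(a_z^{\eps})=\sum\gr{{\eps}}{B_s(z)}(a_z^{\eps},b)-(\text{correction})$ has the wrong sign on the right: the left side is $\le 0$, so it cannot equal a nonnegative Green's sum minus a nonnegative correction. Once you flip the sign (or, equivalently, argue directly with $u_t^{\eps}+\mathfrak f^{\eps}$ as the paper does), the rest of your outline goes through. A minor additional point: your ``trivial small-$s$'' case is not quite trivial, since the \emph{lower} bound on cell diameters in Lemma~\ref{lemma:cell-size-estimate} is much smaller than ${\eps}^\omega$; the paper's statement and proof are also slightly loose here, but in the application (Proposition~\ref{prop:div-sandpile-upper-bound}) only radii $s\gtrsim{\eps}^{\omega_0}$ are ever used.
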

	
	\begin{proof}
		Truncate on the events of Lemmas \ref{lemma:greens-kernel-lower-bound} and \ref{lemma:cell-size-estimate}.
		By \eqref{eq:laplacian-of-uteps} we have that $\Delta^{{\eps}} u_t^{{\eps}} \geq 0$ away from the origin and $\Delta^{{\eps}} u_t^{{\eps}} = (\deg^{{\eps}})^{-1}$ on $\Lambdateps \setminus \{0\}$. 
		Hence, by the maximum principle,  as $u_t^{{\eps}} = 0$ on $\partial B_{\rho}$ and $u_t^{{\eps}}$ is subharmonic away from the origin, 
		\begin{equation} \label{eq:upper-bound-on-ut}
		\sup_{\Lambdateps \cap {\VGeps}(B_s(z))} u_t^{{\eps}} \leq  \sup_{\partial \Theta} u_t^{{\eps}}.
		\end{equation}
		We will prove the desired inequality by comparing $u_t^{{\eps}}$ to the  following function 
		\[
		\mathfrak{f}^{{\eps}}(\cdot) =  \sum_{b \in {\VGeps}(B_s(z)) \cap \Lambdateps \setminus  {\VGeps}(\partial B_s(z))} \gr{{\eps}}{B_s(z)}(\cdot,b)
		\]
		which, for the same reasoning leading to \eqref{eq:discrete-normalized-exit-laplacian}, satisfies 
		\[
		\begin{cases}
		\Delta^{{\eps}} \mathfrak{f}^{{\eps}}(\cdot) = -\deg^{{\eps}}(\cdot)^{-1} 1\{ \cdot \in \Lambdateps\} \quad &\mbox{in ${\VGeps}(B_s(z)) \setminus {\VGeps}(\partial B_s(z))$} \\
		\mathfrak{f}^{{\eps}}(\cdot) = 0 \quad &\mbox{otherwise}.
		\end{cases} 
		\]
		We then compute:
		\begin{align*}
		\sup_{{\VGeps}(B_s(z))} \mathfrak{f}^{{\eps}}  &\leq 	\sup_{{\VGeps}(B_s(z))} ( u_t^{{\eps}} + \mathfrak{f}^{{\eps}} ) \quad \mbox{($u_t^{{\eps}} \geq 0$)} \\
		&\leq \sup_{{\VGeps}(\partial B_s(z))} (	u_t^{{\eps}} + \mathfrak{f}^{{\eps}} )  \quad \mbox{($u_t^{{\eps}} + \mathfrak{f}^{{\eps}}$ is subharmonic)} \\
		&= \sup_{{\VGeps}(\partial B_s(z))} 	u_t^{{\eps}} \quad \mbox{($\mathfrak{f}^{{\eps}} = 0$ on ${\VGeps}(\partial B_s(z))$)} \\
		&\leq \sup_{\partial \Theta} u_t^{{\eps}} \quad \mbox{(by \eqref{eq:upper-bound-on-ut})}.
		\end{align*}	
		Let $\boldsymbol{c}_0$ be as in \eqref{eq:choice-of-constant-annulus} so that
		by Lemma \ref{lemma:greens-kernel-lower-bound} 
		\[
		\mathfrak{f}^{{\eps}} \geq \sum_{b \in {\VGeps}(B_{\boldsymbol{c_0} s}(z)) \cap \Lambdateps}  \gr{{\eps}}{B_s(z)}(\cdot,b)
		\geq \frac{1}{2} 	 \left| {\VGeps}(B_{\boldsymbol{c}_0 s}(z)) \cap \Lambdateps \right|.
		\]
		Combining the previous two indented equations completes the proof. 
	\end{proof}

	\begin{lemma} \label{lemma:odometer-bound-mass}
		The following holds for all $t > 0$ and all sets $B \subset {\VGeps}(B_{\rho}) \setminus \{0\}$:
		\[
		u_t^{{\eps}}(b) \leq \left( \sup_{\partial B} u_t^{{\eps}} \right) \P[\mbox{$X^{b, {\eps}}$ exits $B$ before hitting $B \setminus \Lambdateps$} | h, \eta], \quad \forall b \in B,
		\]	
		where $X^{b, {\eps}}$	is a simple random walk on ${\Geps}$ started from the vertex $b \in {\VGeps}$.
	\end{lemma}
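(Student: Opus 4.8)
\textbf{Proof proposal for Lemma \ref{lemma:odometer-bound-mass}.}

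The plan is to exploit the fact that $u_t^{{\eps}}$ satisfies the pointwise inequalities in \eqref{eq:laplacian-of-uteps}, in particular that $\Delta^{{\eps}} u_t^{{\eps}} \geq 0$ away from the origin and $\Delta^{{\eps}} u_t^{{\eps}} = (\deg^{{\eps}})^{-1} > 0$ on $\Lambdateps \setminus \{0\}$, combined with a one-sided comparison against the normalized exit time from $B$. Fix $B \subset {\VGeps}(B_\rho) \setminus \{0\}$ and recall from \eqref{eq:normalized-exit-time} and Remark \ref{remark:overload-notation-potential} the normalized expected exit time $\qeps{B \setminus \Lambdateps}$, which by \eqref{eq:discrete-normalized-exit-laplacian} satisfies $\Delta^{{\eps}} \qeps{B \setminus \Lambdateps} = -(\deg^{{\eps}})^{-1}$ on $B \setminus \Lambdateps$ and vanishes elsewhere; more useful, though, is the harmonic function recording the hitting probability. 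Specifically, let $p^{{\eps}}: {\VGeps} \to [0,1]$ be the $\Delta^{{\eps}}$-harmonic function on $B$ with boundary values: $p^{{\eps}} = 1$ on the part of $\partial B$ reached by exiting $B$, and $p^{{\eps}} = 0$ on $(B \setminus \Lambdateps) \cup (\text{the rest of } \partial B)$ — that is, $p^{{\eps}}(b) = \P[\text{$X^{b,{\eps}}$ exits $B$ before hitting $B \setminus \Lambdateps$} \mid h, \eta]$. By standard discrete potential theory (e.g.\ \cite[Proposition 6.2.3]{lawler-limic-walks} applied to the domain $B$ with the absorbing set $B \setminus \Lambdateps$ adjoined to $\partial B$), $p^{{\eps}}$ is $\Delta^{{\eps}}$-harmonic precisely on the vertices of $\Lambdateps \cap B$.

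First I would set $M := \sup_{\partial B} u_t^{{\eps}}$ and compare $u_t^{{\eps}}$ with $M p^{{\eps}}$ on the region $\Lambdateps \cap B$. On this region $u_t^{{\eps}}$ is $\Delta^{{\eps}}$-subharmonic — in fact strictly, $\Delta^{{\eps}} u_t^{{\eps}} = (\deg^{{\eps}})^{-1} \geq 0$ away from the origin, and the origin is excluded since $0 \notin B$ — while $M p^{{\eps}}$ is $\Delta^{{\eps}}$-harmonic there. Thus $u_t^{{\eps}} - M p^{{\eps}}$ is $\Delta^{{\eps}}$-subharmonic on $\Lambdateps \cap B$. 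Now examine its boundary values on $\partial(\Lambdateps \cap B)$. A boundary vertex $b'$ either lies in $\partial B$, where $u_t^{{\eps}}(b') \leq M$ and $p^{{\eps}}(b') \in \{0,1\}$ so $u_t^{{\eps}}(b') - M p^{{\eps}}(b') \leq M - M p^{{\eps}}(b') \leq M$ when $p^{{\eps}}(b')=0$ but I want $\leq 0$ — here I must be slightly careful: on the exit part of $\partial B$, $p^{{\eps}} = 1$ so the difference is $\leq 0$, while on the non-exit part of $\partial B$ one has $p^{{\eps}} = 0$ but then this vertex lies outside $B$, so the correct boundary set to use is $\partial_B(\Lambdateps\cap B)$ = neighbors in $B$ — or else $b'$ lies in $B \setminus \Lambdateps$, where $u_t^{{\eps}}(b') = 0$ (by \eqref{eq:laplacian-of-uteps}, since $B\setminus\Lambdateps \subset {\VGeps}(B_\rho)\setminus\Lambdateps$) and $p^{{\eps}}(b') = 0$, giving difference $= 0$. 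The cleanest route is to take the domain to be $\Lambdateps \cap B$ as a subset of ${\VGeps}$ and note its boundary consists only of vertices in $B \setminus \Lambdateps$ (where both functions vanish) together with vertices of $\partial B$ — and for those, arrange the partition so that exit-vertices get $p^{{\eps}}=1$. The maximum principle, Lemma \ref{lemma:maximum-principle}, then gives $u_t^{{\eps}}(b) - M p^{{\eps}}(b) \leq \max_{\partial(\Lambdateps \cap B)} (u_t^{{\eps}} - M p^{{\eps}}) \leq 0$ for all $b \in \Lambdateps \cap B$, i.e.\ $u_t^{{\eps}}(b) \leq M p^{{\eps}}(b)$. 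For $b \in B \setminus \Lambdateps$ the inequality is trivial since $u_t^{{\eps}}(b) = 0$. This yields the claim.

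The main obstacle I anticipate is bookkeeping at the boundary: making sure that $p^{{\eps}}$ is defined so that it is genuinely $\Delta^{{\eps}}$-harmonic on $\Lambdateps\cap B$ (which requires adjoining $B\setminus\Lambdateps$ to the absorbing boundary and checking $0\notin B$ is what lets us avoid the point mass in \eqref{eq:laplacian-of-uteps}), and verifying that the boundary vertices of $\Lambdateps\cap B$ — in the sense of $\cl(\cdot)$ from the Notation section — are exactly those where we control both $u_t^{{\eps}}$ (either $=0$ or $\leq M$) and $p^{{\eps}}$ (either $=0$ or $=1$) with matching signs. Once the domain decomposition is set up correctly, the subharmonicity of $u_t^{{\eps}}$ away from the origin from \eqref{eq:laplacian-of-uteps} and a single application of Lemma \ref{lemma:maximum-principle} finish the argument; there are no estimates or limits involved, only the discrete maximum principle.
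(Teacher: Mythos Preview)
Your proposal is correct and is essentially the paper's own proof: define $f(b)=M\,p^{{\eps}}(b)$ with $M=\sup_{\partial B}u_t^{{\eps}}$, observe that $f$ is $\Delta^{{\eps}}$-harmonic on $B\cap\Lambdateps$ while $u_t^{{\eps}}$ is subharmonic there (since $0\notin B$), check that $u_t^{{\eps}}-f\le 0$ on $\partial(B\cap\Lambdateps)\subset (B\setminus\Lambdateps)\cup\partial B$, and apply Lemma~\ref{lemma:maximum-principle}. Your mid-paragraph worry about a ``non-exit part of $\partial B$'' is unnecessary---every vertex of $\partial B$ lies outside $B$, so $p^{{\eps}}=1$ on all of $\partial B$ and the boundary comparison $u_t^{{\eps}}\le M= Mp^{{\eps}}$ is immediate there.
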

	\begin{proof}
		The function 
		\[
		f(b) := \left( \sup_{\partial B} u_t^{{\eps}} \right) \P[\mbox{$X^{b, {\eps}}$ exits $B$ before hitting $B \setminus \Lambdateps$} | h, \eta]
		\]
		satisfies 
		\[
		\begin{cases}
		\Delta^{{\eps}} f = 0 \quad \mbox{on $B \cap \Lambdateps$} \\
		f = 0 \quad \mbox{on $B \cap (\Lambdateps)^c$} \\
		f = \left( \sup_{\partial B} u_t^{{\eps}} \right) \quad \mbox{on $B^c \cap \Lambdateps$} 
		\end{cases}
		\]
		Therefore, by \eqref{eq:laplacian-of-uteps}, we have 
		\[
		\begin{cases}
		\Delta^{{\eps}} (u_t^{{\eps}} - f) \geq 0 \quad \mbox{on $B \cap \Lambdateps$} \\
		(u_t^{{\eps}} - f) = 0 \quad \mbox{on $B \cap (\Lambdateps)^c$} \\
		(u_t^{{\eps}} - f) \leq 0 \quad \mbox{on $B^c \cap \Lambdateps$},
		\end{cases}
		\]
		which implies the claim by the maximum principle and the fact $u_t^{{\eps}} = 0$ on $(\Lambdateps)^c$. 
	\end{proof}

	\subsection{Iteration} \label{subsec:div-sandpile-iteration}
	The proof of Proposition \ref{prop:div-sandpile-upper-bound} proceeds by a similar iteration to the proof of Proposition \ref{prop:IDLA-upper-bound}
	but with the odometer replacing the role of the random walks.  We show that the maximum of the odometer decreases across shells of 
	increasing radii. 
	Eventually the odometer will be so small the upper bound we derive from the following estimate will suffice. 
	\begin{lemma}[Lemma 4.2 \cite{levine2009strong}] \label{lemma:crude-upper-bound}
		The following is true for all $t > 0$ with polynomially high probability as ${\eps} \to 0$. 
		For every point $a \in D_{{\eps}}(t {\eps}^{-1}) \setminus \{0\}$  there is a path $a = a_0 \sim a_1 \sim \cdots \sim a_m = 0$ in $D_{{\eps}}(t {\eps}^{-1})$ 
		with 
		\[
		u^{{\eps}}_t(a_{i+1}) \geq u^{{\eps}}_t(a_i) + \log({\eps}^{-1})^{-2}.
		\]
	\end{lemma}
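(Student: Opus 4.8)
\textbf{Proof proposal for Lemma \ref{lemma:crude-upper-bound}.}

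The plan is to build the path greedily, moving from $a$ toward the origin, at each step passing to a neighbor on which the odometer is larger by at least $\log({\eps}^{-1})^{-2}$, and to show that such a neighbor always exists as long as we remain in $D_{{\eps}}(t{\eps}^{-1})\setminus\{0\}$. Recall from \eqref{eq:laplacian-of-uteps} that $\Delta^{{\eps}} u_t^{{\eps}} + {\deg^{{\eps}}(0)}^{-1} t{\eps}^{-1}\delta_0 = (\deg^{{\eps}})^{-1}$ on $\Lambdateps$, so that away from the origin $\Delta^{{\eps}} u_t^{{\eps}} = (\deg^{{\eps}})^{-1} > 0$ on $\Lambdateps\setminus\{0\}$; in particular $u_t^{{\eps}}$ is \emph{strictly} subharmonic there. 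Thus for any $a \in \Lambdateps \setminus \{0\}$, the averaging identity $\Delta^{{\eps}} u_t^{{\eps}}(a) = \frac{1}{\deg^{{\eps}}(a)}\sum_{b\sim a}(u_t^{{\eps}}(b) - u_t^{{\eps}}(a)) = (\deg^{{\eps}}(a))^{-1}$ gives $\sum_{b\sim a}(u_t^{{\eps}}(b) - u_t^{{\eps}}(a)) = 1$, so the \emph{maximum} over neighbors $b\sim a$ of $u_t^{{\eps}}(b) - u_t^{{\eps}}(a)$ is at least $1/\deg^{{\eps}}(a)$. If we knew $\deg^{{\eps}}(a) \le \log({\eps}^{-1})^{2}$ for all relevant $a$, this would immediately give a neighbor with $u_t^{{\eps}}(b) \ge u_t^{{\eps}}(a) + \log({\eps}^{-1})^{-2}$.

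The first real step, then, is to control the vertex degrees: I would invoke the degree estimates from \cite{berestycki2020random} (the same ones underlying Lemma \ref{lemma:greens-function-log-bound} and Lemma \ref{lemma:mu_h_degree-convergence}) to conclude that with polynomially high probability as ${\eps}\to 0$, one has $\max_{a\in{\VGeps}(B_{\rho})}\deg^{{\eps}}(a) \le \log({\eps}^{-1})^{2}$ — indeed the degrees are known to be at most ${\eps}^{-o(1)}$, far smaller than any power of ${\eps}$, so a polylogarithmic bound is more than comfortable. (Since $D_{{\eps}}(t{\eps}^{-1}) \subset {\VGeps}(B_{\rho})$ on the event $\{t < \TT\}$, and more generally the cluster is contained in a fixed ball with high probability, this is the only place degrees enter.) The second step is the greedy construction: starting from $a_0 = a$, given $a_i \in \Lambdateps\setminus\{0\}$ with $u_t^{{\eps}}(a_i) > 0$, pick $a_{i+1}\sim a_i$ maximizing $u_t^{{\eps}}$ among neighbors; by the subharmonicity computation and the degree bound, $u_t^{{\eps}}(a_{i+1}) \ge u_t^{{\eps}}(a_i) + \log({\eps}^{-1})^{-2} > u_t^{{\eps}}(a_i) > 0$, so $a_{i+1}$ again lies in $\Lambdateps$ (the odometer is positive there) and is not the point where we started descending — but since $u_t^{{\eps}}$ is strictly increasing along the path and takes finitely many values on the finite set $\Lambdateps$, the path cannot cycle, so it must terminate, and the only way to terminate is to reach the origin (the unique point of $\cl(\Lambdateps)$ where the subharmonicity computation fails). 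This produces the required path $a = a_0 \sim \cdots \sim a_m = 0$ lying in $\Lambdateps \subset D_{{\eps}}(t{\eps}^{-1})$.

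The main obstacle is verifying that the path stays inside the cluster and actually reaches $0$ rather than getting stuck at a boundary vertex of $\cl(\Lambdateps)$ that has toppled mass zero: this is handled by the observation that $u_t^{{\eps}} > 0$ precisely on $\Lambdateps$ (by definition \eqref{eq:discrete-cluster}) together with the fact that $u_t^{{\eps}}$ strictly increases along the greedily chosen path, which forces each $a_{i+1}$ into $\Lambdateps$; combined with finiteness of $\Lambdateps$ this rules out both escaping the cluster and infinite looping. A minor secondary point is that one should handle the degenerate case $t{\eps}^{-1} \le 1$ (empty or singleton cluster) separately, where the statement is vacuous. Everything else — the averaging identity, the maximum principle, the degree bound — is either immediate from Section \ref{subsec:discrete-potential-definitions} or quoted from \cite{berestycki2020random}, so this lemma is genuinely routine once the degree bound is in hand, which is why the paper attributes it to \cite[Lemma 4.2]{levine2009strong}.
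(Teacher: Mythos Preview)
Your proposal is correct and follows essentially the same approach as the paper: a greedy path using the identity $\Delta^{{\eps}} u_t^{{\eps}} = (\deg^{{\eps}})^{-1}$ on $\Lambdateps\setminus\{0\}$ together with a polylogarithmic bound on the maximal degree, then termination by finiteness of the cluster. The only minor discrepancy is the source of the degree bound --- the paper cites \cite[Lemma~2.6]{gwynne2019harmonic} rather than \cite{berestycki2020random} --- and the paper is slightly more explicit about the initial step when $a_0 \in D_{{\eps}}(t{\eps}^{-1}) \setminus \Lambdateps = \partial\Lambdateps$, where one uses that some neighbor lies in $\Lambdateps$ by definition of the closure.
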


	\begin{proof}
		By \cite[Lemma 2.6]{gwynne2019harmonic} it holds with polynomially high probability as ${\eps} \to 0$ that 
		\begin{equation} \label{eq:degree-bound}
		\max_{a \in {\VGeps}(B_{\rho})} \deg^{{\eps}}(a) \leq (\log {\eps}^{-1})^2.
		\end{equation}
		Let $a_0 = a$. Inductively, assume that $i\geq 0$ and $a_i \in  D_{{\eps}}(t {\eps}^{-1}) $ has been defined. If $a_i = \{0\}$, set $m=i$ and conclude the induction. If $a_i \not=0$, let $a_{i+1}$ be a neighbor of $a_i$ maximizing $u^{{\eps}}_t(a_{i+1})$.
		Then $a_{i+1} \in  \Lambdateps$. Indeed, either $a_i \in \Lambdateps$ and hence $a_{i+1} \in \Lambdateps$
		or $i = 0$ and $a_i$ is on the boundary of $\Lambdateps$. In the latter case, by definition 
		there is some site in $\Lambdateps$ neighboring $a_i$ with a strictly positive odometer. Since $\Delta^{{\eps}} u^{{\eps}}_t = (\deg^{{\eps}})^{-1}$ on $\Lambdateps$,
		\eqref{eq:laplacian-of-uteps},	
		we have that 
		\begin{align*}
		u^{{\eps}}_t(a_{i+1}) &\geq \frac{1}{\deg^{{\eps}}(a_i)} \sum_{y \sim a_i} u^{{\eps}}_t(y) \quad \text{(choice of $a_{i+1}$)} \\
		&=  u^{{\eps}}_t(a_i) + \Delta^{{\eps}} u^{{\eps}}_t(a_i) \quad \text{(definition of $\Delta^{\eps}$)} \\
		&= u^{{\eps}}_t(a_i) + \deg^{{\eps}}(a_i)^{-1} \\
		&\geq u^{{\eps}}_t(a_i) + \log({\eps}^{-1})^{-2} \quad\text{(by~\eqref{eq:degree-bound})}.
		\end{align*} 
		As the number of vertices in $D_{{\eps}}(t {\eps}^{-1})$ is finite and contains the origin, Lemma \ref{lemma:discrete-conservation-of-mass}, there must be a finite $i$ for which $a_i = 0$. 
	\end{proof}

	\begin{figure}
		\includegraphics[width=0.5\textwidth]{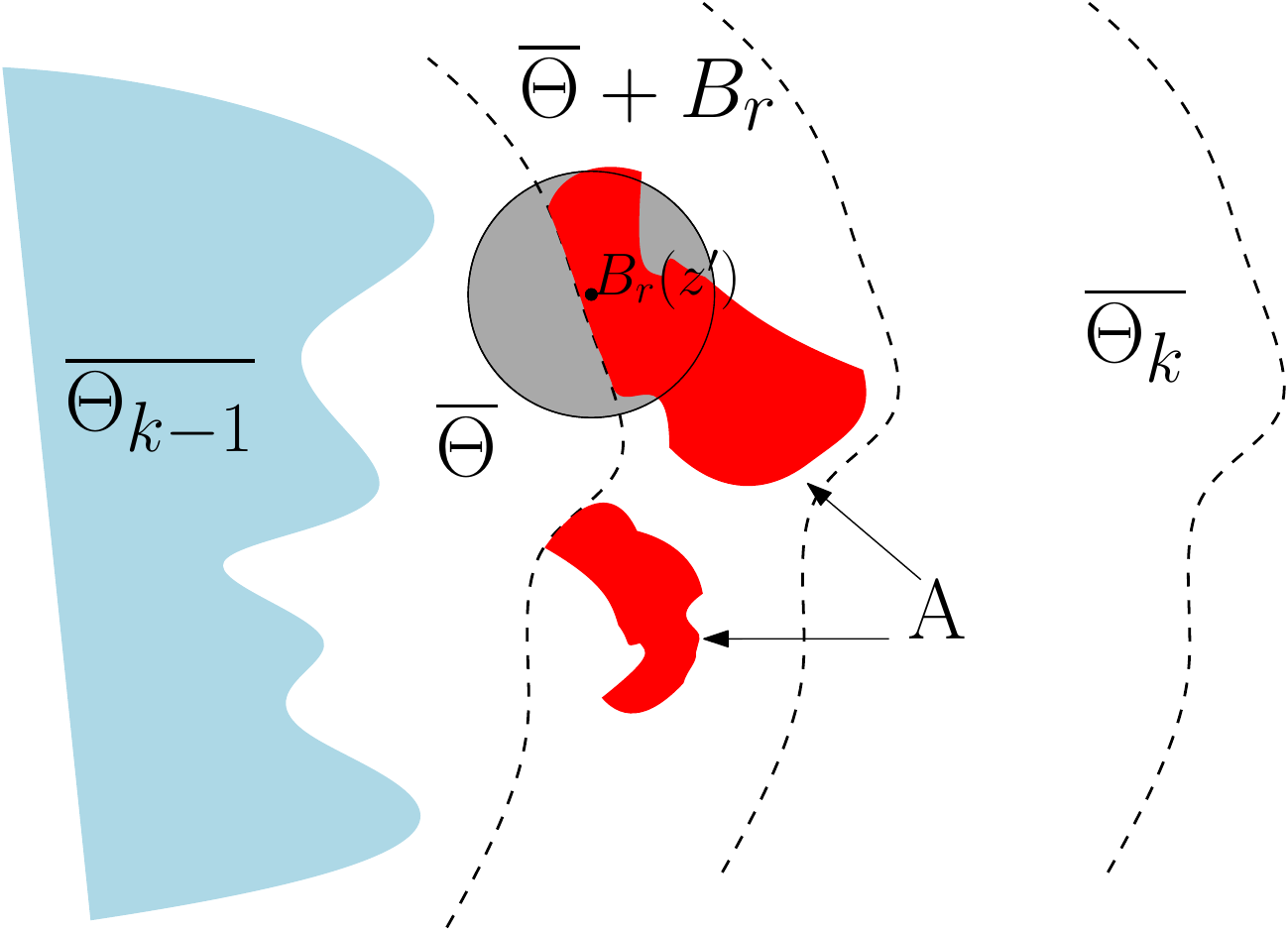}
		\caption{Illustration of the objects appearing in Step 1 of the proof of Proposition \ref{prop:div-sandpile-upper-bound}.
			Part of the current cluster $\overline{\Theta_{k-1}}$ is shown in light blue
			and parts of the boundaries of the intermediate clusters $\overline{\Theta}$ and $\overline{\Theta} + B_r$, defined in \eqref{eq:parameter-choices}
			and the next cluster $\overline{\Theta_k}$ are outlined by dashed lines. The ball $B_r(z')$ is shown in light gray 
			and part of the set $A$, defined in \eqref{eq:parameter-choices}, is shown in red. } \label{fig:div-sandpile-upper-bound-step1}
	\end{figure}
	
	\begin{proof}[Proof of Proposition \ref{prop:div-sandpile-upper-bound}] 
		Let $\delta \in (0, \rho)$ be given. Truncate on the event that $t < \TT$ and let $\Lambda_t$ be the harmonic ball satisfying the conditions in Theorem \ref{theorem:harmonic-balls}. 
		Since $\Lambda_t$ is open and contains the origin, we have that there exists a random $\sigma(h)$-measurable $s_0 \in (0,1)$ so that $B_{s_0} \Subset \Lambda_t$.
		
		For this $s_0$,  let the parameters $\omega, s_1, s_2, \boldsymbol{\delta}$ be as in Lemma \ref{lemma:constant-order-lower-bound-div}. Let $\beta^-$ be as in Lemma \ref{lemma:volume-growth} and
		\begin{equation} \label{eq:s6}
		s_6 := s_1^{-1} \boldsymbol{c_0}^{-2 \beta^-} 4^{2 \beta^-} \times C
		\end{equation}
		where $C$ and $\boldsymbol{c_0}$ are the deterministic constants from Lemma \ref{lemma:mass-bound-odometer}. Let $\boldsymbol{\delta_0} = \boldsymbol{\delta_0}(\boldsymbol{\delta}, \rho, \gamma) < \boldsymbol{\delta}$ be such that $(\boldsymbol{\delta_0} s_6)^{1/2 \beta^-} \leq \boldsymbol \delta$, so that Lemma \ref{lemma:constant-order-lower-bound-div} holds with $r \in ( 10 {\eps}^{\omega}, (\boldsymbol{\delta_0} s_6)^{1/2 \beta^-})$.  
		At the end of the proof we state how small $\boldsymbol{\delta_0}$ must be so that the final cluster is contained in ${\VGeps}(B_{\rho})$. Fix $\delta_0 <
		\boldsymbol{\delta_0}$ and decrease $\boldsymbol{\delta_0}$ so that $2 \times C_1 \times \boldsymbol{\delta_0}^\beta < \delta$ for a random constant $C_1$ to be determined below. 
		
		By the uniform convergence of the scaled version $\overline u_t^{\eps}$ of $u_t^{{\eps}}$ away from the origin (Theorem \ref{theorem:convergence-divisible} and Lemma \ref{lemma:uniform-convergence-of-greens-function}) and the fact that $\Lambda_t$ a.s.\ contains a neighborhood of 0, on the event $\{t < \TT\}$, 
		\begin{equation} \label{eq:start-iteration}
		\sup_{{\VGeps}(\partial \Lambda_t)} u_t^{{\eps}} < \delta_0 {\eps}^{-1},
		\end{equation}
		except on an event of probability tending to 0 as ${\eps} \to 0$.
		Fix $\omega_0 \in (0,\omega)$ which we will choose in \eqref{eq:choice-of-omega0-div} below. Set 
		$\Theta_0 = {\VGeps}(\Lambda_t)$, $r_0 = \delta_0$, $O_0 = \max_{\partial \Theta_0} u^{{\eps}}_t$
		and for $k \geq 1$, inductively define 
		\begin{equation} \label{eq:div-sandpile-cluster-iteration}
		\begin{cases}
		\mbox{Current cluster: } & \Theta_k := {\VGeps}(\overline{\Theta_{k-1}} + B_{r_{k-1}}) \\
		\mbox{Current radius: } & r_k :=  \max\left( (|O_{k-1}| \times {\eps} \times s_6)^{1/{2 \beta^-}}, 40 \boldsymbol{c_0}^{-1} {\eps}^{\omega_0} \right) \\
		\mbox{Current maximum odometer: } & O_k := \max_{\partial \Theta_k} u^{{\eps}}_t.
		\end{cases}
		\end{equation}
		Denote the first time the odometer is smaller than 
		${\eps}^{-\omega_0}$ by 
		\begin{equation} \label{eq:iterate-stop-time2-div}
		\tau_{{\eps},2} = \min\{ k \geq 0 : |O_k| \leq {\eps}^{-\omega_0} \}
		\end{equation}
		and the first time (before $\tau_{{\eps},2}$) that the current radius is less than $40 \boldsymbol{c_0}^{-1} {\eps}^{\omega_0}$ by 
		\begin{equation}
		\tau_{{\eps},1} = \min \left( \min\{ k \geq 0 : r_k \leq 40 \boldsymbol{c_0}^{-1} {\eps}^{\omega_0} \}, \tau_{{\eps},2} \right).
		\end{equation}
		By Lemma \ref{lemma:cell-size-estimate}, for each $\zeta > 0$, 
		\begin{equation} \label{eq:cell-size-estimate-local-div}
		\diam(H^{{\eps}}_a) \leq {\eps}^{2/(2+\gamma)^2 - \zeta} \quad \forall a \in {\VGeps}(B_{\rho}),
		\end{equation}
		w.p.\ tending to 1 as ${\eps} \to 0$. 
		Choose $\zeta$ in a way which depends only on $\gamma$ and then choose $\omega_0$ small enough so that 
		\begin{equation} \label{eq:choice-of-omega0-div}
		{\eps}_0^{-\omega_0}  \times {\eps}_0^{2/(2+\gamma)^2 - \zeta} \leq {\eps}_0^{\omega_0/2} ,\quad \forall {\eps}_0 \in (0,1) .
		\end{equation}

		{\it Step 1: $k < \tau_{{\eps},1}$.} \\
		We first show that for each $t \in \{1, \ldots, \tau_{{\eps},1}-1\}$,
		\begin{equation} \label{eq:iterative-bound-of-o}
		O_k \leq (1-s_2) O_{k-1},
		\end{equation}
		except on an event of probability tending to 0 as ${\eps} \to 0$.
		By Lemma \ref{lemma:mass-bound-odometer}, for all $z \in \C$ and $s > 0$ such that $B_{s}(z) \subset B_{\rho} \setminus \overline{\Theta_{k-1}}$:
		\begin{equation} \label{eq:mass-bound-in-proof}
		\left| {\VGeps}(B_{\boldsymbol c_0 s}(z)) \cap \Lambdateps \right| \leq C \times \sup_{\partial \Theta_{k-1}} u_{t}^{{\eps}},
		\end{equation}
		where $\boldsymbol{c_0}$ is as in Lemma \ref{lemma:mass-bound-odometer}, except on an event of probability tending to 0 as ${\eps} \to 0$.
		As we justify just below, this implies we can apply Lemma \ref{lemma:constant-order-lower-bound-div} with parameters
		\begin{equation} \label{eq:parameter-choices}
		\begin{aligned}
		r &= \boldsymbol{c_0} r_k/4 \\
		\Theta &= {\VGeps}(\overline{\Theta_{k-1}} + B_{r_k/4}) \\
		A &= \Lambdateps \cap \{ {\VGeps}(\overline{\Theta} + B_r) \setminus \Theta \}.
		\end{aligned}
		\end{equation}
		See Figure \ref{fig:div-sandpile-upper-bound-step1} for an illustration of these sets. Indeed, we have that $A \subset {\VGeps}(\overline{\Theta} + B_r) \setminus \Theta$ and, by \eqref{eq:mass-bound-in-proof}, for $z \in \C$:
		\begin{align*}
		|A \cap {\VGeps}(B_r(z))|  &=
		|A \cap {\VGeps}(B_{\boldsymbol{c_0} r_k/4}(z))|  \\
		&\leq |\Lambdateps \cap {\VGeps}(B_{\boldsymbol{c_0} r_k/4}(z'))|  \\
		&\quad \mbox{(for some $z' \in \C$ s.t.\ $B_{r_k/4}(z') \subset B_{\rho} \setminus \overline{\Theta_{k-1}}$  by definition of $A$ \eqref{eq:parameter-choices})} \\
		&\leq C \times \sup_{\partial \Theta_{k-1}} u_t^{{\eps}} \quad \mbox{(by \eqref{eq:mass-bound-in-proof})} \\
		&= C \times O_{k-1}   \quad \mbox{(definition of $O_{k-1}$ \eqref{eq:div-sandpile-cluster-iteration})} \\
		&=  r_k^{2 \beta^-} \times {\eps}^{-1} \times s_6^{-1} \times C  \quad \mbox{(definition of $r_k$ \eqref{eq:div-sandpile-cluster-iteration})} \\
		&=  s_1  r^{2 \beta^-} {\eps}^{-1} \quad \mbox{(definition of $r$ and $s_6$  \eqref{eq:parameter-choices} and \eqref{eq:s6})}.
		\end{align*}
		Thus, as $r \geq 10 {\eps}^{\omega}$ the hypotheses of Lemma \ref{lemma:constant-order-lower-bound-div} are satisfied and we may apply it to get 
		\begin{equation} \label{eq:probability-bound-in-proof}
		\begin{aligned}
		\sup_{a \in {\VGeps}(\partial (\overline{\Theta_{k-1}} + B_{r_k/4 + \boldsymbol{c_0} r_k/8}))}
		\P[&\mbox{$X^{a, {\eps}}$ exits ${\VGeps}(\overline{\Theta_{k-1}} + B_{r_k/4 + \boldsymbol{c_0} r_k/4} )\setminus
			{\VGeps}(\overline{\Theta_{k-1}} + B_{r_k/4}))$} \\
		&\mbox{before exiting $A$} | h , \eta]  
		\leq (1 - s_2)
		\end{aligned}
		\end{equation}
		except on an event of probability tending to 0 as ${\eps} \to 0$.
		Next, by Lemma \ref{lemma:odometer-bound-mass} applied to the set 
		\[
		B := {\VGeps}(\overline{\Theta_{k-1}} + B_{r_k/4 + \boldsymbol{c_0} r_k/4} )\setminus
		{\VGeps}(\overline{\Theta_{k-1}} + B_{r_k/4})
		\]
		together with the maximum principle for $u_t^{{\eps}}$ ($u_t^{{\eps}} = 0$ on ${\VGeps}(\partial B_{\rho})$),
		\[
		u_t^{{\eps}}(b) \leq O_{k-1} \times \P[\mbox{$X^{b, {\eps}}$ exits $B$ before exiting $A$} | h , \eta], \quad \forall b \in B.
		\]
		Another application of the maximum principle shows 
		\[
		O_{k} \leq  \sup_{b \in {\VGeps}(\partial (\overline{\Theta_{k-1}} + B_{r_k/4 + \boldsymbol{c_0} r_k/8}))} u_{t}^{{\eps}}(b).
		\]
		The previous two indented equations together with \eqref{eq:probability-bound-in-proof} imply \eqref{eq:iterative-bound-of-o}.
		
		We now use \eqref{eq:iterative-bound-of-o}, the definition of $r_k$,  and the initial bound given by \eqref{eq:start-iteration} to see that
		\begin{equation} \label{eq:good-event3} 
		r_k \leq (\delta_0\times (1-s_2)^k \times s_6)^{1/2\beta^-}, \quad \forall k \in \{1, \ldots, \tau_{{\eps},1}-1\} 
		\end{equation} 
		and consequently, 
		\begin{equation} \label{eq:bound-on-radii-sum-div}
		\sum_{k=0}^{\tau_{{\eps},1}-1} r_k \leq \sum_{k=0}^{\tau_{{\eps},1}-1} (\delta_0\times (1-s_2)^k \times s_6)^{1/2\beta^-} \leq C \times \delta_0^{1/2\beta^-} \times \frac{1}{1 - (1-s_2)^{1/2\beta^-}},
		\end{equation}
		except on an event of probability tending to 0 as ${\eps} \to 0$ where $C = C(s_6) > 0$ is a random constant. 
		\medskip

		{\it Step 2: $t \in [\tau_{{\eps},1}, \tau_{{\eps},2})$.} \\
		We again apply Lemma \ref{lemma:constant-order-lower-bound-div} for each $k \in \{\tau_{{\eps},1}, \ldots, \tau_{{\eps},2}\}$ with the same choice of parameters as in Step 1.
		Reasoning similar to the proof of Step 1 shows that 
		\[
		O_k \leq (1-s_2) O_{k-1}, \quad \forall k \in \{\tau_{{\eps},1}, \ldots, \tau_{{\eps},2} \} 
		\]
		except on an event of probability tending to 0 as ${\eps} \to 0$.
		Hence, by the definition~\eqref{eq:iterate-stop-time2-div} of $\tau_{{\eps},2}$, 
		\[
		{\eps}^{-\omega_0} \leq |O_{\tau_{{\eps},2}-1}| \leq C (1-s_2)^{\tau_{{\eps},2}- \tau_{{\eps},1}-1} {\eps}^{-1}, \quad \mbox{for $C = C(s_6) > 0$} 
		\]
		which, upon re-arranging, implies
		\begin{equation} \label{eq:time-upper-bound-div}
		\tau_{{\eps},2} \leq C \log {\eps}^{-1} + \tau_{{\eps},1}, \quad \mbox{for random $C = C(s_2, s_6) > 0$} 
		\end{equation}
		and hence 
		\begin{equation}
		\sum_{k=\tau_{{\eps},1}}^{\tau_{{\eps},2}} r_k \leq C \times {\eps}^{\omega_0} \times \log {\eps}^{-1}, \quad \mbox{for random $C = C(s_2, s_6) > 0$}
		\end{equation}
		except on an event of probability tending to 0 as ${\eps} \to 0$.
		\medskip
		
		{\it Step 3: $k \geq \tau_{{\eps},2}$.} \\
		We control the size of the odometer by Lemma \ref{lemma:crude-upper-bound}
		and the upper bound on the Euclidean diameter of cells in the mated-CRT map. 
		By Lemma \ref{lemma:crude-upper-bound},  w.p.\ tending to 1 as ${\eps} \to 0$, there is a path 
		of at most $\log({\eps}^{-1})^2 {\eps}^{-\omega_0}$ cells
		from the boundary of the support of $u_t^{{\eps}}$ 
		to $\partial \Theta_{\tau_{{\eps},2}}$ along which the odometer increases by ${\eps}^{-\omega_0}$. 
		Thus, by the definition of $\tau_{{\eps},2}$, \eqref{eq:cell-size-estimate-local-div}, and \eqref{eq:choice-of-omega0-div}
		\begin{equation} \label{eq:distance-of-final-cluster-div-sandpile}
		u_t^{{\eps}}(a) = 0  \quad \mbox{for $a \in {\VGeps}(B_{\rho})$ such that $\dist(\eta(a), \eta(\Theta_{\tau_{{\eps},2}})) \geq  {\eps}^{\omega_0/2} \log({\eps}^{-1})^2$}
		\end{equation}
		except on an event of probability tending to 0 as ${\eps} \to 0$.
		\medskip
		
		{\it Step 4: Reduce ${\eps}$ and $\boldsymbol{\delta_0}$ and conclude.} \\
		Recall that we have set $\Theta_0 = {\VGeps}(\Lambda_t)$, so that $\overline\Theta_0$ is the union of the ${\eps}$-mated-CRT map cells which intersect $\Lambda_t$.
		By combining Steps 1-3, \eqref{eq:cluster-contained-in}, and \eqref{eq:consistency},
		the divisible sandpile cluster, $\overline{D}_{{\eps}}(t {\eps}^{-1})$ is contained in $\overline{\Theta_0} + B_R$, where
		\[
		\begin{aligned}
		R &=  C_1 \times \delta_0^{1/2\beta^-}  \quad \mbox{(by \eqref{eq:bound-on-radii-sum-div})} \\
		&+  C_1 \times {\eps}^{\omega} \times \log {\eps}^{-1}  \quad \mbox{(by \eqref{eq:time-upper-bound-div})} \\
		&+ {\eps}^{\omega_0/2} \log({\eps}^{-1})^2 \quad \mbox{(by \eqref{eq:distance-of-final-cluster-div-sandpile})} 
		\end{aligned}
		\]
		for random constants $C_1 = C_1(s_2, s_6, \beta^-, \gamma)$, provided that $R < \rho/2$,
		except on an event of probability tending to 0 as ${\eps} \to 0$.
		The first term in the definition of $R$ dominates for small ${\eps}$.
		Decrease $\boldsymbol{\delta_0}$ in a deterministic fashion and then decrease ${\eps}$ depending on $\delta_0$ so that $R < \rho/2$ and hence $u_t^{{\eps}} = v_t^{{\eps}}$.  
		This completes the proof after recalling that $\delta > 2 \times C_1 \boldsymbol{\delta_0}^\beta$. 
	\end{proof}
	
	\section{Convergence} \label{sec:proof-of-convergence}
	We start by combining Propositions \ref{prop:IDLA-lower-bound}, \ref{prop:IDLA-upper-bound}, \ref{prop:div-sandpile-lower-bound}, and  \ref{prop:div-sandpile-upper-bound}
	into the following statement. 
	\begin{prop} \label{prop:IDLA-constrained-convergence}
		Recall the time $\TT$ from~\eqref{eq:cluster-stopping-time}
		and the notation $B_{\delta}^{\pm}$ from \eqref{eq:outer-inner-neighborhoods}.  For each $\delta \in (0, \rho)$ and $t > 0$, on the event $\{t < \TT\}$, 
		it holds except on an event of probability tending to 0 as ${\eps} \to 0$  that
		\[
		\Bdeltam(\Lambda_t) \subset \overline{\mathcal{X}}_{{\eps}}(t {\eps}^{-1}) \subset \Bdeltap(\Lambda_t)
		\]
		for $\mathcal{X} \in \{A, D\}$,
		where $\Lambda_t$ is as in Theorem \ref{theorem:harmonic-balls},  \ie, both the IDLA cluster and divisible sandpile cluster converge. 
	\end{prop}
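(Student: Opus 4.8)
The plan is simply to assemble the four one-sided bounds established in Sections~\ref{sec:divisible-sandpile}--\ref{sec:div-upper-bound}. Fix $\delta \in (0,\rho)$ and $t > 0$ and work throughout on the event $\{t < \TT\}$. For the IDLA cluster, Proposition~\ref{prop:IDLA-lower-bound} provides the inclusion $\Bdeltam(\Lambda_t) \subset \overline{A}_{{\eps}}(\lfloor t {\eps}^{-1} \rfloor)$ outside an event $F_1^{{\eps}}$ with $\P[F_1^{{\eps}}] \to 0$ as ${\eps}\to 0$, and Proposition~\ref{prop:IDLA-upper-bound} provides $\overline{A}_{{\eps}}(\lfloor t {\eps}^{-1} \rfloor) \subset \Bdeltap(\Lambda_t)$ outside an event $F_2^{{\eps}}$ with $\P[F_2^{{\eps}}] \to 0$. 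For the divisible sandpile cluster, Propositions~\ref{prop:div-sandpile-lower-bound} and~\ref{prop:div-sandpile-upper-bound} give, respectively, $\Bdeltam(\Lambda_t) \subset \overline{D}_{{\eps}}(t {\eps}^{-1})$ and $\overline{D}_{{\eps}}(t {\eps}^{-1}) \subset \Bdeltap(\Lambda_t)$ outside events $F_3^{{\eps}}, F_4^{{\eps}}$ whose probabilities also tend to $0$.

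On the complement of $F_1^{{\eps}} \cup F_2^{{\eps}} \cup F_3^{{\eps}} \cup F_4^{{\eps}}$, all four inclusions hold simultaneously, which is exactly the two-sided sandwich $\Bdeltam(\Lambda_t) \subset \overline{\mathcal{X}}_{{\eps}}(t {\eps}^{-1}) \subset \Bdeltap(\Lambda_t)$ for both $\mathcal{X} = A$ and $\mathcal{X} = D$. Since $\P[F_1^{{\eps}} \cup F_2^{{\eps}} \cup F_3^{{\eps}} \cup F_4^{{\eps}}] \le \sum_{i=1}^{4} \P[F_i^{{\eps}}] \to 0$ as ${\eps}\to 0$, the proposition follows. (Here $\overline{\mathcal{X}}_{{\eps}}(t {\eps}^{-1})$ for $\mathcal{X} = A$ is understood, consistently with Theorem~\ref{theorem:convergence-of-IDLA}, as $\overline{A}_{{\eps}}(\lfloor t {\eps}^{-1} \rfloor)$, which is precisely the object appearing in Propositions~\ref{prop:IDLA-lower-bound} and~\ref{prop:IDLA-upper-bound}.)

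There is essentially no obstacle in this proposition beyond the elementary observation that a union of finitely many events, each of probability tending to $0$, again has probability tending to $0$; all of the substance lies in the four ingredient propositions, and the restriction to $\{t < \TT\}$ will be removed in Section~\ref{sec:proof-of-convergence} by the scaling argument based on Lemma~\ref{lemma:scale-invariance}. The only care needed is to ensure that the same realization of the underlying $(h,\eta)$ and of the random walks is used in all four statements, so that the inclusions can be intersected; this is immediate since all four propositions are stated for the clusters built from this common randomness.
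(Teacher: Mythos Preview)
Your proof is correct and matches the paper's approach exactly: the paper simply states this proposition as the combination of Propositions~\ref{prop:div-sandpile-lower-bound}, \ref{prop:IDLA-lower-bound}, \ref{prop:IDLA-upper-bound}, and~\ref{prop:div-sandpile-upper-bound} without writing out any further argument. Your union-bound assembly is precisely what is intended.
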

	To prove Theorems \ref{theorem:convergence-of-IDLA} and \ref{theorem:convergence-of-div-sandpile}, we will combine the previous proposition with a scaling argument to transfer from $t\in (0,\TT)$ to an arbitrary $t>0$.
	
	\subsection{Proof of Theorems \ref{theorem:convergence-of-IDLA} and \ref{theorem:convergence-of-div-sandpile}} \label{subsec:proof-of-idla}
	We use the scale-invariance property of the $\gamma$-quantum cone. Fix $b > 0$ and for $R_b$ as in 
	\eqref{eq:cone-scale-radius}, let
	\[
	h^b := h(R_b \cdot) + Q \log R_b - \frac{1}{\gamma} \log b \quad \text{and} \quad  \eta^b = R^{-1}_b \eta(b \cdot). 
	\]
	By Lemma \ref{lemma:scale-invariance}, $(h, \{\Lambda_t\}_{t > 0} ) \overset{d}{=} (h^b, \{ R_b^{-1} \Lambda_{b t} \}_{t>0})$.
	Hence, by the scale-invariance of the law of $\eta$, viewed modulo time parametrization, and the independence of $\eta$ and $(h, \{\Lambda_t\}_{t>0})$,
	\begin{equation} \label{eq:scale-invariance-of-field-and-eta}
	(h,\eta,\{\Lambda_t\}_{t > 0})  \overset{d}{=} (h^b,\eta^b , \{R_b^{-1} \Lambda_{b t}\}_{t > 0} ) .
	\end{equation}
	Hence, if we let
	\[
	T^b := \sup \{ t > 0 : R_b^{-1} \Lambda_{b t} \subset B_{\rho/3} \}
	\]
	then $T^b$ has the same law as $\TT$, defined in \eqref{eq:cluster-stopping-time}.
	By Proposition \ref{prop:IDLA-constrained-convergence}, for each $\delta \in (0, \rho)$ and $t > 0$, on the event $\{t < \TT\}$, 
	it holds except on an event of probability tending to 0 as ${\eps} \to 0$  that
	\[
	\Bdeltam(\Lambda_t) \subset \overline{\mathcal{X}}_{{\eps}}( t {\eps}^{-1} ) \subset \Bdeltap(\Lambda_t),
	\]
	for $\mathcal{X} \in \{A, D\}$, \ie, both the IDLA cluster and divisible sandpile cluster converge. 
	Let $\mathcal{X} \in \{A, D\}$ be given. 
	By \eqref{eq:scale-invariance-of-field-and-eta}, 
	this implies that for each $\delta \in (0, \rho)$ and $t > 0$, on the event $\{t < T^b\}$, 
	it holds except on an event of probability tending to 0 as ${\eps} \to 0$  that
	\[
	\Bdeltam(R^{-1}_b \Lambda_{b t}) \subset  R^{-1}_b \overline{\mathcal{X}}_{{\eps}}( b t {\eps}^{-1} ) \subset \Bdeltap(R^{-1}_b \Lambda_{b t}).
	\]
	By applying the above statement with $t/b$ in place of $t$ and $\delta / R_b$ in place of $\delta$, then re-scaling space by $R_b$, we get the following.
	For each $\delta \in (0, \rho)$ and $t > 0$, on the event $\{t < b \times T^b\}$, 
	it holds except on an event of probability tending to 0 as ${\eps} \to 0$  that
	\[
	\Bdeltam(\Lambda_t) \subset \overline{\mathcal{X}}_{{\eps}}( t {\eps}^{-1} ) \subset \Bdeltap(\Lambda_t).
	\]
	As $b > 0$ was arbitrary, it remains to show that in probability, 
	\begin{equation} \label{eq:btb-converge-to-inf}
	\lim_{b \to \infty} b \times T^b = \infty.
	\end{equation}
	This, however, follows immediately from the fact that $\TT$ is strictly positive and $\TT \overset{d}{=} T^b$.
	\qed

	\appendix	
	\section{Discrete obstacle problem} \label{sec:obstacle-appendix}
We provide the proofs which were omitted in Section \ref{subsec:basic-properties} in the general setting of an undirected, locally finite, and infinite graph $G$. We consider a finite, connected set of vertices of $G$, denoted by $V$, which contain a marked vertex which we call the {\it origin}, denoted by $0$.  
The graph Laplacian, $\Delta$, is defined by,
\[
\Delta u(a) =  \frac{1}{\deg(a)} \sum_{b \sim a}(u(b) - u(a)), \quad \mbox{for all vertices $a$ of $G$}
\]
where the sum $b \sim a$ is over the vertices $b$ which share an edge with $a$ and $\deg(a)$ denotes the number of such vertices.  Observe that the analogue of the maximum principle, Lemma \ref{lemma:maximum-principle}, and the following discrete divergence theorem are satisfied on $G$.

\begin{lemma} \label{lemma:discrete-divergence-theorem}
	Let $VG$ denote all the vertices of $G$ and suppose that $f,g: VG \to \R$ are functions which are zero outside of a finite subset of $VG$. 
	Then
	\begin{equation} \label{eq:gauss-green}
	\sum_{a \in VG} f(a) \Delta g(a) \deg(a) = \sum_{a \in VG} g(a) \Delta f(a) \deg(a).
	\end{equation}
\end{lemma}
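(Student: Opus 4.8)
The plan is to reduce both sides of \eqref{eq:gauss-green} to the symmetric Dirichlet form and read off the identity. First I would unfold the left-hand side using the definition of $\Delta$: since the factor $\deg(a)$ cancels the $\tfrac{1}{\deg(a)}$ in the Laplacian,
\[
\sum_{a \in VG} f(a)\, \Delta g(a)\, \deg(a) \;=\; \sum_{a \in VG} f(a) \sum_{b \sim a} \big(g(b) - g(a)\big) \;=\; \sum_{(a,b)\,:\, a \sim b} f(a)\big(g(b) - g(a)\big),
\]
where the last sum runs over \emph{ordered} pairs $(a,b)$ of adjacent vertices. Because $f$ and $g$ vanish outside a finite set and $G$ is locally finite, only finitely many terms in each of these sums are nonzero, so every rearrangement below is a manipulation of finite sums and needs no further justification.

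Next I would symmetrize. Relabelling $(a,b) \mapsto (b,a)$ (the set of ordered adjacent pairs is symmetric) gives $\sum_{(a,b): a \sim b} f(a)(g(b)-g(a)) = \sum_{(a,b): a\sim b} f(b)(g(a)-g(b))$, and averaging the two expressions yields
\[
\sum_{a \in VG} f(a)\, \Delta g(a)\, \deg(a) \;=\; -\frac12 \sum_{(a,b)\,:\, a \sim b} \big(f(b) - f(a)\big)\big(g(b) - g(a)\big) \;=:\; -\frac12\, \mathcal E(f,g).
\]
The right-hand side of \eqref{eq:gauss-green} is obtained from the left by interchanging $f$ and $g$, so by the same computation it equals $-\tfrac12\,\mathcal E(g,f)$. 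Since each summand $(f(b)-f(a))(g(b)-g(a))$ is symmetric under $f \leftrightarrow g$, we have $\mathcal E(f,g) = \mathcal E(g,f)$, and the two sides of \eqref{eq:gauss-green} coincide.

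There is no real obstacle here; the only point requiring care is the reindexing of the ordered-pair sums, which is legitimate precisely because finite support makes the sums finite. (If one prefers to work with the multigraph structure of $\mathcal{G}^{\eps}$, one may instead sum over oriented edges after choosing an orientation of each edge; the ordered-pair formulation used above avoids having to fix such a choice, and the identity is insensitive to it.) I would present the Dirichlet-form computation as the single displayed chain above and then note the symmetry, keeping the write-up to a few lines.
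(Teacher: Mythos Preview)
Your argument is correct and is exactly the standard summation-by-parts / Dirichlet-form computation. The paper does not actually prove this lemma: it just remarks that the identity is well known and points to \cite[Equation~(2.17)]{telcs2006} and \cite[Lemma~3.6]{berestycki2020random}. Your write-up therefore supplies the details those references contain, via the same route (expand $\deg(a)\Delta g(a)$ as a sum over ordered adjacent pairs and symmetrize).
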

\begin{proof}
	This is well known for general, undirected graphs --- see for example~\cite[Equation (2.17)]{telcs2006} or~\cite[Lemma 3.6]{berestycki2020random}.
\end{proof}


Let $\mathfrak{r}:V \to (0, \infty)$ denote a {\it threshold function} 
and $\phi : \cl(V) \to (-\infty, 0]$ the {\it obstacle}
which satisfies the following equation
\begin{equation} \label{eq:laplacian-of-phi}
\begin{cases}
\Delta \phi  = \mathfrak t \delta_0 \quad \mbox{on $V$} \\
\phi = 0 \quad \mbox{on $\partial V$}
\end{cases}
\end{equation}
where $\mathfrak t > 0$. Consider the following {\it discrete obstacle problem}
\begin{equation} \label{eq:discrete-least-supersolution-general}
\begin{aligned}
w := &\min \{ u: \cl(V) \to \R : \Delta u \leq \mathfrak{r} \mbox{ on $V$ and $u \geq \phi$}\}.
\end{aligned}
\end{equation}
We will make use of the following function $\mathfrak{q}: \cl(V) \to \R$ which is defined by the unique solution to
\begin{equation} \label{eq:definition-of-exit-time-general}
\begin{cases}
\Delta \mathfrak q  = \mathfrak{r} \quad \mbox{on $V$} \\
\mathfrak{q} = 0 \quad \mbox{on $\partial V$}.
\end{cases}
\end{equation}
We also denote the {\it discrete cluster} by 
\begin{equation} \label{eq:discrete-cluster-general}
\Lambda = \{ a \in \cl(V) :  w(a) > \phi(a) \}.
\end{equation}

The results of Section \ref{subsec:basic-properties} may be achieved from those of this section 
via the following map, for ${\eps} > 0$, $\rho \in (0,1)$, 
\begin{align*}
\mathfrak{t} &\to {\eps}^{-1} t \deg^{{\eps}}(0)^{-1} \\
w &\to \wteps \\
G &\to {\Geps} \\
V &\to {\VGeps}(B_{\rho}) \setminus {\VGeps}(\partial B_{\rho}) \\
\mathfrak{r} &\to (\deg^{{\eps}})^{-1} \\
\phi &\to -(t {\eps}^{-1}) \times  \gr{{\eps}}{B_\rho}(0, \cdot) \\
\mathfrak q &\to  -\qeps{B_{\rho}} \\
\Lambda &\to \Lambdateps.
\end{align*}
Lemma \ref{lemma:discrete-basic-properties} is a special case of the following. 
\begin{lemma} \label{lemma:discrete-basic-properties-general}
	We have that
	\[
	\max(\mathfrak q,\phi) \leq w \leq 0
	\]
	and  $\Delta w \leq \mathfrak{r}$ on $V$.
\end{lemma}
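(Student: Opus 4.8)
The statement asserts three facts about the least supersolution $w$ defined in \eqref{eq:discrete-least-supersolution-general}: the lower bound $w \ge \max(\mathfrak q,\phi)$, the upper bound $w \le 0$, and the admissibility $\Delta w \le \mathfrak r$ on $V$. The plan is to verify each by exhibiting an appropriate competitor in the minimization and invoking the discrete maximum principle (the analogue of Lemma \ref{lemma:maximum-principle}, valid on $G$ as noted in the appendix).

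First I would check that the competitor class in \eqref{eq:discrete-least-supersolution-general} is nonempty, so that $w$ is well-defined: the function $\mathfrak q$ from \eqref{eq:definition-of-exit-time-general} satisfies $\Delta \mathfrak q = \mathfrak r \le \mathfrak r$ on $V$, and one must argue $\mathfrak q \ge \phi$. For this, note $\Delta(\mathfrak q - \phi) = \mathfrak r - \mathfrak t\delta_0$; since $\mathfrak r > 0$, the difference $\mathfrak q - \phi$ is superharmonic away from $0$ and vanishes on $\partial V$, and at the origin $\phi$ has a downward spike while $\mathfrak q$ does not, so $\mathfrak q - \phi \ge 0$ on $\cl(V)$ by the maximum principle applied to $-(\mathfrak q-\phi)$ on $V\setminus\{0\}$ together with the sign of the spike at $0$. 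Thus $\mathfrak q$ is admissible, hence $w \le \mathfrak q$; combined with the constraint $w \ge \phi$ built into \eqref{eq:discrete-least-supersolution-general}, this already gives half of the lower bound. For the other half, $w \ge \mathfrak q$: take any admissible $u$; then $\Delta(u - \mathfrak q) \le 0$ on $V$ and $u - \mathfrak q = u \ge \phi$... wait, that is not directly $\ge 0$ on $\partial V$ — rather $u = \phi = 0$ is not forced. Instead, since $u$ is admissible it satisfies $u \ge \phi \ge \phi|_{\partial V} = 0$? No: the cleaner route is that $u - \mathfrak q$ is superharmonic on $V$, so by the minimum principle $\min_V(u-\mathfrak q) \ge \min_{\partial V}(u - \mathfrak q)$; on $\partial V$ we have $\mathfrak q = 0$ and $u \ge \phi = 0$, so $u - \mathfrak q \ge 0$ on $\partial V$, whence $u \ge \mathfrak q$ on $\cl(V)$. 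Taking the infimum over admissible $u$ gives $w \ge \mathfrak q$. Together with $w \ge \phi$ (the explicit constraint), this yields $w \ge \max(\mathfrak q, \phi)$.

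Next, the upper bound $w \le 0$: the constant function $0$ satisfies $\Delta 0 = 0 \le \mathfrak r$ on $V$ (since $\mathfrak r > 0$) and $0 \ge \phi$ on $\cl(V)$ (since $\phi \le 0$), so $0$ is admissible and therefore $w \le 0$ pointwise.

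Finally, the admissibility $\Delta w \le \mathfrak r$ on $V$. This is the least-superficial part and the main point where one must argue carefully rather than just plug in a competitor. The standard argument: $w$, being an infimum of admissible functions, is itself admissible — one shows the pointwise infimum of two admissible functions is admissible (if $u_1, u_2$ are admissible then $\min(u_1,u_2) \ge \phi$ trivially, and at each $a \in V$, $\Delta \min(u_1,u_2)(a) \le \Delta u_i(a) \le \mathfrak r(a)$ where $i$ achieves the minimum at $a$, using that $\min(u_1,u_2)(b) \le u_i(b)$ for all neighbors $b$ and $\min(u_1,u_2)(a) = u_i(a)$), then extend to the infimum over the whole family by noting $V$ is finite so the infimum is attained locally / is a min of finitely many. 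Alternatively, since $V$ is finite one can exhibit $w$ concretely as a min over a finite subfamily. I expect this closure-under-infimum step to be the one requiring the most care — it is routine but is exactly the kind of claim where the finiteness of $V$ and the local nature of $\Delta$ must be used explicitly — while the two inequalities $w \le 0$ and $w \ge \max(\mathfrak q, \phi)$ are immediate from the maximum principle once the competitors $0$ and $\mathfrak q$ are identified. I would also remark that these arguments are entirely standard for discrete obstacle problems and cite \cite{levine2009strong} or the continuum analogues in \cite[Appendix A]{bourabee-sandpile-2021} for context, as the excerpt already does.
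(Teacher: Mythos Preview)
Your proof is essentially correct and matches the paper's approach: the zero function as competitor for $w \le 0$, closure under pointwise minimum for $\Delta w \le \mathfrak r$ and $w \ge \phi$, and the maximum principle applied to $u - \mathfrak q$ for $w \ge \mathfrak q$. The paper applies the last step directly to $w - \mathfrak q$ after establishing that $w$ is admissible, rather than to an arbitrary admissible $u$; the two are equivalent.

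One correction, however: your opening digression attempting to show that $\mathfrak q$ itself is admissible is both unnecessary and wrong. Since $\Delta(\mathfrak q - \phi) = \mathfrak r > 0$ away from the origin, $\mathfrak q - \phi$ is \emph{sub}harmonic there, not superharmonic as you wrote; and in fact the inequality $\mathfrak q \ge \phi$ fails in general (take $\mathfrak r$ large and $\mathfrak t$ small, so $\mathfrak q$ is very negative while $\phi$ is near zero). Note that if $\mathfrak q$ were always admissible you would conclude $w \le \mathfrak q$, which together with $w \ge \mathfrak q$ would force $w = \mathfrak q$ --- plainly false. Nonemptiness of the competitor class is already witnessed by the constant $0$, which you use anyway for the upper bound; simply drop the $\mathfrak q$-admissibility paragraph and your argument is clean.
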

\begin{proof}
	{\it Step 1: $w \leq 0$.} \\ 
	As $0 \geq \phi$ and $\mathfrak{r} > 0$ the function which is identically 0 is admissible in \eqref{eq:discrete-least-supersolution-general}.
	\medskip
	
	{\it Step 2: $w \geq \phi$ and $\Delta w \leq \mathfrak{r}$.} \\ 
	Observe that in general, for two functions $u_1, u_2$ defined on the vertices
	of the graph $G$, we have
	\[
	\Delta \min(u_1, u_2) \leq \min(\Delta u_1 , \Delta u_2),
	\]
	where the minimum is pointwise. 
	Indeed, write $u = \min(u_1,u_2)$ and suppose for a vertex $a$ that $u(a) = u_1(a)$. Then, by definition, 
	\[
	\Delta u(a) = \frac{1}{\deg(a)} \sum_{b \sim a}(u(b) - u_1(a))
	\leq \frac{1}{\deg(a)} \sum_{b \sim a}( u_1(b) - u_1(a)) = \Delta u_1(b). 
	\]
	Therefore, by compactness, the inequalities in the definition of \eqref{eq:discrete-least-supersolution-general}
	are also satisfied by $w$. 
	\medskip
	
	{\it Step 3: $w \geq \mathfrak q$.} \\ 
	By Steps 1 and 2, since $\phi$ is zero on $\partial V$, we have that $w$ is zero on $\partial V$. 
	Thus, by \eqref{eq:definition-of-exit-time-general} and Step 2, 	$(w - \mathfrak{q})$ is superharmonic on $V$ and zero on 
	$\partial V$, and hence the claim follows by the maximum principle.
\end{proof}

Lemma \ref{lemma:discrete-laplacian-bound} is a special case of the following. 
\begin{lemma} \label{lemma:discrete-laplacian-bound-general}
	We have that $\Delta w = \mathfrak{r}$ on $\Lambda$ 
	and $\mathfrak{r} \geq \Delta w \geq 0$ on $V$. 
	Moreover,  $\Lambda$ is connected and if $\mathfrak{t} > \mathfrak{r}(0)$, contains the origin. 
\end{lemma}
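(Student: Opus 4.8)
\textbf{Proof plan for Lemma~\ref{lemma:discrete-laplacian-bound-general}.} The plan is to exploit the complementarity structure of the obstacle problem \eqref{eq:discrete-least-supersolution-general} together with the basic facts already established in Lemma~\ref{lemma:discrete-basic-properties-general}, namely $\phi \le w \le 0$ and $\Delta w \le \mathfrak r$ on $V$. First I would prove $\Delta w \ge 0$ on $V$. The idea is the standard one: if $\Delta w(a) < 0$ at some $a \in V$, then since $w > \phi$ near such a point cannot be ruled out a priori, I instead argue by contradiction using minimality. Concretely, suppose $\Delta w(a) < 0$ for some $a\in V$. Then one can strictly decrease $w$ at $a$ (replace $w(a)$ by the slightly smaller value obtained by a small downward perturbation) while keeping $\Delta w \le \mathfrak r$ on $V$ (lowering $w(a)$ only decreases $\Delta w$ at $a$ and increases it at neighbors, but by a controllably small amount since $\Delta w < \mathfrak r$ strictly can be arranged in a neighborhood, or one checks the inequality directly at neighbors), and keeping $w \ge \phi$ provided the perturbation is small enough that $w(a)$ stays above $\phi(a)$ --- which is possible exactly when $w(a) > \phi(a)$. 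If $w(a) = \phi(a)$, then $a \notin \Lambda$ and there is nothing to prove for the ``$\Delta w = \mathfrak r$ on $\Lambda$'' claim at $a$, but we still need $\Delta w(a) \ge 0$; here one uses that $\Delta \phi \le 0$ on $V \setminus \{0\}$ (since $\Delta\phi = \mathfrak t\delta_0$), and at a contact point $w = \phi$ with $w \ge \phi$ everywhere, $\Delta w(a) \ge \Delta \phi(a) \ge 0$ (away from the origin; at the origin one treats $\mathfrak t \delta_0$ directly, noting $\mathfrak t > 0$). This dichotomy gives $\Delta w \ge 0$ on all of $V$.

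Next I would establish $\Delta w = \mathfrak r$ on $\Lambda$. Fix $a \in \Lambda$, so $w(a) > \phi(a)$. By the perturbation argument just described, if $\Delta w(a) < \mathfrak r$ then we could strictly lower $w$ at $a$ and remain admissible in \eqref{eq:discrete-least-supersolution-general}, contradicting minimality of $w$. Combined with $\Delta w \le \mathfrak r$ from Lemma~\ref{lemma:discrete-basic-properties-general}, this forces $\Delta w(a) = \mathfrak r(a)$. The bound $0 \le \Delta w \le \mathfrak r$ on $V$ is then the conjunction of this and the previous paragraph.

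For connectedness of $\Lambda$: note $0 \in \Lambda$ when $\mathfrak t > \mathfrak r(0)$, since then $\Delta w(0) \le \mathfrak r(0) < \mathfrak t = \Delta \phi(0)$ shows $w$ and $\phi$ cannot agree in a neighborhood of $0$ (more carefully: if $w = \phi$ on $\{0\} \cup \partial\{0\}$ then $\Delta w(0) = \Delta\phi(0) = \mathfrak t > \mathfrak r(0)$, contradicting $\Delta w \le \mathfrak r$; so some neighbor $b\sim 0$ has $w(b) > \phi(b)$, and then an averaging/strict-inequality argument propagates this to $0$ itself, or one observes directly that $w(0) \ge$ the average of $w$ over neighbors minus $\mathfrak r(0)/\deg(0) > \phi(0)$). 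Then suppose $\Lambda$ is disconnected; let $\Lambda_0$ be the connected component containing $0$ and $\Lambda_1$ another component. On $\Lambda_1$ we have $\Delta w = \mathfrak r > 0$, and $w = \phi$ on $\partial \Lambda_1$ (by definition of $\Lambda$), and moreover on $\partial\Lambda_1$, since $\Delta\phi \le 0$ there (no origin in $\Lambda_1$, as $0 \in \Lambda_0$), we get a sign contradiction: $w - \phi$ is $\ge 0$ on $\cl(\Lambda_1)$, equals $0$ on $\partial\Lambda_1$, and satisfies $\Delta(w-\phi) = \mathfrak r - \Delta\phi \ge \mathfrak r > 0$ on $\Lambda_1$, i.e.\ $w - \phi$ is a nonnegative subharmonic-type function vanishing on the boundary with \emph{strictly positive} Laplacian in the interior; by the maximum principle $w - \phi \le 0$ on $\Lambda_1$, hence $w = \phi$ on $\Lambda_1$, contradicting $\Lambda_1 \subset \Lambda = \{w > \phi\}$ being nonempty. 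Hence $\Lambda$ is connected.

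\textbf{Main obstacle.} The delicate point is the perturbation/minimality argument showing that $\Delta w(a) < \mathfrak r$ at a point of $\Lambda$ contradicts minimality: one must verify that lowering $w(a)$ by a small $\eta > 0$ keeps \emph{all} the constraints, in particular $\Delta w \le \mathfrak r$ at the neighbors of $a$ (where lowering $w(a)$ \emph{raises} $\Delta w$). The clean way to handle this is to work with the equivalent characterization of $w$ as the smallest superharmonic-type majorant and use that the contact set $\{w = \phi\}$ is exactly $V \setminus \Lambda$, running the ``least superharmonic majorant'' argument: define $\tilde w = \min(w, \text{harmonic-type replacement near } a)$ and check admissibility --- essentially the discrete analogue of the classical fact that the obstacle-problem solution is harmonic off the contact set. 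Everything else is a routine application of the discrete maximum principle (Lemma~\ref{lemma:maximum-principle}) and the sign of $\Delta\phi$ away from the origin.
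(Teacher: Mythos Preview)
Your overall strategy matches the paper's: a single–point perturbation for $\Delta w=\mathfrak r$ on $\Lambda$, comparison with $\phi$ at contact points for $\Delta w\ge 0$, and the maximum principle on $w-\phi$ for connectedness. However, you have the signs of the perturbation reversed, and this is what makes you see an obstacle that is not there. If you replace $w$ by $u:=w-\delta\,1\{\cdot=a\}$, then for any neighbor $b\sim a$,
\[
\Delta u(b)=\Delta w(b)-\frac{\delta}{\deg(b)}\le \Delta w(b)\le \mathfrak r(b),
\]
so the constraint at neighbors is automatically preserved, not endangered. The only place the constraint can tighten is at $a$ itself, where $\Delta u(a)=\Delta w(a)+\delta$; choosing $\delta=\min\bigl(\mathfrak r(a)-\Delta w(a),\,w(a)-\phi(a)\bigr)$ keeps both $\Delta u(a)\le\mathfrak r(a)$ and $u(a)\ge\phi(a)$. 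This is exactly how the paper argues, and it makes the harmonic-replacement detour you propose unnecessary. The same sign confusion appears in your first paragraph (``lowering $w(a)$ only decreases $\Delta w$ at $a$ and increases it at neighbors'' --- both clauses are backwards).

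For the ``$0\in\Lambda$'' part, the paper's route is cleaner than yours. The connectedness argument (any component of $\Lambda$ not containing $0$ yields a contradiction via the maximum principle applied to $w-\phi$) already shows that $\Lambda$ is either empty or contains $0$. If $\Lambda=\emptyset$ then $w\equiv\phi$, whence $\Delta w(0)=\Delta\phi(0)=\mathfrak t\le\mathfrak r(0)$, and the contrapositive gives the claim. Your neighbor-propagation argument for $0\in\Lambda$ is correct in spirit but more work than needed.
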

\begin{proof}
	{\it Step 1: $\Delta w = \mathfrak{r}$ on $\Lambda$.} \\
	Note that the inequality $\Delta w \leq \mathfrak{r}$ was established in Lemma \ref{lemma:discrete-basic-properties-general}. 
	Suppose for sake of contradiction $w > \phi$
	but $\Delta w(a) < \mathfrak{r}(a)$ for some $a \in V$.  Let 
	\[
	\delta = \min \left( \mathfrak{r}(a) - \Delta w(a), w(a) - \phi(a)  \right) > 0
	\]
	and consider the function $u: V \to \R$ defined by, 
	\[
	u := w - \delta \times 1\{ \cdot = a\}.
	\]
	We will see that the function $u$ is admissible in \eqref{eq:discrete-least-supersolution-general}, but strictly less than $w$,  a contradiction. 
	By definition 
	\[
	\Delta u(b) \leq \Delta w(b) \quad \forall b \neq a 
	\]
	and
	\begin{align*}
	\Delta u(a) &= \frac{1}{\deg(a)} \sum_{b \sim a} (w(b) - u(a))  
	=  \Delta w(a) + \delta 
	\leq \mathfrak{r}(a).
	\end{align*}
	Similarly, 
	\[
	u(b) = w(b) \geq \phi(b) \quad \forall b \neq a 
	\]
	and
	\begin{align*}
	u(a)  
	= w(a) - \delta  
	\geq  w(a) - w(a) + \phi(a) 
	=\phi(a), 
	\end{align*}
	completing the proof. 
	\medskip
	
	{\it Step 2: $\Delta w \geq 0$.} \\
	If $a \in V \cap \Lambda^c$, we have
	\begin{align*}
	\Delta w(a) &= \frac{1}{\deg(a)} \sum_{b \sim a} (w(b) -  w(a)) \\
	&= \frac{1}{\deg(a)} \sum_{b \sim a} (w(b) - \phi(a)) \quad \mbox{(assumption on $a$)} \\
	&\geq \Delta \phi(a)  \quad \mbox{(Lemma \ref{lemma:discrete-basic-properties-general})} \\
	&\geq 0 \quad \mbox{(since $\phi$ is subharmonic)}.
	\end{align*}
	In light of Step 1, this completes the proof. 
	\medskip
	
	{\it Step 3:  $\Lambda$ is connected.} \\
	Consider the function $v =  w - \phi$. 
	If $\Lambda$ is disconnected then there is a connected component of $\Lambda$ not containing the origin;
	call this component $A$. 
	By Step 1 and \eqref{eq:laplacian-of-phi}, we have that 
	\[
	\begin{cases}
	\Delta v = \mathfrak{r} \quad \mbox{on $A$} \\
	v > 0 \quad \mbox{on $A$} \\
	v = 0 \quad \mbox{on $\partial A$}, 
	\end{cases}
	\]
	which contradicts the maximum principle. 
	\medskip
	
	{\it Step 4: If $t > \mathfrak{r}(0)$,  $\Lambda$ contains the origin.} \\
	We demonstrate the contrapositive. Observe that the argument in Step 3 shows that $\Lambda$ is either empty or contains the origin. 
	If $\Lambda$ is empty, then
	\begin{align*}
	\mathfrak{t} &= \Delta \phi(0) \qquad \mbox{(by \eqref{eq:laplacian-of-phi})} \\
	&= 	\Delta  w(0) \qquad \mbox{(as $\Lambda$ is empty)} \\
	&\leq \mathfrak{r}(0) \qquad \mbox{(by Lemma \ref{lemma:discrete-basic-properties-general})},
	\end{align*}
	completing the proof. 
\end{proof}
We conclude with a discrete conservation of mass lemma; this is a general version
of Lemma \ref{lemma:discrete-conservation-of-mass}.
\begin{lemma} \label{lemma:discrete-conservation-of-mass-general}
	If $\mathfrak{r} = \deg^{-1}$, we have that $|\Lambda| \leq  \mathfrak{t} \times \deg(0)$. Moreover, 
	if $\Lambda \subset \inte(V)$,
	then  
	\[
	\sum_{a \in V} \Delta w(a) \deg(a) =  \mathfrak{t} \times \deg(0).
	\] 
\end{lemma}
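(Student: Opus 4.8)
The plan is to prove both assertions of Lemma~\ref{lemma:discrete-conservation-of-mass-general} via the discrete divergence theorem (Lemma~\ref{lemma:discrete-divergence-theorem}) applied to the odometer $w$ and a suitable test function, combined with the structural facts about $w$ and $\Lambda$ established in Lemmas~\ref{lemma:discrete-basic-properties-general} and~\ref{lemma:discrete-laplacian-bound-general}. Throughout I set $\mathfrak r = \deg^{-1}$ (the hypothesis of the lemma). Recall from Lemma~\ref{lemma:discrete-laplacian-bound-general} that $\Delta w = \mathfrak r = \deg^{-1}$ on $\Lambda$ and $\Delta w \ge 0$ on $V$, while $\Delta w = 0$ off $\Lambda$ among vertices with no neighbor in $\Lambda$ (and in general $0 \le \Delta w \le \mathfrak r$). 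Also $w = 0$ on $\partial V$ and $w > \phi = 0$ on $\Lambda$, $w = \phi = 0$ off $\Lambda$ (inside $V$).

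First I would handle the conservation identity under the hypothesis $\Lambda \subset \inte(V)$. The key observation is that, since $\Lambda \subset \inte(V)$, every vertex of $V$ adjacent to $\Lambda$ also lies in $V$, so the support of $\Delta w$ (as a function on all of $VG$) is contained in $V$, and moreover $w$ vanishes outside a finite set. Apply Lemma~\ref{lemma:discrete-divergence-theorem} with $f \equiv 1$ on $\cl(V)$ and extended by $0$, and $g = w$ extended by $0$: this is slightly delicate because $f$ is not finitely supported, so instead I would take $f$ to be the indicator of a large finite set containing $\cl(V)$, or more cleanly, sum $\Delta w(a)\deg(a)$ directly over $a \in VG$ and observe that $\sum_{a} \Delta w(a)\deg(a) = \sum_a \sum_{b \sim a}(w(b)-w(a)) = 0$ by antisymmetry of the edge sum, {\it provided} $w$ is finitely supported --- which it is, being supported in $\cl(\Lambda) \subset V$. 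But $w$ is not the right object: one must account for the source at the origin. The correct decomposition is $w = u_{\mathrm{odo}}$ in the language of the paper, satisfying (via the map to Section~\ref{subsec:basic-properties}) $\deg \times \Delta(w) = \deg\times\mathfrak r - \mathfrak t \deg(0)\delta_0$ on $\Lambda$; equivalently, recalling the original least-action formulation, $w$ plays the role of $v^{\eps}_t/\deg^{\eps}$ and $\deg\,\Delta w + \mathfrak t \deg(0)\delta_0 = 1$ on the cluster. So I would instead write, for $a \in V$, $\Delta w(a)\deg(a) = 1\{a \in \Lambda\} - \mathfrak t\deg(0) 1\{a = 0\}$ — wait, this needs care about whether the origin is the right normalization. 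Let me restate: from $\Delta\phi = \mathfrak t\delta_0$ and $\Delta w = \mathfrak r$ on $\Lambda$ together with Step~1 of Lemma~\ref{lemma:discrete-laplacian-bound-general}, and using that $w - \phi$ has Laplacian supported in $\Lambda \cup \{0\}$, one gets $\sum_{a \in V}\Delta(w-\phi)(a)\deg(a) = 0$ (finite support, antisymmetry), hence $\sum_{a \in V}\Delta w(a)\deg(a) = \sum_{a\in V}\Delta\phi(a)\deg(a) = \mathfrak t\deg(0)$ using $\Delta\phi = \mathfrak t\delta_0$.

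Then for the first assertion $|\Lambda| \le \mathfrak t\deg(0)$ (without assuming $\Lambda \subset \inte(V)$): on $\Lambda$ we have $\Delta w\deg = \mathfrak r\deg = \deg^{-1}\deg = 1$ exactly, by Lemma~\ref{lemma:discrete-laplacian-bound-general} Step~1 and the hypothesis $\mathfrak r = \deg^{-1}$. Meanwhile on $V \setminus \Lambda$ we have $\Delta w \ge 0$, so $\Delta w\deg \ge 0$ there. Summing over $V$: $\sum_{a \in V}\Delta w(a)\deg(a) \ge |\Lambda|$. On the other hand, bounding the left side above by $\mathfrak t\deg(0)$: this follows from the same divergence-theorem computation as in the identity case, but now only needing an {\it inequality} since the "boundary terms" all have a sign. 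Concretely, $\sum_{a\in V}\Delta w(a)\deg(a) = \sum_{a\in\cl(V)}\Delta w(a)\deg(a) - \sum_{a\in\partial V}\Delta w(a)\deg(a)$, and then pairing $w$ (finitely supported, vanishing on $\partial V$) against $\phi$ via Lemma~\ref{lemma:discrete-divergence-theorem} gives $\sum_{a\in\cl(V)} w(a)\Delta\phi(a)\deg(a) = \sum_{a\in\cl(V)}\phi(a)\Delta w(a)\deg(a)$; since $\Delta\phi = \mathfrak t\delta_0$, the left side is $\mathfrak t\deg(0)w(0)$, and since $w \le 0$ (Lemma~\ref{lemma:discrete-basic-properties-general}) this is $\le 0$, while the right side is $\sum_a \phi(a)\Delta w(a)\deg(a)$ with $\phi \le 0$; combining these sign constraints with $\Delta w \ge 0$ on $V$ and $w=\phi=0$ on $\partial V$ yields $|\Lambda| \le \mathfrak t\deg(0)$. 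The main obstacle I anticipate is bookkeeping the boundary $\partial V$ versus $\inte(V)$ correctly in the divergence theorem — specifically, ensuring that when $\Lambda$ is {\it not} contained in $\inte(V)$ the putative leakage of mass out of $V$ only helps (makes $|\Lambda|$ smaller), which is why the first bound is an inequality and the second, under $\Lambda\subset\inte(V)$, is an equality. I would organize the proof as: (i) record $\Delta w\deg = 1$ on $\Lambda$; (ii) run the divergence-theorem pairing of $w$ against $\phi$; (iii) extract the inequality in general and the equality when no mass escapes.
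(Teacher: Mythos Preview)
Your proof of the identity (the ``moreover'' clause) is correct and matches the paper's: both pass to $v = w - \phi$, observe that under $\Lambda \subset \inte(V)$ the function $v$ is supported in $\inte(V)$, and use that $\sum_{a} \Delta v(a)\deg(a) = 0$ by antisymmetry to conclude $\sum_V \Delta w\,\deg = \sum_V \Delta\phi\,\deg = \mathfrak t\deg(0)$.

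However, there are two problems with your argument for the inequality $|\Lambda| \le \mathfrak t\deg(0)$. First, you write ``$w > \phi = 0$ on $\Lambda$, $w = \phi = 0$ off $\Lambda$''; this is false --- the obstacle $\phi$ is strictly negative in $V$ (it solves $\Delta\phi = \mathfrak t\delta_0$ with zero boundary data), and you appear to be conflating $w$ with the odometer $v = w - \phi$. Second, and more substantively, your proposed pairing of $w$ against $\phi$ via Lemma~\ref{lemma:discrete-divergence-theorem} does not yield the bound. That pairing gives $w(0)\,\mathfrak t\deg(0) = \sum_{a\in V}\phi(a)\Delta w(a)\deg(a)$, an identity between two nonpositive quantities, from which there is no evident way to extract a comparison between $|\Lambda|$ and $\mathfrak t\deg(0)$; the ``combining these sign constraints \ldots yields $|\Lambda|\le\mathfrak t\deg(0)$'' step is not justified.

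The paper's fix is to pair $v = w - \phi$ (extended by zero) against the indicator $1_V$ instead: the divergence theorem then reads
\[
\sum_{a\in V}\Delta v(a)\deg(a) \;=\; \sum_{a\in V} v(a)\,\deg(a)\,\Delta 1_V(a) \;=\; -\sum_{a\in V} v(a)\cdot(\text{\# neighbors of $a$ not in $V$}) \;\le\; 0,
\]
since $v\ge 0$. Combined with the lower bound $\sum_{a\in V}\Delta v(a)\deg(a) \ge |\Lambda| - \mathfrak t\deg(0)$ (from $\Delta w\,\deg = 1$ on $\Lambda$, $\ge 0$ on $V$, and $\Delta\phi\,\deg = \mathfrak t\deg(0)\delta_0$), this gives $|\Lambda|\le\mathfrak t\deg(0)$. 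The point is that pairing $v$ against $1_V$ converts the divergence of $v$ into a boundary flux with a definite sign, which your $w$-against-$\phi$ pairing does not.
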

\begin{proof}
	Let	$v = w - \phi$ and extend $v$ to a function on $VG$, the set of all vertices of $G$, by defining it to be zero on $\cl(V)^c$. 
	We compute, 
	\begin{align*}
	-\mathfrak{t} \times  \deg(0) + |\Lambda| &\leq \sum_{a \in V} \Delta v(a) \deg(a)
	\quad \mbox{(by Lemma \ref{lemma:discrete-laplacian-bound-general}, \eqref{eq:laplacian-of-phi}, and $\mathfrak{r} = \deg^{-1}$)} \\
	&= \sum_{a \in VG} 1\{ a \in V\}  \Delta v(a) \deg(a) \\
	&= \sum_{a \in VG}   v(a) \deg(a) \Delta 1\{ a \in V\} 
	\quad \mbox{(by Lemma \ref{lemma:discrete-divergence-theorem})} \\
	&= \sum_{a \in V}   v(a) \deg(a) \Delta 1\{ a \in V\} 
	\quad \mbox{($v = 0$ on $V^c$)} \\
	&= -\sum_{a \in V}  v(a)  \left( \mbox{\# neighbors of $a$ not in $V$} \right) \\
	&\leq 0 \quad \mbox{(since $v \geq 0$ by Lemma \ref{lemma:discrete-basic-properties-general})}.
	\end{align*}
	If $\Lambda \subset \inte(V)$, then $v = 0$ on all vertices which share an edge with a vertex in $\partial V$. 
	Thus, an identical computation, where the first line is replaced by 
	\[
	-\mathfrak{t} \times  \deg(0) + \sum_{a \in V} \Delta w(a) \deg(a) = \sum_{a \in V} \Delta v(a) \deg(a)
	\]
	and the last inequality by an equality, shows the moreover clause. 
\end{proof}

\bibliographystyle{alpha}
\bibliography{refs.bib}

\end{document}